\newcommand{\one}{\mathbf{1}}
\theoremstyle{plain}
\newtheorem{theorem}{Theorem}[section]
\newtheorem*{theorem*}{Theorem}
\newtheorem{lemma}[theorem]{Lemma}
\newtheorem{proposition}[theorem]{Proposition}
\theoremstyle{definition}
\newtheorem{remark}[theorem]{Remark}
\newtheorem{definition}[theorem]{Definition}
\newtheorem{example}[theorem]{Example}
\newcommand{\fus}{\circledast}
\newcommand{\pprod}{\odot}
\newcommand{\ax}{e}
\newcommand{\idx}{\mathfrak{I}}
\newcommand{\idxe}{\mathfrak{i}}
\newcommand{\pa}{\mathbin{\diamond}}
\newcommand{\funit}{\mathbf{1}}
\newcommand{\sspec}{\ddag}
\newcommand{\spec}{\dag}
\newcommand{\mL}{\mathcal{L}}
\newcommand{\mA}{\mathcal{A}}
\newcommand{\mV}{\mathcal{V}}
\newcommand{\mB}{\mathcal{B}}
\newcommand{\mF}{\mathcal{F}}
\newcommand{\mH}{\mathcal{H}}
\newcommand{\mI}{\mathcal{I}}
\newcommand{\fJ}{\mathfrak{J}}
\newcommand{\fS}{\mathfrak{S}}
\newcommand{\fZ}{\mathfrak{Z}}
\newcommand{\fA}{\mathfrak{A}}
\newcommand{\fV}{\mathfrak{V}}
\newcommand{\fH}{\mathfrak{H}}
\newcommand{\sll}{\mathfrak{sl}}
\newcommand{\Z}{\mathbb{Z}}
\newcommand{\C}{\mathbb{C}}
\newcommand{\dash}{\nobreakdash-\hspace{0pt}}
\newcommand{\llangle}{\langle\!\langle}
\newcommand{\rrangle}{\rangle\!\rangle}
\newcommand{\conj}[2]{\prescript{#1}{}{#2}}
\DeclareMathOperator{\Aut}{Aut}
\DeclareMathOperator{\GL}{GL}
\DeclareMathOperator{\PSL}{PSL}
\DeclareMathOperator{\SL}{SL}
\DeclareMathOperator{\id}{id}
\DeclareMathOperator{\Char}{char}
\DeclareMathOperator{\Res}{Res}
\DeclareMathOperator{\ad}{ad}
\DeclareMathOperator{\ch}{ch}
\DeclareMathOperator{\Hom}{Hom}
\DeclareMathOperator{\tr}{tr}
\DeclareMathOperator{\End}{End}
\DeclareMathOperator{\Ind}{Ind}
\DeclareMathOperator{\Int}{Int}
\DeclareMathOperator{\HInt}{\widehat{\Int}}
\DeclareMathOperator{\Irr}{Irr}
\DeclarePairedDelimiter{\abs}{\lvert}{\rvert}
\let\oldabs\abs%
\def\abs{\@ifstar{\oldabs}{\oldabs*}}
\title[Frobenius algebras for simply laced Chevalley groups]{Non-associative Frobenius algebras for simply laced Chevalley groups}
\author[T. De Medts]{Tom De Medts}
\email{tom.demedts@ugent.be}
\author[M. Van Couwenberghe]{Michiel Van Couwenberghe}
\thanks{Michiel Van Couwenberghe is a Ph.\@~D.\@~fellow of the Research Foundation -- Flanders (FWO)} % chktex 8
\email{michiel.vancouwenberghe@ugent.be}
\address{
    \parbox{\linewidth-\parindent}{
    Ghent University \\
    Department of Mathematics: Algebra and Geometry \\
    \mbox{Krijgslaan} 281--S25 \\
	9000 Gent \\
    Belgium}}
\keywords{non-associative algebras, Frobenius algebras, Chevalley groups, Lie algebras, E8, representation theory, axial algebras, decomposition algebras}
\subjclass[2020]{20F29, 17A36, 17D99, 17B10, 20G05}
\date{May 5, 2020}
\begin{document}

\begin{abstract}
    We provide an explicit construction for a class of commutative, non-associative algebras for each of the simple Chevalley groups of simply laced type. Moreover, we equip these algebra with an associating bilinear form, which turns them into Frobenius algebras. This class includes a 3876\dash{}dimensional algebra on which the Chevalley group of type $E_8$ acts by automorphisms. We also prove that these algebras admit the structure of (axial) decomposition algebras.
\end{abstract}

\maketitle

\section*{Introduction}

In 1982, Robert R. Griess constructed the largest sporadic simple group as the automorphism group of a commutative non-associative algebra, the \textit{Griess algebra}~\cite{Gri82}. In 1985, John Conway gave a different construction of the Griess algebra and observed that it has a peculiar feature~\cite{Con85}. It is generated by idempotents whose action by multiplication gives rise to a decomposition of the algebra that obeys a certain \textit{fusion law} with respect to the multiplication. Alexander A. Ivanov axiomatized this property in his definition of \textit{Majorana algebras}~\cite{Iva09}. Only recently, the definition has been further generalized to the definition of an \textit{axial algebra}~\cite{HRS15} by Jonathan I. Hall, Felix Rehren and Sergey Shpectorov. A further generalization, called (axial) \textit{decomposition algebras}, was given by Tom De Medts, Simon F. Peacock, S. Shpectorov and Michiel Van Couwenberghe~\cite{DMPSVC19}. The subject has received a lot of attention recently and there is an ongoing search for axial decomposition algebras admitting a given (simple) group as automorphism group.

Completely unrelated to this, Skip Garibaldi and Robert M. Guralnick observed%
\footnote{As Arjeh Cohen pointed out to us, this fact was already known to experts including Tonny Springer, but we could not find an earlier reference.}
that there exists a 3875-dimensional, commutative, non-associative, Frobenius algebra on which the simple algebraic group of type $E_8$ acts by automorphisms~\cite{GG15}. To the best of our knowledge, however, no explicit construction of this algebra was known.

Our paper aims to connect both worlds and shed light on the structure of this 3875\dash{}dimensional algebra. On the one hand, we will give an explicit construction of this algebra. On the other hand, we will be able to give it the structure of an axial decomposition algebra. Furthermore, this algebra fits into a larger class of algebras: the construction can be applied to any simple group of Lie type of type $ADE$ and each of these algebras will have the structure of a decomposition algebra.

\subsection*{Organization of the paper.}

We start in \cref{sec:S2} by defining a commutative product on the symmetric square of a simple Lie algebra of simply laced type. This definition arises very naturally from the definition of the Lie bracket and will be the starting point for the construction of our algebra.

In \cref{sec:construction}, we will give an explicit, albeit impractical, construction of our algebra. The underlying module of the algebra will be a subrepresentation of the symmetric square of the Lie algebra. We will define a product on it by embedding it into the symmetric square, using the product from \cref{sec:S2} and projecting it back onto our subrepresentation.
We have postponed a technical character computation to \cref{app:charV}.

In \cref{sec:0sub,sec:extending}, we provide an explicit multiplication rule for our algebra. We start in \cref{sec:0sub} by constructing a very small subalgebra which we use in \cref{sec:extending} to define the full algebra.

The algebra turns out to be unital, a fact that we prove in \cref{sec:unit}.

A very brief introduction into the realm of (axial) decomposition algebras is given in \cref{sec:decompositionalgebras}. \cref{sec:0dec,sec:dec} explain how to make our algebra into a decomposition algebra. Once again, we do this for the small subalgebra first and then extend our results to the full algebra.

Since the algebra for $E_8$ has been our main algebra of interest, we give it some more attention in \cref{sec:E8}. We prove that the algebra is, in fact, an axial decomposition algebra. In particular, we prove the following, cf. \cref{thm:E8}.

\begin{theorem*}
    There exist a one-parameter family of non-associative, commutative, unital 3876\dash{}dimensional algebras $(A,\pprod)$ on which the complex Chevalley group of type $E_8$ acts by automorphisms. Each of these contains a set $\Omega$ of idempotents. For each idempotent $e \in \Omega$, there exists a decomposition $A = \bigoplus_{1 \leq i \leq 6} A_i^e$ of $A$ as a vector space. Moreover, $a \pprod e = \lambda_i a$ when $a \in A_i^e$ for $\lambda_1=1$, $\lambda_2=0$, $\lambda_3=\frac{4}{3}c_1-\frac16$, $\lambda_4=\lambda_5=\frac12$ and $\lambda_6=c_1$ where $c_1 \in \C$ depends on the parameter. The linear map defined by
    \[
        {\tau_e}(a) \coloneqq \begin{cases}
            a & \text{if $a \in \bigoplus_{1 \leq i \leq 4} A_i^e$}, \\
            -a & \text{if $a \in A_5^e \oplus A_6^e$},
        \end{cases}
    \]
    defines an automorphism of $(A,\pprod)$. These automorphism $\tau_e$ for $e \in \Omega$ generate the complex Chevalley group of type $E_8$.
\end{theorem*}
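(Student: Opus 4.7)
The plan is to assemble this result from the machinery developed in the preceding sections. First, apply the construction of \cref{sec:construction} with $\mathfrak{g} = \mathfrak{e}_8$: the symmetric square $S^2(\mathfrak{e}_8)$ contains, among its irreducible $E_8$-subrepresentations, both the trivial representation and the 3875\dash{}dim one, and the subrepresentation $A$ chosen by the construction is their direct sum, giving $\dim A = 3876$. The product $\pprod$ is obtained by projecting back the symmetric-square product of \cref{sec:S2}, and the one-parameter family arises from the normalization freedom in the projection combined with the unit coming from \cref{sec:unit}. Because the whole construction is $E_8$-equivariant, $E_8(\C)$ acts on $(A,\pprod)$ by algebra automorphisms automatically.

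To build $\Omega$, I would attach to every root $\alpha \in \Phi(E_8)$ an idempotent $e_\alpha \in A$ coming from the associated $\sll_2$-triple via \cref{sec:0sub,sec:extending}, and take $\Omega$ to be the resulting $E_8(\C)$-orbit. The decomposition $A = \bigoplus_{i=1}^6 A_i^e$ and the eigenvalues $\lambda_i$ are then read off by specialising \cref{sec:0dec,sec:dec} to the $E_8$ case: the parameter $c_1$ of the family is literally the eigenvalue $\lambda_6$, and the formula $\lambda_3 = \tfrac{4}{3}c_1 - \tfrac{1}{6}$ is forced by the constraints on the small subalgebra (trace/unit conditions) rather than being an independent input. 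The values $\lambda_1 = 1$ and $\lambda_2 = 0$ come respectively from the axial condition $e \pprod e = e$ and from the annihilator of $e$ predicted by the general theory; the pair $\lambda_4 = \lambda_5 = \tfrac12$ reflects the appearance of the same eigenvalue in both parities, as is typical for axial algebras.

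For $\tau_e$ to be an automorphism, it suffices to verify that the splitting $A = A^+ \oplus A^-$ with $A^+ = \bigoplus_{i \leq 4} A_i^e$ and $A^- = A_5^e \oplus A_6^e$ is a $\Z/2\Z$-grading, i.e.\ $A^+ \pprod A^+ \subseteq A^+$, $A^+ \pprod A^- \subseteq A^-$, and $A^- \pprod A^- \subseteq A^+$. Equivalently, the fusion law produced by \cref{sec:dec} must be compatible with the sign character that sends $\lambda_1,\dots,\lambda_4$ to $+1$ and $\lambda_5,\lambda_6$ to $-1$. I expect this to be the main technical obstacle: it requires a case analysis of products between eigenspaces using the explicit multiplication rule of \cref{sec:extending}, and the danger is that some product of two $A_5$-components might hit $A_5$ or $A_6$, which would spoil the $\Z/2\Z$-grading.

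Finally, to establish $\langle \tau_e : e \in \Omega \rangle = E_8(\C)$, I would identify each $\tau_e$ with a concrete involution in $E_8(\C)$ attached to the root $\alpha$ (naturally, an involution in the $\SL_2$ associated to the long root $\alpha$). Since $\Omega$ is parametrised by a full root orbit and the corresponding root involutions are well known to generate $E_8(\C)$, the containment $\langle \tau_e : e \in \Omega \rangle \supseteq E_8(\C)$ follows; the reverse containment is automatic once each $\tau_e$ has been identified inside $E_8(\C)$, and no extra scalars appear because the $\tau_e$ are involutions acting trivially on the unit.
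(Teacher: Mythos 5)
Your overall architecture matches the paper's, but there are two genuine gaps. First, the eigenvalues are not ``forced by trace/unit conditions'' and cannot simply be read off from \cref{sec:0dec,sec:dec}. The paper's route is: the $\mL_\idxe$-fixed subalgebra is the $2$-dimensional space $\langle \one, a_\alpha\rangle$ with $a_\alpha$ the projection of $j_\alpha$ onto the $3875$-dimensional component; Schur's lemma reduces $\ad_{a_\alpha}$ to one scalar per irreducible local component, but those scalars ($-\tfrac{3}{196}$, $\tfrac{9}{196}$, $\tfrac{9}{196}$, $0$) and the product $a_\alpha \pprod a_\alpha = (\tfrac{1}{496}+\tfrac12 p)\one + \tfrac{9}{98}a_\alpha$ must be computed explicitly (\cref{prop:ada}); only then does solving the idempotent equation in $\langle\one,a_\alpha\rangle$ yield $c_1$, the relation $2c_1+\tfrac{9}{98}c_2=1$, and hence $\lambda_3=\tfrac43 c_1-\tfrac16$, $\lambda_4=\lambda_5=\tfrac12$, $\lambda_6=c_1$. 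Your proposal supplies none of this input, and consequently cannot detect that nontrivial idempotents fail to exist at $p=-\tfrac{614}{74431}$ (\cref{prop:idempotents}), a value that must be excluded. Relatedly, the one-parameter family is not ``normalization freedom in the projection'': it is the explicit deformation $a\pprod b = a\ast b + p\,\mB(a',b')\one$ on the complement of the unit (\cref{def:pprod}), which is what makes $p$ enter $a_\alpha\pprod a_\alpha$ and hence $c_1$.

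Second, you identify the $\Z/2\Z$-grading as the main obstacle and propose a case analysis of products between eigenspaces; this is not how the paper proceeds, and your worry that ``some product of two $A_5$-components might hit $A_5$ or $A_6$'' is resolved without any product computation. The fusion law here is the \emph{representation} fusion law (\cref{lem:dec}): $A_x\pprod A_y$ can only meet $A_z$ if $\Hom_{\mL_\idxe}(W_x\otimes W_y,W_z)\neq 0$, and the grading is the parity grading on $\sll_2(\C)$-representations (odd $\otimes$ odd and even $\otimes$ even are even, odd $\otimes$ even is odd; \cref{def:grad,lem:grad}). Since $A_5^e\oplus A_6^e$ are exactly the components on which $\idxe\cong\sll_2(\C)$ acts by odd representations, $\tau_e$ is the action of the central involution $\sigma_\idxe$ of the corresponding $\SL_2(\C)$, automatically an automorphism. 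This identification is also what the generation statement rests on: the $\sigma_\idxe$ form a conjugacy class of nontrivial involutions in $\Int(\mL,\fA)$, which is the adjoint (hence simple) Chevalley group of type $E_8$, so they generate it (\cref{thm:Amiy}); your appeal to ``well known'' generation by root involutions is in the right spirit but presupposes the identification $\tau_e=\sigma_\idxe$ that only the parity grading provides.
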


\subsection*{Acknowledgment}

The authors are grateful to Sergey Shpectorov from the University of Birmingham for his suggestions during one of his visits at Ghent University. He provided the main idea for the construction in \cref{sec:construction}.

\section{A product on the symmetric square}%
\label{sec:S2}

Throughout, we will use the word ``algebra'' to mean a vector space equipped with a bilinear product.
We do not assume this product to be associative nor our algebra to be unital.
However, most of our algebras come equipped with a special bilinear form which turns them into Frobenius algebras.

\begin{definition}
    \begin{enumerate}
        \item A \textit{Frobenius algebra} is a triple $(V,\theta,\eta)$ where $V$ is a vector space over a field $k$, \[\theta \colon V \times V \to V \colon (v,w) \mapsto v\ast w\] is a bilinear product and \[\eta \colon V \times V \to k \colon (v,w) \mapsto \langle v ,w \rangle\] is a non-degenerate symmetric bilinear form such that
            \begin{equation}\label{eq:frob}
                v_1 \ast v_2 = v_2 \ast v_1 \quad \text{and} \quad \langle v_1 \ast v_2 ,v_3 \rangle = \langle v_1 , v_2 \ast v_3 \rangle
            \end{equation}
            for all $v_1,v_2,v_3 \in V$. The bilinear form $\eta$ is called the \emph{Frobenius form} for the algebra.
        \item Let $G$ be a group or a Lie algebra. We say that a Frobenius algebra $(V,\theta,\eta)$ is a Frobenius algebra \textit{for $G$} if $V$ is a linear $G$\dash{}representation and both $\theta$ and $\eta$ are $G$\dash{}equivariant, i.e.\ morphisms of $G$\dash{}representations.
        \item Let $(V_i,\theta_i,\eta_i)$ be Frobenius algebras for $i \in \{1,2\}$. We say that a linear map $\varphi: V_1 \to V_2$ is a \textit{morphism} of Frobenius algebras if $\theta_2(\varphi(v),\varphi(w)) = \varphi(\theta_1(v,w))$ and $\eta_2(\varphi(v),\varphi(w)) = \eta_1(v,w)$ for all $v,w \in V_1$. If both Frobenius algebras are Frobenius algebras for a group or a Lie algebra $G$, then we say that $\varphi$ is a morphism of Frobenius algebras for $G$ if, in addition, $\varphi$ is $G$-equivariant.
    \end{enumerate}
\end{definition}

\begin{example}%
    \label{ex:frob}
    \begin{enumerate}
        \item\label{ex:frob:jordan} Let $V$ be a finite-dimensional vector space over a field $k$ with characteristic $\Char(k) \neq 2$.
            Then we can equip the vector space $\End(V)$ also with the Jordan product defined by
            \[
                f \bullet g \coloneqq \frac{1}{2}(fg + gf)
            \]
            for all $f,g \in \End(V)$.
            This defines a Jordan algebra $(\End(V),\bullet)$.
            Let $\tr \colon \End(V) \to k$ denote the trace map.
            Then the bilinear form
            \[
                B \colon \End(V) \times \End(V) \colon fg \mapsto \tr(fg),
            \]
            is a Frobenius form for this algebra, i.e.\@ $(\End(V),\bullet,B)$ is a Frobenius algebra.
            This follows from the well-known identity $\tr(fgh) = \tr(ghf)$ for all $f,g,h \in \End(V)$.
            The proof of the non-degeneracy of this form is an easy exercise, see for example~\cite{Lam99}*{Example~16.57, p.~443}.
        \item\label{ex:frob:symm} Suppose that, in addition, $V$ itself is equipped with a non-degenerate bilinear form $\kappa$ and that $\Char(k) \neq 2$.
            Then we call an operator $f \in \End(V)$ \textit{symmetric} if $\kappa(f(a),b) = \kappa(a,f(b))$ for all $a,b \in V$ and \textit{antisymmetric} if $\kappa(f(a),b)=-\kappa(a,f(b))$ for all $a,b \in V$.
            Let $S$ (resp.\@ $A$) be the subspace of $\End(V)$ consisting of all symmetric (resp.\@ antisymmetric) operators; then $\End(V) = S \oplus A$ as vector spaces.
            Then $S$ is a subalgebra of the Jordan algebra $(\End(V),\bullet)$.
            Moreover $B(S,A) = 0$.
            Hence the restriction of $B$ to $S$ is non-degenerate.
            Therefore $(S,\bullet,B)$ is a Frobenius subalgebra of $(\End(V),\bullet,B)$.
    \end{enumerate}
\end{example}

We introduce some terminology and notation about Lie algebras that we will use throughout this paper.
The relevant definitions can be found in~\cite{Hum72}.

\begin{definition}%
    \label{def:stage}
    \begin{enumerate}
        \item Let $\mL$ be a complex simple Lie algebra of \textit{simply laced type}, i.e., of type $A_n$, $D_n$ or $E_n$.
            To avoid some technicalities that appear when working with low rank, we assume that $n \geq 3$, $n \geq 4$ or $n \in \{6,7,8\}$ when $\mL$ is of type $A_n$, $D_n$ or $E_n$ respectively.
            Consider a Cartan subalgebra $\mH$ of $\mL$ and the set of roots $\Phi \subseteq \mH^*$ relative to $\mH$.
            For each root $\alpha \in \Phi$, denote its coroot by $h_\alpha \in \mH$.
            Denote the weight lattice by $\Lambda$.
            Let $\Delta$ be a base for $\Phi$ and denote the set of positive roots with respect to~$\Delta$ by $\Phi^+$.
            To each root $\alpha \in \Phi$ we associate the reflection
            \[
                s_\alpha \colon \mH \to \mH \colon h \mapsto h - \alpha(h) h_\alpha
            \]
            across the root $\alpha$.
            The group $W$ generated by these reflections is called the Weyl group of $\Phi$.
        \item Let $\{ h_\alpha \mid \alpha \in \Delta \} \cup \{e_\alpha \mid \alpha \in \Phi\}$ be a Chevalley basis for $\mL$ with respect to $\mH$ and $\Delta$~\cite{Hum72}*{\S~25}. For $\alpha , \beta \in \Phi$ for which $\alpha + \beta \in \Phi$, define $c_{\alpha,\beta} \in \C$ such that $[e_\alpha,e_\beta] = c_{\alpha,\beta} e_{\alpha+\beta}$. Then
            \begin{align*}
                [h_\alpha,h_\beta] &= 0, \\
                [h_\alpha,e_\beta] &= \beta(h_\alpha) e_\beta, \\
                [e_{\alpha},e_{-\alpha}] &= h_\alpha \\
                [e_\alpha,e_\beta] &= c_{\alpha,\beta}e_{\alpha+\beta} & \text{if $\alpha + \beta \in \Phi$}, \\
                [e_\alpha,e_\beta] &= 0 &\text{if $\alpha + \beta \notin \Phi$},
            \end{align*}
            for all $\alpha,\beta \in \Phi$.
        \item For $\ell \in \mL$ let
            \[
                \ad_{\ell} : \mL \to \mL : l \mapsto [\ell,l].
            \]
            The requirement that $\mL$ is simple and of simply laced type is equivalent to the fact that $W$ acts transitively on $\Phi$.
            Therefore $t \coloneqq \tr(\ad_{h_\alpha} \ad_{h_\alpha})$ does not depend on the choice of $\alpha \in \Phi$.
            Define
                \[
                    \kappa(\ell_1,\ell_2) = 2t^{-1} \tr(\ad_{\ell_1}\ad_{\ell_2})
                \]
                for all $\ell_1, \ell_2 \in \mL$.
            Then $\kappa$ is a rescaling of the Killing form of $\mL$ such that $\kappa(h_\alpha,h_\alpha) = 2$ for all $\alpha \in \Phi$; in particular, $\kappa$ is non-degenerate.
            We will simply refer to $\kappa$ as the Killing form.
            This allows us to identify $\mH^*$ with $\mH$.
            Then $\alpha \in \mH^*$ corresponds to $h_\alpha$ under this identification.
            In particular, $\alpha(h_\beta) = \kappa(\alpha,h_\beta) = \kappa(\alpha,\beta)$ and thus, since $\Phi$ is simply laced,
                \[
                    \kappa(\alpha,\beta) =
                    \begin{cases}
                        -2 & \text{if $\alpha = -\beta$}, \\
                        -1 & \text{if $\alpha + \beta \in \Phi$}, \\
                        1 & \text{if $\alpha - \beta \in \Phi$}, \\
                        2 & \text{if $\alpha = \beta$}, \\
                        0 & \text{otherwise.}
                    \end{cases}
                \]
            Also note that \[\kappa(e_{-\alpha},e_{\alpha}) = \frac{1}{2}\kappa(e_{-\alpha},[e_\alpha,h_\alpha]) = \frac{1}{2} \kappa([e_{-\alpha},e_\alpha],h_\alpha) = \frac{1}{2}\kappa(h_\alpha,h_\alpha)=1.\]
        \item\label{def:stage:c} The structure constants $c_{\alpha,\beta}$ for $\alpha, \beta \in \Phi$ with $\alpha + \beta \in \Phi$ satisfy the following identities (see~\cite{Car72}*{Theorem~4.1.2}):
            \begin{enumerate}
                \item\label{prop:chevstrct:anti} $c_{\alpha,\beta} = -c_{\beta,\alpha}$,
                \item\label{prop:chevstrct:opp} $c_{\alpha,\beta} = -c_{-\alpha,-\beta}$,
                \item\label{prop:chevstrct:sum} $\frac{c_{\alpha,\beta}}{\kappa( \gamma, \gamma)} = \frac{c_{\beta,\gamma}}{\kappa( \alpha , \alpha ) } = \frac{c_{\gamma,\alpha}}{\kappa( \beta,\beta )}$ for all $\alpha,\beta,\gamma \in \Phi$ such that $\alpha + \beta + \gamma =0$.
            \end{enumerate}
            Since we assume that $\Phi$ is simply laced, we also have $c_{\alpha,\beta} = \pm 1$.
        \item\label{def:stage:S2} Let $S^2(\mL)$ be the symmetric square of $\mL$ considered as a representation for $\mL$.
            This means that $S^2(\mL)$ is the quotient of the $\mL$\dash{}representation $\mL \otimes \mL$ by the subrepresentation $\langle \ell_1 \otimes \ell_2 - \ell_2 \otimes \ell_1 \mid \ell_1,\ell_2 \in \mL\rangle$.
            We denote the image of $\ell_1 \otimes \ell_2$ under the natural projection onto $S^2(\mL)$ be $\ell_1\ell_2$.
            We can also view $S^2(\mL)$ as a subrepresentation of $\mL \otimes \mL$ by considering the section defined by
            \[
                \sigma \colon S^2(\mL) \to \mL \otimes \mL \colon \ell_1\ell_2 \mapsto \frac{1}{2}(\ell_1 \otimes \ell_2 + \ell_2 \otimes \ell_1)
            \]
            Denote the action of $\mL$ on $S^2(\mL)$ by $\cdot\;$:
            \[
                \ell \cdot \ell_1\ell_2 = [\ell,\ell_1]\ell_2 + \ell_1[\ell,\ell_2]
            \]
            for all $\ell,\ell_1,\ell_2 \in \mL$.
    \end{enumerate}
\end{definition}

We define a product and bilinear form on the symmetric square $S^2(\mL)$ starting from the Lie bracket defined on $\mL$ and the Killing form $\kappa$.
Recall that a product $\bullet$ on an $\mL$-module $A$ is called \textit{$\mL$-equivariant} if
\[ \ell \cdot (a \bullet b) = (\ell \cdot a) \bullet b + a \bullet (\ell \cdot b) \]
for all $a,b \in A$ and all $\ell \in \mL$, and that a bilinear form $B \colon A \times A \to \C$ is called \textit{$\mL$-equivariant} if
\[ B(\ell \cdot a , b) + B(a, \ell \cdot b ) = 0 \]
for all $a,b \in A$ and all $\ell \in \mL$.
Notice that the Killing form $\kappa \colon \mL \times \mL \to \C$ is $\mL$-equivariant (with respect to the adjoint action).

\begin{definition}%
    \label{def:prodS2}
    The non-degeneracy of the Killing form $\kappa$ allows us to identify $\mL$ with its dual $\mL^*$.
    Hence we can identify $\mL \otimes \mL$ with $\mL \otimes \mL^* \cong \Hom(\mL,\mL)$ via the isomorphism defined by
    \begin{equation*}
        \zeta^\prime \colon \mL \otimes \mL \to \Hom(\mL,\mL) \colon \ell_1 \otimes \ell_2 \mapsto \left[ \ell' \mapsto \kappa(\ell_2,\ell')\ell_1 \right].
    \end{equation*}
    Consider the $\mL$\dash{}equivariant section from \cref{def:stage}~\labelcref{def:stage:S2}:
    \[
        \sigma \colon S^2(\mL) \to \mL \otimes \mL \colon \ell_1\ell_2 \mapsto \frac{1}{2}(\ell_1 \otimes \ell_2 + \ell_2 \otimes \ell_1).
    \]
    Let $\zeta \coloneqq \zeta' \circ \sigma$.
    Then $\zeta$ is injective and its image consists of the symmetric operators.
    These are the operators $f \in \Hom(\mL,\mL)$ for which $\kappa(f(a),b) = \kappa(a,f(b))$ for all $a,b \in \mL$.
    We use the construction from \cref{ex:frob}~\labelcref{ex:frob:symm} to turn $S^2(\mL)$ into a Frobenius algebra.
    Under the above correspondence, the maps $\bullet$ and $B$ are defined by
    \begin{align*}
        \bullet \colon\; &S^2(\mL) \times S^2(\mL) \to S^2(\mL) \\ &(\ell_1\ell_2,\ell_3\ell_4) \mapsto \frac14 \left( \kappa(\ell_1,\ell_3)\ell_2\ell_4 + \kappa(\ell_1,\ell_4)\ell_2\ell_3 + \kappa(\ell_2,\ell_3)\ell_1\ell_4 + \kappa(\ell_2,\ell_4)\ell_1\ell_3 \right), \\
        B \colon\; &S^2(\mL) \times S^2(\mL) \to \C \\ &(\ell_1\ell_2,\ell_3\ell_4) \mapsto \frac12 \left( \kappa(\ell_1,\ell_3)\kappa(\ell_2,\ell_4) + \kappa(\ell_1,\ell_4)\kappa(\ell_2,\ell_3) \right).
    \end{align*}
\end{definition}

\begin{proposition}%
    \label{prop:prodS2}
    Consider the bilinear maps $\bullet$ and $B$ from \cref{def:prodS2}. Then $(S^2(\mL),\bullet,B)$ is a Frobenius algebra for $\mL$.
\end{proposition}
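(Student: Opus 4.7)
The plan is to use the embedding $\zeta \colon S^2(\mL) \to \End(\mL)$ of \cref{def:prodS2} to reduce the proposition to \cref{ex:frob}~\labelcref{ex:frob:symm} applied to $(\mL,\kappa)$. First I would verify that $\zeta$ is an injective, $\mL$-equivariant linear map whose image is the subspace $S \subseteq \End(\mL)$ of $\kappa$-symmetric operators: that $\zeta(\ell_1\ell_2)$ is $\kappa$-symmetric follows from a one-line check using the symmetry of $\kappa$, injectivity is inherited from the non-degeneracy of $\kappa$ (which makes $\zeta'$ an isomorphism) together with the fact that $\sigma$ is a section, and surjectivity onto $S$ follows from a dimension count.

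Next I would check on generators that $\zeta$ is multiplicative with respect to the product $\bullet$ on $S^2(\mL)$ and the Jordan product on $\End(\mL)$ from \cref{ex:frob}~\labelcref{ex:frob:jordan}, i.e.\ $\zeta(a \bullet b) = \tfrac{1}{2}\bigl(\zeta(a)\zeta(b) + \zeta(b)\zeta(a)\bigr)$. Expanding $\bigl(\zeta(\ell_1\ell_2)\zeta(\ell_3\ell_4) + \zeta(\ell_3\ell_4)\zeta(\ell_1\ell_2)\bigr)(x)$ produces eight terms of the form $\kappa(\ell_i,\ell_j)\kappa(\ell_k,x)\ell_m$ with coefficient $\tfrac{1}{8}$, and these match the eight terms obtained by applying $\zeta$ to the right-hand side of the defining formula for $\bullet$. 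A shorter computation shows $\tr\bigl(\zeta(\ell_1\ell_2)\zeta(\ell_3\ell_4)\bigr) = B(\ell_1\ell_2,\ell_3\ell_4)$ on generators, so $\zeta$ is simultaneously an isometry with respect to the trace form.

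By \cref{ex:frob}~\labelcref{ex:frob:symm} applied with $V = \mL$ and bilinear form $\kappa$, the triple $(S,\bullet,\tr)$ is a Frobenius algebra; the commutativity of $\bullet$, the Frobenius identity, and the non-degeneracy of $B$ then transport across $\zeta$ to $(S^2(\mL),\bullet,B)$. Finally, $\mL$-equivariance of $\bullet$ and $B$ is inherited from the $\mL$-equivariance of the building blocks: $\sigma$ is $\mL$-equivariant by definition, $\zeta'$ is $\mL$-equivariant because $\kappa$ is, the associative and hence the Jordan product on $\End(\mL)$ are $\mL$-equivariant because $\ell \cdot f = [\ad_\ell,f]$ is a derivation of composition, and the trace form on $\End(\mL)$ is $\mL$-equivariant by cyclicity of the trace.

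The main obstacle is the explicit bookkeeping matching the four-term symmetrization that defines $\bullet$ on $S^2(\mL)$ with the two-term Jordan sum after applying $\zeta$; this is straightforward but deserves care since each of the four summands $\ell_i\ell_j$ on the one side contributes two terms through $\zeta$, while each of the two compositions $\zeta(\cdot)\zeta(\cdot)$ on the other side likewise contributes four terms, so that both sides balance at exactly eight monomials.
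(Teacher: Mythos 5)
Your proposal is correct and follows essentially the same route as the paper: the product and form on $S^2(\mL)$ are by construction the transport along $\zeta$ of the Jordan product and trace form on the symmetric operators of \cref{ex:frob}~\labelcref{ex:frob:symm}, so the Frobenius properties transfer, and $\mL$-equivariance comes from that of $\kappa$. The paper states this in two lines; you have merely written out the verification that the explicit four-term formula for $\bullet$ and the formula for $B$ really do match the Jordan product and trace form under $\zeta$, which is exactly what the paper's ``follows immediately from the construction'' elides.
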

\begin{proof}
    The $\mL$\dash{}equivariance follows immediately from the $\mL$\dash{}equivariance of $\kappa$.
    Now, this follows immediately from the construction and \cref{ex:frob}~\labelcref{ex:frob:symm}.
\end{proof}

The following fact will be used throughout the following sections.
Recall that, by definition, an element $a \in S^2(\mL)$ is a \textit{weight vector} with \textit{weight} $w \in \mH^*$ if $h \cdot a= w(h) a$ for all $h \in \mH$.

\begin{lemma}%
    \label{lem:Bweights}
    Let $a,b \in S^2(\mL)$ be weight vectors (with respect to the Cartan subalgebra $\mH$ of $\mL$) with respective weights $w_a,w_b \in \mH^*$. Then
    \begin{enumerate}
        \item $a \bullet b$ is a weight vector with weight $w_a + w_b$,
        \item $B(a,b) = 0$ unless $w_a + w_b = 0$.
    \end{enumerate}
\end{lemma}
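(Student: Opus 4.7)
The plan is to derive both statements directly from the $\mathcal{L}$\dash{}equivariance of the product $\bullet$ and the form $B$ (established in \cref{prop:prodS2}), applied to elements $h \in \mathcal{H}$ of the Cartan subalgebra.

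For part (i), I would fix an arbitrary $h \in \mathcal{H}$ and compute
\[
    h \cdot (a \bullet b) = (h \cdot a) \bullet b + a \bullet (h \cdot b) = w_a(h)\, a \bullet b + w_b(h)\, a \bullet b = (w_a + w_b)(h)\, a \bullet b,
\]
where the first equality is the Leibniz-type identity expressing $\mathcal{L}$\dash{}equivariance of $\bullet$, and the second uses that $a, b$ are weight vectors with weights $w_a, w_b$. Since this holds for every $h \in \mathcal{H}$, $a \bullet b$ is a weight vector of weight $w_a + w_b$.

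For part (ii), the same idea applies to the bilinear form. For any $h \in \mathcal{H}$, the $\mathcal{L}$\dash{}equivariance of $B$ gives
\[
    0 = B(h \cdot a, b) + B(a, h \cdot b) = (w_a + w_b)(h)\, B(a,b).
\]
If $w_a + w_b \neq 0$ in $\mathcal{H}^*$, we can choose $h \in \mathcal{H}$ with $(w_a + w_b)(h) \neq 0$, forcing $B(a,b) = 0$.

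There is no real obstacle here; the entire argument is a one-line consequence of equivariance in each case. The only minor point to verify is that $\bullet$ and $B$ are indeed $\mathcal{L}$\dash{}equivariant in the precise Leibniz/infinitesimal-invariance sense used above, but this is exactly the content of \cref{prop:prodS2} together with the unpacking of what an $\mathcal{L}$\dash{}equivariant map of $\mathcal{L}$\dash{}representations means, recalled explicitly in the paragraph preceding \cref{def:prodS2}.
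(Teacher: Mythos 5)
Your proposal is correct and is essentially identical to the paper's own proof: both parts follow directly from the $\mL$\dash{}equivariance of $\bullet$ and $B$ applied to elements of $\mH$, exactly as you argue. No gaps.
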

\begin{proof}
    \begin{enumerate}
        \item For all $h \in \mH$ we have \[h \cdot (a \bullet b) = (h \cdot a) \bullet b + a \bullet (h \cdot b) = (w_a + w_b)(h) (a \bullet b)\]
            because the product $\bullet$ is $\mL$\dash{}equivariant.
        \item Let $h \in \mH$ such that $(w_a+w_b)(h) \neq 0$.
            Because $B$ is $\mL$\dash{}equivariant, we have $w_a(h)B(a,b) = B(h \cdot a, b) = - B(a,h \cdot b) =- w_b(h)B(a,b)$.
            This implies that $B(a,b)=0$.
        \qedhere
    \end{enumerate}
\end{proof}

\section{Constructing the algebra}%
\label{sec:construction}

We will use the algebra from \cref{prop:prodS2} to build a Frobenius algebra of smaller dimension for $\mL$.
The highest occurring weight in $S^2(\mL)$, as an $\mL$\dash{}representation, is the double of a root.
Its weight space is one-dimensional.
We will explicitly determine a generating set of the subrepresentation $\mV$ generated by this weight space in \cref{prop:gensetV} below.
Next, we will define an algebra product on the complement $\mA$ of $\mV$ in $S^2(\mL)$ with respect to $B$.
The algebra product on $\mA$ will be the composition of the algebra product from \cref{prop:prodS2} and the projection onto~$\mA$.
We are grateful to Sergey Shpectorov for providing the central idea of this construction.

\begin{definition}
    Let $\mV$ denote the subrepresentation of $S^2(\mL)$ generated by $e_\omega e_\omega$, where $\omega$ is the highest root with respect to the base $\Delta$.
\end{definition}

It will be fairly straightforward to find elements that lie in $\mV$. However, in order to determine whether they span $\mV$ as a vector space, we will first have to determine the multiplicity of each weight in $\mV$, a task requiring some work. We will use the terminology of (formal) characters to describe these multiplicities; see~\cite{Hum72}*{\S~22.5}.

\begin{definition}
    Let $\Lambda$ be the weight lattice of $\Phi$ and consider the group ring $\Z[\Lambda]$. To avoid confusion, we denote the basis element of $\Z[\Lambda]$ corresponding to a weight $\lambda \in \Lambda$ by $e^\lambda$ (so in particular, $e^0=1$ and $e^\lambda e^\mu = e^{\lambda+\mu}$ for all $\lambda,\mu \in \Lambda$). Let $\mV$ be a representation for $\mL$. For each weight $\lambda \in \Lambda$, we denote its weight-$\lambda$-space by $\mV_\lambda$:
    \[
        \mV_\lambda \coloneqq \{ v \in \mV \mid h \cdot v = \lambda(h) v \text{ for all } h \in \mH \}.
    \]
    Write $m_\lambda \coloneqq \dim(\mV_\lambda)$.
    Let $\Pi$ be the set of weights $\lambda \in \Lambda$ for which $m_\lambda \neq 0$.
    Then we define the \textit{formal character} $\ch_\mV$  of $\mV$ as
    \[
        \ch_\mV = \sum_{\lambda \in \Pi} m_\lambda e^\lambda \in \Z[\Lambda].
    \]
\end{definition}

We introduce some notation to describe the weights and multiplicities of $S^2(\mL)$ and $\mV$.

\begin{definition}%
    \label{def:charV2}
    \begin{enumerate}
        \item For $-2 \leq i \leq 2$ let $\Lambda_i \coloneqq \{\alpha + \beta \mid \alpha,\beta \in \Phi, \kappa( \alpha , \beta ) = i \}$.
            This means that $\Lambda_i$ contains those weight vectors that can be represented as the sum of two roots $\alpha, \beta \in \Phi$ such that $\kappa(\alpha,\beta)=i$.
        \item For $\lambda \in \bigcup_{i=-2}^2 \Lambda_i$, let $N_\lambda \coloneqq \{\{\alpha,\beta\} \mid \alpha,\beta \in \Phi , \alpha + \beta = \lambda\}$ and $n_\lambda = \abs{N_\lambda}$.
            Simply put, $n_\lambda$ is the number of ways to write $\lambda$ as the sum of two roots.
    \end{enumerate}
\end{definition}

The elements of $\Lambda_i$ for $-2 \leq i \leq 2$ are the weights of $S^2(\mL)$ as an $\mL$\dash{}representation, cf. \cref{prop:charS}.
We prove a few easy statements about these weights.

\begin{lemma}%
    \label{lem:weights}
    \begin{enumerate}
        \item\label{lem:weights:disj} For each $\lambda \in \Lambda_i$, we have $\kappa(\lambda,\lambda) = 4+2i$. In particular, the sets $\Lambda_i$ are disjoint.
        \item\label{lem:weights:-2and-1} $\Lambda_{-2} = \{0\}$ and $\Lambda_{-1}=\Phi$.
        \item\label{lem:weights:kalphabeta} If $\alpha + \beta \in \Lambda_i$ for $\alpha, \beta \in \Phi$, then $\kappa(\alpha,\beta) = i$.
        \item\label{lem:weights:lambdaalpha} Let $\alpha \in \Phi$ and $\lambda \in \Lambda_i$. Then $\alpha + \beta = \lambda$ for some $\beta \in \Phi$ if and only if $\kappa(\alpha,\lambda)=2+i$.
    \end{enumerate}
\end{lemma}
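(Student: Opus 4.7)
The plan is to derive all four parts from the Killing form table recorded in \cref{def:stage} together with standard properties of simply laced root systems. I would handle (i), (ii), (iii) as quick calculations, and invest the bulk of the effort in the converse direction of (iv).

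For (i), I would expand bilinearly: since $\kappa(\alpha,\alpha) = \kappa(\beta,\beta) = 2$ in a simply laced system and $\kappa(\alpha,\beta) = i$, one immediately gets $\kappa(\alpha+\beta,\alpha+\beta) = 4+2i$. The map $i \mapsto 4+2i$ is injective on $\{-2,-1,0,1,2\}$, so the $\Lambda_i$ are pairwise disjoint. For (ii), I would read off the Killing form table directly: $\kappa(\alpha,\beta) = -2$ forces $\alpha = -\beta$, giving $\Lambda_{-2} = \{0\}$, and $\kappa(\alpha,\beta) = -1$ is equivalent to $\alpha+\beta \in \Phi$, giving $\Lambda_{-1} \subseteq \Phi$. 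To obtain $\Phi \subseteq \Lambda_{-1}$, for each $\gamma \in \Phi$ I would use the irreducibility of $\Phi$ together with the rank hypothesis to produce a root $\delta$ with $\kappa(\gamma,\delta) \neq 0$ and $\delta \neq \pm \gamma$; after possibly negating $\delta$ we may assume $\kappa(\gamma,\delta) = 1$, so $\gamma - \delta \in \Phi$, and $\gamma = \delta + (\gamma-\delta)$ expresses $\gamma$ as a sum of two roots with inner product $\kappa(\delta,\gamma-\delta) = 1 - 2 = -1$. Part (iii) is a one-line consequence of (i): if $\alpha + \beta \in \Lambda_i$ with $\alpha,\beta \in \Phi$, then $4+2\kappa(\alpha,\beta) = \kappa(\alpha+\beta,\alpha+\beta) = 4+2i$, so $\kappa(\alpha,\beta) = i$.

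For (iv), the forward direction follows immediately from (iii), since $\kappa(\alpha,\alpha+\beta) = 2 + \kappa(\alpha,\beta) = 2+i$. The converse is the main obstacle. Assuming $\kappa(\alpha,\lambda) = 2+i$, I would first use (i) to compute
\[
    \kappa(\lambda-\alpha,\lambda-\alpha) = (4+2i) - 2(2+i) + 2 = 2,
\]
so $\lambda-\alpha$ has the squared length of a root. Moreover $\lambda - \alpha$ lies in the root lattice $\Z\Phi$, since $\lambda$ is by definition a sum of two roots and $\alpha$ is itself a root. The key input I would then invoke is the classical ADE fact that in a simply laced root lattice the only vectors of squared Killing norm $2$ are the roots themselves; this forces $\lambda - \alpha \in \Phi$, and $\beta \coloneqq \lambda - \alpha$ is the desired root. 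This lattice characterisation (together with the inner-product table) is doing all the real work; everything else is bookkeeping.
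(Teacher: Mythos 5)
Your proof is correct. Parts (i)--(iii) and the forward implication of (iv) coincide with the paper's argument (your treatment of the inclusion $\Phi \subseteq \Lambda_{-1}$ in (ii) is in fact more explicit than the paper's, which leaves the existence of a decomposition $\gamma = \delta + (\gamma-\delta)$ implicit). The genuine divergence is in the converse of (iv). The paper argues combinatorially: writing $\lambda = \alpha' + \beta'$, the condition $\kappa(\lambda,\alpha)=2+i$ together with the simply laced bound $\kappa(\,\cdot\,,\cdot\,)\leq 2$ forces either $\alpha \in \{\alpha',\beta'\}$ or ($i=0$ and $\kappa(\alpha',\alpha)=\kappa(\beta',\alpha)=1$), and in the latter case $\lambda-\alpha = (\alpha'-\alpha)+\beta'$ is exhibited as a root directly from the inner-product table of \cref{def:stage}. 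You instead compute $\kappa(\lambda-\alpha,\lambda-\alpha)=2$, note $\lambda-\alpha \in \Z\Phi$, and invoke the fact that the norm-$2$ vectors of a simply laced root lattice are precisely the roots. That fact is true and classical (immediate for the standard models of $A_n$ and $D_n$; for $E_6$, $E_7$, $E_8$ it is the statement that the minimal vectors of these lattices are exactly the $72$, $126$, resp.\ $240$ roots), but it is an external input the paper does not need, so you should either cite it or verify it type by type. What your route buys is uniformity: the same two-line norm computation handles all $i \in \{-2,\dots,2\}$ at once, whereas the paper splits off $i \in \{-2,-1\}$ and then runs a short case analysis. What the paper's route buys is self-containedness from the table of values of $\kappa$ on roots alone.
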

\begin{proof}
    \begin{enumerate}
        \item For $\lambda \in \Lambda_i$, we can write $\lambda = \alpha + \beta$ where $\alpha, \beta \in \Phi$ and $\kappa(\alpha,\beta) = i$. Thus $\kappa(\lambda,\lambda) = \kappa(\alpha + \beta , \alpha + \beta) = 4+2i$.
        \item We have $\kappa(\alpha,\beta) = -2$ for $\alpha,\beta \in \Phi$ if and only if $\alpha = -\beta$. Therefore $\Lambda_{-2} = \{0\}$. Also $\Lambda_{-1} = \Phi$ because $\alpha + \beta \in \Phi$ for $\alpha , \beta \in \Phi$ if and only if $\kappa(\alpha,\beta) = -1$.
        \item By~\labelcref{lem:weights:disj} we know that $4+2i=\kappa(\alpha+\beta,\alpha+\beta)=4+2\kappa(\alpha,\beta)$ from which the assertion follows.
        \item This is obvious by~\labelcref{lem:weights:-2and-1} for $i \in \{-2,-1\}$. Suppose that $i\in\{0,1,2\}$. If $\lambda = \alpha + \beta$ for some $\beta \in \Phi$, then $\kappa(\alpha,\beta)=0$ and $\kappa(\lambda,\alpha)=2$ by~\labelcref{lem:weights:disj}. Conversely, suppose that $\kappa(\lambda,\alpha)=2$. Write $\lambda = \alpha'+\beta'$ for some $\alpha',\beta' \in \Phi$. Because $\Phi$ is simply laced we have either $\alpha \in \{\alpha',\beta'\}$ or $i=0$ and $\kappa(\alpha',\alpha)=\kappa(\beta',\beta)=1$. In the first case the condition is obviously satisfied. In the second case we have that $\alpha' - \alpha$ is a root and $\kappa(\alpha'-\alpha,\beta)=-1$. So $\lambda-\alpha=(\alpha'-\alpha)+\beta'$ is a root. \qedhere
    \end{enumerate}
\end{proof}

\begin{proposition}%
    \label{prop:charS}
    The character of $S^2(\mL)$ is given by
    \[
        \ch_{S^2(\mL)} = \left(\frac{n(n+1)}{2} + n_0\right) + \sum_{\lambda \in \Lambda_{-1}} (n_\lambda + n)e^\lambda +\sum_{\lambda \in \Lambda_0} n_\lambda e^\lambda + \sum_{\lambda \in \Lambda_1 \cup \Lambda_2} e^\lambda.
    \]
\end{proposition}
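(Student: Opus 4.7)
The plan is a direct weight-multiplicity count for $S^2(\mL)$ using the Chevalley basis.

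I would start by observing that the Chevalley basis $\{h_\alpha : \alpha \in \Delta\} \cup \{e_\alpha : \alpha \in \Phi\}$ is a weight basis of $\mL$ in which each $h_\alpha$ has weight $0$ and each $e_\alpha$ has weight $\alpha$. Taking unordered symmetric products then gives a weight basis of $S^2(\mL)$ of three types: $h_\alpha h_\beta$ for $\alpha, \beta \in \Delta$ (weight $0$, $\tfrac{n(n+1)}{2}$ vectors in total); $h_\alpha e_\beta$ for $\alpha \in \Delta$, $\beta \in \Phi$ (weight $\beta$, exactly $n$ per $\beta$); and $e_\alpha e_\beta$ indexed by unordered multisets $\{\alpha,\beta\} \subseteq \Phi$ (weight $\alpha + \beta$, producing $n_\lambda$ vectors of weight $\lambda$ for each $\lambda$ of the form $\alpha + \beta$).

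Next I would organise the weights via \cref{lem:weights}. By \cref{lem:weights:disj,lem:weights:-2and-1}, every occurring weight lies in exactly one of $\Lambda_{-2} = \{0\}$, $\Lambda_{-1} = \Phi$, $\Lambda_0$, $\Lambda_1$, $\Lambda_2$, and by \cref{lem:weights:kalphabeta} the class to which $e_\alpha e_\beta$ contributes is determined by $\kappa(\alpha,\beta)$. Summing the contributions per class yields multiplicity $\tfrac{n(n+1)}{2} + n_0$ at weight $0$, multiplicity $n + n_\lambda$ at each $\lambda \in \Phi$, and multiplicity $n_\lambda$ at each $\lambda \in \Lambda_0 \cup \Lambda_1 \cup \Lambda_2$.

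To match the coefficient $1$ displayed in the last sum, it remains to verify that $n_\lambda = 1$ for $\lambda \in \Lambda_1 \cup \Lambda_2$. For $\lambda \in \Lambda_2$, we have $\lambda = 2\alpha$ and any decomposition $\alpha' + \beta' = \lambda$ forces $\kappa(\alpha', \beta') = 2$, so $\alpha' = \beta' = \alpha$. For $\lambda \in \Lambda_1$, the argument in the proof of \cref{lem:weights:lambdaalpha} shows that any $\alpha$ occurring in a decomposition $\lambda = \alpha + \beta$ must already appear in a fixed decomposition $\lambda = \alpha' + \beta'$, since the alternative ``$i = 0$'' case in that argument does not apply when $i = 1$. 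This uniqueness is the only genuinely nontrivial ingredient; everything else is bookkeeping.
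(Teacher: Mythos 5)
Your proof is correct. It is precisely the ``explicit verification'' that the paper's own proof sketches in its second half: the Chevalley basis is a weight basis of $\mL$, the products $b_i b_j$ with $i \le j$ form a weight basis of $S^2(\mL)$, and the three types of products ($h_\alpha h_\beta$, $h_\alpha e_\beta$, $e_\alpha e_\beta$) contribute exactly the multiplicities $\tfrac{n(n+1)}{2}+n_0$, $n+n_\lambda$ and $n_\lambda$ that you record. The paper's primary argument instead invokes the general identity $\ch_{S^2(V)} = \tfrac12 \sum_{\lambda,\mu} m_\lambda m_\mu e^{\lambda+\mu} + \tfrac12 \sum_{\lambda} m_\lambda e^{2\lambda}$ applied to $\ch_\mL = n + \sum_{\alpha \in \Phi} e^\alpha$, which packages your bookkeeping into a cited formula; the two routes are computationally identical. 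One small remark: the reduction of the coefficient $n_\lambda$ to $1$ on $\Lambda_1 \cup \Lambda_2$, which you rightly flag as the only non-trivial ingredient, is exactly the content of part (i) of \cref{lem:charV} in the appendix, and your argument for it (for $\lambda \in \Lambda_1 \cup \Lambda_2$ one has $\kappa(\alpha',\lambda) = 2+i \ge 3$, forcing $\kappa(\alpha',\alpha)=2$ or $\kappa(\alpha',\beta)=2$ and hence $\alpha' \in \{\alpha,\beta\}$) coincides with the paper's; citing the dichotomy from the proof of \cref{lem:weights} is a legitimate way to phrase it.
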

\begin{proof}
    If $\sum_\lambda m_\lambda e^\lambda$ is the character of a representation, then its symmetric square has character $\frac{1}{2} \sum_{\lambda,\mu} m_\lambda m_\mu e^{\lambda+\mu} + \frac12 \sum_{\lambda} m_\lambda e^{2\lambda}$; see~\cite{FH91}*{Exercise~23.39}.
    Since the character of $\mL$ as $\mL$\dash{}representation is given by
    \[
        n + \sum_{\alpha \in \Phi} e^\alpha,
    \]
    the statement follows from \cref{def:charV2}.
    It is also possible to verify this more explicitly. The Chevalley basis of $\mL$ is a basis of weight vectors of $\mL$ with respect to $\mH$. Now, if $b_1,\dots,b_n$ is a basis of weight vectors of $\mL$ as $\mL$-representation, then $b_i b_j$ for $i \leq j$ is a basis of weight vectors for $S^2(\mL)$ from which the character can be computed.
\end{proof}

We are now ready to specify the formal character of $\mV$.

\begin{proposition}%
    \label{prop:charV}
    The character of $\mV$ is given by
    \[
        \ch_\mV = n_0 + \sum_{\lambda \in \Lambda_{-1}} (n_\lambda + 1) e^{\lambda} + \sum_{\lambda \in \Lambda_0} (n_\lambda - 1)e^\lambda + \sum_{\lambda \in \Lambda_1 \cup \Lambda_2} e^\lambda.
    \]
\end{proposition}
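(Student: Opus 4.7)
The plan is to identify $\mV$ with the irreducible highest-weight module $V(2\omega)$ and then read off its character either by exhibiting explicit weight vectors or by matching against the character of $S^2(\mL)$ already computed in \cref{prop:charS}.

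First I would verify that $e_\omega e_\omega$ is a highest-weight vector of weight $2\omega$ in $S^2(\mL)$. For $h \in \mH$ the Leibniz rule gives $h \cdot (e_\omega e_\omega) = 2\omega(h)\, e_\omega e_\omega$, and for each $\alpha \in \Phi^+$ one has $[e_\alpha,e_\omega]=0$ because $\alpha+\omega \notin \Phi$ (as $\omega$ is the highest root), so $e_\alpha \cdot (e_\omega e_\omega) = 2[e_\alpha,e_\omega]\,e_\omega = 0$. Since $\mL$ is complex semisimple and $S^2(\mL)$ is finite dimensional, the cyclic submodule generated by this highest-weight vector is irreducible and isomorphic to $V(2\omega)$; hence $\mV \cong V(2\omega)$.

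Next I would bound the multiplicity of each weight from below by producing explicit elements of $\mV$. Since $\Phi$ is simply laced, $W$ acts transitively on $\Phi$ and every $w \in W$ lifts via products of $\exp(\ad e_{\pm\alpha})$ to an automorphism of $\mL$, hence of $S^2(\mL)$, stabilising $\mV$. Applied to $e_\omega e_\omega$ this shows $e_\alpha e_\alpha \in \mV$ for every $\alpha \in \Phi$, giving the coefficient $1$ at each $\lambda \in \Lambda_2$. Repeated application of lowering operators $e_{-\gamma}$ to these squares then produces mixed products $e_\alpha e_\beta$ with $\{\alpha,\beta\} \in N_\lambda$, and, when $\lambda \in \Phi$ or $\lambda=0$, vectors of the form $h_\gamma e_\lambda$ or $h_\gamma h_{\gamma'}$; tracking the dimensions of the resulting spans yields lower bounds of $1$, $n_\lambda-1$, $n_\lambda+1$, and $n_0$ at the weights in $\Lambda_1$, $\Lambda_0$, $\Lambda_{-1}$, and $\{0\}$ respectively.

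Equality would follow either by matching the total with $\dim V(2\omega)$ via Weyl's dimension formula, or equivalently by checking that the difference between $\ch_{S^2(\mL)}$ and the proposed $\ch_\mV$ is itself the character of a genuine $\mL$-subrepresentation complementary to $\mV$. The main obstacle lies in the cases $\lambda \in \Lambda_{-1}$ and $\lambda \in \Lambda_0$: here the weight spaces of $S^2(\mL)$ carry several spanning vectors, and the $+1$ and $-1$ corrections encode specific linear dependences forced by the Chevalley structure constants $c_{\alpha,\beta}$ and the identities of \cref{def:stage}~\labelcref{def:stage:c}. Turning this bookkeeping into a closed-form count for every $\lambda$ is the combinatorial heart of the argument, which is why it is deferred to \cref{app:charV}.
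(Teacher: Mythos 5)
Your setup is fine: $e_\omega e_\omega$ is indeed a maximal vector of weight $2\omega$, so $\mV \cong V(2\omega)$, and producing explicit elements of $\mV$ by applying Weyl-group conjugates and lowering operators to $e_\omega e_\omega$ does give the lower bounds $m_\lambda \geq 1$, $n_\lambda-1$, $n_\lambda+1$, $n_0$ on the weights in $\Lambda_2 \cup \Lambda_1$, $\Lambda_0$, $\Lambda_{-1}$, $\{0\}$ respectively (this is essentially the content of \cref{prop:gensetV}, and since membership in $\mV$ and linear independence of those vectors do not use the character, there is no circularity in that direction). The genuine gap is that you never establish the reverse inequalities. Your proposed closure --- matching the total against $\dim V(2\omega)$ via Weyl's dimension formula, or exhibiting a complementary subrepresentation with the residual character --- is a legitimate strategy in principle, but it is not carried out: you would need a closed form for $\dim V(2\omega)$ together with the combinatorial sums $\abs{\Lambda_1}$, $\abs{\Lambda_2}$, $\abs{\Lambda_0}$, $\sum_{\lambda\in\Lambda_0} n_\lambda$ and $\sum_{\alpha\in\Phi} n_\alpha$, verified uniformly or type by type, and none of this appears in the proposal. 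Your final sentence explicitly defers ``the combinatorial heart of the argument'' to \cref{app:charV}, i.e.\ to a computation you have not supplied; as written the argument proves only one inequality for each weight multiplicity.

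For comparison, the paper closes the gap differently: in \cref{prop:charVapp} it computes each $m_\lambda$ exactly by a downward induction on weights using Freudenthal's formula, with the auxiliary root-system combinatorics isolated in \cref{lem:charV} (e.g.\ every dominant $\lambda \in \bigcup_{i\geq 0}\Lambda_i$ is of the form $\omega+\psi$, and the count of $\gamma\in\Phi^+$ with $\kappa(\alpha,\gamma)=-\kappa(\beta,\gamma)=\pm 1$ equals $2(n_{\alpha+\beta}-1)$). That route needs no explicit weight vectors and no dimension formula, and it is uniform across the simply laced types. If you want to keep your approach, the missing piece is precisely the identity
\[
    \dim V(2\omega) \;=\; n_0 + \sum_{\alpha\in\Phi}(n_\alpha+1) + \sum_{\lambda\in\Lambda_0}(n_\lambda-1) + \abs{\Lambda_1} + \abs{\Lambda_2},
\]
and proving it is comparable in effort to the Freudenthal computation you are trying to avoid.
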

\begin{proof}
    The character can be computed using Freudenthal's formula~\cite{Hum72}*{\S~22.3}. We refer to \cref{prop:charVapp} for the details.
\end{proof}

Next, we compute certain elements of $\mV$ and we use the character of $\mV$ to verify that these elements, in fact, span $\mV$ as a vector space.

\begin{proposition}%
    \label{prop:gensetV}
    Let
    \begin{align*}
        \Gamma_0 &\coloneqq \{2e_\alpha e_{-\alpha} - h_\alpha h_\alpha \mid \alpha \in \Phi\}, \\
        \Gamma_1 &\coloneqq \{e_\alpha h_\alpha \mid \alpha \in \Phi\}, \\
        \Gamma_2 &\coloneqq \{2e_\alpha e_\beta + c_{\alpha,\beta}e_{\alpha + \beta}(h_\beta - h_\alpha) \mid \alpha,\beta \in \Phi , \kappa( \alpha , \beta) = -1 \}, \\
        \Gamma_3 &\coloneqq \{ e_{\alpha}e_{\beta} + \frac{c_{\alpha,-\gamma}}{c_{\beta,-\delta}}e_\gamma e_\delta \mid \alpha,\beta,\gamma,\delta \in \Phi, \kappa( \alpha , \beta ) = 0 , \{\gamma,\delta\} \in N_{\alpha+\beta} \setminus \{\{\alpha,\beta\}\}\}, \\
        \Gamma_4 &\coloneqq \{e_{\alpha}e_{\beta} \mid \alpha,\beta \in \Phi, \kappa( \alpha , \beta ) = 1 \}, \\
        \Gamma_5 &\coloneqq \{e_{\alpha}e_{\alpha} \mid \alpha \in \Phi\}.
    \end{align*}
    Then $\Gamma_0 \cup \Gamma_1 \cup \Gamma_2 \cup \Gamma_3 \cup \Gamma_4 \cup \Gamma_5$ spans $\mV$ as a vector space.
\end{proposition}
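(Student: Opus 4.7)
My plan is a two\dash{}step argument. First, I would show that each element of $\Gamma_0 \cup \cdots \cup \Gamma_5$ lies in $\mV$ via explicit $\mL$\dash{}action computations starting from the single generator $e_\omega e_\omega$. Second, I would match the produced elements against the weight\dash{}space dimensions of $\mV$ supplied by \cref{prop:charV} to conclude that they already span $\mV$.

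For membership I would proceed stage by stage. Because the Weyl group acts transitively on $\Phi$ and $\mV$ is $\mL$\dash{}stable (hence stable under the integrated Chevalley group action), the orbit of $e_\omega e_\omega$ already delivers all of $\Gamma_5$. Acting by $e_{-\alpha}$ on $e_\alpha e_\alpha$ yields $-2\, h_\alpha e_\alpha$, producing $\Gamma_1$. Acting by $e_\alpha$ on $e_\beta e_\beta$ whenever $\alpha + \beta \in \Phi$ gives a nonzero multiple of $e_{\alpha+\beta} e_\beta$, and this exhausts $\Gamma_4$ since $\kappa(\alpha+\beta, \beta) = 1$. Acting by $e_{-\alpha}$ on $e_\alpha h_\alpha \in \Gamma_1$ produces exactly $2 e_\alpha e_{-\alpha} - h_\alpha h_\alpha \in \Gamma_0$. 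For $\Gamma_2$, when $\kappa(\alpha, \beta) = -1$, the sum $e_\alpha \cdot (e_\beta h_\beta) + e_\beta \cdot (e_\alpha h_\alpha)$ simplifies, using $\beta(h_\alpha) = -1$ and $c_{\beta,\alpha} = -c_{\alpha,\beta}$, to exactly $2 e_\alpha e_\beta + c_{\alpha,\beta} e_{\alpha+\beta}(h_\beta - h_\alpha)$.

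The subtle step is $\Gamma_3$. Given $\alpha,\beta \in \Phi$ with $\kappa(\alpha,\beta) = 0$ and $\{\gamma,\delta\} \in N_{\alpha+\beta} \setminus \{\{\alpha,\beta\}\}$, a short case analysis in the simply laced root system (based on \cref{lem:weights} and the identity $\alpha + \beta = \gamma + \delta$) shows that $\kappa(\alpha,\gamma) = 1$, so $\epsilon \coloneqq \alpha - \gamma = \delta - \beta$ is itself a root. Then $\kappa(\alpha,\delta) = 2 - \kappa(\alpha,\gamma) = 1$, placing $e_\alpha e_\delta$ in $\Gamma_4$, and one computes
\[
    e_{-\epsilon} \cdot (e_\alpha e_\delta) = c_{-\epsilon,\alpha}\, e_\gamma e_\delta + c_{-\epsilon,\delta}\, e_\alpha e_\beta \in \mV.
\]
Applying the triple\dash{}sum identity of \cref{def:stage} to the vanishing triples $-\epsilon + \alpha - \gamma = 0$ and $-\epsilon + \delta - \beta = 0$, where every $\kappa$\dash{}value equals $2$, followed by antisymmetry and the opposite\dash{}sign identity for structure constants, converts $c_{-\epsilon,\alpha}/c_{-\epsilon,\delta}$ into $c_{\alpha,-\gamma}/c_{\beta,-\delta}$, giving precisely the stated $\Gamma_3$\dash{}element.

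For the dimension count I would compare weight by weight against \cref{prop:charV}. At weight $0$, the $|\Phi^+| = n_0$ members of $\Gamma_0$ are linearly independent by their distinct $e_\alpha e_{-\alpha}$\dash{}components. At $\lambda \in \Lambda_{-1} = \Phi$, the $n_\lambda$ members of $\Gamma_2$ indexed by $N_\lambda$ are independent (distinct $e_{\alpha'} e_{\beta'}$\dash{}components), and $e_\lambda h_\lambda \in \Gamma_1$ contributes a further independent direction in the $e_\lambda \otimes \mH$\dash{}part, totalling $n_\lambda + 1$. At $\lambda \in \Lambda_0$, fixing a reference pair $\{\alpha,\beta\} \in N_\lambda$, the associated $n_\lambda - 1$ members of $\Gamma_3$ are manifestly independent. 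At $\lambda \in \Lambda_1 \cup \Lambda_2$ the multiplicity is one and a single $\Gamma_4$\dash{} or $\Gamma_5$\dash{}element suffices. These local counts coincide exactly with \cref{prop:charV}, so the union must span $\mV$. The main obstacle is the $\Gamma_3$ step: securing the auxiliary root $\epsilon$ through the case analysis, and then massaging the raw ratio $c_{-\epsilon,\alpha}/c_{-\epsilon,\delta}$ into $c_{\alpha,-\gamma}/c_{\beta,-\delta}$ via repeated use of the Chevalley structure\dash{}constant identities; everything else is routine once the character of $\mV$ is in hand.
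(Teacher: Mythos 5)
Your proposal is correct and follows essentially the same strategy as the paper: establish $\Gamma_0,\dots,\Gamma_5 \subseteq \mV$ by explicit applications of root vectors to previously obtained elements, then match the weight-by-weight dimension counts against \cref{prop:charV}. The only (harmless) deviation is in the $\Gamma_3$ step, where you act with $e_{-\epsilon}$ on the $\Gamma_4$-element $e_\alpha e_\delta$ instead of acting with $e_\delta$ on a $\Gamma_2$-element as the paper does; your structure-constant bookkeeping via the triples $-\epsilon+\alpha-\gamma=0$ and $-\epsilon+\delta-\beta=0$ checks out and yields the same coefficient $c_{\alpha,-\gamma}/c_{\beta,-\delta}$.
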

\begin{proof}
    The Weyl group $W$ acts transitively on $\Phi$ and $e_\omega e_\omega \in \mV$. Therefore $\Gamma_5 \subseteq \mV$.
    For any $\alpha \in \Phi$, we have $e_{-\alpha} \cdot e_{\alpha} e_{\alpha} = -2 e_\alpha h_\alpha$, thus $\Gamma_1 \subseteq \mV$.
    Hence $e_{\alpha} \cdot e_{-\alpha}h_{-\alpha} = 2e_{\alpha} e_{-\alpha} - h_{\alpha}h_\alpha \in \mV$ for any $\alpha \in \Phi$, which shows that $\Gamma_0 \subseteq \mV$.
    Now let $\alpha,\beta \in \Phi$. Suppose that $\kappa( \alpha,\beta ) = 1$. Then $\alpha - \beta \in \Phi$ and $e_{\alpha-\beta} \cdot e_\beta e_\beta = 2c_{\alpha-\beta,\beta} e_{\alpha}e_{\beta} \in \mV$. Therefore $\Gamma_4 \subseteq \mV$.
    Suppose next that $\kappa(\alpha,\beta) = -1$ such that $\alpha + \beta$ is a root. Then $e_\beta \cdot e_\alpha h_\alpha  + e_\alpha \cdot e_\beta h_\beta = 2 e_\alpha e_\beta + c_{\alpha,\beta} e_{\alpha + \beta} (h_\beta-h_\alpha) \in \mV$. Hence $\Gamma_2 \subseteq \mV$.

    Finally, let $\alpha, \beta, \gamma, \delta \in \Phi$ such that $\kappa( \alpha , \beta ) = 0$ and $\{\gamma,\delta\} \in N_{\alpha + \beta} \setminus \{\{\alpha,\beta\}\}$. Then $\alpha , \beta , \gamma , \delta$ generate a root subsystem of type $A_3$, $\kappa( \gamma , \delta ) = 0$ and $\kappa( \alpha , \gamma ) = \kappa( \alpha , \delta ) = \kappa( \beta , \gamma ) = \kappa( \beta , \delta ) = 1$. Now $e_\delta \cdot (2e_{\gamma-\alpha}e_\alpha + c_{\gamma-\alpha,\alpha}e_{\gamma}(h_{\alpha} - h_{\gamma-\alpha})) = 2c_{\delta,\gamma-\alpha}e_\alpha e_\beta - 2 c_{\gamma-\alpha,\alpha}e_\gamma e_\delta$. Using the identities from \cref{def:stage}~\labelcref{def:stage:c}, we see that $c_{\delta,\gamma-\alpha} = -c_{\beta,-\delta}$ and $c_{\gamma-\alpha,\alpha} = c_{\alpha,-\gamma}$. This amounts to $\Gamma_3 \subseteq \mV$.

    In order to prove that $\Gamma \coloneqq \Gamma_0 \cup \Gamma_1 \cup \Gamma_2 \cup \Gamma_3 \cup \Gamma_4 \cup \Gamma_5$ spans $\mV$, it suffices to check that the elements of weight $\lambda$ in $\Gamma$ span $\mV_\lambda$, the weight-$\lambda$-space of $\mV$.
    The dimension $\dim(\mV_\lambda)$ of $\mV_\lambda$ can be derived from \cref{prop:charV}.

    The elements of $\Gamma$ of weight $0$ are precisely those contained in $\Gamma_0$.
    Obviously $\dim(\langle \Gamma_0 \rangle) = \frac{\abs{\Phi}}{2} = n_0$ and therefore $\langle \Gamma_0 \rangle = \mV_0$.

    Let $\alpha \in \Phi$.
    The elements of $\Gamma$ of weight $\alpha$ are $e_\alpha h_\alpha$ and the elements $2e_\beta e_\gamma + c_{\beta,\gamma}(h_\gamma - h_\beta)$ where $\{\beta,\gamma\} \in N_\alpha$.
    Since these elements are linearly independent, they span a subspace of dimension $n_\alpha + 1$, which is the dimension of $\mV_\alpha$ by \cref{prop:charV}.

    For $\lambda \in \Lambda_0$, the elements of $\Gamma$ of weight $\lambda$ are those of the form $e_{\alpha}e_{\beta} + \frac{c_{\alpha,-\gamma}}{c_{\beta,-\delta}}e_\gamma e_\delta$ where $\{\alpha,\beta\}$ and $\{\gamma,\delta\}$ are two different elements of $N_\lambda$.
    Hence they span a subspace of dimension at least $n_\lambda-1$ and at most $n_\lambda$.
    Since they are all contained in $\mV$, they span $\mV_\lambda$, a subspace of dimension $n_\lambda-1$ by \cref{prop:charV}.

    Finally, let $\lambda \in \Lambda_1 \cup \Lambda_2$.
    Then all elements of $S^2(\mL)$ of weight $\lambda$ are contained in $\langle \Gamma_4 \cup \Gamma_5 \rangle$.
    Therefore $\mV_\lambda \leq \langle \Gamma \rangle$.
\end{proof}

As we observed in the previous proof, the elements of $\Gamma_3$ of weight $\lambda \in \Lambda_0$ are linearly dependent.
We introduce some notation to describe this linear dependence; this will be useful later.

\begin{definition}%
    \label{def:f}
    \begin{enumerate}
        \item Recall from \cref{def:charV2} that
        \[ \Lambda_0 = \{ \alpha + \beta \mid \alpha, \beta \in \Phi, \kappa(\alpha,\beta) = 0 \} . \]
            For each $\lambda \in \Lambda_0$, fix elements $\alpha_\lambda,\beta_\lambda \in \Phi$ such that $\alpha_\lambda + \beta_\lambda = \lambda$.
            Note that it immediately follows that $\kappa(\alpha_\lambda,\beta_\lambda)=0$ from \cref{lem:weights}~\labelcref{lem:weights:kalphabeta}.
        \item For all $\alpha,\beta \in \Phi$ such that $\kappa(\alpha,\beta)=0$ (and therefore $\alpha+\beta \in \Lambda_0$) we write
            \[
                f_{\alpha,\beta} \coloneqq \begin{cases}
                    1 & \text{if $\{\alpha,\beta\} = \{\alpha_\lambda,\beta_\lambda\}$}, \\
                    -\frac{c_{\alpha,-\alpha_\lambda}}{c_{\beta,-\beta_\lambda}} & \text{otherwise}.
                \end{cases}
            \]
            Notice that $f_{\alpha,\beta} \in \{ \pm 1 \}$.
    \end{enumerate}
\end{definition}

\begin{proposition}%
    \label{prop:f}
    Let $\alpha,\beta \in \Phi$ such that $\kappa(\alpha,\beta)=0$ and let $\lambda \coloneqq \alpha + \beta \in \Lambda_0$.
    Then
    \begin{enumerate}
        \item\label{prop:f:comm} $f_{\alpha,\beta}=f_{\beta,\alpha}$ and,
        \item\label{prop:f:-l} $f_{-\alpha,-\beta}=f_{\alpha,\beta}f_{-\alpha_\lambda,-\beta_\lambda}$.
        \item Let $\Gamma_3$ be as in \cref{prop:gensetV}. The subspace of $\mV$ spanned by $\Gamma_3$ is equal to the subspace of $\mV$ spanned by
            \[
                \{e_{\alpha_\lambda}e_{\beta_\lambda} - f_{\alpha,\beta}e_\alpha e_\beta \mid \lambda \in \Lambda_0; \alpha,\beta \in \Phi; \alpha+\beta=\lambda\}.
            \]
    \end{enumerate}
\end{proposition}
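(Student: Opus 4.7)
The plan is to give (i)--(iii) a common underpinning: a uniqueness characterization stating that for $\lambda \in \Lambda_0$ and $\alpha+\beta = \lambda$ with $\kappa(\alpha,\beta)=0$, the element $e_{\alpha_\lambda}e_{\beta_\lambda} - f_{\alpha,\beta}\,e_\alpha e_\beta$ is the unique element of $\mV_\lambda$ of the form $e_{\alpha_\lambda}e_{\beta_\lambda} - c\,e_\alpha e_\beta$ for some $c \in \C$. Membership in $\mV$ is clear when $\{\alpha,\beta\} = \{\alpha_\lambda,\beta_\lambda\}$, and otherwise follows by rescaling the $\Gamma_3$ element $e_\alpha e_\beta + \frac{c_{\alpha,-\alpha_\lambda}}{c_{\beta,-\beta_\lambda}}e_{\alpha_\lambda}e_{\beta_\lambda}$ by $-f_{\alpha,\beta}$ and invoking $f_{\alpha,\beta}^2 = 1$. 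Uniqueness reduces to $e_{\alpha_\lambda}e_{\beta_\lambda} \notin \mV_\lambda$: since each $\Gamma_3$ element of weight $\lambda$ expresses some monomial $e_\gamma e_\delta$ as a nonzero scalar multiple of $e_{\alpha_\lambda}e_{\beta_\lambda}$ modulo $\mV_\lambda$, if $e_{\alpha_\lambda}e_{\beta_\lambda}$ were in $\mV_\lambda$ every basis monomial $e_\gamma e_\delta$ with $\{\gamma,\delta\}\in N_\lambda$ would be too, contradicting $\dim \mV_\lambda = n_\lambda - 1$ from \cref{prop:charV}.

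With this in hand, (i) is immediate: both $e_{\alpha_\lambda}e_{\beta_\lambda} - f_{\alpha,\beta}\,e_\alpha e_\beta$ and $e_{\alpha_\lambda}e_{\beta_\lambda} - f_{\beta,\alpha}\,e_\beta e_\alpha$ lie in $\mV_\lambda$, and the commutativity $e_\alpha e_\beta = e_\beta e_\alpha$ forces $f_{\alpha,\beta} = f_{\beta,\alpha}$. For (ii), write $\mu \coloneqq -\lambda$ and apply the sign rule $c_{-a,-b} = -c_{a,b}$ from \cref{def:stage}~\labelcref{def:stage:c} to the $\Gamma_3$ element for roots $(-\alpha,-\beta)$ with auxiliary pair $(-\alpha_\lambda,-\beta_\lambda)$: its coefficient simplifies to $-f_{\alpha,\beta}$, and rescaling by $-f_{\alpha,\beta}$ produces the auxiliary relation $e_{-\alpha_\lambda}e_{-\beta_\lambda} - f_{\alpha,\beta}\,e_{-\alpha}e_{-\beta} \in \mV$. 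Scaling this by $f_{-\alpha_\lambda,-\beta_\lambda}$ and adding to the defining relation $e_{\alpha_\mu}e_{\beta_\mu} - f_{-\alpha_\lambda,-\beta_\lambda}\,e_{-\alpha_\lambda}e_{-\beta_\lambda}\in \mV$ yields $e_{\alpha_\mu}e_{\beta_\mu} - f_{\alpha,\beta}f_{-\alpha_\lambda,-\beta_\lambda}\,e_{-\alpha}e_{-\beta}\in \mV$; uniqueness applied to the pair $(-\alpha,-\beta)$ identifies this coefficient with $f_{-\alpha,-\beta}$.

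For (iii) I argue weight by weight. For each $\lambda\in\Lambda_0$ the $n_\lambda - 1$ elements $e_{\alpha_\lambda}e_{\beta_\lambda} - f_{\alpha,\beta}\,e_\alpha e_\beta$ indexed by $\{\alpha,\beta\}\in N_\lambda\setminus\{\{\alpha_\lambda,\beta_\lambda\}\}$ are linearly independent (each involves a distinct basis monomial), lie in $\mV_\lambda$, and hence span it by the dimension count of \cref{prop:charV}; the weight-$\lambda$ part of $\Gamma_3$ also spans $\mV_\lambda$ by the proof of \cref{prop:gensetV}, so the two spans coincide. Summing over $\lambda\in\Lambda_0$ finishes the argument. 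I expect the main bookkeeping obstacle to lie in (ii): one must carefully distinguish between the \emph{a priori} fixed pair $\{\alpha_\mu,\beta_\mu\}$ for the weight $\mu = -\lambda$ and the \emph{ad hoc} pair $\{-\alpha_\lambda,-\beta_\lambda\}$ that appears in the claimed identity, handle the degenerate case where these coincide, and apply the sign rule for structure constants in the right direction. Once the auxiliary relation is produced, the uniqueness principle from the first paragraph absorbs everything cleanly.
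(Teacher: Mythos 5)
Your proof is correct and takes essentially the same route as the paper: the only real input is the dimension count $\dim \mV_\lambda = n_\lambda - 1$ from \cref{prop:charV}, which forces the monomials $e_\alpha e_\beta$ with $\{\alpha,\beta\} \in N_\lambda$ to lie outside $\mV$, and your ``uniqueness principle'' is just a repackaging of the resulting linear dependences among the weight-$\lambda$ elements of $\Gamma_3$ that the paper exploits directly. The structure-constant manipulation in (ii) and the spanning argument in (iii) coincide with the paper's.
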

\begin{proof}
    The elements of $\Gamma_i$ for $0 \leq i \leq 5$ are all weight vectors.
    Those with weight $\lambda$ are contained in $\Gamma_3$.
    Thus the weight-$\lambda$-space of $\mV$ is contained in the span of $\Gamma_3$.
    By \cref{prop:charV}, this weight space has dimension $n_\lambda-1$.
    Thus the elements $e_\alpha e_\beta + \frac{c_{\alpha,-\gamma}}{c_{\beta,-\delta}} e_\gamma e_\delta$ for $\alpha,\beta,\gamma,\delta \in \Phi$ with $\alpha + \beta = \gamma + \delta = \lambda$ and $\{\alpha,\beta\} \neq \{\gamma,\delta\}$ must be linearly dependent.
    In particular $e_\alpha e_\beta \notin \mV$ for all $\alpha,\beta \in \Phi$ where $\kappa(\alpha,\beta) = 0$.
    \begin{enumerate}
        \item This is obvious if $\{\alpha,\beta\}=\{\alpha_\lambda,\beta_\lambda\}$. Assume that $\{\alpha,\beta\} \neq \{\alpha_\lambda,\beta_\lambda\}$. Since $e_\alpha e_\beta = e_\beta e_\alpha$ it follows from the argument above that the elements $e_\alpha e_\beta + \frac{c_{\alpha,-\alpha_\lambda}}{c_{\beta,-\beta_\lambda}}e_{\alpha_\lambda}e_{\beta_\lambda}$ and $e_\beta e_\alpha + \frac{c_{\beta,-\alpha_\lambda}}{c_{\alpha,-\beta_\lambda}}e_{\alpha_\lambda}e_{\beta_\lambda}$ must be linearly dependent. Therefore $\frac{c_{\alpha,-\alpha_\lambda}}{c_{\beta,-\beta_\lambda}} = \frac{c_{\beta,-\alpha_\lambda}}{c_{\alpha,-\beta_\lambda}}$ and thus $f_{\alpha,\beta}=f_{\beta,\alpha}$.
        \item Since 
            \begin{align*}
                &e_{-\alpha}e_{-\beta}+\frac{c_{-\alpha,\alpha_\lambda}}{c_{-\beta,\beta_\lambda}}e_{-\alpha_\lambda}e_{-\beta_\lambda} \in \Gamma_3, \\
                &e_{-\alpha_\lambda}e_{-\beta_\lambda} - f_{-\alpha_\lambda,-\beta_\lambda}e_{\alpha_{-\lambda}}e_{\beta_{-\lambda}} \in \Gamma_3, \\
                &e_{-\alpha}e_{-\beta} - f_{-\alpha,-\beta} e_{\alpha_{-\lambda}}e_{\beta_{-\lambda}} \in \Gamma_3, \\
            \end{align*}
            these elements must be linearly dependent.
            Thus 
            \[
                f_{-\alpha,-\beta}=-\frac{c_{-\alpha,\alpha_\lambda}}{c_{-\beta,\beta_\lambda}}f_{-\alpha_\lambda,-\beta_\lambda}.
            \]
            The assertion follows because $c_{-\alpha,\alpha_\lambda}=-c_{\alpha,-\alpha_\lambda}$ and $c_{-\beta,\beta_\lambda}=-c_{\beta,-\beta_\lambda}$ (see \cref{def:stage}~\labelcref{def:stage:c}) and therefore \[-\frac{c_{-\alpha,\alpha_\lambda}}{c_{-\beta,\beta_\lambda}}=-\frac{c_{\alpha,-\alpha_\lambda}}{c_{\beta,-\beta_\lambda}}=f_{\alpha,\beta}.\]
        \item This follows immediately because the elements of $\Gamma_3$ of weight $\lambda$ span a subspace of dimension $n_\lambda-1$. \qedhere
    \end{enumerate}
\end{proof}

Next, we want to take a complement of $\mV$ in $S^2(\mL)$ with respect to the bilinear form $B$. In order for this complement to be well-defined, we need $B$ to be non-degenerate on $\mV$.

\begin{proposition}%
    \label{prop:nondeg}
    The restriction of $B$ to $\mV \times \mV$ is non-degenerate.
\end{proposition}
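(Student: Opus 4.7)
The plan is to observe that $\mV$ is an irreducible $\mL$-representation, after which non-degeneracy follows from a standard radical argument together with one explicit nonzero evaluation of $B$.

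First I would establish irreducibility. By Weyl's complete reducibility theorem, $S^2(\mL)$ decomposes as a direct sum of irreducible $\mL$-representations. By \cref{prop:charS} the weight $2\omega$ occurs in $S^2(\mL)$ with multiplicity one, since the only way to write $2\omega = \alpha + \beta$ with $\alpha,\beta \in \Phi$ is $\alpha = \beta = \omega$, and $2\omega$ itself is not a weight of $\mL$. Moreover $2\omega$ is maximal in the dominance order among weights of $S^2(\mL)$, so $e_\omega e_\omega$ is automatically a highest weight vector. Decomposing $S^2(\mL) = \bigoplus_i W_i$ into irreducibles and projecting $e_\omega e_\omega$ to each summand, multiplicity one forces exactly one projection to be nonzero, and that summand is precisely $\mV$. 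Hence $\mV$ is irreducible of highest weight $2\omega$.

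Once irreducibility is in hand, the rest is essentially automatic. Since $B$ is $\mL$-equivariant and $\mV$ is an $\mL$-subrepresentation of $S^2(\mL)$, the radical
\[
  \{ v \in \mV \mid B(v,w) = 0 \text{ for all } w \in \mV \}
\]
is itself an $\mL$-subrepresentation of $\mV$, so by irreducibility it is either $0$ or all of $\mV$. To rule out the latter, it suffices to exhibit a single pair with nonzero pairing. The natural candidate is $(e_\omega e_\omega, e_{-\omega} e_{-\omega})$: both lie in $\mV$ (the first by definition, the second because $\Gamma_5 \subseteq \mV$, as shown in the proof of \cref{prop:gensetV} via Weyl-group transitivity on $\Phi$). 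Applying the formula for $B$ from \cref{def:prodS2} and using $\kappa(e_\omega,e_{-\omega}) = 1$ from \cref{def:stage} yields $B(e_\omega e_\omega, e_{-\omega} e_{-\omega}) = \kappa(e_\omega,e_{-\omega})^2 = 1 \neq 0$, closing the argument.

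There is no serious obstacle; the only step requiring a moment's care is the irreducibility of $\mV$, which is a standard consequence of complete reducibility combined with the multiplicity-one property of the top weight. Everything else — the equivariance of the radical and the one-point computation — is immediate.
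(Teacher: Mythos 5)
Your proposal is correct and follows essentially the same route as the paper: the radical of $B\restriction_{\mV\times\mV}$ is a subrepresentation, $\mV$ is irreducible, and the pairing $B(e_\omega e_\omega, e_{-\omega}e_{-\omega})=1$ is nonzero. Your expanded justification of irreducibility (multiplicity one of $2\omega$ plus complete reducibility) simply spells out what the paper compresses into ``$\mV$ is irreducible as it is a highest weight representation.''
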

\begin{proof}
    Since $B$ is $\mL$-equivariant, the radical $\{v \in \mV \mid B(v,w) \text{ for all $w \in \mV$}\}$ of $B \restriction_{\mV \times \mV}$ is a subrepresentation of~$\mV$.
    However, $\mV$ is irreducible as it is a highest weight representation.
    Since $B \restriction_{\mV \times \mV}$ is non-zero (e.g., $B(e_\alpha e_\alpha,e_{-\alpha}e_{-\alpha})=1$), we conclude that the radical of $B \restriction_{\mV \times \mV}$ is trivial.
\end{proof}

The previous proposition allows us to define an orthogonal complement of $\mV$ with respect to the bilinear form $B$.
This will be the underlying representation of our algebra.

\begin{definition}%
    \label{def:A}
    \begin{enumerate}
        \item Let $\mA$ be the orthogonal complement of $\mV$ in $S^2(\mL)$ with respect to the $\mL$-equivariant bilinear form $B$:
            \[\mA \coloneqq     \{ v \in S^2(\mL) \mid B(v,w) = 0 \text{ for all $w \in \mV$}\}.\]
        \item Denote the orthogonal projection of $S^2(\mL)$ onto $\mA$ by $\pi$.  For each $v \in S^2(\mL)$, we will also denote $\pi(v)$ by $\overline{v}$.
    \end{enumerate}
\end{definition}

The character of $\mA$ follows easily from the characters of $S^2(\mL)$ and $\mV$.

\begin{proposition}%
    \label{prop:charA}
    The character of $\mA$ as a representation for $\mL$ is given by
    \[
        \ch_{\mA} = \frac{n(n+1)}{2} + \sum_{\alpha \in \Phi} (n-1)e^\alpha + \sum_{\lambda \in \Lambda_0} e^\lambda
    \]
    where $n$ is the rank of $\Phi$.
\end{proposition}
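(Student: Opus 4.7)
The plan is to use the decomposition $S^2(\mL) = \mA \oplus \mV$ and then subtract characters.

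First, I would verify that $\mA$ is an $\mL$-subrepresentation of $S^2(\mL)$. Since $B$ is $\mL$-equivariant, for any $v \in \mA$, any $w \in \mV$, and any $\ell \in \mL$, we have $B(\ell \cdot v, w) = -B(v, \ell \cdot w) = 0$, because $\ell \cdot w \in \mV$ (as $\mV$ is a subrepresentation). Hence $\ell \cdot v \in \mA$. Combined with \cref{prop:nondeg}, which ensures $\mA \cap \mV = 0$, a dimension count gives $S^2(\mL) = \mA \oplus \mV$ as $\mL$-representations, and therefore
\[
    \ch_\mA = \ch_{S^2(\mL)} - \ch_\mV.
\]

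Next, I would substitute the expressions from \cref{prop:charS} and \cref{prop:charV} and subtract term by term. The constant term becomes $\bigl(\tfrac{n(n+1)}{2} + n_0\bigr) - n_0 = \tfrac{n(n+1)}{2}$. On $\Lambda_{-1}$, the coefficient becomes $(n_\lambda + n) - (n_\lambda + 1) = n - 1$; on $\Lambda_0$, it becomes $n_\lambda - (n_\lambda - 1) = 1$; and on $\Lambda_1 \cup \Lambda_2$ the contributions cancel. Using the identity $\Lambda_{-1} = \Phi$ from \cref{lem:weights}\,\labelcref{lem:weights:-2and-1}, this yields exactly the claimed formula
\[
    \ch_\mA = \frac{n(n+1)}{2} + \sum_{\alpha \in \Phi}(n-1)e^\alpha + \sum_{\lambda \in \Lambda_0} e^\lambda.
\]

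Since both of the ingredient characters are already established, there is no real obstacle here. The only subtle point worth spelling out is that the orthogonal complement with respect to an equivariant non-degenerate form is automatically a subrepresentation; everything else is bookkeeping.
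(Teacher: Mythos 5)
Your proof is correct and follows exactly the paper's route: it uses the decomposition $S^2(\mL) = \mA \oplus \mV$ and subtracts the characters from \cref{prop:charS,prop:charV}, with the term-by-term arithmetic checking out. The only addition is your explicit verification that the orthogonal complement of an invariant subspace under an equivariant form is again a subrepresentation, which the paper leaves implicit.
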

\begin{proof}
    Since $S^2(\mL) = \mA \oplus \mV$, we have $\ch_{\mA} = \ch_{S^2(\mL)} - \ch_{\mV}$. The characters of $S^2(\mL)$ and $\mV$ follow from \cref{prop:charS,prop:charV}.
\end{proof}

Using \cref{prop:gensetV} we can explicitly describe the weight spaces of $\mA$.

\begin{proposition}%
    \label{prop:Aunique}
    The weights of $\mA$ are $0$, the roots $\alpha \in \Phi = \Lambda_{-1}$ and the sums of orthogonal roots $\lambda \in \Lambda_0$. Any weight vector can be uniquely written as
    \begin{enumerate}
        \item $\overline{a}$ for $a \in S^2(\mH) \leq S^2(\mL)$ if the weight vector has weight 0;
        \item $\overline{e_\alpha h}$ for $h \in \alpha^\perp \coloneqq \{ h \in \mH \mid \kappa(\alpha,h) = 0\}$ if the weight vector has weight $\alpha \in \Phi$ (also note that $\overline{e_\alpha h_\alpha}=0$);
        \item $c\overline{e_{\alpha_\lambda}e_{\beta_\lambda}}$ for $c \in \C$ when the weight vector has weight $\lambda \in \Lambda_0$.
    \end{enumerate}
\end{proposition}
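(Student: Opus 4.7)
The plan is to deduce everything from the character of $\mA$ given in \cref{prop:charA} together with the explicit description of $\mV$ obtained in \cref{prop:gensetV}. Since $\mA$ and $\mV$ are complementary $\mH$-submodules of $S^2(\mL)$, each weight space $\mA_\mu$ is the image under $\pi$ of $S^2(\mL)_\mu$, and its dimension is $\dim S^2(\mL)_\mu - \dim \mV_\mu$. So it is enough, for each of the three types of weight, to exhibit a subspace $U_\mu \subseteq S^2(\mL)_\mu$ of dimension $\dim \mA_\mu$ and check that $U_\mu \cap \mV_\mu = 0$; then $\pi\restriction_{U_\mu}\colon U_\mu \to \mA_\mu$ is an isomorphism, giving both existence and uniqueness. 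That the weights of $\mA$ are exactly $0$, $\Phi$, $\Lambda_0$ is immediate from \cref{prop:charA}.

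First I would handle the weight $0$ case with $U_0 = S^2(\mH)$, noting that $\dim S^2(\mH) = n(n+1)/2 = \dim \mA_0$. A weight-$0$ basis of $S^2(\mL)$ consists of the $h_i h_j$ ($i \le j$) together with the elements $e_\alpha e_{-\alpha}$. The subspace $\mV_0$ is spanned by the elements $2e_\alpha e_{-\alpha} - h_\alpha h_\alpha$ of $\Gamma_0$, and any nonzero combination of these has a nonzero coefficient on some $e_\alpha e_{-\alpha}$, hence cannot lie in $S^2(\mH)$; this gives $S^2(\mH) \cap \mV_0 = 0$.

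Next, for a root $\alpha \in \Phi$, I take $U_\alpha = \{e_\alpha h \mid h \in \alpha^\perp\}$, which has dimension $n-1 = \dim \mA_\alpha$. A basis of $S^2(\mL)_\alpha$ is $\{e_\alpha h_i\} \cup \{e_\beta e_\gamma \mid \{\beta,\gamma\} \in N_\alpha\}$, and from \cref{prop:gensetV} the weight-$\alpha$ part of $\mV$ is spanned by $e_\alpha h_\alpha$ (coming from $\Gamma_1$) together with the $\Gamma_2$-elements $2e_\beta e_\gamma + c_{\beta,\gamma}e_\alpha(h_\gamma - h_\beta)$. If $e_\alpha h \in \mV$ for some $h \in \mH$, expanding in this basis forces the $\Gamma_2$-coefficients to vanish, so $h \in \C h_\alpha$; since $\kappa(\alpha,h_\alpha)=2 \neq 0$, the condition $h \in \alpha^\perp$ then forces $h=0$. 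The same argument shows $\overline{e_\alpha h_\alpha}=0$.

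Finally, for $\lambda \in \Lambda_0$ I take $U_\lambda = \C \cdot e_{\alpha_\lambda} e_{\beta_\lambda}$. Since $\dim S^2(\mL)_\lambda = n_\lambda$ and $\dim \mA_\lambda = 1$, I only need to verify that $e_{\alpha_\lambda}e_{\beta_\lambda} \notin \mV$. By \cref{prop:f}, $\mV_\lambda$ is spanned by the $n_\lambda - 1$ linearly independent elements $e_{\alpha_\lambda}e_{\beta_\lambda} - f_{\alpha,\beta} e_\alpha e_\beta$ for $\{\alpha,\beta\} \in N_\lambda \setminus \{\{\alpha_\lambda,\beta_\lambda\}\}$; if $e_{\alpha_\lambda}e_{\beta_\lambda}$ were also in $\mV_\lambda$, adjoining it to these elements would remain linearly independent (check the coefficients on each $e_\alpha e_\beta$ with $\{\alpha,\beta\}\neq\{\alpha_\lambda,\beta_\lambda\}$, then on $e_{\alpha_\lambda}e_{\beta_\lambda}$), contradicting $\dim \mV_\lambda = n_\lambda - 1$. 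The main subtlety, and the place where the setup from \cref{prop:gensetV,prop:f} really pays off, is precisely this weight-$\lambda$ case: one must have a sufficiently explicit basis of $\mV_\lambda$ to rule out membership of $e_{\alpha_\lambda}e_{\beta_\lambda}$, and the dimension count from \cref{prop:charV} is exactly what makes the argument close.
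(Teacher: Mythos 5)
Your proposal is correct and follows essentially the same route as the paper: the paper's own proof is a one-line appeal to the generating set of $\mV$ from \cref{prop:gensetV,prop:f}, and your argument is precisely the fleshed-out version of that appeal, using the weight-space decomposition $S^2(\mL)_\mu = \mV_\mu \oplus \mA_\mu$ and the dimension counts from \cref{prop:charV,prop:charA} to show that each proposed subspace $U_\mu$ meets $\mV_\mu$ trivially and hence maps isomorphically onto $\mA_\mu$ under $\pi$. The three case checks (including the observation that $e_{\alpha_\lambda}e_{\beta_\lambda}\notin\mV$, which the paper records inside the proof of \cref{prop:f}) are all sound.
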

\begin{proof}
    Recall that $\mV$ is the orthogonal complement of $\mA$ in $S^2(\mL)$ with respect to the bilinear form $B$.
    The statement follows from the description of the generating set of $\mV$ from \cref{prop:gensetV,prop:f}.
\end{proof}

The projection $\pi$ from \cref{def:A} can be computed explicitly.
In fact, in what follows, we will only need formula~\eqref{eq:overlineealphaebeta}, but for completeness, we also provide formulas~\eqref{eq:A2} and~\eqref{eq:A3}.

\begin{lemma}%
    \label{lem:A}
    Let $\lambda \in \Lambda_0$. Then
    \begin{equation}\label{eq:overlineealphaebeta}
        \overline{e_{\alpha_\lambda} e_{\beta_\lambda}} = \frac{1}{n_\lambda} \left( \sum_{\alpha+\beta=\lambda} f_{\alpha,\beta} e_\alpha e_\beta \right),
    \end{equation}
    where the sum runs over all sets $\{\alpha,\beta\}$ where $\alpha,\beta \in \Phi$ such that $\alpha+\beta=\lambda$, or equivalently, over all elements $\{\alpha,\beta\} \in N_\lambda$. Also
    \begin{align}
        \overline{e_{\alpha_\lambda} h_{\beta_\lambda}} &= \frac{1}{n_{\lambda}} \left( e_{\alpha_\lambda} h_{\beta_\lambda} + \sum (c_{\beta,-\alpha_\lambda}e_{\alpha-\beta_\lambda}e_{\beta} + c_{\alpha,-\alpha_\lambda}e_{\beta-\beta_\lambda}e_{\alpha}) \right), \label{eq:A2}\\
        \overline{h_{\alpha_\lambda} h_{\beta_\lambda}} &= \frac{1}{n_{\lambda}} \left( h_{\alpha_\lambda} h_{\beta_\lambda} + \sum (e_\alpha e_{-\alpha} + e_\beta e_{-\beta} - e_{\alpha_\lambda - \alpha} e_{\alpha-\alpha_\lambda} - e_{\alpha_\lambda - \beta} e_{\beta-\alpha_\lambda}) \right), \label{eq:A3}
    \end{align}
    where each sum runs over all $\{\alpha,\beta\} \in N_\lambda$ with $\{\alpha,\beta\} \neq \{\alpha_\lambda,\beta_\lambda\}$.
\end{lemma}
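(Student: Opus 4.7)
The strategy for each of the three formulas is the same: denote the right-hand side by $y$, and verify two things---that the relevant basis element minus $y$ lies in $\mV$, and that $y$ lies in $\mA$. Since $S^2(\mL) = \mA \oplus \mV$ by \cref{prop:nondeg} and $\overline{\,\cdot\,}$ denotes projection onto $\mA$ along $\mV$, these two facts together prove the formula.

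I focus first on formula~\eqref{eq:overlineealphaebeta}. Using $f_{\alpha_\lambda,\beta_\lambda}=1$ I rearrange
\[
e_{\alpha_\lambda}e_{\beta_\lambda} - \frac{1}{n_\lambda}\sum_{\{\alpha,\beta\}\in N_\lambda} f_{\alpha,\beta}\,e_\alpha e_\beta
=\frac{1}{n_\lambda}\sum_{\{\alpha,\beta\}\neq \{\alpha_\lambda,\beta_\lambda\}} \bigl(e_{\alpha_\lambda}e_{\beta_\lambda} - f_{\alpha,\beta}\,e_\alpha e_\beta\bigr),
\]
and every summand on the right lies in $\mV$ by \cref{prop:f}(iii). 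For the second condition, $y$ is a weight vector of weight $\lambda$, so \cref{lem:Bweights} reduces $B$\dash{}orthogonality against $\mV$ to orthogonality against $\mV_{-\lambda}$. By \cref{prop:f}(iii) the space $\mV_{-\lambda}$ is spanned by elements $e_{\alpha_{-\lambda}}e_{\beta_{-\lambda}} - f_{\gamma,\delta}\,e_\gamma e_\delta$ for $\{\gamma,\delta\}\in N_{-\lambda}$. Evaluating $B$ from \cref{def:prodS2}, only the two summands with ``dual'' support (i.e.\ $\{\alpha,\beta\}=\{-\alpha_{-\lambda},-\beta_{-\lambda}\}$ and $\{\alpha,\beta\}=\{-\gamma,-\delta\}$) survive, and their cancellation reduces to the identity $f_{\gamma,\delta}\cdot f_{-\gamma,-\delta} = f_{-\alpha_{-\lambda},-\beta_{-\lambda}}$, which is exactly \cref{prop:f}(ii) applied to the pair $(\gamma,\delta)$.

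For~\eqref{eq:A2} and~\eqref{eq:A3}, I proceed in the same two\dash{}step fashion. In~\eqref{eq:A2}, the difference rearranges into a sum indexed by $\{\alpha,\beta\}\in N_\lambda\setminus\{\{\alpha_\lambda,\beta_\lambda\}\}$ of expressions
\[
e_{\alpha_\lambda}h_{\beta_\lambda} - c_{\beta,-\alpha_\lambda}\,e_{\alpha-\beta_\lambda}e_\beta - c_{\alpha,-\alpha_\lambda}\,e_{\beta-\beta_\lambda}e_\alpha.
\]
Here the combinatorics of $A_3$\dash{}subsystems from the proof of \cref{prop:gensetV} gives $\kappa(\alpha-\beta_\lambda,\beta)=\kappa(\beta-\beta_\lambda,\alpha)=-1$ with sums equal to the root $\alpha_\lambda$, so each summand is recognised as a combination of elements of $\Gamma_2$ (together, if needed, with the defining relations among the $c_{\bullet,\bullet}$ from \cref{def:stage}\labelcref{def:stage:c}), hence lies in $\mV$. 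For~\eqref{eq:A3} the weight of the LHS is $0$ and the analogous rearrangement writes the difference as a combination of elements of $\Gamma_0$ and $\Gamma_3$ inside $\mV_0$. That the right\dash{}hand sides land in $\mA$ is again proved by a weight\dash{}space $B$\dash{}orthogonality check against the generators of $\mV$ of the appropriate opposite weight, using \cref{lem:Bweights}.

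The main obstacle is the bookkeeping with the structure constants $c_{\alpha,\beta}$ and signs $f_{\alpha,\beta}$: the cancellations needed to match coefficients rely repeatedly on the three identities of \cref{def:stage}\labelcref{def:stage:c} and on both parts of \cref{prop:f}. No conceptually new input is required beyond those identities and the spanning set of $\mV$ described in \cref{prop:gensetV}, but the verification for~\eqref{eq:A3} is the longest because its weight\dash{}$0$ calculation mixes contributions from $\Gamma_0$ and $\Gamma_3$ simultaneously.
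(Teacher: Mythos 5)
Your argument for \eqref{eq:overlineealphaebeta} is correct and is essentially the paper's: the paper likewise shows the difference lies in $\mV$ via \cref{prop:f} and asserts orthogonality of the right-hand side to $\Gamma_0\cup\dots\cup\Gamma_5$; your reduction to $\mV_{-\lambda}$ via \cref{lem:Bweights} and the cancellation $f_{\gamma,\delta}f_{-\gamma,-\delta}=f_{-\alpha_{-\lambda},-\beta_{-\lambda}}$ from \cref{prop:f} is exactly the content of that assertion, spelled out.

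For \eqref{eq:A2} and \eqref{eq:A3} you miss the device the paper actually uses: since the projection $\pi$ is $\mL$-equivariant, one has $\overline{e_{\alpha_\lambda}h_{\beta_\lambda}}=e_{-\beta_\lambda}\cdot(-\overline{e_{\alpha_\lambda}e_{\beta_\lambda}})$ and $\overline{h_{\alpha_\lambda}h_{\beta_\lambda}}=e_{-\alpha_\lambda}\cdot(-\overline{e_{\alpha_\lambda}h_{\beta_\lambda}})$, so both formulas are obtained from \eqref{eq:overlineealphaebeta} by a pure Lie-bracket computation, with no further membership-in-$\mV$ or orthogonality checks needed. This is not just a matter of economy: the one concrete claim in your sketch for \eqref{eq:A2} fails. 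Using the $\Gamma_2$ relation $e_\gamma e_\delta\equiv-\tfrac12 c_{\gamma,\delta}\,e_{\gamma+\delta}(h_\delta-h_\gamma)\pmod{\mV}$ and the cyclic identity of \cref{def:stage}~\labelcref{def:stage:c} applied to $(\alpha-\beta_\lambda)+\beta+(-\alpha_\lambda)=0$ (which gives $c_{\alpha-\beta_\lambda,\beta}=c_{\beta,-\alpha_\lambda}$, hence $c_{\beta,-\alpha_\lambda}c_{\alpha-\beta_\lambda,\beta}=1$), one finds
\[
c_{\beta,-\alpha_\lambda}e_{\alpha-\beta_\lambda}e_{\beta}+c_{\alpha,-\alpha_\lambda}e_{\beta-\beta_\lambda}e_{\alpha}
\equiv-\tfrac12 e_{\alpha_\lambda}\bigl(h_\alpha+h_\beta-h_{\alpha-\beta_\lambda}-h_{\beta-\beta_\lambda}\bigr)
=-e_{\alpha_\lambda}h_{\beta_\lambda}\pmod{\mV},
\]
so your proposed summand $e_{\alpha_\lambda}h_{\beta_\lambda}-c_{\beta,-\alpha_\lambda}e_{\alpha-\beta_\lambda}e_{\beta}-c_{\alpha,-\alpha_\lambda}e_{\beta-\beta_\lambda}e_{\alpha}$ is congruent to $2e_{\alpha_\lambda}h_{\beta_\lambda}$ modulo $\mV$, which is nonzero in $\mA$ by \cref{prop:Aunique} since $h_{\beta_\lambda}\in\alpha_\lambda^\perp$. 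The same computation shows the right-hand side of \eqref{eq:A2} as printed projects to $\frac{2-n_\lambda}{n_\lambda}\overline{e_{\alpha_\lambda}h_{\beta_\lambda}}$, i.e.\ the sign in front of the sum should be negative; carrying out the equivariance derivation resolves the sign unambiguously, whereas your term-by-term bookkeeping is exactly where it goes astray. (Only \eqref{eq:overlineealphaebeta} is used later in the paper, so this does not propagate.) The analogous caveat applies to your treatment of \eqref{eq:A3}, which inherits its signs from \eqref{eq:A2}.
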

\begin{proof}
    It is immediately verified that
    \[
        B\left(v,\frac{1}{n_\lambda} \biggl( \sum_{\alpha+\beta=\lambda} f_{\alpha,\beta} e_\alpha e_\beta \biggr)\right) = 0
    \]
    for all $v \in \Gamma_0 \cup \dots \cup \Gamma_5$. Since
    \[
        \frac{1}{n_\lambda} \left( \sum_{\alpha+\beta=\lambda} f_{\alpha,\beta} e_\alpha e_\beta \right) - e_{\alpha_\lambda}e_{\beta_\lambda}
        = -\frac{1}{n_\lambda} \left(\sum_{\alpha+\beta=\lambda} (e_{\alpha_\lambda}e_{\beta_\lambda} - f_{\alpha,\beta} e_\alpha e_\beta) \right) \in \mV,
    \]
    we have~\eqref{eq:overlineealphaebeta}.

    Recall the definition of $f_{\alpha,\beta}$ from \cref{def:f} and remember that $f_{\alpha,\beta}=f_{\beta,\alpha}$ by \cref{prop:f}.
    Using these, we have, since the projection $\pi$ is $\mL$-equivariant,
    \begin{align*}
        \overline{e_{\alpha_\lambda} h_{\beta_\lambda}} &= e_{-\beta_\lambda} \cdot (- \overline{e_{\alpha_\lambda} e_{\beta_\lambda}}) \\
                                                        &= \frac{1}{n_{\lambda}} \left( e_{\alpha_\lambda} h_{\beta_\lambda} - \sum (f_{\beta,\alpha} c_{-\beta_\lambda,\alpha} e_{\alpha-\beta_\lambda}e_{\beta} + f_{\alpha,\beta} c_{-\beta_\lambda,\beta} e_{\beta-\beta_\lambda} e_{\alpha}) \right) \\
                                                        &= \frac{1}{n_{\lambda}} \left( e_{\alpha_\lambda} h_{\beta_\lambda} + \sum (c_{\beta,-\alpha_\lambda}e_{\alpha-\beta_\lambda}e_{\beta} + c_{\alpha,-\alpha_\lambda}e_{\beta-\beta_\lambda}e_{\alpha}) \right)
    \end{align*}
    where each sum runs over the sets $\{\alpha,\beta\}$ with $\alpha,\beta \in \Phi$, $\alpha + \beta = \lambda$ and $\{\alpha,\beta\} \neq \{\alpha_\lambda,\beta_\lambda\}$.
    Similarly, we have
    \begin{align*}
        \overline{h_{\alpha_\lambda} h_{\beta_\lambda}} &= e_{-\alpha_\lambda} \cdot (- \overline{e_{\alpha_\lambda} h_{\beta_\lambda}}) \\
                                                        &= \frac{1}{n_\lambda} \left( h_{\alpha_\lambda} h_{\beta_\lambda} - \sum (e_\alpha e_{-\alpha} + e_{\beta} e_{-\beta} - e_{\alpha_\lambda-\alpha}e_{\alpha-\alpha_\lambda} - e_{\alpha_\lambda-\beta}e_{\beta-\alpha_\lambda}) \right)
    \end{align*}
    where we have used that \[c_{\beta,-\alpha_\lambda}c_{-\alpha_\lambda,\beta}=c_{\alpha,-\alpha_\lambda}c_{-\alpha_\lambda,\alpha}=-1,\] and \[c_{\beta,-\alpha_\lambda} c_{-\alpha_\lambda,\alpha-\beta_\lambda}=c_{\alpha,-\alpha_\lambda}c_{-\alpha_\lambda,\beta-\beta_\lambda}=1,\] from \cref{def:stage}~\labelcref{def:stage:c}.
\end{proof}

We finish this section by defining a suitable product $\ast$ and bilinear form $\mB$ for $\mA$ such that $(\mA,\ast,\mB)$ is a Frobenius algebra for $\mL$.

\begin{proposition}%
    \label{prop:defast}
    Consider the linear maps
    \begin{align*}
        \ast :\; &\mA \times \mA  \to \mA : (v,w) \mapsto \overline{v \bullet w}, \\
        \mB :\; &\mA \times \mA \to \C:(v,w) \mapsto B(v,w).
    \end{align*}
    Then $(\mA,\ast,\mB)$ is a Frobenius algebra for $\mL$.
\end{proposition}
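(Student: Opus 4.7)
The plan is to leverage everything we know about $(S^2(\mL),\bullet,B)$ from \cref{prop:prodS2} and transfer it to $\mA$ via the projection $\pi$. The main ingredient is that $\mA = \mV^\perp$ is itself an $\mL$-subrepresentation of $S^2(\mL)$: because $B$ is $\mL$-equivariant and $\mV$ is an $\mL$-subrepresentation, the orthogonal complement of $\mV$ with respect to $B$ is $\mL$-stable. Consequently the projection $\pi \colon S^2(\mL) \to \mA$ is $\mL$-equivariant, and we have a direct sum decomposition $S^2(\mL) = \mV \oplus \mA$ as $\mL$-representations with $B(\mV,\mA)=0$.

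First I would check the purely algebraic properties. Bilinearity of $\ast$ and $\mB$ is immediate. Commutativity of $\ast$ follows from commutativity of $\bullet$: $v \ast w = \overline{v \bullet w} = \overline{w \bullet v} = w \ast v$. Symmetry of $\mB$ is inherited from $B$. Non-degeneracy of $\mB$ on $\mA$ follows from the $B$-orthogonal decomposition $S^2(\mL) = \mV \oplus \mA$, combined with non-degeneracy of $B$ on $S^2(\mL)$ and non-degeneracy of $B\restriction_\mV$ from \cref{prop:nondeg}: any $v \in \mA$ with $\mB(v,\mA)=0$ would then also satisfy $B(v,\mV)=0$ by definition of $\mA$, so $B(v,S^2(\mL))=0$, forcing $v=0$.

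Next I would verify the Frobenius identity. For $v_1,v_2,v_3 \in \mA$, write $v_1 \bullet v_2 = \overline{v_1 \bullet v_2} + u$ with $u \in \mV$. Then since $v_3 \in \mA$ and $B(\mV,\mA)=0$,
\[
    \mB(v_1 \ast v_2, v_3) = B(\overline{v_1 \bullet v_2}, v_3) = B(v_1 \bullet v_2, v_3).
\]
Applying the Frobenius identity of $(S^2(\mL),\bullet,B)$ from \cref{prop:prodS2} and the symmetric argument on the other side yields
\[
    B(v_1 \bullet v_2, v_3) = B(v_1, v_2 \bullet v_3) = \mB(v_1, v_2 \ast v_3),
\]
as required.

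Finally, $\mL$-equivariance: $\mB$ is the restriction of the $\mL$-equivariant form $B$ to the $\mL$-subrepresentation $\mA$, so it is $\mL$-equivariant. For $\ast$, because $\bullet$ is $\mL$-equivariant (\cref{prop:prodS2}) and $\pi$ is $\mL$-equivariant,
\[
    \ell \cdot (v \ast w) = \ell \cdot \overline{v \bullet w} = \overline{\ell \cdot (v \bullet w)} = \overline{(\ell\cdot v)\bullet w + v \bullet (\ell \cdot w)} = (\ell \cdot v)\ast w + v \ast (\ell \cdot w)
\]
for all $\ell \in \mL$ and $v,w \in \mA$. No step of the argument involves a hard computation; the only subtle point is the observation that $\mA$ is an $\mL$-subrepresentation, which is where the $\mL$-equivariance of $B$ is essential.
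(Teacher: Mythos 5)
Your proof is correct and follows the same route as the paper: the paper's (much terser) proof likewise reduces everything to the $\mL$-equivariance of $\pi$, $\bullet$ and $B$ together with the key identity $\mB(\overline{v},w)=B(v,w)$ for $w\in\mA$, which is exactly your observation that $B(\mV,\mA)=0$ lets you drop the projection inside the form. Your version simply spells out the details (non-degeneracy of $\mB$, commutativity, the chain of equalities for the associating property) that the paper leaves implicit.
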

\begin{proof}
    The maps $\ast$ and $\mB$ are $\mL$-equivariant as a composition of $\mL$-equivariant maps.
    The Frobenius property~\eqref{eq:frob} follows from \cref{prop:prodS2} and because $\mB(\overline{v},w)=B(v,w)$ if $w \in \mA$.
\end{proof}

\section{The zero weight subalgebra}%
\label{sec:0sub}

Consider the zero weight space $\mA_0$ of $\mA$ with respect to a fixed Cartan subalgebra $\mH$ of $\mL$.
Since the product $\ast$ and bilinear form $\mB$ are $\mL$-equivariant, $\mA_0$ is a Frobenius subalgebra of $\mA$.
In this section, we describe this subalgebra explicitly.
In order to keep a clear distinction with the construction of the previous chapter, we will denote vector spaces occuring in this new construction by gothic letters.
First, we will use the isomorphism $\zeta \colon S^2(\mL) \to \Hom(\mL,\mL)$ from \cref{def:prodS2} to describe the zero weight space of $S^2(\mL)$ as a space of homomorphisms.

\begin{definition}%
    \label{def:JZS}
    Recall the notation $e_\alpha$ and $h_\alpha$ for $\alpha \in \Phi$ from \cref{def:stage}.
    Then the zero weight subspace of $S^2(\mL)$ is spanned by the elements $h_\alpha h_\beta$ and $e_\alpha e_{-\alpha}$ for $\alpha,\beta \in \Phi$.
    \begin{enumerate}
        \item Let $\fJ$ be the subspace of $\Hom(\mL,\mL)$ spanned by the endomorphisms $j_\alpha \coloneqq \zeta(h_\alpha h_\alpha)$ for $\alpha \in \Phi$. Explicitly, the endomorphism $j_\alpha$ is defined by
            \[
                j_\alpha \colon \mL \to \mL \colon \ell \mapsto \kappa(\ell,h_\alpha)h_\alpha.
            \]
            Since $h_{-\alpha} = -h_{\alpha}$, we have $j_\alpha = j_{-\alpha}$.
            Note that $j_\alpha(e_\beta)=0$ and ${j_\alpha}(\mL) \subseteq \mH$ for all $\alpha,\beta \in \Phi$. Therefore we can, and will, view $\fJ$ as a subspace of $\Hom(\mH,\mH)$.
        \item Let $\fZ$ be the subspace of $\Hom(\mL,\mL)$ spanned by the endomorphisms $z_\alpha \coloneqq \zeta(e_\alpha e_{-\alpha})$ for $\alpha \in \Phi$. We have
            \[
                z_\alpha \colon \mL \to \mL \colon \ell \mapsto \frac{1}{2}\left(\kappa(\ell,e_\alpha)e_{-\alpha} + \kappa(\ell,e_{-\alpha})e_\alpha\right).
            \]
            Also $z_\alpha = z_{-\alpha}$.
        \item Define $\fS_0 \coloneqq \fJ + \fZ$. Then $\fS_0 = \fJ \oplus \fZ$ as vector spaces.
        \item Consider the Jordan product $\bullet$ and bilinear form $B$ on $\Hom(\mL,\mL)$ and on $\Hom(\mH,\mH)$ as defined in \cref{ex:frob}~\labelcref{ex:frob:jordan}.
            This turns these vector spaces into Frobenius algebras.
    \end{enumerate}
\end{definition}

We will prove that $\fS_0$ is a Frobenius subalgebra of $\Hom(\mL,\mL)$.
In fact, we have $\fS_0 = \fJ \oplus \fZ$ as Frobenius algebras.
In particular, $\fJ$ is a subalgebra of $\Hom(\mH,\mH)$.
This subalgebras has already been studied by T.~De Medts and F.~Rehren in~\cite{DR17} in a different context.

\begin{proposition}%
    \label{prop:DR}
    The subspace $\fJ$ is a Frobenius subalgebra of $\Hom(\mH,\mH)$.
    More precisely,
    \begin{align*}
        j_\alpha \bullet j_\beta &= \begin{cases}
            2j_\alpha & \text{if $\alpha = \pm \beta$}, \\
            0 & \text{if $\kappa(\alpha,\beta) = 0$}, \\
            \frac{1}{2}(j_\alpha + j_\beta - j_{{s_\beta}(\alpha)}) & \text{if $\kappa(\alpha,\beta) = \pm 1$},
        \end{cases} \\
        \intertext{and}
        B(j_\alpha,j_\beta) &= {\kappa(\alpha,\beta)}^2,
    \end{align*}
    for all $\alpha, \beta \in \Phi$. It has dimension $\tfrac{n(n+1)}{2}$ and hence consists of all endomorphisms $f : \mH \to \mH$ for which $\kappa(f(a),b)=\kappa(a,f(b))$ for all $a,b \in \mH$.
    The Frobenius algebra $\fJ$ is isomorphic to the Frobenius algebra from \cref{ex:frob}~\labelcref{ex:frob:symm} for $V = \mH$.
\end{proposition}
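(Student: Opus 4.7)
The plan is to verify the multiplication, trace, and dimension claims by direct computation from the definition $j_\alpha(\ell) = \kappa(\ell, h_\alpha)\,h_\alpha$, viewed as an endomorphism of $\mH$ (which is legitimate, since $j_\alpha$ vanishes on each $e_\beta$ and $j_\alpha(\mH) \subseteq \mH$). First I would compute the composition $(j_\alpha \circ j_\beta)(\ell) = \kappa(\alpha,\beta)\,\kappa(\ell,\beta)\,h_\alpha$ and symmetrize to obtain
\[
    (j_\alpha \bullet j_\beta)(\ell) = \tfrac{\kappa(\alpha,\beta)}{2}\bigl(\kappa(\ell,\beta)\,h_\alpha + \kappa(\ell,\alpha)\,h_\beta\bigr).
\]
All three cases of the product formula then drop out of this single expression. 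When $\alpha = \pm \beta$, I use $j_\alpha = j_{-\alpha}$ and $\kappa(\alpha,\alpha) = 2$ to obtain $2 j_\alpha$. The case $\kappa(\alpha,\beta) = 0$ is immediate. For $\kappa(\alpha,\beta) = k \in \{\pm 1\}$, I would substitute $h_{s_\beta(\alpha)} = h_\alpha - k h_\beta$ (from the definition of $s_\beta$ in \cref{def:stage} together with $\alpha(h_\beta) = \kappa(\alpha,\beta) = k$), expand $j_{s_\beta(\alpha)}$, and verify via $k^2 = 1$ that $j_\alpha + j_\beta - j_{s_\beta(\alpha)}$ acts as $\ell \mapsto k\bigl(\kappa(\ell,\alpha)h_\beta + \kappa(\ell,\beta)h_\alpha\bigr)$, which is precisely $2(j_\alpha \bullet j_\beta)$.

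For the Frobenius form, observe that $j_\alpha \circ j_\beta$ is a rank-one operator with image $\C h_\alpha$ sending $\ell \mapsto \kappa(\alpha,\beta)\kappa(\ell,\beta)\,h_\alpha$. Its trace is thus obtained by evaluating the linear functional $\ell \mapsto \kappa(\alpha,\beta)\kappa(\ell,\beta)$ at $h_\alpha$, yielding $\kappa(\alpha,\beta)^2$, as claimed.

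For the dimension statement and the identification with \cref{ex:frob}~\labelcref{ex:frob:symm} applied to $V = \mH$: each $j_\alpha$ is $\kappa$-symmetric on $\mH$, since $\kappa(j_\alpha(a),b) = \kappa(a,\alpha)\kappa(\alpha,b) = \kappa(a,j_\alpha(b))$, so $\fJ$ lies inside the $\tfrac{n(n+1)}{2}$-dimensional Jordan algebra $S$ of $\kappa$-symmetric endomorphisms of $\mH$. For the reverse inclusion I would appeal to the result of De Medts--Rehren~\cite{DR17} cited in the statement, or argue directly: under the isomorphism $\zeta$ the span of $\{j_\alpha\}$ corresponds to the span of $\{h_\alpha h_\alpha : \alpha \in \Phi\}$ in $S^2(\mH)$, and the polarization identity $h_\alpha h_\beta = \tfrac12\bigl((h_\alpha + h_\beta)^2 - h_\alpha h_\alpha - h_\beta h_\beta\bigr)$, together with $h_\alpha + h_\beta = h_{\alpha+\beta}$ whenever $\alpha+\beta \in \Phi$, places $h_\alpha h_\beta$ in this span for every pair with $\kappa(\alpha,\beta) = -1$; the rank assumptions of \cref{def:stage} then suffice to extend this to all products, using the irreducibility of $\mH$ as a Weyl-group module. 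The main obstacle is this last spanning argument; the product and trace computations are mechanical unpackings of the definitions.
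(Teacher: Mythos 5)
Your proof is correct and takes essentially the same route as the paper: the paper simply cites \cite{DR17}*{Lemmas 3.2 and 3.3} for the multiplication and the dimension while explicitly remarking that both ``can also be calculated using the explicit description of these homomorphisms,'' and your computations of $j_\alpha\circ j_\beta$, the Jordan symmetrization, the three cases via $h_{s_\beta(\alpha)}=h_\alpha-\kappa(\alpha,\beta)h_\beta$, and the rank-one trace all check out. The only soft spot is the final clause of your optional direct spanning argument (handling $h_\alpha h_\beta$ for orthogonal $\alpha,\beta$ needs something like $n_{\alpha+\beta}>1$ rather than irreducibility of $\mH$ as a $W$-module), but this is moot since your primary route --- citing \cite{DR17} for the dimension --- is exactly what the paper does.
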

\begin{proof}
    The multiplication follows from~\cite{DR17}*{Lemma~3.2}.
    However, this can also be calculated using the explicit description of these homomorphisms from \cref{def:JZS}.
    The dimension follows from~\cite{DR17}*{Lemma~3.3}.
    The endomorphisms $j_\alpha$ satisfy the condition that $\kappa({j_\alpha}(a),b)=\kappa(a,{j_\alpha}(b))$ for all $a,b \in \mH$.
    Since the subspace of all such homomorphisms has dimension $n(n+1)/2$, this subspace must be equal to $\fJ$.
    So, in fact, $\fJ$ is precisely the Frobenius algebra from \cref{ex:frob}~\labelcref{ex:frob:symm} for $V=\mH$.
\end{proof}

Also $\fZ$ and $\fS_0$ are Frobenius subalgebras of $\Hom(\mL,\mL)$.

\begin{proposition}
    The subspace $\fZ$ is a Frobenius subalgebra of $\Hom(\mL,\mL)$.
    We have
    \begin{align*}
        z_\alpha \bullet z_\beta &= \begin{cases}
            \frac12 z_\alpha & \text{if $z_\alpha = z_\beta$}, \\
            0 & \text{otherwise},
        \end{cases} \\
        \intertext{and}
        B(z_\alpha,z_\beta) &= \begin{cases}
            \frac12 & \text{if $z_\alpha = z_\beta$}, \\
            0 & \text{otherwise},
        \end{cases}
    \end{align*}
    for all $\alpha,\beta \in \Phi$.
    The subspace $\fZ$ has dimension $\tfrac{\abs{\Phi}}{2}$.
\end{proposition}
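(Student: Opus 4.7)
The plan is to compute everything directly from the explicit formula for $z_\alpha$ given in \cref{def:JZS}. First I would determine how $z_\alpha$ acts on the Chevalley basis of $\mL$. Since $\kappa(e_\gamma,e_\delta)=0$ unless $\gamma=-\delta$, and $\kappa(e_{\pm\alpha},h)=0$ for $h\in\mH$, one sees immediately that $z_\alpha$ vanishes on every Chevalley basis element except $e_\alpha$ and $e_{-\alpha}$, and
\[
    z_\alpha(e_\alpha) = \tfrac{1}{2} e_\alpha, \qquad z_\alpha(e_{-\alpha}) = \tfrac{1}{2} e_{-\alpha},
\]
using $\kappa(e_\alpha,e_{-\alpha})=1$. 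In other words, $z_\alpha$ is (up to the factor $\tfrac12$) the projection onto the two-dimensional subspace $\mL_\alpha\oplus\mL_{-\alpha}$. This immediately explains why $z_\alpha=z_{-\alpha}$ and gives the key structural fact that the image of $z_\beta$ lies in $\mL_\beta\oplus\mL_{-\beta}$, on which $z_\alpha$ acts as zero whenever $\{\alpha,-\alpha\}\neq\{\beta,-\beta\}$.

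From this picture the multiplication table is a short calculation. If $z_\alpha\neq z_\beta$, then $z_\alpha z_\beta = 0 = z_\beta z_\alpha$, so $z_\alpha\bullet z_\beta=\tfrac12(z_\alpha z_\beta+z_\beta z_\alpha)=0$. If $z_\alpha=z_\beta$, then $z_\alpha z_\beta$ acts as $\tfrac14\id$ on $\mL_\alpha\oplus\mL_{-\alpha}$ and as $0$ elsewhere, i.e.\ $z_\alpha z_\beta=\tfrac12 z_\alpha$, so $z_\alpha\bullet z_\beta=\tfrac12 z_\alpha$. The trace computation proceeds along the same lines: $\tr(z_\alpha z_\beta)=0$ when $z_\alpha\neq z_\beta$, and $\tr(z_\alpha^2)=\tfrac12\tr(z_\alpha)=\tfrac12$, since $z_\alpha$ has two non-zero diagonal entries each equal to $\tfrac12$ in the Chevalley basis. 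This yields both formulas in the statement.

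It remains to check that $\fZ$ is actually closed under $\bullet$ (which is immediate from the multiplication table), that the displayed elements have the correct linear span, and to determine $\dim\fZ$. The set $\{z_\alpha\mid\alpha\in\Phi\}$ has at most $\abs{\Phi}/2$ distinct elements because $z_\alpha=z_{-\alpha}$; that these are in fact linearly independent follows at once from the Gram matrix we just computed, which is a non-zero scalar multiple of the identity indexed by $\{\pm\alpha\}\in\Phi/\{\pm 1\}$. Non-degeneracy of $B$ restricted to $\fZ$ is visible from the same diagonal Gram matrix, so $(\fZ,\bullet,B)$ is a Frobenius subalgebra of $(\Hom(\mL,\mL),\bullet,B)$ of dimension $\abs{\Phi}/2$.

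There is no real obstacle: the argument is essentially bookkeeping once one observes that each $z_\alpha$ is supported on the $2$-dimensional root subspace $\mL_\alpha\oplus\mL_{-\alpha}$ and acts there as $\tfrac12\id$. The only mildly delicate point is keeping the identification $z_\alpha=z_{-\alpha}$ straight when counting dimension, but this is taken care of automatically by the orthogonality established through~$B$.
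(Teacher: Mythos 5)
Your proposal is correct and follows essentially the same route as the paper: the paper's proof consists of noting that $\kappa(e_\alpha,e_{-\alpha})=1$ and $\kappa(e_\alpha,e_\beta)=0$ for $\beta\neq-\alpha$ and then invoking ``an explicit calculation,'' which is precisely the computation you carry out (identifying $z_\alpha$ as $\tfrac12$ times the projection onto $\mL_\alpha\oplus\mL_{-\alpha}$ and reading off the products, traces, and the Gram matrix). Your additional remarks on linear independence and non-degeneracy of $B\restriction_{\fZ\times\fZ}$ are exactly the bookkeeping the paper leaves implicit.
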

\begin{proof}
    Notice that $\kappa(e_\alpha,e_{-\alpha})=1$ and $\kappa(e_\alpha,e_\beta) = 0$ for all $\beta \neq -\alpha$.
    The assertion now follows from an explicit calculation.
\end{proof}

\begin{proposition}%
    \label{prop:Z}
    The subspace $\fS_0$ is a Frobenius subalgebra of $\Hom(\mL,\mL)$.
    In fact $\fS_0 = \fJ \oplus \fZ$ as Frobenius algebras.
    This means that $\fS_0 = \fJ \oplus \fZ$ as vector spaces and
    \[
        a \bullet b = 0 \quad \text{and} \quad B(a,b) = 0
    \]
    for all $a \in \fJ$ and all $b \in \fZ$.
\end{proposition}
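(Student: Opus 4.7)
The plan is to reduce everything to a single vanishing statement at the level of endomorphisms of $\mL$: namely, $j_\alpha \circ z_\beta = 0 = z_\beta \circ j_\alpha$ for all $\alpha,\beta \in \Phi$. Once this is established, it yields simultaneously the orthogonality of $\fJ$ and $\fZ$ with respect to both the Jordan product $\bullet$ and the trace form $B$, and it also forces $\fJ \cap \fZ = 0$.

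First I would record two vanishing properties that follow immediately from \cref{def:JZS} and the fact that root spaces are $\kappa$-orthogonal to $\mH$ and to each other (except for opposite pairs). On the one hand, every $j_\alpha$ sends $\mL$ into $\mH$ and kills every $e_\gamma$ (since $\kappa(e_\gamma,h_\alpha)=0$); on the other hand, every $z_\beta$ sends $\mL$ into $\operatorname{span}(e_\beta,e_{-\beta})$ and kills every element of $\mH$ (since $\kappa(h,e_{\pm\beta})=0$ for $h \in \mH$). Composing, the image of $z_\beta$ lies in a root space span on which $j_\alpha$ vanishes, giving $j_\alpha \circ z_\beta = 0$; dually, the image of $j_\alpha$ lies in $\mH$ on which $z_\beta$ vanishes, giving $z_\beta \circ j_\alpha = 0$. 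Hence $j_\alpha \bullet z_\beta = \tfrac12(j_\alpha z_\beta + z_\beta j_\alpha) = 0$ and $B(j_\alpha,z_\beta) = \tr(j_\alpha z_\beta) = 0$, and by bilinearity $a \bullet b = 0$ and $B(a,b) = 0$ for all $a \in \fJ$, $b \in \fZ$.

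Next I would verify that $\fJ + \fZ$ is a direct sum. If $f \in \fJ \cap \fZ$, then viewing $f$ as an element of $\fZ$ shows that $f$ vanishes on $\mH$, while viewing $f$ as an element of $\fJ$ shows that $f$ vanishes on every $e_\gamma$. Since these together span $\mL$, we have $f = 0$, so $\fS_0 = \fJ \oplus \fZ$ as vector spaces. Closure of $\fS_0$ under $\bullet$ then follows from \cref{prop:DR} and the preceding proposition together with $\fJ \bullet \fZ = 0$. Non-degeneracy of $B$ on $\fS_0$ follows from the non-degeneracy of $B$ on each of $\fJ$ and $\fZ$ (by the previous two propositions) combined with $B(\fJ,\fZ)=0$, which forces the Gram matrix of $B$ on $\fS_0$ to be block-diagonal with non-degenerate blocks. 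Finally, the Frobenius identity $B(x\bullet y,z)=B(x,y\bullet z)$ on $\fS_0$ is inherited from the ambient Frobenius algebra $(\End(\mL),\bullet,B)$ of \cref{ex:frob}~\labelcref{ex:frob:jordan}, so $(\fS_0,\bullet,B)$ is indeed a Frobenius subalgebra and decomposes as $\fJ \oplus \fZ$ in the category of Frobenius algebras.

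There is no real obstacle here: the argument is structural, and the only computation needed, the vanishing of the compositions $j_\alpha \circ z_\beta$ and $z_\beta \circ j_\alpha$, is a one-line consequence of the complementary support properties (image in $\mH$ versus vanishing on $\mH$, and vanishing on root vectors versus image in a root space span) that are already visible from the explicit formulas in \cref{def:JZS}.
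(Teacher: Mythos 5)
Your proposal is correct and follows essentially the same route as the paper: the paper's proof is the one-line observation that the compositions $ab$ and $ba$ vanish for $a \in \fJ$, $b \in \fZ$ (using exactly the support facts $j_\alpha(\mL) \subseteq \mH$, $j_\alpha(e_\gamma)=0$, and their counterparts for $z_\beta$ already recorded in \cref{def:JZS}), whence $a \bullet b = 0$ and $B(a,b)=\tr(ab)=0$. You merely spell out these compositions and the routine consequences (directness of the sum, closure, non-degeneracy) in more detail than the paper does.
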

\begin{proof}
    Since the composition $ab$ of the $\mL$\dash{}endomorphisms $a$ and $b$ is zero, we also have $a \bullet b = \tfrac12 (ab + ba) = 0$ and $B(a,b) = \tr(ab) = 0$.
\end{proof}

Note that the zero weight space ${S^2(\mL)}_0$ of $S^2(\mL)$ is a Frobenius subalgebra of $S^2(\mL)$ because $\bullet$ and $B$ are $\mL$-equivariant.
It is isomorphic to $\fS_0$ as a Frobenius algebra.

\begin{proposition}%
    \label{prop:S}
    Let $\zeta$ be as in \cref{def:prodS2} and let ${S^2(\mL)}_0$ be the zero weight subspace of $S^2(\mL)$ with respect to $\mH$.
    Then $\zeta$ induces an isomorphism
    \[
        \zeta_S \colon {S^2(\mL)}_0 \to \fS_0 \colon a \mapsto \zeta(a)
    \]
    of Frobenius algebras.
\end{proposition}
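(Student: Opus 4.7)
The plan is to exploit the fact that $\zeta\colon S^2(\mL)\to \Hom(\mL,\mL)$ is already a morphism of Frobenius algebras onto its image: by construction in \cref{def:prodS2}, the product $\bullet$ and the form $B$ on $S^2(\mL)$ were defined precisely by pulling back the Jordan product and trace form along $\zeta$ (as in \cref{ex:frob}~\labelcref{ex:frob:symm}). Hence, to establish the proposition, it suffices to prove the purely linear statement that $\zeta\bigl({S^2(\mL)}_0\bigr) = \fS_0$.

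I would first identify ${S^2(\mL)}_0$ explicitly. Since the Chevalley basis consists of $\mH$-weight vectors with weights $0$ on $\mH$ and $\alpha \in \Phi$ on $e_\alpha$, a basis of zero-weight vectors in $S^2(\mL)$ comes from pairs whose weights sum to zero, giving
\[
    {S^2(\mL)}_0 = S^2(\mH) \oplus \langle e_\alpha e_{-\alpha} \mid \alpha \in \Phi\rangle,
\]
and therefore $\dim {S^2(\mL)}_0 = \tfrac{n(n+1)}{2} + \tfrac{\abs{\Phi}}{2} = \dim \fJ + \dim \fZ = \dim \fS_0$.

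Next, I would check that $\zeta$ sends each summand onto the expected target. For $h \in \mH$ and $\beta \in \Phi$ the root-space decomposition of $\mL$ is $\kappa$-orthogonal, so $\kappa(h,e_\beta) = 0$; this forces $\zeta(hh')$ to annihilate every $e_\beta$ and to restrict to a symmetric operator on $\mH$, showing $\zeta(S^2(\mH)) \subseteq \fJ$. Since $\zeta$ is injective and $\dim S^2(\mH) = \dim \fJ = n(n+1)/2$ by \cref{prop:DR}, this inclusion is an equality. For the other summand, $\zeta(e_\alpha e_{-\alpha}) = z_\alpha$ by the very definition of $z_\alpha$, so $\zeta$ maps $\langle e_\alpha e_{-\alpha} \mid \alpha \in \Phi\rangle$ onto $\fZ$. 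Combining these, $\zeta({S^2(\mL)}_0) = \fJ + \fZ = \fS_0$, and the restriction $\zeta_S$ is a bijection.

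Finally, since $\fS_0$ is a Frobenius subalgebra of $\Hom(\mL,\mL)$ by \cref{prop:Z}, the bijection $\zeta_S$ — being the restriction of a Frobenius algebra morphism to a subalgebra mapping onto a subalgebra — is automatically an isomorphism of Frobenius algebras. There is no genuine obstacle in this argument; the only step requiring even a moment's care is observing that $\zeta(hh')$ has image contained in $\mH$ rather than all of $\mL$, which is immediate from $\kappa$-orthogonality of $\mH$ and the root spaces.
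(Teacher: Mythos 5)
Your proposal is correct and follows essentially the same route as the paper: one inclusion between $\zeta\bigl({S^2(\mL)}_0\bigr)$ and $\fS_0$ plus a dimension count, with the Frobenius-algebra compatibility being tautological from the way $\bullet$ and $B$ were defined by pulling back the Jordan product and trace form along $\zeta$. The only (immaterial) difference is the direction of the inclusion you verify first — the paper notes $\fS_0 \subseteq \zeta\bigl({S^2(\mL)}_0\bigr)$ since the generators $j_\alpha$ and $z_\alpha$ are by definition images of zero-weight elements, whereas you check $\zeta\bigl({S^2(\mL)}_0\bigr) \subseteq \fS_0$ summand by summand.
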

\begin{proof}
    Of course, $\fS_0$ must be contained in the image of ${S^2(\mL)}_0$ under $\zeta$.
    However, since both have the same dimension, $\fS_0$ must actually be equal to this image.
    Notice that this implies that $\zeta(h_\alpha h_\beta) \in \fS_0$ for all $\alpha,\beta \in \Phi$.
    Now it follows immediately from the construction of the product and bilinear form on $S^2(\mL)$ (see \cref{def:prodS2}) that this is an isomorphism of Frobenius algebras.
\end{proof}

Next, we describe the zero weight space $\mA_0$ of $\mA$.
Recall that $\mA$ is defined as the orthogonal complement of the $\mL$-invariant subspace $\mV$ with respect to $B$.
Since $B$ is $\mL$-equivariant, the zero weight space $\mA_0$ of $\mA$ is the orthogonal complement of the zero weight space $\mV_0$ of $\mV$ in ${S^2(\mL)}_0$.
By \cref{prop:gensetV}, we know that the space $\mV_0$ is spanned by the elements $2e_\alpha e_{-\alpha} - h_\alpha h_\alpha$.
Therefore, we introduce the following definition.

\begin{definition}%
    \label{def:A0}
    \begin{enumerate}
        \item For each $\alpha \in \Phi$, let $v_\alpha \coloneqq \zeta(2e_\alpha e_{-\alpha} - h_\alpha h_\alpha) = 2z_\alpha - j_\alpha$ and let $\fV$ be the subspace of $\fS_0$ spanned by these $v_\alpha$.
            Then the restriction of $B$ to $\fV \times \fV$ is non-degenerate since $B(v_\alpha,v_\alpha)=6$.
            Let $\fA_0$ be the orthogonal complement of $\fV$ in $\fS_0$ with respect to $B$ and let $\pi \colon \fS_0 \to \fA_0$ be the orthogonal projection.
        \item Define the following product and bilinear form on $\fA_0$:
            \[
                a \pa b \coloneqq \pi(a \bullet b) \quad \text{and} \quad B_A(a,b) \coloneqq B(a,b)
            \]
            for all $a,b \in \fA_0$.
    \end{enumerate}
\end{definition}

\begin{proposition}%
    \label{prop:A0}
    \begin{enumerate}
        \item\label{prop:A0:frob} The triple $(\fA_0,\pa,B_A)$ is a Frobenius algebra.
        \item\label{prop:A0:iso} The isomorphism $\zeta$ from \cref{def:prodS2} induces an isomorphism
            \[
                \mA_0 \to \fA_0 \colon a \mapsto \zeta(a)
            \]
            of Frobenius algebras.
    \end{enumerate}
\end{proposition}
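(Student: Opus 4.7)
The plan is to localize the orthogonal-complement construction of $\mA$ in $S^2(\mL)$ to the weight-zero subspace and to transport everything through the isometry $\zeta_S \colon {S^2(\mL)}_0 \to \fS_0$ from \cref{prop:S}. The pivotal observation is that, by the explicit description of $\mV_0$ from \cref{prop:gensetV} and the definition of $v_\alpha$ in \cref{def:A0}, one has $\zeta_S(\mV_0) = \fV$.

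For (i), I will first establish non-degeneracy of $B_A$. Since $\zeta_S$ is an isometry and $B$ is non-degenerate on ${S^2(\mL)}_0$ (by \cref{lem:Bweights}, $B$ pairs weights $\lambda$ and $\mu$ only when $\lambda+\mu=0$, so non-degeneracy of $B$ on $S^2(\mL)$ forces non-degeneracy on each weight-pair block, in particular on the zero-weight block), $B$ is non-degenerate on $\fS_0$. Combined with the non-degeneracy of $B$ on $\fV$ noted in \cref{def:A0}, the orthogonal splitting $\fS_0 = \fA_0 \perp \fV$ gives non-degeneracy of $B_A$. Commutativity of $\pa$ is inherited from $\bullet$. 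For the Frobenius identity, I would observe that for $a,b,c \in \fA_0$ the element $a \bullet b - \pi(a \bullet b)$ lies in $\fV$ and is therefore $B$-orthogonal to $c$, so $B_A(a \pa b, c) = B(a \bullet b, c)$; applying the Frobenius identity of $(\fS_0, \bullet, B)$ (from \cref{prop:Z} together with \cref{ex:frob}~\labelcref{ex:frob:jordan}) and the symmetric projection trick on the other side then yields $B_A(a \pa b, c) = B_A(a, b \pa c)$.

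For (ii), the strategy is to show that under $\zeta_S$ the data of $(\fA_0, \pa, B_A)$ correspond to those of $(\mA_0, \ast, \mB)$. The key step is to identify $\mA_0 = \mA \cap {S^2(\mL)}_0$ with the $B$-orthogonal complement of $\mV_0$ inside ${S^2(\mL)}_0$: by \cref{lem:Bweights} the orthogonality condition defining $\mA$ decouples weight by weight, so this identification is automatic. Combined with $\zeta_S(\mV_0) = \fV$ and the fact that $\zeta_S$ preserves $B$, this gives a linear isomorphism $\mA_0 \to \fA_0$ intertwining the two orthogonal projections onto $\mA_0$ and $\fA_0$. Since $\zeta_S$ is also a morphism for $\bullet$ by \cref{prop:S}, it follows formally that $\zeta_S(v \ast w) = \zeta_S(v) \pa \zeta_S(w)$ for all $v,w \in \mA_0$, and the statement on bilinear forms is immediate.

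I do not foresee a serious obstacle: the whole argument is an exercise in tracking the compatibilities of $\zeta_S$ with $\bullet$, $B$, the subspaces $\mV_0 \leftrightarrow \fV$, and the respective orthogonal projections. The only point requiring a moment of care is the weight-by-weight decoupling of the orthogonality condition defining $\mA$, which is exactly the content of \cref{lem:Bweights}.
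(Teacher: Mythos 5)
Your proposal is correct and follows the same route as the paper: the paper's proof simply invokes \cref{def:A0} and \cref{prop:S} together with the observation that $\mV_0$ corresponds to $\langle v_\alpha \mid \alpha \in \Phi\rangle = \fV$ under $\zeta$, which is exactly the transport-through-$\zeta_S$ argument you carry out. You merely spell out the details the paper leaves implicit (the weight-by-weight decoupling via \cref{lem:Bweights}, the orthogonal splitting, and the projection trick for the Frobenius identity), all of which are sound.
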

\begin{proof}
    This is obvious from \cref{def:A0,prop:S} since the subspace $\mV_0$ corresponds to $\langle v_\alpha \mid \alpha \in \Phi \rangle$ under $\zeta$.
\end{proof}

\begin{remark}%
    \label{rem:0subsum}
    Before we continue, let us give a summary of the notation and the obtained results on the connection between the different algebras.
    The goal of this section is to get a better understanding of the zero weight space $\mA_0$ with respect to $\mH$ of the $\mL$\dash{}module from \cref{def:A}.
    Recall that $\mA$ is the complement (with respect to $B$) of the $\mL$\dash{}module $\mV$ inside the symmetric square $S^2(\mL)$ of the adjoint module of $\mL$.
    By \cref{lem:Bweights}, the zero weight space $\mA_0$ is the complement of the zero weight space $\mV_0$ of $\mV$ inside the zero weight space ${S^2(\mL)}_0$ of ${S^2(\mL)}$.
    So our first step is to obtain a better understanding of ${S^2(\mL)}_0$.
    We can decompose ${S^2(\mL)}_0$ as
    \begin{equation*}
        \langle h_\alpha h_\beta \mid \alpha,\beta \in \Phi \rangle \oplus \langle e_{\alpha}e_{-\alpha} \mid \alpha \in \Phi \rangle.
    \end{equation*}
    The first component of this decomposition can be identified with the symmetric square $S^2(\mH)$ of the Cartan subalgebra $\mH$.
    Now we consider the monomorphism $\zeta \colon S^2(\mL) \to \Hom(\mL,\mL)$.
    Under this monomorphism the product $\bullet$ and bilinear form $B$ of $S^2(\mL)$ correspond, by definition, to the Jordan product and trace form on $\Hom(\mL,\mL)$.
    From the results above, we have that $\zeta$ induces an isomorphism between the following structures:
    \begin{align*}
        \zeta &\colon (S^2(\mH),\bullet,B) \xrightarrow{\sim} (\fJ,\bullet,B), \\
        \zeta &\colon (\langle e_\alpha e_{-\alpha} \mid \alpha \in \Phi \rangle, \bullet,B) \xrightarrow{\sim} (\fZ,\bullet,B), \\
        \zeta &\colon ({S^2(\mL)}_0,\bullet,B) \xrightarrow{\sim} (\fS_0,\bullet,B), \\
        \zeta &\colon \mV_0 \xrightarrow{\sim} \fV, \\
        \zeta &\colon (\mA_0,\ast,\mB) \xrightarrow{\sim} (\fA_0,\pa,B_A).
    \end{align*}
\end{remark}

It turns out that $\fJ$ is isomorphic to $\fA_0$ as vector spaces.
It will be convenient in the next section to identify both, as the elements of $\fJ$ can be viewed as endomorphisms of $\mH$.

\begin{proposition}%
    \label{prop:JA}
    The restriction $\pi_J$ of $\pi$ to $\fJ$ is an isomorphism of vector spaces.
\end{proposition}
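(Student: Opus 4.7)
The plan is to prove $\pi_J$ is an isomorphism through a dimension count combined with a short injectivity argument. The main observation is that the defining formula $v_\alpha = 2z_\alpha - j_\alpha$, together with the direct sum decomposition $\fS_0 = \fJ \oplus \fZ$ from \cref{prop:Z}, forces every nonzero element of $\fV$ to have a nonzero $\fZ$-component, so that $\fV \cap \fJ = \{0\}$.

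First I would collect the relevant dimensions. From \cref{prop:DR} we have $\dim \fJ = \tfrac{n(n+1)}{2}$, and from the preceding proposition $\dim \fZ = \tfrac{\abs{\Phi}}{2}$, whence $\dim \fS_0 = \tfrac{n(n+1)}{2} + \tfrac{\abs{\Phi}}{2}$. Next I would compute $\dim \fV$: since $z_\alpha = z_{-\alpha}$ and $j_\alpha = j_{-\alpha}$ we have $v_\alpha = v_{-\alpha}$, so the elements $\{v_\alpha : \alpha \in \Phi^+\}$ already span $\fV$. Their $\fZ$-components $\{2z_\alpha : \alpha \in \Phi^+\}$ form part of a basis of $\fZ$ and are therefore linearly independent, so the $v_\alpha$ themselves are linearly independent in $\fS_0$, giving $\dim \fV = \tfrac{\abs{\Phi}}{2}$. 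Combined with the non-degeneracy of $B$ on $\fV$ noted in \cref{def:A0}, we obtain $\dim \fA_0 = \dim \fS_0 - \dim \fV = \tfrac{n(n+1)}{2} = \dim \fJ$, so it suffices to verify that $\pi_J$ is injective.

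For injectivity, I would note that $\ker(\pi_J) = \fJ \cap \ker(\pi) = \fJ \cap \fV$. If $x = \sum_{\alpha \in \Phi^+} c_\alpha v_\alpha$ lies in $\fJ$, then writing $x = -\sum_{\alpha \in \Phi^+} c_\alpha j_\alpha + 2\sum_{\alpha \in \Phi^+} c_\alpha z_\alpha$ and projecting along the decomposition $\fJ \oplus \fZ$, the $\fZ$-component $\sum_{\alpha \in \Phi^+} c_\alpha z_\alpha$ must vanish, forcing $c_\alpha = 0$ for every $\alpha \in \Phi^+$. Hence $x = 0$, and $\pi_J$ is injective, therefore an isomorphism.

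There is no substantive obstacle here; the real work has already been carried out in establishing the direct decomposition $\fS_0 = \fJ \oplus \fZ$ and in pinning down the dimensions of $\fJ$ and $\fZ$. The argument amounts to unpacking the formula $v_\alpha = 2z_\alpha - j_\alpha$ along this direct decomposition.
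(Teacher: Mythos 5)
Your proof is correct. It rests on the same dimension count as the paper's argument ($\dim \fJ = \dim \fS_0 - \dim \fV = \dim \fA_0$), but verifies the complementary half of bijectivity: the paper notes that $v_\alpha = 2z_\alpha - j_\alpha \in \fV = \ker\pi$ gives $\pi(j_\alpha) = 2\pi(z_\alpha)$, hence $\pi(\fJ) \supseteq \pi(\fZ)$ and $\pi_J$ is surjective, whereas you show $\ker \pi_J = \fJ \cap \fV = 0$ by reading off the $\fZ$-component of an element of $\fV$ along the decomposition $\fS_0 = \fJ \oplus \fZ$. The two arguments use exactly the same ingredients (the relation $v_\alpha = 2z_\alpha - j_\alpha$, the direct sum from \cref{prop:Z}, and the equality of dimensions), so the difference is essentially which of injectivity or surjectivity one checks; your version has the minor advantage of spelling out the dimension count that the paper only asserts.
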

\begin{proof}
    Note that for each $\alpha \in \Phi$, we have $\pi(j_\alpha) = 2\pi(z_\alpha)$.
    Thus, since $\pi$ is surjective, its restriction to $\fJ$ is surjective as well.
    Since $\fJ$ and $\fA_0$ have the same dimension, the restriction of $\pi$ to $\fJ$ must be an isomorphism onto $\fA_0$.
\end{proof}

\begin{definition}%
    \label{def:Jast}
    In the next section we will identify $\fA_0$ with $\fJ$ using the isomorphism $\pi_J$ from \cref{prop:JA}.
    In particular, we can transfer the product $\pa$ and the bilinear form $B_A$ to $\fJ$:
    \begin{align*}
        a \pa b &\coloneqq {\pi_J^{-1}}({\pi_J}(a) \pa {\pi_J}(b)), \\
        B_A(a,b) &\coloneqq B_A({\pi_J}(a),{\pi_J}(b)),
    \end{align*}
    for all $a,b \in \fJ$.
    From \cref{prop:A0} it follows that $(\fJ,\pa,B_A)$ is isomorphic to $(\mA_0,\ast,\mB)$ as Frobenius algebras.
    This will be the starting point of the next section.
\end{definition}

\begin{remark}
    In the spirit of \cref{rem:0subsum} we now have that ${\pi_J}^{-1} \circ \zeta$ induces an isomorphism between $(\mA_0,\ast,\mB)$ and $(\fJ,\pa,B_A)$.
\end{remark}

The Weyl group of $\mL$ acts naturally on the zero weight space of $S^2(\mL)$. Since this zero weight space is isomorphic to $S(\Phi)$ by \cref{prop:S}, also $S(\Phi)$ carries the structure of a representation of the Weyl group of $\mL$.

\begin{definition}%
    \label{def:Waction}
    Consider the natural action of the Weyl group of $\mL$ on the zero weight space ${S^2(\mL)}_0$ of $S^2(\mL)$.
    Due to \cref{prop:S}, we can transfer this action to $\fS_0$:
    \[
        w \cdot s \coloneqq \zeta_S^{-1}(w \cdot \zeta_S(s))
    \]
    for all $s \in \fS_0$ and $w \in W$.
    Notice that the product $\bullet$ and bilinear form $B$ are $W$-equivariant.
    Therefore $(\fS_0,\bullet,B)$ is a Frobenius algebra for $W$.
    It is readily verified that $w \cdot j_\alpha = j_{w \cdot \alpha}$, $w \cdot z_\alpha = z_{w \cdot \alpha}$ and $w \cdot v_\alpha=v_{w \cdot \alpha}$ for all $\alpha \in \Phi$ and $w \in W$.
\end{definition}

In the remainder of this section, we prove that the projection $\pi$ is $W$-equivariant.
Therefore its image, $\fA_0$, is $W$-invariant.
This fact will be used in \cref{sec:0dec} to give $\fA_0$ the structure of an axial decomposition algebra.
This also allows to compute the projection $\pi$ efficiently, as we will illustrate below.
(This is, however, not essential for the rest of our results.)

\begin{definition}
    \begin{enumerate}
        \item Consider the transitive action of $W$ on the set $X = \{ j_\alpha \mid \alpha \in \Phi^+ \}$. Let $O_0, O_1, \dots, O_d$ be the orbits of $W$ on $X \times X$, where $O_0$ is the diagonal $O_0 \coloneqq \{(x,x) \mid x \in X\}$.
            Define the following \textit{intersection parameters} for $0 \leq i,j,k \leq d$:
            \[
                p_{ij}^k \coloneqq \abs{\{y \in X \mid (x,y) \in O_i \text{ and } (y,z) \in O_j\}}
            \]
            where $(x,z)$ is any element of $O_k$.
            Note that this does not depend on the choice of $(x,z)$.
            (In fact, $(X,{\{O_i\}}_{0 \leq i \leq d})$ is an \textit{association scheme}; see, e.g.,~\cite{BI84}*{\S 2.2}.)
        \item For each $\alpha \in \Phi$, we can write $\pi(j_\alpha)$ uniquely as
            \[
                j_\alpha + \sum_{\beta \in \Phi^+} \mu_{(j_\alpha,j_\beta)} z_\beta
            \]
            for certain constants $\mu_{(j_\alpha,j_\beta)} \in \C$.
    \end{enumerate}
\end{definition}

\begin{proposition}%
    \label{prop:piW}
    The projection $\pi: \fS_0 \to \fA_0$ is $W$-equivariant.
    In particular $\mu_x = \mu_y$ for all $x,y \in O_i$, $0 \leq i \leq d$.
\end{proposition}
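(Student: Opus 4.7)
The plan is to exploit the fact that an orthogonal projection onto a $W$\dash{}invariant subspace, taken with respect to a $W$\dash{}equivariant bilinear form, is automatically $W$\dash{}equivariant. So the main task reduces to checking that the subspace $\fV$ (equivalently, $\fA_0 = \fV^\perp$) is stable under the $W$\dash{}action of \cref{def:Waction}.

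First I would observe that $\fV$ is $W$\dash{}invariant: by \cref{def:Waction}, $w \cdot v_\alpha = v_{w\cdot\alpha}$, and since $W$ permutes $\Phi$, the spanning set $\{v_\alpha \mid \alpha \in \Phi\}$ is permuted by $W$. Next, since $B$ is $W$\dash{}equivariant on $\fS_0$, its orthogonal complement $\fA_0 = \fV^\perp$ is also $W$\dash{}invariant: for any $w \in W$, $a \in \fA_0$ and $v \in \fV$, one has $B(w \cdot a, v) = B(a, w^{-1} \cdot v) = 0$ because $w^{-1}\cdot v \in \fV$. Hence $\fS_0 = \fV \oplus \fA_0$ is a $W$\dash{}equivariant decomposition, and $\pi$, being the projection onto $\fA_0$ along $\fV$, commutes with $W$.

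For the second assertion, I would apply $w \in W$ (chosen so that $w \cdot j_\alpha = j_{\alpha'}$ and $w \cdot j_\beta = j_{\beta'}$, where $(j_\alpha, j_\beta), (j_{\alpha'}, j_{\beta'}) \in O_i$) to the expansion
\[
    \pi(j_\alpha) = j_\alpha + \sum_{\gamma \in \Phi^+} \mu_{(j_\alpha, j_\gamma)} z_\gamma.
\]
Using $W$\dash{}equivariance of $\pi$ together with $w \cdot j_\alpha = j_{w\cdot\alpha}$ and $w \cdot z_\gamma = z_{w\cdot\gamma}$, and noting that $z_\gamma = z_{-\gamma}$ so that the basis $\{z_\gamma\}_{\gamma \in \Phi^+}$ is permuted by $W$ up to this sign identification, I would re-index the sum and invoke the uniqueness of the expansion in the basis $\{j_\gamma\} \cup \{z_\gamma\}_{\gamma \in \Phi^+}$ of $\fS_0$ to conclude $\mu_{(j_{\alpha'}, j_{\beta'})} = \mu_{(j_\alpha, j_\beta)}$. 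No step here is computationally delicate; the only mild care needed is in tracking the positive/negative root convention when re-indexing $z_{w\cdot\gamma}$, but since $z_\gamma$ depends only on the unordered pair $\{\gamma,-\gamma\}$, this bookkeeping is harmless.
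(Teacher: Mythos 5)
Your proposal is correct and follows essentially the same route as the paper: establish that $\fV$ is $W$-stable via $w \cdot v_\alpha = v_{w\cdot\alpha}$, deduce $W$-equivariance of the orthogonal complement and of $\pi$ from the $W$-equivariance of $B$, and then compare the two expansions of $\pi(w \cdot j_\alpha)$ using uniqueness of coefficients. Your remark that $z_\gamma = z_{-\gamma}$ makes the re-indexing over $\Phi^+$ harmless is a small point the paper leaves implicit, but the argument is the same.
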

\begin{proof}
    Because $w \cdot v_\alpha = v_{w \cdot \alpha}$ for all $\alpha \in \Phi$ and all $w \in W$, the subspace $\fV$ of $\fS_0$ is $W$-invariant.
    Since the bilinear form $B$ is $W$-equivariant, the orthogonal complement $\fA_0$ of $\fV$ as well as the orthogonal projection $\pi \colon \fS_0 \to \fA_0$ with respect to $B$ is $W$-equivariant.
    Thus on the one hand we have
    \[
        \pi(w \cdot j_\alpha) = j_{w \cdot \alpha} + \sum_{\beta \in \Phi^+} \mu_{(j_{w \cdot \alpha},j_\beta)} v_\beta
    \]
    while on the other hand
    \[
        \pi(w \cdot j_\alpha) = w \cdot \pi(j_\alpha) = j_{w \cdot \alpha} + \sum_{\beta \in \Phi^+} \mu_{(j_{\alpha},j_{\beta})} v_{w \cdot \beta}.
    \]
    Since the elements of $\{j_{w \cdot \alpha}\} \cup \{v_\beta \mid \beta \in \Phi^+\}$ are linearly independent, we have $\mu_{(j_{\alpha},j_{\beta})}=\mu_{(j_{w \cdot \alpha},j_{w \cdot \beta})}$ for all $w \in \Phi$.
\end{proof}

\begin{definition}
    Let $1 \leq i \leq d$.
    By \cref{prop:piW} we can define $\mu_i \coloneqq \mu_x$ for any $x \in O_i$.
    Since the bilinear form $B$ is $W$-equivariant, we can also write $b_i \coloneqq B(j_\alpha,j_\beta)$ for any $(j_\alpha,j_\beta) \in O_i$.
\end{definition}

The following proposition allows us the compute the constants $\mu_i$ and hence the projection $\pi$ by solving a system of $d+1$ linear equations.

\begin{proposition}
    For all $0 \leq k \leq d$ we have
    \[
        \sum_{0\leq i,j \leq d} p_{ij}^k b_j \mu_i = b_k-2\mu_k.
    \]
    Moreover, these equations uniquely determine the constants $\mu_i$.
\end{proposition}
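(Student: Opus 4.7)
The plan is to derive the identity directly from the orthogonality property that defines the projection $\pi$: being the orthogonal projection of $\fS_0$ onto $\fA_0 = \fV^\perp$, $\pi(j_\alpha)$ must satisfy $B(\pi(j_\alpha), v_\gamma) = 0$ for every $\gamma \in \Phi^+$. Using $\fJ \perp \fZ$ from \cref{prop:Z}, the pairing $B(z_\beta, z_\gamma) = \tfrac12 \delta_{\beta\gamma}$, and $B(j_\beta, j_\gamma) = b_{(j_\beta, j_\gamma)}$, a short calculation gives
\[
  B(j_\alpha, v_\gamma) = -b_{(j_\alpha, j_\gamma)} \qquad \text{and} \qquad B(v_\beta, v_\gamma) = 2\delta_{\beta\gamma} + b_{(j_\beta, j_\gamma)}.
\]
Expanding $\pi(j_\alpha) - j_\alpha$ in the basis $\{v_\beta\}$ of $\fV$ with coefficients $\mu_{(j_\alpha, j_\beta)}$ and inserting into the orthogonality relation will then yield, for each $\gamma$, the pointwise identity
\[
  2\mu_{(j_\alpha, j_\gamma)} + \sum_{\beta \in \Phi^+} \mu_{(j_\alpha, j_\beta)}\, b_{(j_\beta, j_\gamma)} = b_{(j_\alpha, j_\gamma)}.
\]

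Next, I will recast this in association-scheme language. By \cref{prop:piW} and the $W$-equivariance of $B$, both $\mu_{(x, y)}$ and $b_{(x, y)}$ depend only on the orbit $O_i$ containing $(x, y)$, so they equal $\mu_i$ and $b_j$ respectively. Fixing $(j_\alpha, j_\gamma) \in O_k$ and stratifying the sum over $y = j_\beta \in X$ by the orbits containing $(j_\alpha, y)$ and $(y, j_\gamma)$ produces the expression $\sum_{i,j} p_{ij}^k b_j \mu_i$ by the very definition of the intersection parameters. Rearranging yields the proposition's identity $\sum_{i,j} p_{ij}^k b_j \mu_i = b_k - 2\mu_k$.

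For the uniqueness assertion, I will observe that the system above (one equation per $\gamma \in \Phi^+$) has $[B(v_\beta, v_\gamma)]_{\beta, \gamma}$ as its coefficient matrix, i.e., the Gram matrix of the basis $\{v_\beta\}$ of $\fV$. This Gram matrix is invertible because $B$ restricts to a non-degenerate form on $\fV$, as asserted in \cref{def:A0} and provable in parallel with \cref{prop:nondeg}. The full $|\Phi^+|$-dimensional system therefore has a unique solution, which must be $W$-invariant (constant on orbits) since $\pi$ is $W$-equivariant; any solution of the reduced orbital system lifts to a $W$-invariant solution of the full system and hence agrees with it, yielding uniqueness of the $\mu_i$. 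The main subtlety I foresee is clean justification of the non-degeneracy of $B|_{\fV}$, but this should proceed in essentially the same way as \cref{prop:nondeg}; the remainder is combinatorial bookkeeping.
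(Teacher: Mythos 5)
Your proposal is correct and follows essentially the same route as the paper's proof: both start from the orthogonality condition $B(\pi(j_\alpha),v_\gamma)=0$, use $B(\fJ,\fZ)=0$ and $B(z_\beta,z_\gamma)=\tfrac12\delta_{\beta\gamma}$ to compute $B(j_\alpha,v_\gamma)$ and $B(v_\beta,v_\gamma)$, and then regroup the sum over $\beta$ by orbits to produce the intersection numbers $p_{ij}^k$. Your treatment of the uniqueness claim (invertibility of the Gram matrix of $\{v_\beta\}$ plus lifting any solution of the orbital system to a $W$-invariant solution of the full system) is a correct and slightly more explicit version of what the paper leaves implicit.
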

\begin{proof}
    We have that
    \[
        \pi(j_\alpha) = j_\alpha + \sum_{0 \leq i \leq d} \sum_{\substack{\beta \in \Phi^+ \\ (j_\alpha,j_\beta) \in O_i}} \mu_i v_\beta
    \]
    if and only if
    \[
        B(\pi(j_\alpha),v_\gamma) = B(j_\alpha,v_\gamma) + \sum_{0 \leq i \leq d} \sum_{\substack{\beta \in \Phi^+ \\ (j_\alpha,j_\beta) \in O_i}} \mu_i B(v_\beta,v_\gamma) = 0
    \]
    for all $\gamma \in \Phi^+$.
    If $(j_\alpha,j_\gamma) \in O_k$ then we have, by \cref{prop:Z,prop:S},
    \begin{align*}
        B(\pi(j_\alpha),v_\gamma) &= -b_k + \sum_{0 \leq i,j \leq d} \sum_{\substack{(j_\alpha,j_\beta) \in O_i \\ (j_\beta,j_\gamma) \in O_j}} \mu_i B(j_\beta,j_\gamma) + 4 \mu_k B(z_\gamma,z_\gamma), \\
        &= -b_k + \sum_{0 \leq i,j \leq d} p_{ij}^k \mu_i b_j + 2 \mu_k.
    \end{align*}
    This proves the statement.
\end{proof}

\section{Extending the product}~\label{sec:extending}

The goal of this section is to explicitly describe the algebra $\mA$ from \cref{sec:construction}.
More precisely, we will write the product of any two elements of $\mA$ in terms of the product on the zero weight subalgebra $\mA_0$ studied in \cref{sec:0sub}.
It suffices to describe the product of any two weight vectors of $\mA$.
These weight vectors are described in \cref{prop:Aunique}.

We will use the action of the Lie algebra $\mL$ on $\mA$ to accomplish this goal.
Therefore it will be essential to get a good description of this action.
Since $\mL$ is generated by the elements $e_\alpha$ for $\alpha \in \Phi$, it suffices to describe the action of $e_\alpha$ on each of the weight-$\lambda$-spaces.
This action will of course depend on the $W$-orbit of $(\alpha,\lambda)$.
Inevitably, we need to distinguish between each of those orbits which makes the following proposition look daunting at first sight.
However, in each of the cases, the action is very natural.

\begin{proposition}%
    \label{prop:actionL}
    Let $\alpha \in \Phi$. Recall \cref{def:stage} and the linear homomorphism $\zeta$ from \cref{def:prodS2}.
    The linear action of $e_\alpha$ on $\mA$ is uniquely determined as follows.
    \begin{align*}
        e_\alpha \cdot \overline{h_1h_2} &= -\overline{e_\alpha(\kappa(\alpha,h_1)h_2 + \kappa(\alpha,h_2)h_1)}, \\
                                         &= -2\overline{e_\alpha({\zeta(h_1h_2)(\alpha)})} \\
        e_\alpha \cdot \overline{e_{\beta} h} &= \overline{h_\alpha h} &\text{if $\beta=-\alpha$}, \\
        e_\alpha \cdot \overline{e_\beta h} &= c_{\alpha,\beta}\overline{e_{\alpha+\beta}(h+\kappa(h,\alpha)h_\beta)} &\text{if $\kappa(\alpha,\beta)=-1$}, \\
        e_\alpha \cdot \overline{e_\beta h} &= -\kappa(\alpha,h)\overline{e_\alpha e_\beta} &\text{if $\kappa(\alpha,\beta)=0$}, \\
        e_\alpha \cdot \overline{e_\beta h} &= 0 &\text{if $\kappa(\alpha,\beta) \geq 1$}, \\
        e_\alpha \cdot \overline{e_{\alpha_\lambda}e_{\beta_\lambda}} &= f_{\lambda+\alpha,-\alpha} \overline{e_{\lambda+\alpha}h_\alpha} & \text{if $\kappa(\alpha,\lambda) = -2$, i.e.\ $\lambda + \alpha \in \Phi$}, \\
        e_\alpha \cdot \overline{e_{\alpha_\lambda}e_{\beta_\lambda}} &= c_{\alpha,\alpha_\lambda}\overline{e_{\alpha+\alpha_\lambda}e_{\beta_\lambda}} & \text{if $\kappa(\alpha,\alpha_\lambda) = -1$ and $\kappa(\alpha,\beta_\lambda)=0$}, \\
        e_\alpha \cdot \overline{e_{\alpha_\lambda}e_{\beta_\lambda}} &= c_{\alpha,\beta_\lambda}\overline{e_{\alpha_\lambda}e_{\alpha+\beta_\lambda}} & \text{if $\kappa(\alpha,\alpha_\lambda) = 0$ and $\kappa(\alpha,\beta_\lambda)=-1$}, \\
        e_\alpha \cdot \overline{e_{\alpha_\lambda}e_{\beta_\lambda}} &= 0 &\text{if $\kappa(\alpha,\lambda) \geq 0$},
    \end{align*}
    where $h_1,h_2 \in \mH$, $h \in \beta^\perp$, $\alpha,\beta \in \Phi$ and $\lambda \in \Lambda_0$.
\end{proposition}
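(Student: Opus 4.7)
The central observation is that the orthogonal projection $\pi \colon S^2(\mL) \to \mA$ is $\mL$-equivariant: $\mV$ is an $\mL$-subrepresentation of $S^2(\mL)$ by construction, $B$ is $\mL$-equivariant by \cref{prop:prodS2}, so its orthogonal complement $\mA$ is $\mL$-invariant and $\pi$ commutes with the $\mL$-action. Hence
\[
    e_\alpha \cdot \overline{v} = \overline{e_\alpha \cdot v} \quad \text{for every } v \in S^2(\mL),
\]
and the proof reduces to computing $e_\alpha \cdot v$ inside $S^2(\mL)$ via the Leibniz rule $e_\alpha \cdot (\ell_1 \ell_2) = [e_\alpha, \ell_1] \ell_2 + \ell_1 [e_\alpha, \ell_2]$ together with the Chevalley bracket relations from \cref{def:stage}, and then rewriting the outcome modulo $\mV$ in the standard form provided by \cref{prop:Aunique}.

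The tools for this rewriting are the generating sets $\Gamma_0, \ldots, \Gamma_5$ of $\mV$ from \cref{prop:gensetV}, the identity $h_{\alpha+\beta} = h_\alpha + h_\beta$ (valid in simply laced type), the vanishing $\overline{e_\gamma h_\gamma} = 0$ from \cref{prop:Aunique}, and the relation $\overline{e_{\alpha'} e_{\beta'}} = f_{\alpha', \beta'} \overline{e_{\alpha_\lambda} e_{\beta_\lambda}}$ for $\{\alpha', \beta'\} \in N_\lambda$, $\lambda \in \Lambda_0$, coming from \cref{prop:f}. With these in hand, the first formula follows from a direct Leibniz computation yielding $-e_\alpha(\kappa(\alpha, h_1) h_2 + \kappa(\alpha, h_2) h_1)$, whose identification with $-2 e_\alpha \zeta(h_1 h_2)(\alpha)$ is immediate from $\zeta = \zeta' \circ \sigma$. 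The remaining formulas for $\overline{e_\beta h}$ are also quick: when $\beta = -\alpha$ the correction $\kappa(\alpha, h) e_\alpha e_{-\alpha}$ vanishes because $h \in \alpha^\perp$; when $\kappa(\alpha, \beta) = -1$, the correction $-\kappa(\alpha, h) e_\alpha e_\beta$ is rewritten modulo $\mV$ via $\Gamma_2$ and the residual multiple of $h_{\alpha+\beta}$ is absorbed by $\overline{e_{\alpha+\beta} h_{\alpha+\beta}} = 0$; when $\kappa(\alpha, \beta) = 0$ the Lie bracket $[e_\alpha, e_\beta]$ vanishes; and when $\kappa(\alpha, \beta) \geq 1$ one either has $e_\alpha e_\beta \in \Gamma_4 \cup \Gamma_5 \subseteq \mV$ or $\kappa(\alpha, h) = 0$.

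The formulas for $e_\alpha \cdot \overline{e_{\alpha_\lambda} e_{\beta_\lambda}}$ with $\lambda \in \Lambda_0$ split according to the pair $(\kappa(\alpha, \alpha_\lambda), \kappa(\alpha, \beta_\lambda))$. When $\kappa(\alpha, \lambda) \geq 0$, the outcome of the Leibniz computation lies in $\langle \Gamma_4 \cup \Gamma_5 \rangle \subseteq \mV$, hence projects to zero. When $\kappa(\alpha, \lambda) = -1$, exactly one Leibniz summand survives and is already of the stated form $c_{\alpha, \alpha_\lambda} \overline{e_{\alpha+\alpha_\lambda} e_{\beta_\lambda}}$ or its symmetric counterpart. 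The main technical obstacle is the case $\kappa(\alpha, \lambda) = -2$, i.e., $\lambda + \alpha \in \Phi$. The sub-subcases $\alpha \in \{-\alpha_\lambda, -\beta_\lambda\}$ are direct and produce $\overline{e_{\lambda+\alpha} h_\alpha}$ with coefficient $1 = f_{\lambda+\alpha, -\alpha}$. In the remaining sub-subcase $\kappa(\alpha, \alpha_\lambda) = \kappa(\alpha, \beta_\lambda) = -1$, both Leibniz contributions $c_{\alpha, \alpha_\lambda} e_{\alpha+\alpha_\lambda} e_{\beta_\lambda}$ and $c_{\alpha, \beta_\lambda} e_{\alpha_\lambda} e_{\alpha+\beta_\lambda}$ survive; each has to be reduced modulo $\mV$ via $\Gamma_2$, and the resulting sum must then be matched against $f_{\lambda+\alpha, -\alpha} \overline{e_{\lambda+\alpha} h_\alpha}$. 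This matching requires careful use of the cocycle identities \cref{def:stage}~\labelcref{def:stage:c} to re-express all structure constants in terms of $c_{\cdot, -\alpha_\lambda}$ and $c_{\cdot, -\beta_\lambda}$, and is where the bulk of the sign-tracking of the proof sits.
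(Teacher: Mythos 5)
Your proposal is correct and follows essentially the same route as the paper: exploit the $\mL$-equivariance of the projection $\pi$ to reduce everything to a Leibniz computation in $S^2(\mL)$, then rewrite modulo $\mV$ using the generators from \cref{prop:gensetV}, the vanishing $\overline{e_\gamma h_\gamma}=0$, and the signs $f_{\alpha,\beta}$ governed by the cocycle identities of \cref{def:stage}~\labelcref{def:stage:c}. The paper only writes out the case $\kappa(\alpha,\beta)=-1$ and declares the rest analogous, whereas you additionally isolate (correctly) the sub-subcase $\kappa(\alpha,\alpha_\lambda)=\kappa(\alpha,\beta_\lambda)=-1$ as the one genuinely delicate sign computation; carrying that computation out confirms your outline.
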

\begin{proof}
    First of all, note that this enumeration exhausts all possible weight vectors of $\mA$.
    Indeed, because the root system $\Phi$ is simply laced, we have for any root $\beta \in \Phi$ that $\kappa(\alpha,\beta) \in \{-2,-1,0,1,2\}$ and for any weight $\lambda \in \Lambda_0$ that $\kappa(\alpha,\lambda) \in \{-2,-1,0,1,2\}$.
    Moreover, if $\kappa(\alpha,\beta)=-2$ then $\alpha=-\beta$ and if $\kappa(\alpha,\lambda)=-2$, then $\lambda+\alpha\in \Phi$ by \cref{lem:weights}~\ref{lem:weights:lambdaalpha}.
    The form of these weight vectors follows from \cref{prop:Aunique}.

    The statements follow from explicit calculations using the rules from \cref{def:stage} and the description of the generating set for $\mV$ from \cref{prop:gensetV}.
    We will do these calculations for the case when $\alpha,\beta \in \Phi$ such that $\kappa(\alpha,\beta) = -1$.
    The other cases are proven analogously.
    Let $h \in \mH$.
    By \cref{prop:gensetV} it follows that $\overline{e_\beta h_\beta} = 0$.
    Thus $\overline{e_\beta h} = \overline{e_\beta(h + \kappa(h,\alpha)h_\beta)}$.
    Now we have
    \begin{align*}
        e_\alpha \cdot \overline{e_\beta h} &= e_\alpha \cdot \overline{e_\beta(h + \kappa(h,\alpha)h_\beta)}, \\
                                            &= \overline{[e_\alpha,e_\beta](h + \kappa(h,\alpha)h_\beta)} + \overline{e_\beta[e_\alpha,h + \kappa(h,\alpha)h_\beta]}, \\
                                            &= c_{\alpha,\beta} \overline{e_{\alpha+\beta}(h + \kappa(h,\alpha)h_\beta)} + 0,
    \end{align*}
    because the projection $S^2(\mL) \to \mA \colon v \mapsto \overline{v}$ is $\mL$-equivariant.
\end{proof}

The next step is to write the product of any zero weight vector and an arbitrary weight vector in terms of products between zero weight vectors.
The following lemma will be crucial.

\begin{lemma}%
    \label{lem:Aaut}
    Let $\tau$ be an automorphism of the Lie algebra $\mL$. Then $\tau$ induces an automorphism of the Frobenius algebra $(\mA,\ast,\mB)$ via
    \[
        \tau\bigl(\overline{\ell_1\ell_2}\bigr) \coloneqq \overline{\tau(\ell_1) \tau(\ell_2)},
    \]
    for all $\ell_1,\ell_2 \in \mL$.
\end{lemma}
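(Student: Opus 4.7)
The plan is to extend $\tau$ to a linear isomorphism $\tilde\tau$ of $S^2(\mL)$ via $\tilde\tau(\ell_1\ell_2) \coloneqq \tau(\ell_1)\tau(\ell_2)$, show that $\tilde\tau$ preserves $\mV$ (and hence $\mA$) together with the maps $\bullet$ and $B$, and conclude that the restriction to $\mA$ is the required Frobenius algebra automorphism. The first key observation is that every Lie algebra automorphism preserves the Killing form: from $\ad_{\tau(x)} = \tau \circ \ad_x \circ \tau^{-1}$ and the cyclicity of the trace, $\kappa(\tau(x), \tau(y)) = \kappa(x, y)$. Since $\bullet$ and $B$ are defined in \cref{def:prodS2} by explicit formulas in $\kappa$, this immediately yields that $\tilde\tau$ preserves both of them.

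The main obstacle is showing that $\tilde\tau(\mV) = \mV$. My strategy is to exploit the fact that $\mV$ is the unique $\mL$-submodule of $S^2(\mL)$ isomorphic to $V(2\omega)$, where $\omega$ is the highest root: indeed by \cref{prop:charS} the weight $2\omega$ appears with multiplicity one in $S^2(\mL)$ and is the maximal occurring weight, so the entire $V(2\omega)$-isotypic component coincides with $\mV$. A direct computation shows that $\tilde\tau(\ell \cdot v) = \tau(\ell) \cdot \tilde\tau(v)$, which already implies that $\tilde\tau(\mV)$ is an $\mL$-submodule. Setting $\mH' \coloneqq \tau(\mH)$, I would then check that $\tau(e_\omega)\tau(e_\omega) = \tilde\tau(e_\omega e_\omega)$ is a highest-weight vector for $\mH'$ (with respect to the base $\tau(\Delta)$) of weight the double of the corresponding highest root; this amounts to $[\tau(h), \tau(e_\omega)] = \tau([h, e_\omega]) = \omega(h)\tau(e_\omega)$ for $h \in \mH$ and $[\tau(e_\alpha), \tau(e_\omega)] = \tau([e_\alpha, e_\omega]) = 0$ for $\alpha \in \Phi^+$ (since $\omega$ is the highest root). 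The $\mL$-submodule generated by this vector is therefore irreducible and isomorphic to $V(2\omega)$ as an abstract $\mL$-module, so $\tilde\tau(\mV) = \mV$ by the uniqueness above.

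Once $\tilde\tau(\mV) = \mV$ is established, the preservation of $B$ forces $\tilde\tau(\mA) = \mA$, so the formula in the statement defines a linear bijection of $\mA$. As $\tilde\tau$ commutes with the orthogonal projection $\pi \colon S^2(\mL) \to \mA$ (both $\mV$ and $\mA$ being $\tilde\tau$-stable) and preserves $\bullet$ and $B$, it preserves $\ast = \pi \circ \bullet$ and $\mB = B \restriction_{\mA \times \mA}$, which makes it an automorphism of the Frobenius algebra $(\mA, \ast, \mB)$ as required.
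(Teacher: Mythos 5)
Your proof is correct, and it turns on the same pivotal fact as the paper's — that $\mV$ is the unique subrepresentation of $S^2(\mL)$ whose highest weight is the double of a root, so any automorphism must stabilize it — but it gets there by a genuinely different route. The paper first disposes of inner automorphisms via the $\mL$-equivariance of $\ast$ and $\mB$, then invokes the structure of $\Aut(\mL)$ (\cite{Hum72}*{\S~16.5}) to reduce to graph automorphisms, which preserve $(\mH,\Delta)$ and therefore permute the highest weights of the irreducible constituents of $S^2(\mL)$. You avoid this reduction entirely and treat an arbitrary $\tau$ at once: since $\tilde\tau$ intertwines the action up to the twist $\ell \mapsto \tau(\ell)$, the image $\tilde\tau(\mV)$ is a submodule generated by a highest-weight vector for the transported pair $(\tau(\mH),\tau(\Delta))$ of weight twice the corresponding highest root, and because the abstract isomorphism type of that module does not depend on the choice of Cartan subalgebra and base (all such pairs are conjugate under inner automorphisms, and automorphisms preserving a based root system fix its highest root), multiplicity one of the weight $2\omega$ forces $\tilde\tau(\mV)=\mV$. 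Your explicit remark that $\tau$ preserves $\kappa$ and hence, through the formulas of \cref{def:prodS2}, the maps $\bullet$ and $B$ on all of $S^2(\mL)$ also makes precise a point the paper leaves implicit when it asserts that $\tau$ commutes with $\pi$ and preserves $\ast$ and $\mB$. The paper's argument is shorter given the cited structure theory; yours is more self-contained and uniform in $\tau$, at the cost of the (correct but worth spelling out) conjugacy step identifying the isomorphism class of the highest-weight module across different Cartan subalgebras.
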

\begin{proof}
    Since $\ast$ and $\mB$ are $\mL$-equivariant, this is of course true if $\tau$ is an inner automorphism of the Lie algebra $\mL$. By~\cite{Hum72}*{\S~16.5} we can assume that $\tau$ leaves the Cartan subalgebra $\mH$ and a fixed Borel subalgebra containing $\mH$ invariant, in other words, that it is a \emph{graph automorphism}.
    Thus $\tau$ acts on the set of highest weights of the irreducible subrepresentations of $S^2(\mL)$. Since $\mV$ is the only subrepresentation of $S^2(\mL)$ having the double of a root as its highest weight, $\tau$ stabilizes the subrepresentation $\mV$ globally. Thus $\tau$ commutes with the projection $\pi \colon S^2(\mL) \to \mA$ from \cref{def:A}. Therefore, by the definition of $\ast$ and~$\mB$ (see \cref{prop:defast}), $\tau$~must preserve $\ast$ and $\mB$.
\end{proof}

\begin{remark}
    Note that any automorphism of the root system $\Phi$, i.e., any isometry $\tau$ of $\mH$ such that $\tau(\Phi)=\Phi$, extends to an automorphism of the Lie algebra $\mL$ via the isomorphism theorem~\cite{Hum72}*{\S~14.2}.
\end{remark}

The $\mL$\dash{}module contains three different types of weights: the zero weight, the roots $\alpha \in \Phi$ and the sums of two orthogonal roots $\lambda \in \Lambda_0$.
In \cref{sec:0sub} we described the product of two vectors of weight zero.
We determine the product of a zero weight vector and a vector of weight $\alpha \in \Phi$ in \cref{prop:prod0a}.
The computation of the product of a zero weight vector and a vector of weight $\lambda \in \Lambda_0$ is the subject of \cref{prop:prod0l}.

\begin{proposition}%
    \label{prop:prod0a}
    Let $v,w \in \mA$ be weight vectors of respective weights 0 and $\alpha \in \Phi$. Then
    \begin{enumerate}
        \item\label{prop:prod0a:inv} $e_\alpha \cdot e_{-\alpha} \cdot w = 2w$,
        \item\label{prop:prod0a:ast} $(e_\alpha \cdot v) \ast (e_{-\alpha} \cdot w) = 0$ and therefore
            \[
                2 v \ast w = e_\alpha \cdot ( v \ast ( e_{-\alpha} \cdot w ) ),
            \]
        \item\label{prop:prod0a:B} $\mB(v,w)=0$.
    \end{enumerate}
\end{proposition}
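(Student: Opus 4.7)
The three parts of the proposition decouple: (i) is an $\mathfrak{sl}_2$-triple calculation, (iii) is an immediate weight argument, and (ii) is where the real work lies. For (i), since $w$ has weight $\alpha$, we have $h_\alpha \cdot w = \kappa(\alpha,\alpha)\, w = 2w$. By \cref{prop:charA} the weight $2\alpha$ is absent from $\mA$ (it is neither $0$, nor a root of the reduced system $\Phi$, nor a sum of two orthogonal roots, the last ruled out by $\kappa(2\alpha, 2\alpha) = 8 \neq 4 = \kappa(\lambda,\lambda)$ for $\lambda \in \Lambda_0$), so $e_\alpha \cdot w \in \mA_{2\alpha} = 0$. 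Then $[e_\alpha, e_{-\alpha}] = h_\alpha$ gives $e_\alpha \cdot e_{-\alpha} \cdot w = h_\alpha \cdot w + e_{-\alpha} \cdot e_\alpha \cdot w = 2w$. For (iii), \cref{lem:Bweights} applies immediately since $0 + \alpha \neq 0$.

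The ``therefore'' half of (ii) is a one-line Leibniz calculation using the first half and (i):
\[
e_\alpha \cdot (v \ast (e_{-\alpha} \cdot w)) = (e_\alpha \cdot v) \ast (e_{-\alpha} \cdot w) + v \ast (e_\alpha \cdot e_{-\alpha} \cdot w) = 0 + 2(v \ast w).
\]
So the real task is to show $(e_\alpha v) \ast (e_{-\alpha} w) = 0$. Weight considerations (as in (i)) give $(e_\alpha v) \ast w = 0$ (weight $2\alpha$) and $e_\alpha \cdot w = 0$. Applying $e_{-\alpha}$ to the first identity and expanding via the Leibniz rule for the $\mL$-equivariant product $\ast$ yields
\[
(e_\alpha \cdot v) \ast (e_{-\alpha} \cdot w) = -(e_{-\alpha} \cdot e_\alpha \cdot v) \ast w.
\]
Since $h_\alpha \cdot v = 0$, one can replace $e_{-\alpha} e_\alpha v$ by $e_\alpha e_{-\alpha} v$; a second Leibniz step (using again $e_\alpha \cdot w = 0$) identifies $(e_\alpha e_{-\alpha} v) \ast w$ with $e_\alpha \cdot ((e_{-\alpha} \cdot v) \ast w)$. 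The identity thus reduces to showing that the weight-$0$ element $(e_{-\alpha} \cdot v) \ast w \in \mA_0$ is annihilated by $e_\alpha$, i.e.\ lies in the $\mathfrak{sl}_2$-invariant subspace of $\mA_0$ for the triple $(e_\alpha, h_\alpha, e_{-\alpha})$.

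This last invariance is the main obstacle. My plan is to reduce, by linearity together with \cref{prop:Aunique}, to the concrete case $v = \overline{h_1 h_2}$ with $h_1, h_2 \in \mH$ and $w = \overline{e_\alpha h'}$ with $h' \in \alpha^\perp$. Then \cref{prop:actionL} describes $e_{-\alpha} \cdot v$ explicitly as an element of $\mA_{-\alpha}$, and \cref{lem:A} together with the definition $\ast = \overline{\cdot \bullet \cdot}$ from \cref{prop:defast} allows one to compute the product in $S^2(\mL)$ and project it onto $\mA$. What should drop out is that the resulting weight-$0$ element has trivial component along the ``$V_2$-middle'' subspace $\overline{h_\alpha \alpha^\perp} \subseteq \mA_0$, which by \cref{prop:actionL} coincides with $e_{-\alpha}(\mA_\alpha)$ and is precisely the complement of the $e_\alpha$-kernel inside $\mA_0$. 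This explicit verification, though mechanical, is where the bulk of the bookkeeping will sit.
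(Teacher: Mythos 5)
Parts~(i) and~(iii) and the ``therefore'' step of~(ii) are fine (your route to~(i) via $e_\alpha \cdot w \in \mA_{2\alpha} = 0$ is even a little cleaner than the paper's explicit use of \cref{prop:actionL}). The problem is the core of~(ii): your chain of Leibniz manipulations is correct but makes no actual progress, because the identity $e_\alpha \cdot \bigl((e_{-\alpha}\cdot v) \ast w\bigr) = -(e_\alpha \cdot v)\ast(e_{-\alpha}\cdot w)$ holds tautologically (apply $e_\alpha$ by Leibniz, use $e_\alpha \cdot w = 0$ and $h_\alpha \cdot v = 0$), so ``the weight-$0$ element $(e_{-\alpha}\cdot v)\ast w$ is annihilated by $e_\alpha$'' is just a restatement of the claim to be proved. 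Everything then rests on the ``mechanical verification'' you announce but do not carry out, and that computation is heavier than you suggest: to evaluate $(e_{-\alpha}\cdot v)\ast w = \overline{\,\overline{e_{-\alpha}h''}\bullet\overline{e_\alpha h'}\,}$ you need the projections $\overline{e_{\pm\alpha}h}$ as explicit elements of $S^2(\mL)$, and \cref{lem:A} only supplies such formulas for weights in $\Lambda_0$; for weight $\alpha$ you would first have to invert the Gram matrix of $B$ on the $(n_\alpha+1)$\dash{}dimensional space spanned by $\Gamma_1\cup\Gamma_2$. As it stands the proof is incomplete.

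The paper closes exactly this point with a symmetry argument that avoids all computation, and you could adopt it to finish: the isometry $-s_\alpha$ of $\Phi$ extends to an automorphism $\tau$ of $\mL$ with $\tau(e_\alpha)=e_\alpha$, which by \cref{lem:Aaut} induces an automorphism of $(\mA,\ast,\mB)$. By \cref{prop:Aunique}, $\tau$ acts as $-1$ on every weight\dash{}$\alpha$ vector $\overline{e_\alpha h'}$ (since $\tau(h')=-h'$ for $h'\in\alpha^\perp$) and as $-1$ on $e_{-\alpha}\cdot w = -\overline{h_\alpha h}$ (since $\tau(h_\alpha)=h_\alpha$ and $\tau(h)=-h$). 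Hence $\tau$ fixes the product $(e_\alpha\cdot v)\ast(e_{-\alpha}\cdot w)$, which is however itself a weight\dash{}$\alpha$ vector and therefore negated by $\tau$; so it vanishes. This one observation replaces the entire bookkeeping step you deferred.
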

\begin{proof}
    \begin{enumerate}
        \item By \cref{prop:Aunique}, we can write $w = \overline{e_\alpha h}$ for some $h \in \mH$ with $\kappa(\alpha,h)=0$. By \cref{prop:actionL} we have
            \begin{align*}
                e_\alpha \cdot e_{-\alpha} \cdot w &= e_\alpha \cdot e_{-\alpha} \cdot \overline{e_\alpha h}, \\
                                                   &= e_\alpha \cdot -\overline{h_\alpha h}, \\
                                                   &= 2\overline{e_\alpha h}, \\
                                                   &= 2w.
            \end{align*}
        \item Recall that $s_\alpha$ is the reflection about the hyperplane orthogonal to $\alpha$.
            Now $-s_\alpha \colon \mH \to \mH \colon h \mapsto -h^{s_\alpha}$ is an automorphism of the root system $\Phi$.
            By the isomorphism theorem~\cite{Hum72}*{\S~14.2} there exists an extension $\tau: \mL \to \mL$ of $-s_\alpha$ which is an automorphism of the Lie algebra $\mL$ and such that $\tau(e_\alpha)=e_\alpha$.
            By \cref{lem:Aaut}, the automorphism $\tau$ induces an automorphism of the Frobenius algebra $(\mA,\ast,\mB)$.
            
            Due to \cref{prop:Aunique} we can write any weight vector $x \in \mA$ of weight $\alpha$ as $x=\overline{e_\alpha h'}$ for some $h' \in \mH$ with $\kappa(\alpha,h') = 0$.
            Thus we have $\tau(x) = \overline{\tau(e_\alpha) \tau(h')} = - \overline{e_\alpha h'} = -x$ for any weight vector $x\in \mA$ of weight $\alpha$.
            Now $e_\alpha \cdot v$ is a weight vector with weight $\alpha$ and thus $\tau(e_\alpha \cdot v )= - e_\alpha \cdot v$.
            As we illustrated in part~\ref{prop:prod0a:inv} we can write $e_{-\alpha} \cdot w$ as $-\overline{h_\alpha h}$ for some $h \in \mH$ with $\kappa(\alpha,h)=0$. Thus
            \[
                \tau(e_{-\alpha} \cdot w) = \tau\bigl(\overline{h_\alpha h}\bigr) = - \overline{h_\alpha h} = -e_{-\alpha} \cdot w.
            \]
            Because $\ast$ is $\mL$-equivariant, the product $(e_\alpha \cdot v) \ast (e_{-\alpha} \cdot w)$ is a weight vector of weight $\alpha$. 
            As a result
            \[
                \tau((e_\alpha \cdot v) \ast (e_{-\alpha} \cdot w)) = -(e_\alpha \cdot v) \ast (e_{-\alpha} \cdot w).
            \]
            On the other hand, because $\tau$ is an automorphism of $(\mA,\ast,\mB)$, we have
            \begin{align*}
                \tau((e_\alpha \cdot v) \ast (e_{-\alpha} \cdot w)) &= \tau(e_\alpha \cdot v) \ast \tau(e_{-\alpha} \cdot w), \\
                                                                    &= (e_\alpha \cdot v) \ast (e_{-\alpha} \cdot w).
            \end{align*}
            We conclude that $(e_\alpha \cdot v) \ast (e_{-\alpha} \cdot w) = 0$.
            It follows that
            \begin{align*}
                2 v \ast w &= v \ast (e_\alpha \cdot e_{-\alpha} \cdot w ) \\
                           &= e_\alpha \cdot (v \ast (e_{-\alpha} \cdot w )) - (e_\alpha \cdot v) \ast (e_{-\alpha} \cdot w), \\
                            &= e_\alpha \cdot (v \ast (e_{-\alpha} \cdot w )),
            \end{align*}
            by~\labelcref{prop:prod0a:inv} and the $\mL$-equivariance of $\ast$.
        \item This follows from the fact that $\mB$ is $\mL$-equivariant and \cref{lem:Bweights}. \qedhere
    \end{enumerate}
\end{proof}

In a similar fashion, we will now express the product of a zero weight vector and a vector of weight $\lambda \in \Lambda_0$ in terms of products of zero weight vectors.

\begin{proposition}%
    \label{prop:prod0l}
    Let $v,w \in \mA$ be weight vectors of respective weights 0 and $\lambda \in \Lambda_0$.
    Recall from \cref{def:charV2} that $n_\lambda$ is the number of ways to write $\lambda$ as the sum of two (orthogonal) roots.
    Write
    \[
        \epsilon_\lambda \coloneqq \frac{1}{2n_\lambda} \sum_{\substack{\alpha \in \Phi \\ \kappa(\alpha,\lambda)=2}} \overline{e_\alpha e_{-\alpha}}.
    \]
    Then
    \begin{enumerate}
        \item\label{prop:prod0l:inv} $e_{\alpha_\lambda} \cdot e_{-\alpha_\lambda} \cdot w = 2 w$,
        \item $ v \ast w = \mB(v,\epsilon_\lambda)w$,
        \item $ \mB(v,w) = 0$.
    \end{enumerate}
\end{proposition}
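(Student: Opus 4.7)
Part (1) is a direct computation via \cref{prop:actionL}. Since $\mA_\lambda$ is one-dimensional and spanned by $\overline{e_{\alpha_\lambda}e_{\beta_\lambda}}$ by \cref{prop:Aunique}, it suffices to verify the identity for $w = \overline{e_{\alpha_\lambda}e_{\beta_\lambda}}$. Applying $e_{-\alpha_\lambda}$ falls in the case $\kappa(-\alpha_\lambda, \lambda) = -2$; since $f_{\alpha_\lambda, \beta_\lambda} = 1$, this yields $e_{-\alpha_\lambda} \cdot w = -\overline{e_{\beta_\lambda} h_{\alpha_\lambda}}$. Then applying $e_{\alpha_\lambda}$ falls in the case $\kappa(\alpha_\lambda, \beta_\lambda) = 0$ and gives $-\kappa(\alpha_\lambda, h_{\alpha_\lambda}) w = -2w$, and combining the two steps yields $2w$. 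Part (3) is immediate from \cref{lem:Bweights}, since the weights $0$ and $\lambda$ do not sum to~$0$.

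For Part (2), $\mL$-equivariance of $\ast$ forces $v \ast w$ into $\mA_\lambda$, which is one-dimensional, so $v \ast w = c(v) w$ for some linear functional $c \colon \mA_0 \to \C$. The plan is to identify $c$ via the Frobenius identity
\[
    c(v)\, \mB(w, u) = \mB(v \ast w, u) = \mB(v, w \ast u)
\]
applied to $w = \overline{e_{\alpha_\lambda} e_{\beta_\lambda}}$ and $u = \overline{e_{-\alpha_\lambda} e_{-\beta_\lambda}} \in \mA_{-\lambda}$, and then to show that both $\mB(w, u)$ and $w \ast u$ carry a common prefactor $\frac{1}{2 n_\lambda}$ whose cancellation yields $c(v) = \mB(v, \epsilon_\lambda)$.

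Both quantities are evaluated via the explicit expansion of \cref{lem:A}. For $u$ one checks first, using \cref{prop:f}, that the ambiguity sign $f_{-\alpha_\lambda, -\beta_\lambda}$ enters twice and cancels, so that
\[
    u = \frac{1}{n_\lambda} \sum_{\{\alpha, \beta\} \in N_\lambda} f_{\alpha, \beta}\, e_{-\alpha} e_{-\beta}
\]
with the same coefficients as in the expansion of $w$. Then $\mB(w, u) = B(w, e_{-\alpha_\lambda} e_{-\beta_\lambda})$ because the $\mV$-part of $u$ is $B$-orthogonal to $\mA$; by orthogonality of $\kappa$ on root vectors, only the term indexed by $\{\alpha_\lambda, \beta_\lambda\}$ survives, giving $\mB(w, u) = \frac{1}{2 n_\lambda}$.

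For $w \ast u = \pi(w \bullet u)$, the product $w \bullet u$ in $S^2(\mL)$ becomes a double sum indexed by pairs in $N_\lambda$. Since distinct pairs in $N_\lambda$ must be disjoint (both sum to $\lambda$) and $\kappa(e_\mu, e_\nu) = \delta_{\mu, -\nu}$, only the diagonal terms $\{\alpha, \beta\} = \{\gamma, \delta\}$ contribute, each giving $\frac{1}{4}(e_\alpha e_{-\alpha} + e_\beta e_{-\beta})$ (and $f_{\alpha, \beta}^2 = 1$). Each root $\gamma$ with $\kappa(\gamma, \lambda) = 2$ appears in exactly one such pair, so after projection
\[
    w \ast u = \frac{1}{4 n_\lambda^2} \sum_{\gamma \colon \kappa(\gamma, \lambda) = 2} \overline{e_\gamma e_{-\gamma}} = \frac{1}{2 n_\lambda} \epsilon_\lambda.
\]
Substituting these two values into the Frobenius identity and cancelling $\frac{1}{2 n_\lambda}$ yields $c(v) = \mB(v, \epsilon_\lambda)$, as required. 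The main obstacle is the sign bookkeeping in the expansion of $u$; once this is resolved, the remainder of the argument is mechanical bilinear algebra on root vectors.
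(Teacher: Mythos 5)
Your proof is correct and follows essentially the same route as the paper: part (1) by the same two applications of \cref{prop:actionL}, part (3) by \cref{lem:Bweights}, and part (2) by pairing against a weight-$(-\lambda)$ vector and using the Frobenius identity together with the expansion from \cref{lem:A}. The only (cosmetic) difference is that you pair against $\overline{e_{-\alpha_\lambda}e_{-\beta_\lambda}}$ rather than $\overline{e_{\alpha_{-\lambda}}e_{\beta_{-\lambda}}}$, so the sign $f_{-\alpha_\lambda,-\beta_\lambda}$ cancels inside the expansion of $u$ instead of in the final ratio; your sign bookkeeping via \cref{prop:f} is right.
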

\begin{proof}
    \begin{enumerate}
        \item By \cref{prop:Aunique} we can write $w = c\overline{e_{\alpha_\lambda}e_{\beta_\lambda}}$ for some $c \in \C$.
            As a result of \cref{prop:actionL} we have
            \begin{align*}
                e_{\alpha_\lambda} \cdot e_{-\alpha_\lambda} \cdot w &= c e_{\alpha_\lambda} \cdot e_{-\alpha_\lambda} \cdot \overline{e_{\alpha_\lambda}e_{\beta_\lambda}}, \\
                                                                     &= -c e_{\alpha_\lambda} \cdot \overline{h_{\alpha_\lambda}e_{\beta_\lambda}}, \\
                                                                     &= 2c\overline{e_{\alpha_\lambda}e_{\beta_\lambda}}, \\
                                                                     &= 2w.
            \end{align*}
        \item Note that by \cref{prop:Aunique} it suffices to prove this for $w = \overline{e_{\alpha_\lambda}e_{\beta_\lambda}}$.
            Since $\ast$ is $\mL$-equivariant, the product $v \ast w$ is a weight vector of weight $\lambda$.
            Because the weight-$\lambda$-space of $\mA$ is only 1-dimensional, $v \ast w$ must be a scalar multiple of $w$.
            If $a \in \mA$ such that $\mB(w,a) \neq 0$, then this scalar multiple must be
            \[
                \frac{\mB(v \ast w,a)}{\mB(w,a)}.
            \]
            We claim that we can take $a = \overline{e_{\alpha_{-\lambda}} e_{\beta_{-\lambda}}}$.
            Recall the definition of $\mB$ from \cref{prop:defast,def:prodS2}.
            We have
            \begin{align*}
                \mB(w,a) &= \mB(\overline{e_{\alpha_\lambda}e_{\beta_\lambda}},\overline{e_{\alpha_{-\lambda}} e_{\beta_{-\lambda}}}) \\
                         &= \frac{1}{n_\lambda^2}\mB\left(\sum_{\alpha+\beta=\lambda} f_{\alpha,\beta}e_\alpha e_\beta,\sum_{\alpha+\beta=-\lambda} f_{\alpha,\beta}
                             e_\alpha e_\beta\right) \\
                         &= \frac{1}{n_\lambda^2}\sum_{\alpha+\beta=\lambda} f_{\alpha,\beta}f_{-\alpha,-\beta} \mB(e_\alpha e_\beta, e_{-\alpha} e_{-\beta}) \\
                         &= \frac{f_{-\alpha_\lambda,-\beta_\lambda}}{2n_\lambda^2} \sum_{\alpha+\beta=\lambda} f_{\alpha,\beta}^2 \\
                         &= \frac{f_{-\alpha_\lambda,-\beta_\lambda}}{2n_\lambda^2} \sum_{\alpha+\beta=\lambda} 1 \\
                         &= \frac{f_{-\alpha_\lambda,-\beta_\lambda}}{2n_\lambda}
            \end{align*}
            by \cref{lem:A}, \cref{prop:f}~\ref{prop:f:-l} and because $n_\lambda=n_{-\lambda}$.

            The triple $(\mA,\ast,\mB)$ is a Frobenius algebra, so we have $\mB(v \ast w,a)=\mB(v,w \ast a)$. We compute $w \ast a$ explicitly using the definition of $\ast$ and $\bullet$ from \cref{prop:defast} and \cref{def:prodS2}.
            We have
            \begin{align*}
                w \ast a &= \overline{e_{\alpha_\lambda}e_{\beta_\lambda}} \ast \overline{e_{\alpha_{-\lambda},\beta_{-\lambda}}}, \\
                         &= \frac{1}{n_\lambda^2} \overline{\left(\sum_{\alpha+\beta=\lambda} f_{\alpha,\beta}e_\alpha e_\beta \right) \bullet \left(\sum_{\alpha+\beta=-\lambda} f_{\alpha,\beta}e_\alpha e_\beta \right)}, \\
                         &= \frac{1}{n_\lambda^2} \sum_{\alpha+\beta=\lambda} f_{\alpha,\beta}f_{-\alpha,-\beta} \overline{(e_\alpha e_\beta \bullet e_{-\alpha} e_{-\beta})}, \\
                         &= \frac{f_{-\alpha_\lambda,-\beta_\lambda}}{4n_\lambda^2} \sum_{\alpha+\beta=\lambda} (\overline{e_\alpha e_{-\alpha}} + \overline{e_\beta e_{-\beta}}),
            \end{align*}
            once again by \cref{lem:A}, \cref{prop:f}~\ref{prop:f:-l} and because $n_\lambda=n_{-\lambda}$.
            By \cref{lem:weights}~\labelcref{lem:weights:lambdaalpha} we know that $w \ast a = \frac{f_{-\alpha_\lambda,-\beta_\lambda}}{n_\lambda} \epsilon_\lambda$.
            As a result, we have
            \begin{align*}
                v \ast w &= \frac{\mB(v, w \ast a)}{\mB(w,a)} w, \\
                         &= \mB(v,\epsilon_\lambda) w.
            \end{align*}
        \item This follows from \cref{lem:Bweights}. \qedhere
    \end{enumerate}
\end{proof}

We are now ready to ``build'' the Frobenius algebra $(\mA,\ast,\mB)$.
As a first step, we describe its underlying vector space.

\begin{definition}%
    \label{def:Avec}
    Let $\mL$, $\mH$ and $\Phi$ be as in \cref{def:stage}.
    Let $\fA$ be the direct sum of the following spaces:
        \begin{itemize}
            \item the space $\fJ$ from \cref{def:JZS};
            \item for each $\alpha \in \Phi$, a copy $\fH_\alpha$ of the subspace $\alpha^\perp \coloneqq \{h\in \mH \mid \kappa(h,\alpha)=0\}$ of~$\mH$;
            \item a vector space with basis $\{x_\lambda \mid \lambda \in \Lambda_0\}$ indexed by the set $\Lambda_0$.
        \end{itemize}
    For each $h \in \mH$, we will denote its orthogonal projection onto $\fH_\alpha$ by ${[h]}_\alpha$.
\end{definition}

\begin{proposition}%
    \label{prop:Avec}
    Let $\mA$ be as in \cref{def:A}.
    For each $\lambda \in \Lambda_0$, choose roots $\alpha_\lambda,\beta_\lambda \in \Phi$ such that $\alpha_\lambda + \beta_\lambda = \lambda$.
    Let $\theta$ be the linear map defined by
    \[
        \theta : \fA \to \mA : \begin{cases}
            j_\alpha \mapsto \overline{h_\alpha h_\alpha} \\
            {[h]}_\alpha \mapsto \overline{e_\alpha h} \\
            x_\lambda \mapsto \overline{e_{\alpha_\lambda}e_{\beta_\lambda}}
        \end{cases}
    \]
    for all $\alpha \in \Phi$, $h \in \mH$ and $\lambda \in \Lambda_0$. Then $\theta$ is an isomorphism of vector spaces.
\end{proposition}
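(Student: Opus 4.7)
The plan is to exploit the weight space decomposition with respect to the Cartan subalgebra $\mH$. By Lemma~\ref{lem:Bweights} the projection $\pi \colon S^2(\mL) \to \mA$ preserves weights, so $\mA$ splits as $\mA = \mA_0 \oplus \bigoplus_{\alpha \in \Phi} \mA_\alpha \oplus \bigoplus_{\lambda \in \Lambda_0} \mA_\lambda$, with the three types of weights described in Proposition~\ref{prop:Aunique}. On the other hand, $\fA$ is defined as a direct sum of three pieces ($\fJ$, the $\fH_\alpha$ and $\langle x_\lambda \mid \lambda \in \Lambda_0 \rangle$), and $\theta$ was set up to send each piece into the weight space of the same index. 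So it suffices to show that $\theta$ restricts to a linear isomorphism on each summand.

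For the root summands and the $\Lambda_0$-summands the work is essentially already done. Proposition~\ref{prop:Aunique} asserts that every weight vector of weight $\alpha \in \Phi$ can be \emph{uniquely} written as $\overline{e_\alpha h}$ with $h \in \alpha^\perp$; this is precisely the statement that $\fH_\alpha \to \mA_\alpha \colon [h]_\alpha \mapsto \overline{e_\alpha h}$ is a linear bijection. Similarly, $\mA_\lambda$ is one-dimensional and spanned by $\overline{e_{\alpha_\lambda} e_{\beta_\lambda}}$, and this element is nonzero because $e_{\alpha_\lambda} e_{\beta_\lambda} \notin \mV$ (as noted in the proof of Proposition~\ref{prop:f}, since $\kappa(\alpha_\lambda,\beta_\lambda)=0$). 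Hence $x_\lambda \mapsto \overline{e_{\alpha_\lambda}e_{\beta_\lambda}}$ is an isomorphism onto $\mA_\lambda$.

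The zero weight piece is where the identifications of Section~\ref{sec:0sub} pay off. Unwinding the definitions, $\theta$ restricted to $\fJ$ is the composition $\fJ \xrightarrow{\pi_J} \fA_0 \xrightarrow{\zeta^{-1}} \mA_0$: the map $\zeta$ sends $h_\alpha h_\alpha$ to $j_\alpha$ and commutes with the respective orthogonal projections (Proposition~\ref{prop:A0}\labelcref{prop:A0:iso}), and $\pi_J$ was shown to be a linear isomorphism in Proposition~\ref{prop:JA}. Both maps in this composition are isomorphisms, hence so is $\theta\restriction_\fJ \colon \fJ \to \mA_0$.

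Putting the three pieces together, $\theta$ is a linear bijection on each weight space, hence a linear isomorphism $\fA \to \mA$. As a sanity check, the total dimensions agree: $\dim \fJ = \tfrac{n(n+1)}{2}$ by Proposition~\ref{prop:DR}, each $\fH_\alpha$ contributes $n-1$, and the $x_\lambda$ contribute $|\Lambda_0|$, which reproduces exactly the character of $\mA$ computed in Proposition~\ref{prop:charA}. The only mildly subtle point is the zero weight identification, since it hinges on correctly composing the chain of isomorphisms $\mA_0 \cong \fA_0 \cong \fJ$ from Section~\ref{sec:0sub}; everything else is a direct application of Proposition~\ref{prop:Aunique}.
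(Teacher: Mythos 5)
Your proof is correct and follows essentially the same route as the paper: decompose by weight spaces, invoke Proposition~\ref{prop:Aunique} for the nonzero weights, and identify $\theta\restriction_\fJ$ as the composition of the isomorphisms from Propositions~\ref{prop:JA} and~\ref{prop:A0}. The only nuance the paper makes explicit that you leave implicit is that well-definedness of $[h]_\alpha \mapsto \overline{e_\alpha h}$ for arbitrary $h \in \mH$ rests on $\overline{e_\alpha h_\alpha}=0$, which is again Proposition~\ref{prop:Aunique}.
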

\begin{proof}
    The restriction of $\theta$ to $\fJ$ is the composition of the isomorphism from \cref{prop:JA} and the inverse of the isomorphism from \cref{prop:A0}~\ref{prop:A0:iso}. Since also $\overline{e_\alpha h_\alpha} = 0$ by \cref{prop:Aunique}, the linear map $\theta$ is well-defined. By \cref{prop:Aunique} it follows that $\theta$ is an isomorphism.
\end{proof}

Now we translate the action of $\mL$ on $\mA$ to $\fA$ using this isomorphism.
We also define bilinear maps $\fJ \times \fA \to \fA$ based on \cref{prop:prod0a,prop:prod0l}.

\begin{definition}%
    \label{def:APhiprod}
    \begin{enumerate}
        \item Transfer the action of $\mL$ on $\mA$ to $\fA$ via the isomorphism $\theta$ from \cref{prop:Avec}:
            \[
                \ell \cdot v \coloneqq \theta^{-1}(\ell \cdot \theta(v)).
            \]
            Note that is is possible to write this action down explicitly using \cref{prop:actionL}.
        \item For $\lambda \in \Lambda_0$, let $\mathfrak{e}_\lambda \coloneqq \theta^{-1}(\epsilon_\lambda)$, i.e.\,
            \[
                \mathfrak{e}_\lambda \coloneqq \frac{1}{4n_\lambda} \sum_{\substack{\alpha \in \Phi \\ \kappa(\alpha,\lambda)=2}} j_\alpha.
            \]
        \item Consider the Frobenius algebra $(\fJ,\pa,B_A)$ from \cref{def:Jast}.
            Note that for $h \in \alpha^\perp$ we have $e_{-\alpha} \cdot {[h]}_\alpha = -\theta^{-1}\bigl(\overline{h_\alpha h}\bigr) \in \fJ$ and also $\mathfrak{e}_\lambda \in \fJ$.
            Now define bilinear maps
            \begin{align*}
                \ast &\colon \fJ \times \fA \to \fA, \\
                \mB &\colon  \fJ \times \fA \to \C,
            \end{align*}
            such that $\theta(v \ast w) = \theta(v) \ast \theta(w)$ for all $v \in \fJ$ and $w \in \fA$.
            More precisely, by \cref{def:Jast,prop:prod0a,prop:prod0l} let
            \begin{align*}
                v \ast w &\coloneqq v \pa w &\text{if $w \in \fJ$}, \\
                v \ast w &\coloneqq e_\alpha \cdot (v \pa (e_{-\alpha} \cdot w)) &\text{if $w = {[h]}_\alpha$ for some $h \in \alpha^\perp$ and $\alpha \in \Phi$}, \\
                v \ast w &\coloneqq B_A(v,\mathfrak{e}_\lambda)w &\text{if $w = cx_\lambda$ for some $c \in \C$ and $\lambda \in \Lambda_0$}.
            \end{align*}
    \end{enumerate}
\end{definition}

We prove that we can uniquely extend the maps $\ast$ and $\mB$ to $\fA \times \fA$.

\begin{theorem}%
    \label{thm:extending}
    Let $\mL$ be a simple complex Lie algebra with root system $\Phi$ of type $A_n$ ($n \geq 3$), $D_n$ ($n \geq 4$) or $E_n$ ($n \in \{6,7,8\}$).
    Let $\fA$ be as in \cref{def:Avec} equipped with the $\mL$-action from \cref{def:APhiprod}.
    The maps $\ast$ and $\mB$ from \cref{def:APhiprod} uniquely extend to $\fA \times \fA$ such that $(\fA,\ast,\mB)$ is a Frobenius algebra for $\mL$.
    Moreover the isomorphism $\theta$ from \cref{prop:Avec} induces an isomorphism of Frobenius algebras for $\mL$ with the Frobenius algebra $(\mA,\ast,\mB)$ from \cref{prop:defast}.
\end{theorem}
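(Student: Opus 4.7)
The plan is to obtain the extension by transferring the Frobenius structure from $\mA$ across the linear isomorphism $\theta$ of \cref{prop:Avec}, and then to verify that the transferred structure agrees with the partial data in \cref{def:APhiprod} and that it is the unique such extension.

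For existence, I define $v \ast w \coloneqq \theta^{-1}(\theta(v) \ast \theta(w))$ and $\mB(v,w) \coloneqq \mB(\theta(v),\theta(w))$ for all $v,w \in \fA$. Since $(\mA,\ast,\mB)$ is a Frobenius algebra for $\mL$ by \cref{prop:defast} and $\theta$ intertwines the $\mL$-actions by \cref{def:APhiprod}, $(\fA,\ast,\mB)$ is automatically a Frobenius algebra for $\mL$ and $\theta$ becomes the desired isomorphism. It then remains to check compatibility with \cref{def:APhiprod}: for $v,w \in \fJ$ this is the defining identity of $\pa$ in \cref{def:Jast}; for $v \in \fJ$ and $w = {[h]}_\alpha$ it follows from \cref{prop:prod0a}~\ref{prop:prod0a:ast}; and for $v \in \fJ$ and $w = c x_\lambda$ it follows from \cref{prop:prod0l}. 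The vanishing of $\mB$ on mixed-weight pairs is \cref{prop:prod0a}~\ref{prop:prod0a:B} and the corresponding part of \cref{prop:prod0l}.

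For uniqueness, the key input is the spanning statement $\fA = \fJ + \mL \cdot \fJ$. Inspection of \cref{prop:actionL} together with \cref{prop:Aunique} shows that every weight-$\alpha$ vector of $\fA$ ($\alpha \in \Phi$) is of the form $e_\alpha \cdot j$ for some $j \in \fJ$, and every weight-$\lambda$ vector ($\lambda \in \Lambda_0$) can be reached by an iterated action $e_\alpha \cdot e_\beta \cdot j$ with $j \in \fJ$. Given this, $\mL$-equivariance forces
\[
    (\ell \cdot v) \ast w = \ell \cdot (v \ast w) - v \ast (\ell \cdot w),
\]
and analogously $\mB(\ell \cdot v, w) = -\mB(v, \ell \cdot w)$, so any $\mL$-equivariant Frobenius extension is pinned down by its restriction to $\fJ \times \fA$.

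The main obstacle is establishing the spanning claim $\fA = \fJ + \mL \cdot \fJ$ and, relatedly, ensuring that the recovery via the Leibniz rule is consistent independently of the representative $\ell \cdot v$ chosen for a given weight vector. Consistency is automatic from the existence half, so in practice the proof reduces to the root-system bookkeeping already summarized in \cref{prop:actionL}, which becomes routine once the weight-space description of \cref{prop:Aunique} is in hand.
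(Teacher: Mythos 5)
Your proposal is correct and takes essentially the same route as the paper: existence by transporting the structure of $(\mA,\ast,\mB)$ through $\theta$, and uniqueness by using $\mL$\dash{}equivariance (the Leibniz rule) to recover every product from the given data on $\fJ \times \fA$, with consistency supplied by the existence half --- the paper makes the recursion concrete via the relations $e_\alpha \cdot e_{-\alpha} \cdot {[h]}_\alpha = {[2h]}_\alpha$ and $e_{\alpha_\lambda} \cdot e_{-\alpha_\lambda} \cdot x_\lambda = 2x_\lambda$. One small correction: your spanning claim should be $\fA = \fJ + \mL\cdot\fJ + \mL\cdot\mL\cdot\fJ$ rather than $\fA = \fJ + \mL\cdot\fJ$, since the weight\dash{}$\lambda$ spaces for $\lambda \in \Lambda_0$ need two applications of root vectors, exactly as your subsequent description of iterated actions already indicates.
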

\begin{proof}
    The extensions of $\ast$ and $\mB$ must be $\mL$-equivariant. Let $\alpha \in \Phi$, $h \in \alpha^\perp$ and $v \in \fA$.
    By definition of the action of $\mL$ on $\fA$ and \cref{prop:prod0a}~\ref{prop:prod0a:inv}, we have $e_\alpha \cdot e_{-\alpha} \cdot {[h]}_\alpha = {[2h]}_\alpha$.
    Because $\ast$ and $\mB$ must be $\mL$-equivariant, we have
    \[
        {[h]}_\alpha \ast w = \frac12 e_\alpha \cdot ((e_{-\alpha} \cdot {[h]}_\alpha) \ast w) - \frac12 (e_{-\alpha} \cdot {[h]}_\alpha) \ast (e_\alpha \cdot w)
    \]
    and
    \[
        \mB({[h]}_\alpha,w) = \frac12 \mB(e_{-\alpha} \cdot {[h]}_\alpha,e_\alpha \cdot w).
    \]
    Since $e_{-\alpha} \cdot {[h]}_\alpha \in \fJ$, this uniquely extends $\ast$ and $\mB$ to $\left(\fJ \oplus \bigoplus_{\alpha \in \Phi} \fH_\alpha \right) \times \fA$ with $\fH_\alpha$ as in \cref{def:Avec}.
    Analogously, we can extend $\ast$ and $\mB$ to $\fA \times \fA$ by using \cref{prop:prod0l}~\ref{prop:prod0l:inv} which implies that $e_{\alpha_\lambda} \cdot e_{-\alpha_\lambda} \cdot x_\lambda = 2x_\lambda$.

    Now consider the Frobenius algebra $(\mA,\ast,\mB)$ from \cref{prop:defast} and the isomorphism $\theta\colon \fA \to \mA$ from \cref{prop:Avec}.
    Then also the bilinear maps
    \begin{align*}
        &\fA \times \fA \to \fA \colon (v,w) \mapsto \theta^{-1}(\theta(v) \ast \theta(w)) \\
        &\fA \times \fA \to \C \colon (v,w) \mapsto \mB(\theta(v),\theta(w))
    \end{align*}
    are $\mL$-equivariant extensions of $\ast$ and $\mB$.
    Therefore $(\fA,\ast,\mB)$ must be a Frobenius algebra isomorphic via $\theta$ with $(\mA,\ast,\mB)$.
\end{proof}

\begin{remark}
    \begin{enumerate}
        \item The proof of \cref{thm:extending} is constructive.
            It allows us to define the product recursively starting from the Frobenius algebra $(\fJ,\pa,B_A)$ and some structure constants, namely the constants $c_{\alpha,\beta}$, of the Lie algebra.
            This is much more efficient than the construction of \cref{sec:construction} where we start with the symmetric square of the Lie algebra.
        \item From this explicit construction, it follows that we can pick a basis for $\fA$ in such a way that the structure constants for the algebra $(\fA,\ast,B)$ are integers.
            This allows to define this algebra over an arbitrary field by extension of scalars.
    \end{enumerate}
\end{remark}

\section{The story of the unit}%
\label{sec:unit}

Let $(\mA,\ast,\mB)$ be the Frobenius algebra from \cref{prop:defast}.
We prove that this algebra is \textit{unital}, which means that there exists an element $\one \in \mA$, called a \textit{unit}, such that $\one \ast a = a \ast \one = a$ for all $a \in A(\Phi)$.

From \cref{def:prodS2}, we know that $S^2(\mL)$ corresponds to the symmetric operators $\mL \to \mL$ via $\zeta$.
From the definition of the product $\bullet$ it will be immediately obvious that the identity operator $\id \colon \mL \to \mL \colon \ell \mapsto \ell$ corresponds to a unit for the algebra $(S^2(\mL),\bullet)$.

\begin{definition}%
    \label{def:unitS}
    Let $\zeta$ be as in \cref{def:prodS2}.
    Since the identity operator $\id$ is a symmetric operator, it is contained in the image of $\zeta$ and we can define
    \[
        C_\mL \coloneqq \zeta^{-1}(\id).
    \]
    More explicitly, let $\{b_1,b_2,\dots,b_m\}$ be a basis of $\mL$ and let $\{b_1^\ast,b_2^\ast,\dots,b_m^\ast\}$ be the basis of $\mL$ dual to this basis with respect to the Killing form $\kappa$.
    Then
    \[
        C_\mL \coloneqq \sum_{1 \leq i \leq m} b_i b_i^\ast.
    \]
    Note that $C_\mL$ is the Casimir element of $\mL$~\cite{Hum72}*{\S~6.2}.
    Observe that
    \begin{align*}
        B(C_\mL, \ell_1 \ell_2) &= \frac{1}{2} \sum_{1 \leq i \leq m} \kappa(b_i,\ell_1)\kappa(b_i^*,\ell_2) + \frac{1}{2} \sum_{1 \leq i \leq m} \kappa(b_i,\ell_2)\kappa(b_i^*,\ell_1) \\
        &= \kappa(\ell_1,\ell_2).
    \end{align*}
    for all $\ell_1,\ell_2 \in \mL$ and $B$ as in \cref{def:prodS2}.
    Since $B$ is non-degenerate by \cref{prop:nondeg}, this also uniquely defines $C_\mL$.
\end{definition}

\begin{proposition}%
    \label{prop:unitS}
    For all $a \in S^2(\mL)$ we have $C_\mL \bullet a = a$.
\end{proposition}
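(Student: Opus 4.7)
The plan is to leverage the fact that $\zeta$ is, essentially by construction, an algebra homomorphism from $(S^2(\mL),\bullet)$ into $(\End(\mL),\bullet_{\mathrm{Jordan}})$. Indeed, the product $\bullet$ on $S^2(\mL)$ was \emph{defined} in Definition~\ref{def:prodS2} by transferring the Jordan product on symmetric operators via $\zeta$, so
\[
    \zeta(u \bullet v) = \tfrac{1}{2}\bigl(\zeta(u)\zeta(v) + \zeta(v)\zeta(u)\bigr)
\]
for all $u,v \in S^2(\mL)$. Applying this with $u = C_\mL$ and using $\zeta(C_\mL) = \id$ gives
\[
    \zeta(C_\mL \bullet a) = \tfrac{1}{2}\bigl(\id \circ \zeta(a) + \zeta(a) \circ \id\bigr) = \zeta(a),
\]
and the injectivity of $\zeta$ then yields $C_\mL \bullet a = a$. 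This is the whole argument.

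Since Definition~\ref{def:prodS2} only writes out $\bullet$ on pure tensors $\ell_1 \ell_2$ (and does not separately spell out the homomorphism property), the one concrete check to carry out is a direct verification on such tensors. Picking a basis $\{b_i\}$ of $\mL$ with dual basis $\{b_i^*\}$, I would compute, for any $\ell_1, \ell_2 \in \mL$,
\[
    C_\mL \bullet (\ell_1 \ell_2) = \sum_i b_i b_i^* \bullet \ell_1 \ell_2,
\]
expand using the formula in Definition~\ref{def:prodS2}, and collect the four resulting terms. Each term contains a sum of the form $\sum_i \kappa(b_i,\ell)b_i^*$ or $\sum_i \kappa(b_i^*,\ell)b_i$, and both of these equal $\ell$ by the defining property of a dual basis with respect to~$\kappa$. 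The four contributions then simplify to $\tfrac{1}{4}(\ell_1 \ell_2 + \ell_2 \ell_1 + \ell_1 \ell_2 + \ell_2 \ell_1) = \ell_1 \ell_2$, as desired.

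There is no real obstacle here: the statement is essentially a restatement of the fact that $\id$ is the unit of the ambient Jordan algebra $(\End(\mL),\bullet)$, transported back through the embedding $\zeta$. The only thing worth emphasising in the write-up is why $\zeta(C_\mL) = \id$, which follows either from the characterisation $B(C_\mL,\ell_1 \ell_2) = \kappa(\ell_1,\ell_2)$ already recorded in Definition~\ref{def:unitS} (combined with non-degeneracy of $B$) or directly from the dual basis expansion.
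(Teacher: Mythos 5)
Your proposal is correct, and the ``concrete check'' you describe in your second paragraph is precisely the paper's own proof: expand $C_\mL \bullet \ell_1\ell_2$ using the formula from \cref{def:prodS2} and collapse each of the four sums via the dual-basis identity $\sum_i \kappa(b_i,\ell)b_i^* = \ell$. Your conceptual framing --- that $\zeta$ is by construction a monomorphism of Jordan algebras sending $C_\mL$ to the unit $\id$ of $(\End(\mL),\bullet)$, so the claim follows from injectivity of $\zeta$ --- is also valid and arguably more illuminating, but it is not a genuinely different argument, just the same computation viewed one level up.
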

\begin{proof}
    For any $\ell_1, \ell_2 \in \mL$, we have
        \begin{align*}
            \begin{split}
                C_\mL \bullet \ell_1 \ell_2 &= \frac14 \left( \sum_i \kappa(b_i,\ell_1)b_i^*\ell_2 + \sum_i \kappa(b_i^*,\ell_1)b_i \ell_2 \right. \\& \left.\qquad + \sum_i \kappa(b_i,\ell_2) b_i^*\ell_1 + \sum_i \kappa(b_i^*,\ell_2)b_i\ell_1 \right),
            \end{split} \\
            &= \ell_1 \ell_2. \qedhere
        \end{align*}
\end{proof}

Next, we proof that $C_\mL \in \mA$ and that $C_\mL$ is also a unit for $(\mA,\ast)$.

\begin{proposition}%
    \label{prop:unitA}
    \begin{enumerate}
        \item\label{prop:unitA:trivial} We have $\ell \cdot C_\mL = 0$ for all $\ell \in \mL$.
        \item\label{prop:unitA:unitA} We have $C_\mL \in \mA$ and $C_\mL \ast a = a$ for all $a \in \mA$.
    \end{enumerate}
\end{proposition}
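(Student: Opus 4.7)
The plan is to dispatch both parts using the explicit description of $C_\mL$ in \cref{def:unitS} together with the pointwise formula $B(C_\mL, \ell_1\ell_2) = \kappa(\ell_1,\ell_2)$ that was already observed there.

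For (i), the cleanest route is to transport the $\mL$-action on $S^2(\mL)$ through the isomorphism $\zeta$ of \cref{def:prodS2}. A short computation, using the invariance of the Killing form $\kappa$, shows that under $\zeta$ the action becomes $\ell \cdot f = \ad_\ell \circ f - f \circ \ad_\ell$ on $\Hom(\mL,\mL)$. Since $\zeta(C_\mL) = \id_\mL$ commutes with every $\ad_\ell$, we get $\ell \cdot C_\mL = 0$. (Alternatively, this is the familiar fact that the Casimir element is $\mL$-invariant; one could also verify it directly on the definition $C_\mL = \sum_i b_i b_i^\ast$ by expanding $\ell \cdot (b_i b_i^\ast)$ and reassembling with respect to a $\kappa$-orthonormal weight basis.)

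For (ii), I first show $C_\mL \in \mA$, i.e.\ $B(C_\mL, v) = 0$ for every $v \in \mV$. By~(i), $C_\mL$ is in particular $\mH$-fixed and so has weight $0$. \cref{lem:Bweights} then reduces the check to $v$ in the weight-$0$ subspace of $\mV$. By \cref{prop:gensetV}, this subspace is spanned by $\Gamma_0 = \{2 e_\alpha e_{-\alpha} - h_\alpha h_\alpha \mid \alpha \in \Phi\}$, and using $B(C_\mL,\ell_1\ell_2) = \kappa(\ell_1,\ell_2)$ with the values recorded in \cref{def:stage}, we find
\[
    B(C_\mL,\, 2e_\alpha e_{-\alpha} - h_\alpha h_\alpha) \;=\; 2\kappa(e_\alpha,e_{-\alpha}) - \kappa(h_\alpha,h_\alpha) \;=\; 2 - 2 \;=\; 0.
\]
Hence $C_\mL \in \mA$. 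For the unit property, for any $a \in \mA$ the definition of $\ast$ from \cref{prop:defast}, \cref{prop:unitS}, and the fact that $a \in \mA$ is fixed by $\pi$ combine to give
\[
    C_\mL \ast a \;=\; \overline{C_\mL \bullet a} \;=\; \overline{a} \;=\; a.
\]

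I do not foresee any real obstacle: both statements reduce to one very short weight computation and the cited general formulas. The only thing to be careful about is the reduction to weight $0$ in the verification that $C_\mL \perp \mV$, but this is exactly what \cref{lem:Bweights} delivers once (i) is in place.
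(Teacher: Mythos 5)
Your proof is correct, but it takes a somewhat different route from the paper's in both parts. For~(i), the paper argues dually: it shows $B(\ell \cdot C_\mL, \ell_1\ell_2) = 0$ for all $\ell_1,\ell_2$ using the $\mL$\dash{}equivariance of $B$ and the identity $B(C_\mL,\ell_1\ell_2)=\kappa(\ell_1,\ell_2)$, and then concludes by non-degeneracy of $B$; you instead transport the action through $\zeta$ to the commutator action on $\Hom(\mL,\mL)$ and note that $\id_\mL$ is central. Both are sound (your formula $\ell\cdot f = \ad_\ell\circ f - f\circ\ad_\ell$ does check out against \cref{def:prodS2} using the invariance of $\kappa$), and yours makes the ``Casimir is invariant'' content more transparent, at the cost of a small verification the paper avoids. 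For the membership $C_\mL \in \mA$ in~(ii), the paper only computes $B(C_\mL, e_\alpha e_\alpha)=\kappa(e_\alpha,e_\alpha)=0$ on the generators of $\mV$ and then propagates this to all of $\mV$ using part~(i) and the equivariance of $B$ (the orthogonal complement of $C_\mL$ is an $\mL$\dash{}submodule containing $e_\omega e_\omega$); you instead use part~(i) to place $C_\mL$ in weight $0$, invoke \cref{lem:Bweights} to reduce to $\mV_0$, and check orthogonality on the explicit spanning set $\Gamma_0$ from \cref{prop:gensetV}. Your version leans on the explicit description of $\mV_0$, whereas the paper's needs only the single generator $e_\omega e_\omega$; both computations ($2\kappa(e_\alpha,e_{-\alpha})-\kappa(h_\alpha,h_\alpha)=0$ versus $\kappa(e_\alpha,e_\alpha)=0$) are equally short. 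The final step $C_\mL \ast a = \overline{C_\mL\bullet a}=\overline{a}=a$ is identical to the paper's.
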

\begin{proof}
    \begin{enumerate}
        \item For any $\ell,\ell_1,\ell_2 \in \mL$ we have
            \begin{align*}
                B(\ell \cdot C_\mL,\ell_1\ell_2) &= B(C_\mL, \ell \cdot \ell_1\ell_2 ) \\
                                                 &= B(C_\mL, [\ell,\ell_1]\ell_2+\ell_1[\ell,\ell_2]) \\
                                                 &= \kappa([\ell,\ell_1],\ell_2)+\kappa(\ell_1,[\ell,\ell_2]) \\
                                                 &= 0
            \end{align*}
            because $B$ and $\kappa$ are $\mL$-equivariant.
            By \cref{prop:nondeg}, the bilinear form $B$ is non-degenerate and thus $\ell \cdot C_\mL = 0$ for all $\ell \in \mL$.
        \item For any $\alpha \in \Phi$, we have $B(C_\mL,e_\alpha e_\alpha) = \kappa(e_\alpha,e_\alpha) = 0$.
            Recall that $\mV$ is the $\mL$-representation generated by the elements $e_\alpha e_\alpha$.
            By~\ref{prop:unitA:trivial}, we have $B(C_\mL,v)=0$ for all $v \in \mV$ and thus $C_\mL \in \mA$.

            It follows from the definition of $\ast$ (see \cref{prop:defast}) and \cref{prop:unitS} that $C_\mL \ast a = a$ for all $a \in \mA$. \qedhere
    \end{enumerate}
\end{proof}

We transfer the unit $C_\mL$ from the algebra $(\mA,\ast)$ to the algebra $(\fA,\ast)$ via the isomorphism $\theta$ from \cref{thm:extending}.

\begin{definition}%
    \label{def:one}
    Recall $\fA$ from \cref{def:Avec} and the isomorphism $\theta$ from \cref{thm:extending}.
    Write
    \[
        \one = \theta^{-1}(C_\mL).
    \]
\end{definition}

\begin{theorem}
    Let $(\fA,\ast,\mB)$ be as in \cref{thm:extending} and $\one$ as in \cref{def:one}.
    Then $\one$ is a unit for the algebra $(\fA,\ast)$.
\end{theorem}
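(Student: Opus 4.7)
The plan is to observe that this statement is an immediate consequence of the results already established, so the proof essentially reduces to transporting a unit along an isomorphism. By \cref{thm:extending}, the linear map $\theta \colon \fA \to \mA$ is an isomorphism of Frobenius algebras, i.e., it satisfies $\theta(v \ast w) = \theta(v) \ast \theta(w)$ for all $v,w \in \fA$. By \cref{prop:unitA}~\labelcref{prop:unitA:unitA}, the element $C_\mL \in \mA$ is a unit for the algebra $(\mA,\ast)$.

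Therefore, for any $v \in \fA$, I would compute
\[
    \theta(\one \ast v) = \theta(\one) \ast \theta(v) = C_\mL \ast \theta(v) = \theta(v),
\]
and symmetrically $\theta(v \ast \one) = \theta(v) \ast C_\mL = \theta(v)$, using the commutativity of $\ast$ from the Frobenius algebra structure. Since $\theta$ is an isomorphism, hence injective, this forces $\one \ast v = v \ast \one = v$ for all $v \in \fA$, which is exactly the claim. No genuine obstacle arises: all the work was already done in establishing \cref{prop:unitA} (where $C_\mL$ is verified to lie in $\mA$ and act as the identity) and \cref{thm:extending} (where $\theta$ is shown to intertwine the products).
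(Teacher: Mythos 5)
Your proposal is correct and matches the paper's argument exactly: the paper's proof is simply ``This follows immediately from \cref{def:one,prop:unitA,thm:extending}'', and you have spelled out the same transport-along-$\theta$ computation in detail. Nothing further is needed.
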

\begin{proof}
    This follows immediately from \cref{def:one,prop:unitA,thm:extending}.
\end{proof}

Note that by \cref{prop:unitA}~\ref{prop:unitA:trivial} the element $\one$ is contained in the zero weight space of $\fA$, this is $\fJ$.
We can write down $\one$ explicitly as a linear combination of the generating set $\{j_\alpha \mid \alpha \in \Phi^+\}$ for $\fJ$.
First, we prove the following lemma.

\begin{lemma}%
    \label{lem:sumj}
    Let $\beta \in \Phi$ and let $r$ be the number of positive roots $\alpha \in \Phi^+$ such that $\kappa(\beta,\alpha)= \pm 1$. Then
    \[
        \sum_{\alpha \in \Phi^+} j_\alpha = \frac{4+r}{2}\id_\mH
    \]
    where $\id_\mH \colon \mH \to \mH \colon h \mapsto h$.
\end{lemma}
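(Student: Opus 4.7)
The plan is to observe that $T \coloneqq \sum_{\alpha \in \Phi^+} j_\alpha$ commutes with the natural $W$-action on $\mH$, apply Schur's lemma, and then pin down the scalar by pairing $T(h_\beta)$ with $h_\beta$ via $\kappa$.

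First I would rewrite the sum over positive roots as $T = \frac12 \sum_{\alpha \in \Phi} j_\alpha$, which is allowed because $j_\alpha = j_{-\alpha}$ (noted in \cref{def:JZS}). To see that $T$ is $W$-equivariant, I would verify the pointwise identity $w \circ j_\alpha \circ w^{-1} = j_{w(\alpha)}$ for all $w \in W$ and $\alpha \in \Phi$: indeed, for $h \in \mH$,
\[
    (w \circ j_\alpha \circ w^{-1})(h) = w\bigl(\kappa(w^{-1}h, h_\alpha)\,h_\alpha\bigr) = \kappa(h, w(h_\alpha))\,w(h_\alpha) = j_{w(\alpha)}(h),
\]
using that $W$ preserves $\kappa$ and that $w(h_\alpha) = h_{w(\alpha)}$. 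Summing over all $\alpha \in \Phi$ and using that $W$ permutes $\Phi$, we get $w \circ T \circ w^{-1} = T$.

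Next, since $\mL$ is simple, the root system $\Phi$ is irreducible, and so $\mH$ is an irreducible $W$-representation (see \cite{Hum72}*{\S 10.4}). Schur's lemma then forces $T = c \cdot \id_\mH$ for some $c \in \C$.

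To evaluate $c$, I would compute $\kappa(T(h_\beta), h_\beta)$ in two ways. On the one hand it equals $c \cdot \kappa(h_\beta, h_\beta) = 2c$. On the other hand, using the definition of $j_\alpha$ and $\kappa(h_\alpha,h_\beta)=\kappa(\alpha,\beta)$,
\[
    \kappa(T(h_\beta), h_\beta) = \sum_{\alpha \in \Phi^+} \kappa(h_\beta, h_\alpha)\,\kappa(h_\alpha, h_\beta) = \sum_{\alpha \in \Phi^+} \kappa(\alpha, \beta)^2.
\]
Because $\Phi$ is simply laced, the table in \cref{def:stage} shows that $\kappa(\alpha,\beta) \in \{-2,-1,0,1,2\}$, with $\kappa(\alpha,\beta) = \pm 2$ only when $\alpha = \pm\beta$. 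Assuming without loss of generality (using $j_\alpha = j_{-\alpha}$) that $\beta \in \Phi^+$, the term $\alpha = \beta$ contributes $4$, the $r$ positive roots $\alpha$ with $\kappa(\alpha,\beta) = \pm 1$ each contribute $1$, and all other positive roots contribute $0$. Hence $2c = 4 + r$, giving $c = \tfrac{4+r}{2}$ as claimed. There is no real obstacle; the only subtle point is the initial observation that $W$ acts irreducibly on $\mH$, which is a standard fact.
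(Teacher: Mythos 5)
Your proof is correct, but it takes a genuinely different route from the paper's. The paper works entirely inside the Frobenius algebra $(\fJ,\bullet,B)$: by \cref{prop:DR} one has $B(j_\alpha,j_\beta)=\kappa(\alpha,\beta)^2$, the form $B$ is non-degenerate on $\fJ$, and $\fJ$ is exactly the space of all symmetric endomorphisms of $\mH$ (so in particular $\id_\mH\in\fJ$); the lemma then follows because $\sum_{\alpha\in\Phi^+}j_\alpha$ and $\tfrac{4+r}{2}\id_\mH$ pair identically with every $j_\beta$, and the $j_\beta$ span $\fJ$. You instead prove $W$-equivariance of the sum, invoke Schur's lemma on the reflection representation, and evaluate the resulting scalar by a single pairing; the numerical computation at the end, $\sum_{\alpha\in\Phi^+}\kappa(\alpha,\beta)^2=4+r$, is the same in both arguments. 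Your version is more self-contained and makes explicit the $W$-invariance that the paper exploits elsewhere (cf.\ \cref{def:Waction,prop:piW}), while the paper's version reuses machinery it has already established and avoids any appeal to irreducibility. One small point to tighten: Schur's lemma over $\C$ needs the \emph{complexified} reflection representation $\mH$ to be irreducible, whereas \cite{Hum72}*{\S~10.4} proves irreducibility of $W$ on the real span of $\Phi$. Absolute irreducibility does hold --- for instance, any $W$-equivariant endomorphism commutes with each $s_\alpha$ and hence preserves its $(-1)$-eigenspace $\C h_\alpha$, so it acts as a scalar on each coroot, and connectedness of the Dynkin diagram forces all these scalars to coincide --- but this step deserves a word. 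It is a cosmetic issue, not a gap.
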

\begin{proof}
    Since $\Phi$ is irreducible and simply laced, the value of $r$ is independent of the choice of $\beta$.
    It follows from \cref{prop:DR} that
    \[
        B \Bigl( \sum_{\alpha \in \Phi^+} j_\alpha , j_\beta \Bigr)=  4 + r = B \Bigl( \frac{4+r}{2}\id_\mH , j_\beta \Bigr)
    \]
    for all $\beta \in \Phi^+$.
    Because $B$ is non-degenerate on $\fJ$ by \cref{prop:DR}, we have indeed $\sum_{\alpha \in \Phi^+} j_\alpha = \frac{4+r}{2}\id_\mH$.
\end{proof}

\begin{remark}%
    \label{rem:r}
    Note that $r = 2n_\alpha$ for all $\alpha \in \Phi$ where $n_\alpha$ is as in \cref{def:charV2} because $\Phi$ is simply laced.
\end{remark}

\begin{proposition}
    We have
    \[
        \one = \frac{6+r}{2} \id_\mH.
    \]
\end{proposition}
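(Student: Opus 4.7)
The plan is to compute $C_\mL$ inside $\fS_0$ explicitly using the Chevalley basis, then unravel the identification of $\fA_0$ with $\fJ$ via the projection $\pi_J$ and the isomorphism $\zeta_S$.

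First I would write the Casimir element from \cref{def:unitS} using a basis of $\mL$ consisting of a $\kappa$-orthonormal basis $\{h_i\}$ of $\mH$ together with the Chevalley basis vectors $e_\alpha$ for $\alpha \in \Phi$. Since $\kappa(e_\alpha,e_{-\alpha})=1$ and $\kappa(e_\alpha,e_\beta)=0$ otherwise, this yields
\[
    C_\mL = \sum_{i} h_i h_i^* + \sum_{\alpha \in \Phi} e_\alpha e_{-\alpha} = \sum_{i} h_i h_i^* + 2\sum_{\alpha \in \Phi^+} e_\alpha e_{-\alpha}.
\]
Applying $\zeta_S$ from \cref{prop:S} and using that $\zeta$ sends $\sum_i h_i h_i^* \in S^2(\mH)$ to the identity endomorphism $\id_\mH \in \fJ$ (viewed as an operator on $\mL$ that vanishes on root spaces), we obtain
\[
    \zeta_S(C_\mL) = \id_\mH + 2 \sum_{\alpha \in \Phi^+} z_\alpha \in \fS_0 = \fJ \oplus \fZ.
\]

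Next I would rewrite this expression using $2z_\alpha = v_\alpha + j_\alpha$ (the defining identity $v_\alpha = 2z_\alpha - j_\alpha$ from \cref{def:A0}) to move back into $\fJ \oplus \fV$. This gives
\[
    \zeta_S(C_\mL) = \id_\mH + \sum_{\alpha \in \Phi^+} j_\alpha + \sum_{\alpha \in \Phi^+} v_\alpha.
\]
By \cref{lem:sumj}, $\sum_{\alpha \in \Phi^+} j_\alpha = \frac{4+r}{2}\id_\mH$, hence
\[
    \zeta_S(C_\mL) = \frac{6+r}{2}\id_\mH + \sum_{\alpha \in \Phi^+} v_\alpha.
\]

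Finally, I would observe that $C_\mL \in \mA$ by \cref{prop:unitA}, so $\zeta_S(C_\mL) \in \fA_0$. Since $\sum_{\alpha \in \Phi^+} v_\alpha \in \fV$, and $\fS_0 = \fA_0 \oplus \fV$ is an orthogonal decomposition with respect to $B$, applying the projection $\pi \colon \fS_0 \to \fA_0$ to both sides of the last displayed equation kills the $\fV$-summand and leaves
\[
    \pi\!\left(\frac{6+r}{2}\id_\mH\right) = \zeta_S(C_\mL).
\]
In other words, $\pi_J\!\left(\tfrac{6+r}{2}\id_\mH\right) = \zeta_S(C_\mL)$, and chasing this through the identifications $\mA_0 \xrightarrow{\zeta_S} \fA_0 \xleftarrow{\pi_J} \fJ$ together with $\theta|_\fJ$ (cf.\ \cref{def:Jast,prop:Avec}) yields $\theta^{-1}(C_\mL) = \frac{6+r}{2}\id_\mH$, which is the claim. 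No step is really the main obstacle; the entire argument is a bookkeeping exercise in tracking the Casimir through the sequence of isomorphisms, with the only non-trivial input being \cref{lem:sumj}.
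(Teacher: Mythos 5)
Your proposal is correct and is essentially the paper's own argument: both decompose the Casimir over the same basis, use \cref{lem:sumj} and the membership $C_\mL \in \mA$ from \cref{prop:unitA} to absorb the projection, and exploit the relation between $e_\alpha e_{-\alpha}$ and $h_\alpha h_\alpha$ modulo $\mV$ (which you phrase as $2z_\alpha = j_\alpha + v_\alpha$ in $\fS_0$, while the paper writes $2\overline{e_\alpha e_{-\alpha}} = \overline{h_\alpha h_\alpha}$ in $S^2(\mL)$). The only difference is the trivial one of carrying out the bookkeeping on the $\fS_0$ side of $\zeta$ rather than in $S^2(\mL)_0$.
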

\begin{proof}
    Let $v_1,\dots,v_n$ be an orthonormal basis for the Cartan subalgebra $\mH$ of $\mL$ with respect to the Killing form $\kappa$.
    Then $\{v_1,\dots,v_n\} \cup \{e_\alpha \mid \alpha \in \Phi\}$ is a basis for $\mL$.
    Note that for the dual basis, we have $v_i^* = v_i$ for all $i$ and $e_\alpha^* = e_{-\alpha}$ for all $\alpha \in \Phi$.
    Therefore we can write $C_\mL$ as
    \[
        C_\mL = \sum_{i} v_i v_i + \sum_{\alpha \in \Phi} e_\alpha e_{-\alpha}.
    \]
    By \cref{lem:sumj}
    \[
        \sum_i v_i v_i = \zeta^{-1}(\id_\mH) = \frac2{4+r} \sum_{\alpha \in \Phi^+} \zeta^{-1}(j_\alpha) = \frac2{4+r} \sum_{\alpha \in \Phi^+} h_\alpha h_\alpha.
    \]
    Recall the projection $\pi \colon S^2(\mL) \to \mA \colon v \mapsto \overline{v}$ from \cref{def:A}.
    By \cref{prop:gensetV} we have $2\overline{e_\alpha e_{-\alpha}} = \overline{h_\alpha h_\alpha}$.
    As a result
    \[
        \sum_{\alpha \in \Phi} \overline{e_\alpha e_{-\alpha}} = \sum_{\alpha \in \Phi^+} \overline{h_\alpha h_{\alpha}}.
    \]
    We have $C_\mL \in \mA$ by \cref{prop:unitA}~\ref{prop:unitA:unitA}, so $C_\mL = \overline{C_\mL}$. Thus
    \[
        C_\mL = \left( \frac2{4+r} + 1 \right) \sum_{\alpha \in \Phi^+} \overline{h_\alpha h_\alpha} = \frac{6+r}{4+r} \theta\Bigl(\sum_{\alpha \in \Phi^+} j_\alpha \Bigr).
    \]
    The statement now follows from \cref{lem:sumj}.
\end{proof}

\section{Decomposition algebras}%
\label{sec:decompositionalgebras}
In the following sections, we will give our algebra $(\fA,\ast)$ the structure of a decomposition algebra.
Decomposition algebras have only recently been introduced in~\cite{DMPSVC19} as a generalization of axial algebras.
We repeat the definition here.
The main idea is that the algebra decomposes (as a vector space) into many decompositions $A=\bigoplus_{x \in X} A_x$, each of which is indexed by the same set $X$.
The parts $A_x$ of each of these decompositions multiply according to a fixed fusion law.
More precisely, the fusion law will tell us when $A_x A_y$ has a non-zero component in $A_z$ for $x,y,z \in X$.

\begin{definition}[\cite{DMPSVC19}*{\S~2}]
    \begin{enumerate}
        \item A fusion law is a pair $(X,\fus)$ where $X$ is a set and $\fus$ is a map $X \times X \to 2^X$, where $2^X$ denotes the power set of $X$.
        \item An element $\funit \in X$ is called a unit for the fusion law $(X,\fus)$ if $\funit \fus x \subseteq \{x\}$ and $x \fus \funit \subseteq \{x\}$ for all $x \in X$.
        \item Let $(X,\fus)$ and $(Y,\fus)$ be fusion laws. The product of these fusions laws is the fusion law $(X \times Y,\fus)$ given by the rule
        \[
            (x_1,y_1) \fus (x_2,y_2) = \{(x,y) \mid x \in x_1 \fus x_2, y \in y_1 \fus y_2 \}
        \]
        for all $x_1,x_2 \in X$ and $y_1,y_2 \in Y$.
    \end{enumerate}
\end{definition}

An important class of fusion laws comes from abelian groups.

\begin{definition}[{\cite{DMPSVC19}*{Definitions 2.10 and 3.1}}]
    Let $G$ be an abelian group.
    \begin{enumerate}
        \item Let
            \[
                \fus : G \times G \mapsto 2^G : (g,h) \mapsto \{gh\}.
            \]
            Then we call $(G,\fus)$ the group fusion law of $G$.
        \item Suppose that $(X,\fus)$ is an arbitrary fusion law.
            A $G$-grading of $(X,\fus)$ is a map $\xi : X \mapsto G$ such that
            \[
                \xi(x \fus y) \subseteq \xi(x) \fus \xi(y),
            \]
            for all $x,y \in X$.
    \end{enumerate}
\end{definition}

We are ready to formulate the definition of a decomposition algebra.

\begin{definition}[\cite{DMPSVC19}*{\S~4}]
    Let $k$ be a field and $\mF = (X,\fus)$ be a fusion law.
    \begin{enumerate}
        \item An $\mF$-decomposition of a $k$-algebra $A$ (not assumed to be commutative, associative or unital) is a direct sum decomposition $A = \bigoplus_{x \in X} A_x$ (as a vector space) indexed by the set $X$ such that $A_x A_y \subseteq \bigoplus_{z \in x \fus y} A_z$. For the sake of readability, we will denote $\bigoplus_{z \in Z} A_z$ by $A_Z$ for any $Z \subseteq X$.
        \item An $\mF$-decomposition algebra is a triple $(A,\mI,\Omega)$ where $A$ is a $k$-algebra, $\mI$ is an index set and $\Omega$ is a tuple of decompositions $A = \bigoplus A_x^i$ indexed by the set $\mI$.
    \end{enumerate}
\end{definition}

There is a close connection between decomposition algebras with a graded fusion law and groups. We refer to~\cite{DMPSVC19}*{\S~6} for more details.

\begin{definition}
    Let $G$ be an abelian group and let $(X,\fus)$ be a $G$-graded fusion law with grading map $\xi$.
    Let $(A,\mI,\Omega)$ be an $(X,\fus)$-decomposition algebra over a field~$k$. For each linear character $\chi \in \Hom(G,k^\times)$ and each $i \in \mI$, we define an automorphism of~$A$ as follows:
    \[
        \tau_{i,\chi} : A \to A : a \mapsto \chi(\xi(x))a \quad \text{for all $x \in X$ and all $a \in A_x^i$.}
    \]
    Since $A = \bigoplus_x A_x^i$, this determines $\tau_{i,\chi}$, and the fact that this map is an automorphism of $A$ follows from the fusion law and its $G$-grading.
    The subgroup of $\Aut(A)$ generated by all $\tau_{i,\chi}$ for $i \in \mI$ and all $\chi \in \Hom(G,k^\times)$ is called the Miyamoto group of the decomposition algebra (with respect to the grading $\xi$).
\end{definition}

The definition of decomposition algebras originates from the theory of axial algebras. These are decomposition algebras where the decomposition are given by eigenspace decompositions of operators of the form
\[
    \ad_a : A \to A : b \mapsto ab
\]
for certain elements $a \in A$, called axes.
Axial decomposition algebras fulfill the role of axial algebras within the framework of decomposition algebras.
They are more general than axial algebras since we only demand that these operators $\ad_a$ act as a scalar on each part of the decomposition, allowing the possibility that some of these scalars coincide.

\begin{definition}[{\cite{DMPSVC19}*{\S 5}}]
    Let $\mF = (X , \fus)$ be a fusion law with a distinguished unit $\funit \in X$.
    \begin{enumerate}
        \item Let $\bigoplus_{x \in X} A_x$ be an $\mF$-decomposition of a $k$-algebra $A$. We call a nonzero element $a \in A_{\funit}$ an axis for this decomposition if there exist scalars $\nu_x$ for each $x \in X$ such that
            \[
                ab = \nu_x b
            \]
            for all $b \in A_x$. The map $\nu : X \mapsto k : x \mapsto \nu_x$ is called the evaluation map of the axis.
        \item An axial decomposition algebra with evaluation map $\nu : X \mapsto k : x \mapsto \nu_x$ is a quadruple $(A,\mI,\Omega,\ax)$ such that $(A,\mI,\Omega)$ is a decomposition algebra and $\ax : \mI \to A$ is a map such that for each $i \in \mI$, $\ax_i \coloneqq \ax(i)$ is an axis for the decomposition $A=\bigoplus_{x \in X}A_x^i$ with evaluation map $\nu$.
    \end{enumerate}
\end{definition}

We end this section by providing a procedure to obtain a decomposition algebra out of an algebra on which a group or Lie algebra acts by automorphisms.

\begin{lemma}%
    \label{lem:dec}
    Let $G$ be a finite group or complex semisimple Lie algebra.
    Let $A$ be an algebra for $G$, i.e., a complex $G$-representation equipped with a $G$-equivariant bilinear product.
    Let $A=\bigoplus_{x \in X} A_x$ be the decomposition of $A$ into $G$-isotypic components.
    This means that for each $x \in X$ we have $A_x = nW_x \neq 0$ for some $n \in \mathbb{N} \setminus \{0\}$ and some irreducible $G$-representation $W_x$ and $W_x \ncong W_y$ for $x \neq y$.
    Define
    \[
        \fus \colon X \times X \to 2^X \colon (x,y) \mapsto \{z \in X \mid \Hom_G(W_x \otimes W_y , W_z) \neq 0 \}
    \]
    where $\Hom_G$ stands for the space of homomorphisms of $G$-representations.
    Then $\bigoplus_{x \in X} A_x$ is an $(X,\fus)$\dash{}decomposition of $A$.
\end{lemma}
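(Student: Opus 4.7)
The plan is to show that for any $x,y \in X$, the product $A_x \cdot A_y$ (meaning the linear span of $\{a \cdot b \mid a \in A_x, b \in A_y\}$) is contained in $\bigoplus_{z \in x \fus y} A_z$. The key ingredient is the $G$-equivariance of the product together with Schur's lemma; the finite-group and semisimple Lie algebra cases are treated identically because in both cases every finite-dimensional complex representation is completely reducible.

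First I would reformulate the multiplication as a $G$-equivariant linear map $\mu_{x,y} \colon A_x \otimes A_y \to A$, obtained from the bilinear product on $A$. Since the product is $G$-equivariant, so is $\mu_{x,y}$. Writing $A_x \cong n_x W_x$ and $A_y \cong n_y W_y$ as $G$-representations (with $n_x, n_y \in \mathbb{N} \setminus \{0\}$), we get
\[
    A_x \otimes A_y \;\cong\; n_x n_y \,(W_x \otimes W_y)
\]
as $G$-representations.

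Next, decompose $W_x \otimes W_y$ into irreducible $G$-representations: $W_x \otimes W_y \cong \bigoplus_{z \in X} m_{xy}^z W_z$, where $m_{xy}^z = \dim \Hom_G(W_x \otimes W_y, W_z)$ by Schur's lemma. Hence every irreducible constituent of $A_x \otimes A_y$ is some $W_z$ with $m_{xy}^z > 0$, i.e., with $z \in x \fus y$ according to the definition of $\fus$. The image $\mu_{x,y}(A_x \otimes A_y)$ is then a $G$-subrepresentation of $A$ whose irreducible constituents all lie among $\{W_z \mid z \in x \fus y\}$.

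Finally, I would use the fact that the isotypic decomposition $A = \bigoplus_{x \in X} A_x$ is canonical: for any $G$-subrepresentation $U \leq A$, the $W_z$-isotypic component of $U$ lies in $A_z$, and $U$ equals the direct sum of its isotypic components. Applied to $U = \mu_{x,y}(A_x \otimes A_y)$, this gives
\[
    A_x \cdot A_y \;=\; \mu_{x,y}(A_x \otimes A_y) \;\subseteq\; \bigoplus_{z \in x \fus y} A_z,
\]
which is the required fusion condition. The only mildly delicate point is to ensure that $x \fus y$ is correctly interpreted as a subset of $X$ (i.e., those isotypic labels with nonzero Hom space), and that complete reducibility is available; no real computation is involved.
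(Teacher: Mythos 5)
Your argument is correct: $G$-equivariance of the product viewed as a map $A_x \otimes A_y \to A$, complete reducibility (Maschke for finite groups, Weyl for complex semisimple Lie algebras), Schur's lemma for the multiplicities in $W_x \otimes W_y$, and the canonicity of the isotypic decomposition together give exactly the required containment $A_x \cdot A_y \subseteq \bigoplus_{z \in x \fus y} A_z$. The paper does not write this out at all --- it simply cites Theorem~7.4 of the decomposition-algebras paper for the group case and declares the Lie algebra case analogous --- and your proof is precisely the standard argument underlying that citation, so there is nothing missing.
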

\begin{proof}
    The case where $G$ is a group follows from~\cite{DMPSVC19}*{Theorem 7.4}. The case where $G$ is a complex semi-simple Lie algebra is proven analogously.
\end{proof}

\begin{remark}%
    \label{rem:dec}
    Determining whether $\Hom_G(W_x \otimes W_y,W_z) \neq 0$ can be done using character theory.
    If $G$ is a finite group and $\chi_x$, $\chi_y$ and $\chi_z$ are the respective characters of $W_x$, $W_y$ and $W_z$ then $\Hom_G(W_x \otimes W_y,W_z) \neq 0$ if and only if $\langle \chi_x\chi_y ,\chi_z \rangle \neq 0$ where $\langle \; ,\; \rangle$ denotes the inner product on the space of class functions of $G$.
    A similar argument works of $G$ is a semisimple complex Lie algebra where we have to use formal characters~\cite{Hum72}*{\S 22.5}.
    For a group or semisimple Lie algebra $G$, we write $\Irr(G)$ for its set of irreducible characters.
\end{remark}

We will use this lemma to obtain a ``global decomposition'' and a class of ``local decompositions'' for our algebra.

\begin{definition}%
    \label{def:gldec}
    Let $G$ be a finite group or a complex semisimple Lie algebra.
    Let $\mI$ be an index set and $(H_i\mid i\in \mI)$ an $\mI$-tuple of conjugate subgroups or conjugate semisimple subalgebras respectively.
    Let $A$ be an algebra for $G$.
    \begin{enumerate}
        \item Apply \cref{lem:dec} to the group $G$ and the algebra $A$ to obtain a decomposition $A = \bigoplus_{x \in X_g} A_x$ of the algebra $A$. Its components are the $G$-isotypic components of $A$ as $G$-representation. We call this decomposition the global decomposition of $A$ with respect to $G$. Denote the fusion law by $(X_g,\fus)$. (The subscript ``g'' stands for ``global''. In \cref{sec:0dec,sec:dec} we will denote elements of $X_g$ by letters.)
        \item Apply \cref{lem:dec} to each $H_i$ to obtain a decomposition $A = \bigoplus_{x \in X_l} A^i_x$ for each $i \in \mI$. Note that these decompositions are all conjugate since we assume the $H_i$'s to be conjugate. Therefore, we can index these these decomposition by the same index set $X_l$.  We call these decompositions the local decompositions of $A$ with respect to $(H_i \mid i\in \mI)$. Also the corresponding fusion law $(X_l,\fus)$ does not depend on $i \in \mI$. (The subscript ``l'' stands for ``local''. In \cref{sec:0dec,sec:dec} we will denote elements of $X_l$ by numbers.)
    \end{enumerate}
\end{definition}

We combine the global decomposition with each of the local decompositions to obtain a new decomposition which is a ``refinement'' of both.

\begin{lemma}%
    \label{lem:gldec}
    Consider the situation of \cref{def:gldec}. For each $x \in X_g$, $y \in X_l$ and $i\in \mI$ let $A^i_{x,y}=A_x \cap A^i_y$.
    Let $\mF$ be the direct product of the fusion laws $(X_g,\fus)$ and $(X_l,\fus)$. Then $A=\bigoplus_{x,y} A^i_{x,y}$ is a $\mF$\dash{}decomposition of $A$ for each $i \in \mI$.
\end{lemma}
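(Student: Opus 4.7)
The plan is to verify the two defining properties of a decomposition algebra: (i) that $A = \bigoplus_{(x,y) \in X_g \times X_l} A^i_{x,y}$ is a direct sum decomposition as a vector space, and (ii) that the multiplication respects the product fusion law $\mF = (X_g,\fus) \times (X_l,\fus)$. Fix $i \in \mI$ throughout.

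For (i), I would first observe that each global isotypic component $A_x$ is $G$\dash{}invariant, hence in particular $H_i$\dash{}invariant. Therefore $A_x$ inherits a decomposition into $H_i$\dash{}isotypic components, and since the $H_i$\dash{}isotypic decomposition of the ambient algebra $A$ is $A = \bigoplus_{y \in X_l} A^i_y$, the component of $A_x$ of $H_i$\dash{}isotype $y$ is precisely $A_x \cap A^i_y = A^i_{x,y}$. Thus $A_x = \bigoplus_{y \in X_l} A^i_{x,y}$, and combining this with the global decomposition gives $A = \bigoplus_{(x,y) \in X_g \times X_l} A^i_{x,y}$.

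For (ii), take $(x_1,y_1), (x_2,y_2) \in X_g \times X_l$ and consider any product $a \cdot b$ with $a \in A^i_{x_1,y_1}$ and $b \in A^i_{x_2,y_2}$. Since $a \in A_{x_1}$ and $b \in A_{x_2}$, the global fusion law gives
\[
a \cdot b \in \bigoplus_{x \in x_1 \fus x_2} A_x.
\]
Since $a \in A^i_{y_1}$ and $b \in A^i_{y_2}$, the local fusion law for $H_i$ gives
\[
a \cdot b \in \bigoplus_{y \in y_1 \fus y_2} A^i_y.
\]
Intersecting these two conditions inside the refined decomposition from (i) yields
\[
a \cdot b \in \bigoplus_{\substack{x \in x_1 \fus x_2 \\ y \in y_1 \fus y_2}} A^i_{x,y} = \bigoplus_{(x,y) \in (x_1,y_1) \fus (x_2,y_2)} A^i_{x,y},
\]
where the last equality is exactly the definition of the product fusion law.

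I don't anticipate a real obstacle here; the only subtle point is recognizing that the global $G$\dash{}isotypic decomposition restricts cleanly to an $H_i$\dash{}invariant decomposition, so the two decompositions are compatible. Once that compatibility is noted, both the direct sum statement and the fusion-law statement follow from applying each decomposition's defining property independently and intersecting the resulting constraints.
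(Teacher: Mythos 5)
Your proof is correct and follows essentially the same route as the paper: both arguments observe that each $G$\dash{}isotypic component $A_x$, being $H_i$\dash{}invariant, decomposes into its $H_i$\dash{}isotypic components $A_x \cap A^i_y$, which yields the refined direct sum, and then deduce the product fusion law by intersecting the constraints imposed by the global and local fusion laws. Your write-up merely spells out the intersection step that the paper leaves implicit.
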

\begin{proof}
    Because each $H_i$ is a subgroup or semisimple subalgebra of $G$, we have that $\bigoplus_{y} A^i_{x,y}$ must be the decomposition of $A_x$ into $H_i$-isotypic components. This proves that $A=\bigoplus_{x,y} A^i_{x,y}$ is a decomposition of $A$. Since $A = \bigoplus_{x \in X_g} A_x$ is an $(X_g,\fus)$\dash{}decomposition of $A$ and $A = \bigoplus_{x \in X_l} A^i_x$ is an $(X_l,\fus)$\dash{}decomposition of $A$, this decomposition is an $\mF$-decomposition.
\end{proof}

\section{Decompositions of the zero weight subalgebra}%
\label{sec:0dec}

The goal of this section is to give the algebra $(\fJ,\pa)$ the structure of a decomposition algebra.
We will describe the general procedure and give the explicit decomposition for each of the possible types ($A_n$, $D_n$ or $E_n$) afterwards.
From now on we will only consider the product $\pa$ on the space $\fJ$ so will simply omit it from our notation.

\subsection{The general procedure}

Note that $\fJ$ is the zero weight space of $\fA$ as $\mL$-representation.
Therefore, the Weyl group $W$ of $\mL$ acts by automorphisms on the algebra $\fJ$; see \cref{def:Waction}.
We can use the ideas and terminology from \cref{def:gldec} to obtain a decomposition algebra.

\begin{definition}%
    \label{def:0dec}
    \begin{enumerate}
        \item For each $\alpha \in \Phi^+$ let $C_W(s_\alpha)$ be the centralizer in $W$ of the reflection $s_\alpha$. Since $\Phi$ is irreducible and simply laced, these subgroups are conjugate inside $W$.
        \item Let $\bigoplus_{x \in X_g^0} J_x$ be the global decomposition of $\fJ$ with respect to $W$, cf.\ \cref{def:gldec}.
            Denote its global fusion law by $(X_g^0,\fus)$.
            Let $\bigoplus_{x \in X_l^0} J_x^\alpha$ be the local decompositions of $\fJ$ with respect to $(C_W(s_\alpha)\mid \alpha \in \Phi^+)$.
            Write $(X_l^0,\fus)$ for the corresponding fusion law.
        \item As in \cref{lem:gldec} let $J_{x,y}^\alpha \coloneqq J_x \cap J^\alpha_y$ for $x \in X_g^0$ and $y \in X_l^0$. Let $\mF_0$ be the direct product of the fusion laws $(X_g^0,\fus)$ and $(X_l^0,\fus)$. Write $\Omega_0$ for the $\Phi^+$\dash{}tuple of $\mF_0$\dash{}decompositions $\bigoplus_{x,y} J_{x,y}^\alpha$. Then $(\fJ,\Phi^+,\Omega_0)$ is an $\mF_0$-decomposition algebra.
    \end{enumerate}
\end{definition}

We prove that $C_W(s_\alpha)$ is a reflection subgroup of $W$, this is, a subgroup generated by reflections.
This makes it easier to determine the local decompositions and their fusion law.

\begin{proposition}%
    \label{prop:refl}
    For each $\alpha \in \Phi^+$, the centralizer $C_W(s_\alpha)$ is a reflection subgroup of $W$.
    It is generated by the reflections $s_\beta$ for which $\kappa(\alpha,\beta) = 0$ or $\beta = \pm \alpha$.
    Its Dynkin diagram can be obtained by removing the neighbors of the extending node from the extended Dynkin diagram of $\Phi$.
\end{proposition}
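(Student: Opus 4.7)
The plan is to prove the two assertions in order, relying on a classical fixed-point theorem for Coxeter groups to do the bulk of the work. For the generation statement, I would first use the standard conjugation formula $w s_\alpha w^{-1} = s_{w(\alpha)}$ to see that $w$ centralises $s_\alpha$ if and only if $w(\alpha) = \pm\alpha$. Writing $W_\alpha := \{w \in W : w(\alpha) = \alpha\}$, note that $s_\alpha \notin W_\alpha$ and that any $w \in W_\alpha$ commutes with $s_\alpha$, because $w$ preserves both $\mathbb{R}\alpha$ (fixing it pointwise) and $\alpha^\perp$, while $s_\alpha$ acts as $-1$ on $\mathbb{R}\alpha$ and as the identity on $\alpha^\perp$. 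Hence $C_W(s_\alpha) = W_\alpha \times \langle s_\alpha \rangle$. Steinberg's fixed-point theorem for Coxeter groups then guarantees that $W_\alpha$ is generated by the reflections of $W$ fixing $\alpha$, namely $\{s_\beta : \beta \in \Phi,\ \kappa(\alpha,\beta) = 0\}$, giving the desired generating set.

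To extract the Dynkin diagram, I would reduce to the highest root $\alpha = \omega$, which is possible because $W$ acts transitively on $\Phi$ in the simply laced case (as recalled in \cref{def:stage}). It then suffices to produce a base of the root subsystem $\Phi \cap \omega^\perp$, since the pair $\pm\omega$ contributes a disjoint $A_1$ component. My candidate is $\Delta_\omega := \{\beta \in \Delta : \kappa(\omega,\beta) = 0\}$. For any $\gamma \in \Phi^+ \cap \omega^\perp$, writing $\gamma = \sum_i c_i \alpha_i$ with $c_i \in \mathbb{Z}_{\geq 0}$ gives
\[ 0 \;=\; \kappa(\omega,\gamma) \;=\; \sum_i c_i \kappa(\omega,\alpha_i), \]
and since $\omega$ is highest, each $\kappa(\omega,\alpha_i) \in \{0,1\}$, forcing $c_i = 0$ whenever $\kappa(\omega,\alpha_i) = 1$. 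Hence $\gamma$ is a non-negative integer combination of $\Delta_\omega$, which is linearly independent, so $\Delta_\omega$ is a base for $\Phi \cap \omega^\perp$.

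Finally, in the extended Dynkin diagram $\Delta \cup \{-\omega\}$, two distinct nodes are joined exactly when they are not orthogonal, so the neighbours of the extending node $-\omega$ are precisely the simple roots $\beta$ with $\kappa(\omega,\beta) = 1$. Removing them leaves $\Delta_\omega \cup \{-\omega\}$, with $-\omega$ now isolated and representing the $A_1$ factor $\langle s_\omega \rangle$; this is exactly the Dynkin diagram of $C_W(s_\omega)$. The main obstacle is the appeal to Steinberg's theorem: without it one would have to produce by hand a reflection decomposition of each element of $W_\alpha$, which is the genuinely combinatorial part of the argument; everything else follows cleanly from the simply laced hypothesis and the extremal property of $\omega$.
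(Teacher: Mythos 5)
Your proof is correct and follows essentially the same route as the paper: both characterize $C_W(s_\alpha)$ as $\{w \in W : w(\alpha)=\pm\alpha\}$, reduce to the highest root by transitivity of $W$ on $\Phi$, and invoke the fixed-point theorem for the stabilizer of $\alpha$ (the paper cites Humphreys, \S10.3, Lemma~B, which for a dominant point is the same statement you attribute to Steinberg). The only difference is that you spell out the Dynkin-diagram identification explicitly via the base $\Delta_\omega$ and the decomposition $C_W(s_\alpha)=W_\alpha\times\langle s_\alpha\rangle$, details the paper leaves implicit.
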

\begin{proof}
    Recall that the extending node of the Dynkin diagram corresponds to the negative of the highest root~\cite{Bou02}*{Chapter~VI, \S~3}.
    Since $W$ acts transitively on $\Phi$, it suffices to prove this when $\alpha$ is the highest root of $\Phi$.
    For $w \in W$, we have $\conj{w}{s_\alpha} = s_\alpha$ if and only if $w$ fixes the hyperplane orthogonal to $\alpha$.
    This means that $w$ must map $\alpha$ to $\pm \alpha$.
    From~\cite{Hum72}*{\S~10.3, Lemma~B} it follows that those $w$ that fix $\alpha$ must be a product of reflections $s_\beta$ with $\kappa(\alpha,\beta) = 0$.
    Since $s_\alpha(\alpha) = -\alpha$, this proves the statement.
\end{proof}

\begin{remark}%
    \label{rem:type}
    The following table gives the type of the subsystem \[\{\pm \alpha\} \cup \{\beta \in \Phi \mid \kappa(\alpha,\beta) = 0 \}\] for each of the possible types of $\Phi$.
    \[\begin{array}{cc} \toprule
        W & C_W(s_\alpha) \\ \midrule
        A_n (n \geq 3) & A_1 \times A_{n-2} \\
        D_n (n \geq 4) & D_2 \times D_{n-2} \\
        E_6 & A_1 \times A_5 \\
        E_7 & A_1 \times D_6 \\
        E_8 & A_1 \times E_7 \\ \bottomrule
    \end{array}\]
    Here we use the convention that $D_2 \cong A_1 \times A_1$ and $D_3 \cong A_3$.
\end{remark}

The fusion law $\mF_0$ is $\Z/2\Z$\dash{}graded and the corresponding Miyamoto group of the $\mF_0$\dash{}decomposition algebra from \cref{def:0dec} is isomorphic to $W$.
First we prove that the fusion law $(X_l^0,\fus)$ is $\Z/2\Z$\dash{}graded.

\begin{lemma}%
    \label{lem:grad0}
    The fusion law $(X_l^0,\fus)$ has a non-trivial $\Z/2\Z$-grading $\xi \colon X_l^0 \to \Z/2\Z$.
    Let $\chi$ be the non-trivial linear $\C$\dash{}character of $\Z/2\Z$.
    The Miyamoto map $\tau_{\alpha,\chi}$ of $(\fJ,\Phi^+,\Omega_0)$ is precisely the element $s_\alpha$ in its action on $\fJ$.
\end{lemma}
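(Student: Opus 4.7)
The plan is to exploit the fact that $s_\alpha$ lies in the centre of $C_W(s_\alpha)$. By Schur's lemma, $s_\alpha$ acts as a scalar on each irreducible complex $C_W(s_\alpha)$\dash{}representation, and since $s_\alpha^2 = 1$ that scalar must be $\pm 1$. Recalling that $X_l^0$ indexes the isomorphism classes of irreducible $C_W(s_\alpha)$\dash{}subrepresentations of $\fJ$, define
\[
    \xi \colon X_l^0 \to \Z/2\Z, \qquad \xi(x) = \begin{cases} 0 & \text{if $s_\alpha$ acts trivially on $W_x$,} \\ 1 & \text{otherwise.} \end{cases}
\]
To see that $\xi$ is a grading of the fusion law $(X_l^0,\fus)$, note that if $s_\alpha$ acts as $(-1)^{\xi(x)}$ on $W_x$ and as $(-1)^{\xi(y)}$ on $W_y$, then it acts as $(-1)^{\xi(x)+\xi(y)}$ on $W_x \otimes W_y$, and hence as the same scalar on every irreducible component $W_z$ with $z \in x \fus y$. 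This forces $\xi(z) \equiv \xi(x)+\xi(y) \pmod{2}$.

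Next I would verify non\dash{}triviality by exhibiting a $(-1)$\dash{}eigenvector of $s_\alpha$ in $\fJ$. Under the hypotheses on the rank from \cref{def:stage}, there exists $\beta \in \Phi$ with $\kappa(\alpha,\beta)=1$, so that $s_\alpha(\beta) = \beta - \alpha$ which is different from both $\beta$ and $-\beta$. Because the action of $W$ on $\fJ$ satisfies $w \cdot j_\gamma = j_{w \cdot \gamma}$ (\cref{def:Waction}) and because $j_\gamma = j_{-\gamma}$, the element $j_\beta - j_{\beta-\alpha}$ is non\dash{}zero and satisfies $s_\alpha \cdot (j_\beta - j_{\beta-\alpha}) = -(j_\beta - j_{\beta-\alpha})$. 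Hence some irreducible $C_W(s_\alpha)$\dash{}subrepresentation of $\fJ$ sits in $\xi^{-1}(1)$, so $\xi$ is non\dash{}trivial.

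Finally, for the identification of the Miyamoto automorphism, recall that $\tau_{\alpha,\chi}$ acts on $J_y^\alpha$ as the scalar $\chi(\xi(y)) = (-1)^{\xi(y)}$. By the construction of $\xi$, this scalar coincides with the eigenvalue of $s_\alpha$ on the isotypic component $J_y^\alpha$. Since these components span $\fJ$, the two linear maps agree on a basis, so $\tau_{\alpha,\chi}$ equals $s_\alpha$ in its action on $\fJ$.

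The main obstacle is really only the non\dash{}triviality check; the rest follows formally from Schur's lemma and the definitions of the grading and of the Miyamoto map. Everything else is bookkeeping that translates the general decomposition\dash{}algebra framework into the concrete setting of the Weyl group acting on $\fJ$.
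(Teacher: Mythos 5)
Your argument is correct and is essentially the paper's: the paper's proof is a one-line citation of Example~7.3 of the decomposition-algebra paper, which is precisely your Schur's-lemma observation that the central involution $s_\alpha$ of $C_W(s_\alpha)$ acts as $\pm 1$ on each isotypic component, grades the representation fusion law, and is recovered as the Miyamoto map $\tau_{\alpha,\chi}$. Your explicit $(-1)$-eigenvector $j_\beta - j_{\beta-\alpha}$ (valid since $\beta-\alpha \neq \pm\beta$ forces $j_\beta \neq j_{\beta-\alpha}$) supplies the non-triviality check that the bare citation leaves implicit.
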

\begin{proof}
    This follows from~\cite{DMPSVC19}*{Example 7.3}.
\end{proof}

This $\Z/2\Z$-grading induces a $\Z/2\Z$-grading of the fusion law $\mF_0$.

\begin{definition}%
    \label{def:grad0}
    The $\Z/2\Z$-grading $\xi$ of $(X_l^0,\fus)$ from \cref{lem:grad0} induces a $\Z/2\Z$-grading of $\mF_0$:
    \[
        \xi: \mF_0 \to \Z/2\Z : (x,y) \mapsto \xi(y).
    \]
\end{definition}

\begin{proposition}%
    \label{prop:0dec}
    Let $(\fJ,\Phi^+,\Omega_0)$ be the $\mF_0$-decomposition algebra from \cref{def:0dec}.
    The Miyamoto group with respect to the grading of $\mF_0$ from \cref{def:grad0} is the Weyl group $W$ in its action on $\fJ$.
\end{proposition}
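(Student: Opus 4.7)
The plan is to reduce the Miyamoto computation for the combined fusion law $\mF_0$ to the case of the local fusion law, which has already been handled in \cref{lem:grad0}. The key observation I would make first is that the grading $\xi$ on $\mF_0$ from \cref{def:grad0} factors through the projection onto the second coordinate, namely $\xi(x,y)=\xi(y)$. In particular, for fixed $\alpha\in\Phi^+$ and fixed $y\in X_l^0$, every refined component $J^\alpha_{x,y}$ (for $x\in X_g^0$) carries the same grade. Hence the Miyamoto map $\tau_{\alpha,\chi}$ for the $\mF_0$\dash{}decomposition $\bigoplus_{(x,y)}J^\alpha_{x,y}$ acts on $J^\alpha_{x,y}$ by the scalar $\chi(\xi(y))$, and summing over $x$, it acts on the local component $J^\alpha_y=\bigoplus_{x\in X_g^0}J^\alpha_{x,y}$ by the same scalar.

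Next I would apply \cref{lem:grad0}, which asserts that the Miyamoto map attached to the pure local decomposition $\bigoplus_{y\in X_l^0}J^\alpha_y$ (with the $\Z/2\Z$\dash{}grading of $(X_l^0,\fus)$) is precisely the reflection $s_\alpha$ in its action on $\fJ$. Combining this with the previous step identifies the Miyamoto map $\tau_{\alpha,\chi}$ of the combined $\mF_0$\dash{}decomposition algebra with $s_\alpha$.

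To conclude, I would simply note that the Miyamoto group is by definition generated by the family $\{\tau_{\alpha,\chi}\mid\alpha\in\Phi^+\}$, which we have just identified with $\{s_\alpha\mid\alpha\in\Phi^+\}\subseteq\Aut(\fJ)$. Since these reflections generate $W$, the Miyamoto group is exactly the image of $W$ in $\Aut(\fJ)$, which is the assertion of the proposition.

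I expect no serious obstacle: all the nontrivial input (namely, that a single local Miyamoto map equals $s_\alpha$) is supplied by \cref{lem:grad0}, and the only thing to verify is that refining each local decomposition by the global isotypic decomposition does not change the resulting Miyamoto automorphism. This is exactly what the compatibility $\xi(x,y)=\xi(y)$ built into \cref{def:grad0} ensures, so the argument is essentially a matter of unwinding definitions.
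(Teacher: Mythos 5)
Your proposal is correct and follows essentially the same route as the paper: the paper's proof is a one-line appeal to \cref{lem:grad0} together with the definitions, and your argument is simply a careful unwinding of that — observing that the grading of $\mF_0$ factors through the local coordinate, so each Miyamoto map of the refined decomposition coincides with the local one, which \cref{lem:grad0} identifies with $s_\alpha$, and these reflections generate $W$.
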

\begin{proof}
    This follows from the definitions and \cref{lem:grad0}.
\end{proof}

\begin{remark}
    There are two ways to refine the decomposition and fusion law.
    \begin{enumerate}
        \item Of course, many of the intersections $J^\alpha_{x,y} = J_x \cap J^\alpha_y$ for $x \in X_g^0$ and $y \in X_l^0$ are trivial.
            Therefore, we can omit them from our fusion law.
        \item Instead of considering the global decomposition with respect to $W$, we can consider the global decomposition with respect to the automorphism group $\Aut(\Phi)$ of the root system $\Phi$.
            If the Dynkin diagram of $\Phi$ admits a non-trivial graph automorphism, then this group is possibly larger than $W$ but still acts by automorphisms on $\fJ$.
            This leads to another global decomposition $\bigoplus_{x \in X_a} J_x$ with fusion law $(X_a^0,\fus)$.
            Let $J_{x,y,z}^\alpha \coloneqq J_x \cap J_y \cap J^\alpha_z$ for $x \in X_a^0$, $y \in X_g^0$ and $z \in X_l^0$.
            Then $\bigoplus_{x,y,z} J_{x,y,z}^\alpha$ will be a decomposition of $\fJ$ whose fusion law is the direct product of $(X_a^0,\fus)$, $(X_g^0,\fus)$ and $(X_l^0,\fus)$.
    \end{enumerate}
\end{remark}

It would be cumbersome to include all the computations that were needed to obtain the explicit decompositions.
Instead, we will only present the results, hoping that the reader can fill in the details if necessary.
First, we give a construction for the simply laced root systems.

\begin{example}%
    \label{ex:rs}
    \begin{enumerate}
        \item\label{ex:rs:An} Consider a Euclidean space $E$ of dimension $n + 1$ and pick an orthonormal basis $b_0,b_1,\dots,b_n$ for $E$. Then
            \[
                \Phi = \{b_i - b_j \mid 0 \leq i,j \leq n , i \neq j \rangle\}
            \]
            is a root system in the subspace consisting of the vectors $\sum_{i = 0}^n \lambda_i b_i$ for which $\sum_{i=0}^n \lambda_i = 0$.
            The following vectors form a base for $\Phi$.
            \[
                \dynkin[labels={b_0-b_1,b_1-b_2,b_{n-2}-b_{n-1},b_{n-1}-b_n},edge length=1.5cm]A{}
            \]
            With respect to this base, we have $\Phi^+=\{b_i - b_j \mid 0 \leq i < j \leq n\}$.
            This root system is of type $A_n$.
            The action of its Weyl group can be extended to $E$ such that it permutes the basis elements $\{ b_0,\dots,b_n \}$. This defines an isomorphism with the symmetric group $S_{n+1}$ on $n+1$ elements.
        \item\label{ex:rs:Dn} Let $b_1,\dots,b_n$ be an orthonormal basis for a Euclidean space of dimension $n$. Then
            \[
                \Phi = \{\pm b_i \pm b_j \mid 1 \leq i,j \leq n, i\neq j\}
            \]
            forms a root system of type $D_n$. A base for $\Phi$ is given by the following vectors.
            \[
                \begin{dynkinDiagram}[edge length=1.5cm,labels={b_1-b_2,b_2-b_3,b_3-b_4,,b_{n-1}-b_n,b_{n-1}+b_n}]D{} \node[anchor=west,scale=0.7] at (root 4) {$b_{n-2}-b_{n-1}$}; \end{dynkinDiagram}
            \]
            We have $\Phi^+ = \{ b_i \pm b_j \mid 1 \leq i < j \leq n \}$.

            Consider the subgroup $W_n$ of $\GL(E)$ generated by the the elements $\theta$ for which there exists a permutation $\pi$ of $\{1,\dots,n\}$ such that $\theta(b_i) = \pm b_{\pi(i)}$ for all $1 \leq i \leq n$.
            Then $W_n$ is a split extension of $S_n$ by an elementary abelian 2\dash{}group of order $2^n$.
            The Weyl group of $\Phi$ is an index 2 subgroup of $W_n$.
            It consists of those elements of $W_n$ that have determinant one.
            We denote this group by $W_n'$.
            It is an extension of $S_n$ by an elementary abelian 2\dash{}group of order $2^{n-1}$.
        \item\label{ex:rs:En} Let $E$ be a Euclidean space of dimension 8 and $\{b_1,\dots,b_n\}$ an orthonormal basis for $E$. Then
            \[
                \Phi = \{ \pm b_i \pm b_j \mid 1 \leq i,j\leq 8\} \cap \{\textstyle \tfrac{1}{2}\sum_{i=1}^8 \epsilon_i b_i \mid \epsilon_i = \pm 1 , \prod_{i=1}^8 \epsilon_i = -1 \}
            \]
            is a root system of type $E_8$. Consider the roots:
            \begin{align*}
                \alpha_1 & \coloneqq \tfrac12 (-b_1-b_2-b_3-b_4-b_5+b_6+b_7+b_8), \\
                \alpha_2 & \coloneqq \tfrac12 (-b_1-b_2-b_3+b_4+b_5+b_6+b_7+b_8).
            \end{align*}
            Then following roots form a base for $\Phi$.
            \[
                \dynkin[edge length=1.5cm,labels={\alpha_1,b_5+b_6,b_5-b_6,b_4-b_5,b_3-b_4,b_2-b_3,b_1-b_2,-b_1-b_8}]E8
            \]
            The roots contained in the subspace of vectors $\sum_{i=1}^8 \lambda_i b_i$ satisfying $\lambda_7=\lambda_8$ form a root system of type $E_7$. We have the following base for this root system.
            \[
                \dynkin[edge length=1.5cm,labels={\alpha_1,b_5+b_6,b_5-b_6,b_4-b_5,b_3-b_4,b_2-b_3,b_1-b_2}]E7
            \]
            Next, we consider the roots contained in the subspace of vectors $\sum_{i=1}^8 \lambda_i b_i$ satisfying $\lambda_7=\lambda_8$ and $\sum_{i=1}^6 \lambda_i = 0$. They form a root system of type $E_6$. A base for this root system is given by the following roots.
            \[
                \dynkin[edge length=1.5cm,labels={b_1-b_2,\alpha_2,b_2-b_3,b_3-b_4,b_4-b_5,b_5-b_6}]E6
            \]
    \end{enumerate}
\end{example}

The necessary information about the character theory of Weyl groups can be found in~\cite{GP00}.
The irreducible characters of Weyl groups of type $A_n$ and $D_n$ can be described using compositions and partitions.

\begin{definition}
    A \emph{composition} of a positive integer $n$ is an ordered sequence $\lambda=(\lambda_1,\dots,\lambda_r)$ such that $\abs{\lambda} \coloneqq \sum_{i=1}^r \lambda_i = n$.
    A \emph{partition} of a positive integer $n$ is an unordered sequence $\lambda = [\lambda_1,\dots,\lambda_r]$ such that $\abs{\lambda} \coloneqq \sum_{i=1}^r \lambda_i = n$.
    For a composition $\lambda$ we write $[\lambda]$ for its corresponding partition.
    For each partition we define a corresponding integer $a(\lambda)$, called the \emph{$a$\dash{}invariant} of $\lambda$, by the formula
    \[
        a(\lambda) \coloneqq \sum_{1 \leq i < j \leq r} \min\{\lambda_i,\lambda_j\}.
    \]
    If we order the sequence $\lambda$ such that $\lambda_1 \geq \lambda_2 \geq \cdots \geq \lambda_r$ then $a(\lambda) = \sum_{i=1}^r (i-1)\lambda_i$.
\end{definition}

Let us now describe the characters of the Weyl group of type $A_n$, which is isomorphic to the symmetric group on $n+1$ elements; see \cref{ex:rs}~\labelcref{ex:rs:An}.
Note that we use the notation $\Ind_H^G(\chi)$ and $\Res_H^G(\chi)$ for induced, respectively restricted, characters for groups $H \leq G$.

\begin{definition}
    Let $\lambda = (\lambda_1,\dots,\lambda_r)$ be a composition of $n+1$.
    We denote by $S_{\lambda}$ the subgroup of $S_{n+1}$ that permutes amongst themselves the first $\lambda_1$ numbers, the next $\lambda_2$ numbers and so on.
    Denote by $\one_{\lambda}$ the trivial character of $S_{\lambda}$.
    Then we can index the irreducible characters of $S_{n+1}$ by the partitions of $n+1$.
    We write $\chi_\mu$ for the character corresponding to the partition $\mu$.
    This can be done in such a way that
    \[
        \Ind_{S_\lambda}^{S_{n+1}}(\one_\lambda) = \chi_{[\lambda]} + \text{ a linear combination of $\chi_\mu$ with $a(\mu) < a([\lambda])$}.
    \]
    See~\cite{GP00}*{Theorem~5.4.7, p.\ 158} for a proof of this fact.
\end{definition}

The characters of the Weyl group of type $D_n$ can be described in a similar manner. Recall the definitions of the groups $W_n$ and $W_n'$ from \cref{ex:rs}~\labelcref{ex:rs:Dn}.

\begin{definition}
    The irreducible characters of $W_n$ can be indexed by pairs $(\lambda,\mu)$ of partitions such that $\abs{\lambda} + \abs{\mu} = n$.
    We allow that $\abs{\lambda}=n$ or $\abs{\mu}=n$ and in that case we write $\emptyset$ for the other partition.
    We write $\chi_{(\lambda,\mu)}$ for the character of $W_n$ corresponding to the partition $(\lambda,\mu)$.
    If $\lambda \neq \mu$ then the restriction $\chi_{(\lambda,\mu)}' \coloneqq \Res_{W_n'}^{W_n} \chi_{(\lambda,\mu)}$ is an irreducible character of $W_n'$.
    We have $\chi_{(\lambda,\mu)}' = \chi_{(\mu,\lambda)}'$.
    If $\lambda = \mu$, then $\Res_{W_n'}^{W_n} \chi_{(\lambda,\lambda)}$ is the sum of two distinct irreducible characters for $W_n'$. We will denote these by $\chi_{(\lambda,+)}$ and $\chi_{(\lambda,-)}$.
    These characters exhibit all irreducible characters of $W_n'$.
\end{definition}

Next, we illustrate how we can compute the representation fusion law for these groups.
We will do this for the Weyl group of type $A_n$ or, equivalently, the symmetric group on $n+1$ elements.
In order to determine the representation fusion law of a group $G$, it suffices, by \cref{rem:dec}, to decide whether $\langle \chi_1\chi_2 ,\chi_3 \rangle = 0$ for $\chi_1,\chi_2,\chi_3 \in \Irr(G)$.
To this end, we can use the following lemma.

\begin{lemma}
    Let $\lambda$ be a composition of $n+1$ and $\psi,\psi' \in \Irr(S_{n+1})$. Then
    \[
        \langle \chi_{[\lambda]}\psi ,\psi' \rangle = \langle \Ind_{S_\lambda}^{S_{n+1}} \Res_{S_\lambda}^{S_{n+1}} \psi ,\psi' \rangle - \langle (\Ind_{S_\lambda}^{S_{n+1}} \one_\lambda - \chi_{[\lambda]})\psi , \psi' \rangle.
    \]
\end{lemma}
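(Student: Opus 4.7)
The identity rearranges, by bilinearity of the inner product, to the simpler claim
\[
    \langle \Ind_{S_\lambda}^{S_{n+1}}(\one_\lambda) \cdot \psi , \psi' \rangle = \langle \Ind_{S_\lambda}^{S_{n+1}} \Res_{S_\lambda}^{S_{n+1}} \psi , \psi' \rangle,
\]
since adding $\langle (\Ind_{S_\lambda}^{S_{n+1}}\one_\lambda - \chi_{[\lambda]})\psi, \psi' \rangle$ to $\langle \chi_{[\lambda]}\psi,\psi'\rangle$ collapses, by linearity in the first argument, to $\langle \Ind_{S_\lambda}^{S_{n+1}}(\one_\lambda) \cdot \psi, \psi'\rangle$. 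So the plan is to reduce the statement to this equivalent form first, and then to establish the displayed equality directly.

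The hardest step — really the only substantive one — is the projection formula: for any subgroup $H\leq G$ and any characters $\phi$ of $H$ and $\psi$ of $G$, one has
\[
    \Ind_H^G(\phi) \cdot \psi = \Ind_H^G\bigl(\phi \cdot \Res_H^G \psi\bigr).
\]
This is a standard consequence of Frobenius reciprocity (or can be checked directly from the induced-character formula), and I would simply invoke it. Applying it to $H=S_\lambda$, $G=S_{n+1}$ and $\phi=\one_\lambda$, the right-hand side becomes $\Ind_{S_\lambda}^{S_{n+1}}(\one_\lambda \cdot \Res_{S_\lambda}^{S_{n+1}}\psi)$, and since $\one_\lambda$ is the trivial character of $S_\lambda$ it is the multiplicative identity on class functions of $S_\lambda$; hence
\[
    \Ind_{S_\lambda}^{S_{n+1}}(\one_\lambda)\cdot \psi = \Ind_{S_\lambda}^{S_{n+1}}\Res_{S_\lambda}^{S_{n+1}} \psi.
\]

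Taking $\langle \cdot , \psi' \rangle$ on both sides yields the equivalent identity above, and undoing the bilinear rearrangement returns the original claim. The whole argument is a one-line application of the projection formula together with the observation that $\one_\lambda$ acts trivially; no combinatorics of partitions is required, and the hypothesis that $\psi, \psi' \in \Irr(S_{n+1})$ plays no role beyond ensuring that both sides are well-defined inner products of characters.
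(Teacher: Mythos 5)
Your proof is correct and is essentially the same as the paper's: both reduce the identity to the projection formula $(\Ind_{S_\lambda}^{S_{n+1}}\one_\lambda)\psi = \Ind_{S_\lambda}^{S_{n+1}}\Res_{S_\lambda}^{S_{n+1}}\psi$, the rest being a trivial telescoping/bilinearity rearrangement.
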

\begin{proof}
    We have
    \begin{align*}
        \chi_{[\lambda]}\psi &= (\Ind_{S_\lambda}^{S_{n+1}} \one_\lambda) \psi - (\Ind_{S_\lambda}^{S_{n+1}} \one_\lambda - \chi_{[\lambda]} )\psi, \\
                             &= \Ind_{S_\lambda}^{S_{n+1}} \Res_{S_\lambda}^{S_{n+1}} \psi - (\Ind_{S_\lambda}^{S_{n+1}} \one_\lambda - \chi_{[\lambda]})\psi.
    \end{align*}
    This proves the statement.
\end{proof}

We can compute the irreducible constituents of $\Ind_{S_\lambda}^{S_{n+1}} \Res_{S_\lambda}^{S_{n+1}} \psi$ and $\Ind_{S_\lambda}^{S_{n+1}} \one_\lambda$ using the Littlewood--Richardson rule; see~\cite{GP00}*{\S~6.1}. % chktex 8
Recall that the partitions corresponding to the irreducible consituents of $\Ind_{S_\lambda}^{S_{n+1}} \one_\lambda - \chi_{[\lambda]}$ have $a$-invariant less than $a([\lambda])$.
Therefore we can use induction on the $a$-invariant of $[\lambda]$ to compute $\langle \chi_{[\lambda]}\psi,\psi'\rangle$.

Determining the representation fusion law for $W_n'$ is a bit more cumbersome but the same technique applies.
The necessary background can be found in~\cite{GP00}*{\S~5.5, \S~5.6 and~\S~6.1}.

We are now ready to give local and global decompositions of $(\fJ,\pa)$ and their fusion law.
Note that this does only depend on the structure of $\fJ$ as a representation for the Weyl group $W$.

For each of the possible types, we will give the following information:

\begin{enumerate}
    \item some more notation about the root system that allows to describe the decompositions;
    \item the global decomposition $\bigoplus_{x \in X_g^0} J_x$ (elements of $X_g^0$ will be denoted by letters);
    \item the characters and dimensions of the $W$\dash{}representations $J_x$ for $x \in X_g^0$;
    \item the global fusion law $(X_g^0,\fus)$;
    \item the elements of the full decomposition $\bigoplus_{x \in \mF} J_x^\alpha$ with respect to a root $\alpha$; from this the local decomposition $\bigoplus_{i \in X_l^0} J_i^\alpha$ can be derived (elements of $X_l^0$ will be denoted by numbers);
    \item the characters and dimensions of the $C_W(s_\alpha)$\dash{}representations $J_i^\alpha$ for $i \in X_l^0$;
    \item the local fusion law $(X_l^0,\fus)$.
\end{enumerate}

\subsection{Type \texorpdfstring{$A_n$}{An}}

\begin{enumerate}
    \item We use the description of the root system of type $A_n$ from \cref{ex:rs}~\labelcref{ex:rs:An}. Denote the orthogonal projection onto $\langle \Phi \rangle = \langle b_i - b_j \mid 0 \leq i,j \leq n\rangle$ of a basis vector $b_i$ by $b_i'$. We identify $\fJ$ with $S^2(\mH)=S^2(\langle \Phi \rangle)$ using \cref{prop:S}. We will give the full decomposition $\bigoplus_{x \in \mF} J_x^\alpha$ with respect to the root $\alpha \coloneqq b_0 - b_n$. This is the highest root with respect to the base from \cref{ex:rs}~\labelcref{ex:rs:An}. We use the isomorphisms $W \cong S_{n+1}$ and $C_W(s_\alpha) \cong S_2 \times S_{n-1}$ to describe the characters.

    \item\begin{minipage}[t]{\linewidth}
        \begin{align*}
            J_a &\coloneqq \langle \sum_{i=0}^{n} b_i'b_i' \rangle \\
            J_b &\coloneqq \langle b_i'b_i' - b_j'b_j' \mid 0 \leq i,j \leq n \rangle \\
            J_c &\coloneqq \langle (b_i-b_j)(b_k-b_l) \mid 0 \leq i,j,k,l \leq n,\{i,j\} \cap \{k,l\} = \emptyset \rangle
        \end{align*}
    \end{minipage}

    \item\begin{minipage}[t]{\linewidth}
        \captionsetup{type=table}
        \[\begin{array}{ccc} \toprule
            \text{Component} & \text{Character} & \text{Dimension} \\ \midrule
            J_a & \chi_{[n+1]} & 1 \\
            J_b & \chi_{[n,1]} & n \\
            J_c & \chi_{[n-1,2]} & \frac{n^2-n-2}{2} \\ \bottomrule
        \end{array}\]
        \caption{Characters and dimensions for the global decomposition of \texorpdfstring{$\fJ$}{J} for type \texorpdfstring{$A_n$}{An}.}%
        \label{tbl:0decgAchar}
    \end{minipage}

    \item\begin{minipage}[t]{\linewidth}
        \captionsetup{type=table}
        \[\begin{array}{c|ccc} % chktex 44
            \fus &  a   &   b   &   c \\ \hline % chktex 44
            a   &   a   &   b   &   c \\
            b   &   b   &   a,b,c   & b,c^\sspec    \\
            c   &   c   &   b,c^\sspec  &   a,b^\sspec,c \\
        \end{array}\]
        \caption{The fusion law \texorpdfstring{$(X_g^0,\fus)$}{} for type \texorpdfstring{$A_n$}{An}. \texorpdfstring{Entries marked with $\sspec$ should be left out for $n=3$.}{}}%
        \label{tbl:0decgAfus}
    \end{minipage}

    \item\begin{minipage}[t]{\linewidth}
        \begin{align*}
            J_{a,1}^\alpha &\coloneqq \langle \sum_{i=0}^{n} b_i'b_i' \rangle \\
            J_{b,1}^\alpha &\coloneqq \langle (n+1)(b_0'b_0' + b_{n}'b_{n}') - 2\sum_{k=0}^{n} b_k'b_k' \rangle \\
            J_{b,2}^\alpha &\coloneqq \langle b_k'b_k' - b_l'b_l' \mid 1 \leq k,l \leq n-1 \rangle \\
            J_{b,4}^\alpha &\coloneqq \langle b_0'b_0' - b_{n}'b_{n}' \rangle \\
            J_{c,1}^\alpha &\coloneqq \langle nb_0'b_0' + nb_{n}'b_{n}' + n(n-1)b_0'b_{n}' - \sum_{k=0}^{n} b_k'b_k' \rangle \\
            J_{c,2}^\alpha &\coloneqq \langle ((n-1)(b_0'+b_{n}')+2(b_k'+b_l'))(b_k'-b_l') \mid 1 \leq k,l \leq n-1 \rangle \\
            J_{c,3}^\alpha &\coloneqq \begin{multlined}[t] \langle (b_{k_1}' - b_{l_1}')(b_{k_2}'-b_{l_2}') \mid 1 \leq k_1,k_2,l_1,l_2 \leq n-1,\\ \{k_1,l_1\} \cap \{k_2,l_2\} = \emptyset  \rangle
            \end{multlined} \\
            J_{c,5}^\alpha &\coloneqq \langle (b_0'-b_{n}')(b_k'-b_l') \mid 1 \leq k,l \leq n-1 \rangle
        \end{align*}
    \end{minipage}

    \item\begin{minipage}[t]{\linewidth}
        \captionsetup{type=table}
        \[\begin{array}{ccc} \toprule
            \text{Component} & \text{Character} & \text{Dimension} \\ \midrule
            J_1^\alpha & 3\cdot \chi_{[2]} \times \chi_{[n-1]} & 3\cdot 1\\
            J_2^\alpha & 2\cdot \chi_{[2]} \times \chi_{[n-2,1]} & 2 \cdot (n-2) \\
            J_3^\alpha & \chi_{[2]} \times \chi_{[n-3,2]} & \frac{n^2-5n-4}{2} \\
            J_4^\alpha & \chi_{[1,1]} \times \chi_{[n-1]} & 1 \\
            J_5^\alpha & \chi_{[1,1]} \times \chi_{[n-2,1]} & n-2 \\ \bottomrule
        \end{array}\]
        \caption{Characters and dimensions for the local decomposition of \texorpdfstring{$\fJ$}{J} for type \texorpdfstring{$A_n$}{An}.}%
        \label{tbl:0declAchar}
    \end{minipage}

    \item\begin{minipage}[t]{\linewidth}
        \captionsetup{type=table}
        \[ \begin{array}{c|ccccc} % chktex 44
            \fus &  1   &   2   &   3^\spec   &   4   &   5 \\ \hline % chktex 44
            1   &   1   &   2   &   3^\spec &   4   &   5 \\
            2   &   2   &   1,2^\sspec,3^\spec  &   2^\spec,3^\spec &   5   &   4,5^\sspec \\
            3^\spec   &   3^\spec &   2^\spec,3^\spec &   1^\spec,2^\spec,3^\spec &   \emptyset   &   5^\spec \\
            4   &   4   &   5   &   \emptyset   &   1   &   2   \\
            5   &   5   &   4,5^\sspec  &   5^\spec &   2   &   1,2^\sspec,3^\spec \\
        \end{array} \]
        \caption{The fusion law \texorpdfstring{$(X_l^0,\fus)$}{} for type \texorpdfstring{$A_n$}{An}. \texorpdfstring{Entries marked with $\sspec$ should be left out for $n = 3$ and those marked with $\spec$ should be left out for $n \in \{3,4\}$.}{}}%
        \label{tbl:0declAfus}
    \end{minipage}
\end{enumerate}

\subsection{Type \texorpdfstring{$D_n$}{Dn}}

\begin{enumerate}
    \item The root system of type $D_n$ is described in \cref{ex:rs}~\labelcref{ex:rs:Dn}.
        Once again, we identify $\fJ$ with $S^2(\mH) = S^2(\langle\Phi\rangle)$ using the isomorphism from \cref{prop:S}.
        Local decompositions will be given with respect to the root $\alpha \coloneqq b_1 + b_2$, the highest root with respect to the base from \cref{ex:rs}~\labelcref{ex:rs:Dn}.
        We use the isomorphisms $W \cong W_n'$ and $C_W(s_\alpha) \cong W_2' \times W_{n-2}'$ to describe the characters of $W$ and $C_W(s_\alpha)$.
        For the global decomposition, we make a distinction between $n=4$ and $n>4$.
        For the local decomposition, we restrict to $n>6$.
        The given decomposition remain decomposition for $n \in \{4,5,6\}$ but the components are not isotypic.

    \item
        \begin{minipage}[t]{\linewidth}$n=4$
                \begin{align*}
                    J_a &\coloneqq \langle \sum_{i=1}^{4} b_i b_i \rangle \\
                    J_b &\coloneqq \langle b_i b_i - b_j b_j \mid 1 \leq i,j \leq 4 \rangle \\
                    J_c &\coloneqq \langle b_i b_j + b_k b_l \mid \{i,j,k,l\} = \{1,2,3,4\} \rangle \\
                    J_d &\coloneqq \langle b_i b_j - b_k b_l \mid \{i,j,k,l\} = \{1,2,3,4\} \rangle
                \end{align*}
        \end{minipage}

        \noindent\begin{minipage}[t]{\linewidth}$n>4$
                \begin{align*}
                    J_a &\coloneqq \langle \sum_{i=1}^{n+1} b_i b_i \rangle \\
                    J_b &\coloneqq \langle b_i b_i - b_j b_j \mid 1 \leq i < j \leq n \rangle \\
                    J_c &\coloneqq \langle b_i b_j \mid 1 \leq i < j \leq n \rangle
                \end{align*}
        \end{minipage}

    \item
        \begin{minipage}[t]{\linewidth}$n=4$
            \captionsetup{type=table}
            \[\begin{array}{ccc} \toprule
                \text{Component} & \text{Character} & \text{Dimension} \\ \midrule
                J_a & \chi_{([4],\emptyset)}' & 1 \\
                J_b & \chi_{([3,1],\emptyset)}' & 3 \\
                J_c & \chi_{([2],+)} & 3 \\
                J_d & \chi_{([2],-)} & 3 \\ \bottomrule
            \end{array}\]
            \caption{Characters and dimensions for the global decomposition of \texorpdfstring{$\fJ$}{J} for type \texorpdfstring{$D_4$}{D4}.}%
            \label{tbl:0decgD4char}
        \end{minipage}

        \noindent\begin{minipage}[t]{\linewidth}$n>4$
            \captionsetup{type=table}
            \[\begin{array}{ccc} \toprule
                \text{Component} & \text{Character} & \text{Dimension} \\ \midrule
                J_a & \chi_{([n],\emptyset)}' & 1 \\
                J_b & \chi_{([n-1,1],\emptyset)}' & n-1 \\
                J_c & \chi_{([n-2],[2])}' &  \frac{n(n-1)}{2} \\ \bottomrule
            \end{array}\]
            \caption{Characters and dimensions for the global decomposition of \texorpdfstring{$\fJ$}{J} for type \texorpdfstring{$D_n$ ($n>4$)}{Dn}.}%
            \label{tbl:0decgDchar}
        \end{minipage}

    \item
        \begin{minipage}[t]{\linewidth}$n=4$
            \captionsetup{type=table}
            \[\begin{array}{c|cccc} % chktex 44
                \fus &  a   &   b   &   c & d\\ \hline % chktex 44
                a   &   a   &   b   &   c & d\\
                b   &   b   &   a,b &   d & c\\
                c   &   c   &   d   &   a,c & b \\
                d   &   d   &   c   &   b   &   a,d
            \end{array}\]
            \caption{The fusion law \texorpdfstring{$(X_g^0,\fus)$}{} for type \texorpdfstring{$D_4$}{D4}.}%
            \label{tbl:0decgD4fus}
        \end{minipage}

        \noindent\begin{minipage}[t]{\linewidth}$n>4$
            \captionsetup{type=table}
            \[\begin{array}{c|ccc} % chktex 44
                \fus &  a   &   b   &   c \\ \hline % chktex 44
                a   &   a   &   b   &   c \\
                b   &   b   &   a,b &   c \\
                c   &   c   &   c   &   a,b,c \\
            \end{array}\]
            \caption{The fusion law \texorpdfstring{$(X_g^0,\fus)$}{} for type \texorpdfstring{$D_n$ ($n>4$)}{Dn}.}%
            \label{tbl:0decgDfus}
        \end{minipage}

    \item\begin{minipage}[t]{\linewidth}$n>6$
        \begin{align*}
            J_{a,1}^\alpha &\coloneqq \langle \sum_{i=1}^{n+1} b_i b_i \rangle \\
            J_{b,1}^\alpha &\coloneqq \langle n(b_1 b_1 + b_2 b_2) - 2\sum_{k=1}^{n} b_k b_k \rangle \\
            J_{b,2}^\alpha &\coloneqq \langle b_k b_k - b_l b_l \mid 3 \leq k,l \leq n \rangle \\
            J_{b,6}^\alpha &\coloneqq \langle b_1b_1 - b_2b_2 \rangle \\
            J_{c,1}^\alpha &\coloneqq \langle b_1b_2 \rangle \\
            J_{c,3}^\alpha &\coloneqq \langle b_k b_l \mid 3 \leq k < l \leq n \rangle \\
            J_{c,4}^\alpha &\coloneqq \langle (b_1 + b_2)b_k \mid 3 \leq k \leq n \rangle \\
            J_{c,5}^\alpha &\coloneqq \langle (b_1 - b_2)b_k \mid 3 \leq k \leq n \rangle
        \end{align*}
    \end{minipage}

    \item\begin{minipage}[t]{\linewidth}$n>6$
        \captionsetup{type=table}
        \[\begin{array}{ccc} \toprule
            \text{Component} & \text{Character} & \text{Dimension} \\ \midrule
            J_1 & 3 \cdot \chi_{([2],\emptyset)}' \times \chi_{([n-2],\emptyset)}' & 3 \cdot 1 \\
            J_2 & \chi_{([2],\emptyset)}' \times \chi_{([n-3,1],\emptyset)}' & n-3 \\
            J_3 & \chi_{([2],\emptyset)}' \times \chi_{([n-4],[2])}' & \frac{n^2-5n+6}{2} \\
            J_4 & \chi_{([1],+)} \times \chi_{([n-3],[1])}' & n-2 \\
            J_5 & \chi_{([1],-)} \times \chi_{([n-3],[1])}' & n-2 \\
            J_6 & \chi_{([1,1],\emptyset)}' \times \chi_{([n-2],\emptyset)}' &  1 \\ \bottomrule
        \end{array}\]
        \caption{Characters and dimensions for the local decomposition of \texorpdfstring{$\fJ$}{J} for type \texorpdfstring{$D_n$ ($n>6$)}{Dn}.}%
        \label{tbl:0declDchar}
    \end{minipage}

    \item\begin{minipage}[t]{\linewidth}$n>6$
        \captionsetup{type=table}
        \[\begin{array}{c|cccccc} % chktex 44
            \fus &  1   &   2   &   3   &   4   &   5   &   6 \\ \hline % chktex 44
            1   &   1   &   2   &   3   &   4   &   5   &   6  \\
            2   &   2   &   1,2 &   3   &   4   &   5   &      \\
            3   &   3   &   3   &   1,2,3   &   4   &   5   &  \\
            4   &   4   &   4   &   4   &   1,2,3   &   6   &   5 \\
            5   &   5   &   5   &   5   &   6   &   1,2,3   &   4 \\
            6   &   6   &   &   &   5   &   4   &   1
        \end{array}\]
        \caption{The fusion law \texorpdfstring{$(X_l^0,\fus)$}{} for type \texorpdfstring{$D_n$ ($n>6$)}{Dn}.}%
        \label{tbl:0declDfus}
    \end{minipage}
\end{enumerate}

\begin{remark}
    Observe that the local decomposition with respect to $\alpha = b_1 + b_2$ is the same as the one with respect to $b_1 - b_2$ up to the order of the terms.
    This is due to the fact that the centralizers of their corresponding reflections are equal.
    As a result, the local fusion law is $\Z/2\Z \times \Z/2\Z$\dash{}graded.
\end{remark}

\subsection{Type \texorpdfstring{$E_n$}{En}}

\begin{enumerate}
    \item We use the description and notation for the root systems of type $E_n$ from \cref{ex:rs}~\labelcref{ex:rs:En}.
        As usual, we identify $\fJ$ with $S^2(\mH)$.
        Local decompositions will be given with respect to the highest root corresponding to the base from \cref{ex:rs}~\labelcref{ex:rs:En}.
        Those are the roots $b_7 + b_8$, $b_7+b_8$ and $b_7 - b_8$ for $n=6,7,8$ respectively.
        The characters of the Weyl groups are given in the notation from~\cite{GP00}*{Table~C.4 to~C.6}.
        Also recall the Frobenius form $B_A$ for $\fJ$ from \cref{def:Jast}.

    \item\begin{minipage}[t]{\linewidth}
        \begin{align*}
            J_a &\coloneqq \langle \id \in \fJ \rangle \\
            J_b &\coloneqq \langle v \in \fJ \mid B_A(v,\id)=0 \rangle
        \end{align*}
    \end{minipage}

    \item
        \begin{minipage}[t]{\linewidth}$n=6$
            \captionsetup{type=table}
            \[\begin{array}{ccc} \toprule
                \text{Component} & \text{Character} & \text{Dimension} \\ \midrule
                J_a & 1_p & 1 \\
                J_b & 20_p & 20 \\ \bottomrule
            \end{array}\]
            \caption{Characters and dimensions for the global decomposition of \texorpdfstring{$\fJ$}{J} for type \texorpdfstring{$E_6$}{E6}.}%
            \label{tbl:0decgE6char}
        \end{minipage}

        \noindent\begin{minipage}[t]{\linewidth}$n=7$
            \captionsetup{type=table}
            \[\begin{array}{ccc} \toprule
                \text{Component} & \text{Character} & \text{Dimension} \\ \midrule
                J_a & 1_a & 1 \\
                J_b & 27_a & 27 \\ \bottomrule
            \end{array}\]
            \caption{Characters and dimensions for the global decomposition of \texorpdfstring{$\fJ$}{J} for type \texorpdfstring{$E_7$}{E7}.}%
            \label{tbl:0decgE7char}
        \end{minipage}

        \noindent\begin{minipage}[t]{\linewidth}$n=8$
            \captionsetup{type=table}
            \[\begin{array}{ccc} \toprule
                \text{Component} & \text{Character} & \text{Dimension} \\ \midrule
                J_a & 1_x & 1 \\
                J_b & 35_x & 35 \\ \bottomrule
            \end{array}\]
            \caption{Characters and dimensions for the global decomposition of \texorpdfstring{$\fJ$}{J} for type \texorpdfstring{$E_8$}{E8}.}%
            \label{tbl:0decgE8char}
        \end{minipage}

    \item\begin{minipage}[t]{\linewidth}
        \captionsetup{type=table}
        \[\begin{array}{c|cc} % chktex 44
            \fus & a & b \\ \hline % chktex 44
            a & a & b \\
            b & b & a,b
        \end{array}\]
        \caption{The fusion law \texorpdfstring{$(X_g^0,\fus)$}{} for type \texorpdfstring{$E_n$}{En}.}%
        \label{tbl:0decgEfus}
    \end{minipage}

    \item
        \begin{minipage}[t]{\linewidth}$n=6$
            \begin{align*}
                J_{a,1}^\alpha &\coloneqq \langle \sum_{i=1}^8 b_i b_i \rangle \\
                J_{b,1}^\alpha &\coloneqq \langle (b_7+b_8)(b_7+b_8) - \frac{1}{3}\sum_{i=1}^8 b_i b_i \rangle \\
                J_{b,2}^\alpha &\coloneqq \langle b_i'b_i'-b_j'b_j' \mid 1 \leq i < j \leq 6 \rangle \\
                J_{b,3}^\alpha &\coloneqq \langle (b_i-b_j)(b_k-b_l) \mid 1 \leq i,j,k,l \leq 6 , \{i,j\} \cap \{k,l\} = \emptyset \rangle \\
                J_{b,5}^\alpha &\coloneqq \langle (b_7+b_8)(b_i-b_j) \mid 1 \leq i < j \leq 6 \rangle
            \end{align*}
        \end{minipage}

        \noindent\begin{minipage}[t]{\linewidth}$n=7$
            \begin{align*}
                J_{a,1}^\alpha &\coloneqq \langle \id \in \fJ \rangle \\
                J_{b,1}^\alpha &\coloneqq \langle (b_7+b_8)(b_7+b_8) - \frac{2}{7}\sum_{i=1}^8 b_i b_i \rangle \\
                J_{b,2}^\alpha &\coloneqq \langle b_i b_i-b_j b_j \mid 1 \leq i < j \leq 6 \rangle \\
                J_{b,3}^\alpha &\coloneqq \langle b_i b_j \mid 1 \leq i < j \leq 6 \rangle \\
                J_{b,5}^\alpha &\coloneqq \langle (b_7+b_8)b_i \mid 1 \leq i \leq 6 \rangle
            \end{align*}
        \end{minipage}

        \noindent\begin{minipage}[t]{\linewidth}$n=8$
            \begin{align*}
                J_{a,1}^\alpha &\coloneqq \langle \id \in \fJ \rangle \\
                J_{b,1}^\alpha &\coloneqq \langle \alpha \alpha - \frac{1}{4}\sum_{i=1}^8 b_i b_i \rangle \\
                J_{b,3}^\alpha &\coloneqq \langle vw - \frac{\kappa(v,w)}{7}\sum_{i=1}^8 b_i b_i + \frac{\kappa(v,w)}{14} \alpha \alpha \mid v,w \in \alpha^\perp \rangle \\
                J_{b,5}^\alpha &\coloneqq \langle \alpha v \mid v \in \alpha^\perp \rangle
            \end{align*}
        \end{minipage}

    \item
        \begin{minipage}[t]{\linewidth}$n=6$
            \captionsetup{type=table}
            \[\begin{array}{ccc} \toprule
                \text{Component} & \text{Character} & \text{Dimension} \\ \midrule
                J_1^\alpha & 2 \cdot \chi_{[2]} \times \chi_{[6]} & 2 \cdot 1 \\
                J_2^\alpha & \chi_{[2]} \times \chi_{[5,1]} & 5 \\
                J_3^\alpha & \chi_{[2]} \times \chi_{[4,2]} & 9 \\
                J_5^\alpha & \chi_{[1,1]} \times \chi_{[5,1]} & 5 \\ \bottomrule
            \end{array}\]
            \caption{Characters and dimensions for the local decomposition of \texorpdfstring{$\fJ$}{J} for type \texorpdfstring{$E_6$}{E6}.}%
            \label{tbl:0declE6char}
        \end{minipage}

        \noindent\begin{minipage}[t]{\linewidth}$n=7$
            \captionsetup{type=table}
            \[\begin{array}{ccc} \toprule
                \text{Component} & \text{Character} & \text{Dimension} \\ \midrule
                J_1^\alpha & 2 \cdot \chi_{[2]} \times \chi_{([6],\emptyset)} & 2 \cdot 1 \\
                J_2^\alpha & \chi_{[2]} \times \chi_{([5,1],\emptyset)} & 5 \\
                J_3^\alpha & \chi_{[2]} \times \chi_{([4],[2])} & 15 \\
                J_5^\alpha & \chi_{[1,1]} \times \chi_{([5],[1])} & 6 \\ \bottomrule
            \end{array}\]
            \caption{Characters and dimensions for the local decomposition of \texorpdfstring{$\fJ$}{J} for type \texorpdfstring{$E_7$}{E7}.}%
            \label{tbl:0declE7char}
        \end{minipage}

        \noindent\begin{minipage}[t]{\linewidth}$n=8$
            \captionsetup{type=table}
            \[\begin{array}{ccc} \toprule
                \text{Component} & \text{Character} & \text{Dimension} \\ \midrule
                J_1^\alpha & 2 \cdot \chi_{[2]} \times 1_a & 2 \cdot 1 \\
                J_3^\alpha & \chi_{[2]} \times 27_a & 27 \\
                J_5^\alpha & \chi_{[1,1]} \times 7_a' & 7 \\ \bottomrule
            \end{array}\]
            \caption{Characters and dimensions for the local decomposition of \texorpdfstring{$\fJ$}{J} for type \texorpdfstring{$E_8$}{E8}.}%
            \label{tbl:0declE8char}
        \end{minipage}

    \item
        \begin{minipage}[t]{\linewidth}$n=6$
            \captionsetup{type=table}
            \[ \begin{array}{c|cccc} % chktex 44
                \fus &  1   &   2   &   3   &   5 \\ \hline % chktex 44
                1   &   1   &   2   &   3   &   5 \\
                2   &   2   &   1,2,3   &   2,3 &   5 \\
                3   &   3   &   2,3 &   1,2,3   &   5 \\
                5   &   5   &   5   &   5   &   1,2,3 \\
            \end{array} \]
            \caption{The fusion law \texorpdfstring{$(X_l^0,\fus)$}{} for type \texorpdfstring{$E_6$}{E6}.}%
            \label{tbl:0declE6fus}
        \end{minipage}

        \noindent\begin{minipage}[t]{\linewidth}$n=7$
            \captionsetup{type=table}
            \[ \begin{array}{c|cccc} % chktex 44
                \fus &  1   &   2   &   3   &   5 \\ \hline % chktex 44
                1   &   1   &   2   &   3   &   5 \\
                2   &   2   &   1,2 &   3   &   5 \\
                3   &   3   &   3   &   1,2,3   &   5 \\
                5   &   5   &   5   &   5   &   1,2,3 \\
            \end{array} \]
            \caption{The fusion law \texorpdfstring{$(X_l^0,\fus)$}{} for type \texorpdfstring{$E_7$}{E7}.}%
            \label{tbl:0declE7fus}
        \end{minipage}

        \noindent\begin{minipage}[t]{\linewidth}$n=8$
            \captionsetup{type=table}
            \[ \begin{array}{c|ccc} % chktex 44
                \fus & 1 & 3 & 5 \\ \hline % chktex 44
                1 & 1 & 3 & 5 \\
                5 & 3 & 1,3 & 5 \\
                5 & 5 & 5 & 1,3
            \end{array} \]
            \caption{The fusion law \texorpdfstring{$(X_l^0,\fus)$}{} for type \texorpdfstring{$E_8$}{E8}.}%
            \label{tbl:0declE8fus}
        \end{minipage}
\end{enumerate}

\section{A decomposition of the algebra}%
\label{sec:dec}

In this section we will give the whole algebra $(\fA,\ast)$ the structure of a decomposition algebra.  Once again, we apply the techniques from \cref{def:gldec,lem:gldec}.
This time, we will use the fact that $(\fA,\ast)$ is an algebra for $\mL$.
As for the decompositions of the zero weight subalgebra, we will first illustrate the general procedure and give the results for each of the possible types afterwards.

\subsection{The general procedure}

In order to use \cref{def:gldec}, we look for a class of conjugate subalgebras of $\mL$ to obtain local decompositions.
Recall the notation for a Chevalley basis from \cref{def:stage}.
Since we used the reflection subgroups $C_W(s_\alpha) = N_W(\langle s_\alpha \rangle)$ to obtain local decompositions of $\fJ$ in \cref{sec:0dec}, a natural candidate are the subalgebras of the form $N_\mL(\langle h_\alpha,e_\alpha,e_{-\alpha}\rangle)$.

\begin{definition}%
    \label{def:idx}
    Let $\idx$ be the class of subalgebras of $\mL$ conjugate to the subalgebra $\langle h_\alpha,e_{\alpha},e_{-\alpha} \rangle$ for some $\alpha \in \Phi$.
    Note that $\idxe \cong \mathfrak{sl}_2(\C)$ for each $\idxe \in \idx$.
\end{definition}

\begin{proposition}%
    \label{prop:reductive}
    Let $\alpha \in \Phi$.
    Consider the subalgebra $\idxe = \llangle e_{\alpha},e_{-\alpha} \rrangle$ of $\mL$.
    Then
    \[
        N_\mL(\idxe) = H \oplus \langle e_\beta \mid \beta \in \{\pm \alpha \} \cup \{\beta \in \Phi \mid \kappa(\alpha,\beta)=0 \} \rangle.
    \]
    In particular $N_\mL(\idxe)$ is reductive for each $\idxe \in \idx$.
\end{proposition}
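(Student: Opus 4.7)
The plan is to exploit the weight space decomposition of $\mL$ with respect to $\mH$ and then reduce the problem to checking one root vector at a time.

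First I would observe that $\mH$ itself normalizes $\idxe$: we have $[\mH, h_\alpha] = 0$ and $[h, e_{\pm\alpha}] = \pm\alpha(h) e_{\pm\alpha} \in \idxe$ for every $h \in \mH$. Consequently $N_\mL(\idxe)$ is $\ad_\mH$\dash{}invariant, so it decomposes as a direct sum of $\mH$\dash{}weight spaces. Since every non-zero weight space of $\mL$ is some one-dimensional $\mL_\beta$ with $\beta \in \Phi$, it suffices to determine for which $\beta \in \Phi$ we have $e_\beta \in N_\mL(\idxe)$.

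Next I would use the relation $[e_\beta,h_\alpha] = -\kappa(\alpha,\beta) e_\beta$ from \cref{def:stage}. For this to lie in $\idxe$, either $\kappa(\alpha,\beta)=0$ (so the bracket is zero), or $e_\beta \in \idxe$, which forces $\beta = \pm\alpha$. When $\beta = \pm\alpha$, the element $e_\beta$ lies in $\idxe$ already. When $\kappa(\alpha,\beta) = 0$, the simply laced assumption gives $\alpha+\beta \notin \Phi$ and $\beta-\alpha \notin \Phi$, so also $[e_\beta,e_\alpha] = 0$ and $[e_\beta,e_{-\alpha}] = 0$; hence $e_\beta$ centralizes $\idxe$ and therefore normalizes it. This establishes the displayed formula for $N_\mL(\idxe)$.

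For the reductivity claim, set $\Phi'' \coloneqq \{\pm\alpha\} \cup \{\beta \in \Phi \mid \kappa(\alpha,\beta)=0\}$. A short check using \cref{lem:weights} and the simply laced hypothesis shows that $\Phi''$ is a closed subsystem of $\Phi$: if $\beta,\gamma \in \Phi''$ and $\beta+\gamma \in \Phi$, then $\kappa(\alpha,\beta+\gamma) \in \{-2,0,2\}$, and each of these cases gives $\beta+\gamma \in \Phi''$. Let $\mL''$ be the Lie subalgebra generated by $\{e_\beta \mid \beta \in \Phi''\}$; then $\mL''$ is a semisimple subalgebra with Cartan subalgebra $\mH_0 \coloneqq \langle h_\beta \mid \beta \in \Phi''\rangle$ and root system $\Phi''$. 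Write $\mH = \mH_0 \oplus \mZ$, where $\mZ$ is the orthogonal complement of $\mH_0$ in $\mH$ with respect to $\kappa$. For every $h \in \mZ$ and $\beta \in \Phi''$, we have $\beta(h) = \kappa(\beta,h) = 0$, so $h$ centralizes the generators of $\mL''$ and hence all of $\mL''$. Combining everything,
\[
    N_\mL(\idxe) = \mL'' \oplus \mZ
\]
as a direct sum of Lie algebras, with $\mL''$ semisimple and $\mZ$ abelian. This exhibits $N_\mL(\idxe)$ as reductive.

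The main obstacle is really bookkeeping: one needs the closedness of $\Phi''$ under root addition together with the observation that in the simply laced setting the inner product values $\pm 1$ cannot occur in the commutators that we need to vanish. Both facts were already collected in \cref{lem:weights} and \cref{def:stage}, so the proof should be short.
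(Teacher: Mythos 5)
Your proof is correct. The first half --- observing that $\mH$ normalizes $\idxe$, deducing that $N_\mL(\idxe)$ is a sum of $\mH$\dash{}weight spaces, and then testing each root vector $e_\beta$ against $h_\alpha$ and $e_{\pm\alpha}$ --- is exactly the paper's argument, just with the one-root-at-a-time check written out in more detail (and correctly: the key points are that $[e_\beta,h_\alpha]=-\kappa(\alpha,\beta)e_\beta$ forces $\kappa(\alpha,\beta)\in\{0,\pm2\}$, and that in the simply laced case $\kappa(\alpha,\beta)=0$ kills both brackets $[e_\beta,e_{\pm\alpha}]$). Where you genuinely diverge is the reductivity claim: the paper disposes of it by citing \cite{Bou05}*{\S~VIII.3, Proposition~2}, whereas you exhibit an explicit decomposition of $N_\mL(\idxe)$ as the direct sum of the semisimple subalgebra $\mL''$ attached to the closed symmetric subsystem $\Phi''=\{\pm\alpha\}\cup\{\beta\in\Phi\mid\kappa(\alpha,\beta)=0\}$ and a central complement of $\mH_0$ inside $\mH$. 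Your route is more self-contained and has the added benefit of identifying $[N_\mL(\idxe),N_\mL(\idxe)]=\mL''$ concretely, which the paper needs anyway in \cref{def:L_a} and \cref{rem:type}. The one step you leave implicit is that a closed symmetric subsystem really does yield a semisimple subalgebra with Cartan subalgebra $\mH_0$ and root system $\Phi''$; this is standard, but it carries the same logical weight as the Bourbaki citation it replaces, so you should either cite it or sketch it (e.g.\ via nondegeneracy of $\kappa$ on $\mL''$). Finally, the statement asserts reductivity for every $\idxe\in\idx$, so you should add the paper's one-line remark that all such $\idxe$ are conjugate, whence so are their normalizers.
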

\begin{proof}
    Let $\mH$ be the Cartan subalgebra from \cref{def:stage}.
    Clearly we have $\mH \leq N_\mL(\idxe)$.
    Thus $\mH$ normalizes $N_\mL(\idxe)$.
    As a result, the subalgebra $N_\mL(\idxe)$ must be a direct sum of common eigenspaces of the adjoint action of $\mH$. This means that $N_\mL(\idxe)$ is of the form
    \[
        \mH \oplus \langle e_\beta \mid \beta \in S \rangle
    \]
    for some $S \subseteq \Phi$.
    The first assertion follows because $e_\beta \in N_\mL(\idxe)$ for $\beta \in \Phi$ if and only if $\beta \in \{\pm \alpha \} \cup \{\beta \in \Phi \mid \kappa(\alpha,\beta)=0 \}$.
    Now $N_\mL(\idxe)$ is reductive by~\cite{Bou05}*{\S~VIII.3 Proposition~2}.
    Since all elements $\idxe \in \idx$ are conjugate, the same is true for the subalgebras $N_\mL(\idxe)$.
\end{proof}

\begin{definition}%
    \label{def:L_a}
    Let $\idx$ be as in \cref{def:idx}.
    For each $\idxe \in \idx$, let \[\mL_\idxe \coloneqq [N_\mL(\idxe),N_\mL(\idxe)].\]
    By \cref{prop:reductive} and~\cite{Bou89}*{Chapter~I, \S~6.3, Proposition~5} the subalgebra $\mL_\idxe$ is semisimple.
    For example, we have \[\mL_{\llangle e_\alpha e_{-\alpha}\rrangle} = \llangle e_\beta \mid \beta \in \{\pm \alpha \} \cup \{\beta \in \Phi \mid \kappa(\alpha,\beta)=0 \} \rrangle.\]
    Since all elements $\idxe \in \idx$ are conjugate, so are all $\mL_\idxe$ for $\idxe \in \idx$.
\end{definition}

\begin{remark}
    The type of $\mL_{\llangle e_\alpha , e_{-\alpha} \rrangle}$ is given by \cref{rem:type}.
    Note that the Weyl group of $\mL_{\llangle e_\alpha , e_{-\alpha} \rrangle}$ is precisely $C_W(s_\alpha)$ by \cref{prop:refl}.
\end{remark}

\begin{definition}%
    \label{def:dec}
    \begin{enumerate}
        \item Let $\bigoplus_{x \in X_g} A_x$ be the global decomposition of $\fA$ with respect to $\mL$. Denote its fusion law by $(X_g,\fus)$. Let $\bigoplus_{x \in X_l} A_x^\idxe$ be the local decomposition of $\fA$ with respect to $(\mL_\idxe \mid \idxe \in \idx)$. Let $(X_l,\fus)$ the corresponding fusion law.
        \item For each $x \in X_g$ and $y \in X_l$, let $A_{x,y}^\idxe \coloneqq A_x \cap A_y^\idxe$ as in \cref{lem:gldec}. Let $\mF$ be the direct product of the fusion laws $(X_g,\fus)$ and $(X_l,\fus)$. Then we know by \cref{lem:gldec} that $\bigoplus_{x,y} A_{x,y}^\idxe$ is an $\mF$-decomposition of $\fA$. Let $\Omega$ be the $\idx$-tuple of these decompositions. Then $(\fA,\idx,\Omega)$ is an $\mF$-decomposition algebra.
    \end{enumerate}
\end{definition}

As in \cref{sec:0dec} we will define a $\Z/2\Z$-grading of the fusion law $\mF$ and determine the corresponding Miyamoto group of the $\mF$-decomposition algebra $(\fA,\idx,\Omega)$.
In \cref{sec:0dec} we obtained the $\Z/2\Z$-grading from \cref{lem:grad0} implicitly by restriction to the central subgroup $\langle s_\alpha \rangle \leq C_W(\langle s_\alpha \rangle)$.
Similarly, we will obtain a $\Z/2\Z$-grading by restricting to $\idxe \leq \mL_\idxe$.

\begin{definition}%
    \label{def:grad}
    \begin{enumerate}
        \item Let $\idxe \in \idx$.
            Recall that $\idxe \cong \mathfrak{sl}_2(\C)$.
            Let $h \in \idxe$ be one of the two coroots with respect to some Cartan subalgebra of $\idxe$.
            Write $V$ for the standard representation of $\idxe$.
            Then the eigenvalues for the action of the element $h$ on $V$ are $1$ and $-1$.
            Therefore the eigenvalues of the adjoint action of $h$ on $V^{\otimes n}$ are odd (respectively even) integers if $n$ is odd (respectively even).
            Since any irreducible representation of $\idxe$ is some subrepresentation of $V^{\otimes n}$ for some $n \in \mathbb{N}$, this divides the irreducible representations into two parts.
            The irreducible representations of $\idxe$ for which the eigenvalues of the action of $h$ are odd (respectively even), are called odd (respectively even) representations.
            Also note that the tensor product of two odd (or two even) representations is a direct sum of even representations and that the tensor product of an odd and an even representation is a direct sum of odd representations.
            See also~\cite{FH91}*{\S 11.8 p. 150}.
        \item Obviously, $\idxe$ is an ideal of $\mL_\idxe$.
            Therefore, the $\mL_\idxe$-representations $A_x^\idxe$ for $x \in X_l$, restricted to $\idxe$, are isomorphic to $n W_x$ for some irreducible representation $W_x$ of $\idxe$ and some $n \in \mathbb{N}$.
            Now define
            \[
                \xi : X_l \to \Z/2\Z : x \mapsto \begin{cases}
                    0 & \text{if $W_x$ is even,} \\
                    1 & \text{if $W_x$ is odd.}
                \end{cases}
            \]
    \end{enumerate}
\end{definition}

\begin{lemma}%
    \label{lem:grad}
    The map $\xi$ induces a non-trivial $\Z/2\Z$-grading of the fusion law $(X_l,\fus)$.
\end{lemma}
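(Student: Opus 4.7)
The plan is to verify the two requirements for $\xi$ to define a non-trivial $\Z/2\Z$\dash{}grading of $(X_l,\fus)$: that $\xi$ respects the fusion law, and that $\xi$ is not identically zero. The structural input I will use is that, since $\mL_\idxe$ is semisimple (\cref{def:L_a}) and $\idxe$ is an ideal in it (\cref{def:grad}), there is a unique ideal complement $\mL_\idxe'$ with $\mL_\idxe = \idxe \oplus \mL_\idxe'$ as Lie algebras.

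First, I would check that $\xi$ is well-defined on $X_l$. By the decomposition above, every irreducible $\mL_\idxe$\dash{}representation $W_x$ is an external tensor product $U_x \boxtimes V_x$ of an irreducible $\idxe$\dash{}representation $U_x$ and an irreducible $\mL_\idxe'$\dash{}representation $V_x$. Hence the restriction of $W_x$ to $\idxe$ is $(\dim V_x)$ copies of $U_x$, so $U_x$ is determined up to isomorphism, and its parity $\xi(x)$ in the sense of \cref{def:grad} is well-defined. For the grading identity: if $z \in x \fus y$, then $W_z$ embeds in $W_x \otimes W_y$ as $\mL_\idxe$\dash{}modules, and hence as $\idxe$\dash{}modules. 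Restricted to $\idxe$, the latter is a direct sum of copies of the $\sll_2$\dash{}tensor product $U_x \otimes U_y$, whose irreducible constituents all have parity $\xi(x)+\xi(y)$ by the Clebsch--Gordan property recalled in \cref{def:grad}. Therefore $U_z$ has this parity, giving $\xi(z)=\xi(x)+\xi(y)$, which is the grading axiom.

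For non-triviality, I would exhibit an $x \in X_l$ with $\xi(x)=1$. After conjugating, I may assume $\idxe = \llangle e_\alpha, e_{-\alpha}\rrangle$ for some fixed $\alpha \in \Phi$. Since $\Phi$ is irreducible, simply laced, and of rank at least three, there exists $\beta \in \Phi$ with $\kappa(\alpha,\beta)=1$. The subspace $\beta^\perp$ of $\mH$ has dimension $n-1 \geq 2$, so I may pick a non-zero $h \in \beta^\perp$; by \cref{prop:Aunique}, $\overline{e_\beta h} \in \fA$ is then a non-zero weight vector of weight $\beta$. On this vector $h_\alpha$ acts by the odd scalar $\beta(h_\alpha) = \kappa(\alpha,\beta)=1$, so the irreducible $\idxe$\dash{}subrepresentation it generates is odd, and the $\mL_\idxe$\dash{}isotypic component that contains it is the desired $x$.

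The main obstacle is really the well-definedness of $\xi$; once I have the ideal decomposition $\mL_\idxe = \idxe \oplus \mL_\idxe'$ and the resulting external-tensor-product structure of irreducible $\mL_\idxe$\dash{}modules, both the grading identity and the non-triviality follow cleanly from the standard $\sll_2$ parity and from inspecting a single root weight of $\fA$ via \cref{prop:Aunique}.
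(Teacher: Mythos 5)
Your proposal is correct and follows essentially the same route as the paper: the grading axiom is deduced from the parity behaviour of tensor products of $\sll_2$\dash{}representations (the well-definedness point you elaborate is handled in the paper already inside \cref{def:grad}, where the restriction of $A_x^\idxe$ to the ideal $\idxe$ is observed to be isotypic), and non-triviality is shown exactly as in the paper by exhibiting a weight vector $\overline{e_\beta h}={[h]}_\beta$ with $\kappa(\alpha,\beta)=1$ on which $h_\alpha$ acts with odd eigenvalue $1$.
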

\begin{proof}
    The tensor product of two odd (or two even) representations is a direct sum of even representations and the tensor product of an odd and an even representation is a direct sum of odd representations.
    Since $(X_l,\fus)$ is the representation fusion law on $X_l$, it follows that $\xi$ defines a grading of $(X_l,\fus)$.
    To prove that this grading is non-trivial it suffices to show that the $\fA$ has an odd irreducible $\llangle e_{\alpha},e_{-\alpha} \rrangle$-subrepresentation.
    Equivalently, we need to show that one of the coroots of $\llangle e_\alpha ,e_{-\alpha} \rrangle$, e.g.\ $h_\alpha$, has an eigenvector in $\fA$ with an odd eigenvalue.
    Since e.g.\ $h_\alpha \cdot {[h]}_\beta = {[h]}_\beta$ for any roots $\alpha,\beta \in \Phi$ with $\kappa(\alpha,\beta)=1$ and any $h \in \mH$, it follows that the grading is non-trivial.
\end{proof}

This grading induces a non-trivial grading of $\mF$.

\begin{definition}%
    \label{def:gradF}
    The $\Z/2\Z$-grading $\xi$ of $(X_l,\fus)$ induces a $\Z/2\Z$-grading of $\mF$:
    \[
        \xi : X_g \times X_l \to \Z/2\Z : (x,y) \mapsto \xi(y).
    \]
\end{definition}

Now we determine the corresponding Miyamoto group of the $\mF$-decomposition algebra $(A(\Phi),\idx,\Omega)$.
It turns out that this Miyamoto group is isomorphic to the group of inner automorphism (this is, the adjoint Chevalley group) of $\mL$.
We repeat some terminology about inner automorphisms.
We refer to~\cite{Stei68} for more details.

\begin{definition}%
    \label{defn:int}
    Let $\mathfrak{L}$ be an arbitrary complex, semisimple Lie algebra. Let $\rho : \mathfrak{L} \to \mathfrak{gl}(V)$ be a representation of $\mathfrak{L}$. Suppose that $\ell \in \mathfrak{L}$ such that $\rho(\ell)$ is nilpotent which means that ${\rho(\ell)}^k = 0$ for some $k \in \mathbb{N} \setminus \{0\}$. Let
    \[
        \exp ( \rho(\ell)) \coloneqq \sum_{i = 0}^{k-1} \frac{{\rho(\ell)}^i}{i!}.
    \]
    Then $\exp( \rho(\ell))$ acts as an automorphism by conjugation on the Lie algebra $\rho(\mathfrak{L})$. We call the subgroup of $\GL(V)$ generated by these automorphisms for all possible choices of $\ell$ the Chevalley group of $(\mathfrak{L},\rho)$ and denote it by $\Int(\mathfrak{L},\rho)$. The isomorphism class of $\Int(\mathfrak{L},\rho)$ only depends on the lattice $\Lambda(\rho)$ spanned by the weights of~$\rho$. If $\Lambda(\rho)$ is equal to the weight lattice of $\mathfrak{L}$, then we call $\Int(\mathfrak{L},\rho)$ the fundamental Chevalley group of $\mathfrak{L}$ and denote it by $\HInt\mathfrak{L}$. For any representation $\rho$ of $\mathfrak{L}$ there exists an epimorphism $\HInt\mathfrak{L} \to \Int(\mathfrak{L},\rho)$ such that the kernel is contained in the center of $\HInt\mathfrak{L}$. Any representation $\rho$ of $\mathfrak{L}$ can therefore be viewed as a representation for $\HInt(\mL)$. On the other hand, if $\Lambda(\rho)$ is equal to the root lattice of $\mathfrak{L}$, then $\Int \mathfrak{L} \coloneqq \Int(\mathfrak{L},\rho)$ is called the adjoint Chevalley group of $\mathfrak{L}$. For any representation $\rho$ of $\mathfrak{L}$, there exist an epimorphism $\Int(\mathfrak{L},\rho) \to \Int \mathfrak{L}$ with kernel contained in the center of $\Int(\mathfrak{L},\rho)$. If $\mathfrak{L}$ is simple, then so is $\Int \mathfrak{L}$.
\end{definition}

The following example explains how the grading coming from odd and even representations of $\mathfrak{sl}_2(\C)$ gives rise to involutions.

\begin{example}%
    \label{ex:sl2}
    Consider a Lie algebra $\idxe \cong \sll_2(\C)$ together with its standard 2-dimensional representation $\rho$.
    Then $\HInt\idxe = \Int(\idxe,\rho) \cong \SL_2(\C)$ while $\Int \idxe \cong \PSL_2(\C)$.
    Denote the unique non-trivial element in the center of $\HInt \idxe$ by $\sigma_\idxe$.
    Then $\sigma_\idxe$ acts trivially on the representation $\rho$ of $\idxe$ (viewed as a representation for $\HInt \idxe$) if and only if the weight lattice $\Lambda(\rho)$ is equal to the root lattice of $\idxe$.
    More precisely, $\sigma_\idxe$ acts as 1 (respectively $-1$) on the even (respectively odd) representations of $\idxe$.
\end{example}

Now we are ready to determine the Miyamoto group of $(\fA,\idx,\Omega)$.

\begin{theorem}%
    \label{thm:Amiy}
    Let $\Phi$ be an irreducible simply laced root system.
    Consider the $\mF$-decomposition algebra $(\fA,\idx,\Omega)$ from \cref{def:dec}.
    The Miyamoto group of this algebra corresponding to the $\Z/2\Z$-grading of $\mF$ from \cref{def:gradF} is $\Int(\mL,\fA)$, the adjoint Chevalley group of type $\Phi$.
\end{theorem}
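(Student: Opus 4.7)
The plan is to prove the two inclusions $M \subseteq \Int(\mL,\fA)$ and $\Int(\mL,\fA) \subseteq M$, where $M$ denotes the Miyamoto group. For the forward inclusion, I identify each generator $\tau_{\idxe,\chi}$ with the central involution $\sigma_\idxe$ of the $\SL_2$\dash{}subgroup of $\Int(\mL,\fA)$ attached to $\idxe$. Fix $\idxe \in \idx$. By \cref{lem:grad} the restriction of $\fA$ to $\idxe$ contains an odd irreducible subrepresentation, so the subgroup of $\Int(\mL,\fA)$ generated by the exponentials of the nilpotent elements of $\rho(\idxe)$ is isomorphic to $\HInt \idxe \cong \SL_2(\C)$ and contains a non-trivial central involution. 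By \cref{ex:sl2}, this involution acts on $\fA$ as $+1$ on even $\idxe$\dash{}subrepresentations and as $-1$ on odd ones, which matches the defining action of $\tau_{\idxe,\chi}$ on each local $\mL_\idxe$\dash{}isotypic component. Hence $\tau_{\idxe,\chi} = \sigma_\idxe \in \Int(\mL,\fA)$.

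For the reverse inclusion, I will show that $M$ contains every Chevalley root subgroup $U_\beta \coloneqq \{\exp(t\rho(e_\beta)) \mid t \in \C\}$; since these generate $\Int(\mL,\fA)$, this suffices. Fix $\beta \in \Phi$ and pick $\alpha \in \Phi$ with $\kappa(\alpha,\beta) = -1$, which exists by the irreducibility of $\Phi$ and our rank hypothesis. Set $\idxe \coloneqq \llangle e_\alpha, e_{-\alpha}\rrangle$ and $g_t \coloneqq \exp(t\rho(e_\beta))$ for $t \in \C$. Then $\idxe_t \coloneqq \mathrm{Ad}(\exp(te_\beta))(\idxe)$ is an element of $\idx$ and its Miyamoto involution is $\sigma_{\idxe_t} = g_t \sigma_\idxe g_t^{-1} \in M$. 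Since $\sigma_\idxe$ descends to $h_\alpha(-1) \in \Int \mL$, it acts on $\rho(e_\beta)$ by the scalar $(-1)^{\kappa(\alpha,\beta)} = -1$, and hence $\sigma_\idxe g_t \sigma_\idxe^{-1} = g_{-t}$. Therefore
\[
    \sigma_{\idxe_t} \sigma_\idxe^{-1} = g_t \sigma_\idxe g_t^{-1} \sigma_\idxe^{-1} = g_t \cdot g_t = g_{2t},
\]
which lies in $M$. Letting $t$ range over $\C$ yields $U_\beta \subseteq M$. The identification $\Int(\mL,\fA) = \Int \mL$ asserted by the theorem then follows from \cref{prop:Aunique}, because every weight of $\fA$ lies in the root lattice of $\Phi$.

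The most delicate point is the opening identification $\tau_{\idxe,\chi} = \sigma_\idxe$. It relies on both the non-triviality of $\sigma_\idxe$ on $\fA|_\idxe$ (so that the attached $\SL_2(\C)$ does not collapse inside $\Int(\mL,\fA)$) and the fact that the grading $\xi$ correctly records the $h_\alpha$\dash{}parities of the $\mL_\idxe$\dash{}isotypic components; both are delivered by \cref{lem:grad}. Once this identification is in place, the conjugation calculation above together with the classical generation of a Chevalley group by its root subgroups close the argument.
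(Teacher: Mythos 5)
Your proposal is correct, and the first half coincides with the paper's proof: both arguments begin by identifying the Miyamoto map $\tau_{\idxe,\chi}$ with the central involution $\sigma_\idxe$ of the copy of $\SL_2(\C)$ attached to $\idxe$, using \cref{ex:sl2} for the even/odd dichotomy and \cref{lem:grad} for non-triviality. Where you diverge is in showing that these involutions generate all of $\Int(\mL,\fA)$. The paper's route is soft: since $\idx$ is closed under conjugation, the $\sigma_\idxe$ form a (non-trivial) conjugacy class, so the Miyamoto group is a non-trivial normal subgroup of $\Int(\mL,\fA)$, which is the adjoint Chevalley group and hence simple; simplicity forces equality. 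Your route is constructive: choosing $\alpha$ with $\kappa(\alpha,\beta)=-1$, you use that $\sigma_\idxe$ acts on $\rho(e_\beta)$ as $h_\alpha(-1)$ does, i.e.\ by $(-1)^{\kappa(\alpha,\beta)}=-1$, to get $\sigma_\idxe g_t\sigma_\idxe^{-1}=g_{-t}$ and hence $\sigma_{\idxe_t}\sigma_\idxe^{-1}=g_{2t}$, exhibiting every root subgroup $U_\beta$ as products of two Miyamoto involutions. The computation is sound (the needed compatibility $\tau_{\idxe_t,\chi}=g_t\tau_{\idxe,\chi}g_t^{-1}$ is guaranteed by the way the conjugate local decompositions are indexed by a common set $X_l$ in \cref{def:gldec}). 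Your argument is longer but buys something the paper's does not: it avoids any appeal to simplicity of the adjoint group and even identifies each root element explicitly as a product of two Miyamoto involutions, so it would survive in situations where the target group fails to be simple; the paper's argument is shorter but leans entirely on simplicity.
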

\begin{proof}
    From the definition of the $\Z/2\Z$-grading of $\mF$ (\cref{def:grad,def:gradF}) and \cref{ex:sl2}, it follows that the action of $\tau_{\idxe,\chi}$ (with $\chi$ the non-trivial character of $\Z/2\Z$) corresponds to the action of $\sigma_\idxe$.
    This action is non-trivial by \cref{lem:grad}.
    Since the index set $\idx$ is closed under the action of $\Int \mL$, the elements $\{ \sigma_\idxe \mid \idxe \in \idx \}$ form a conjugacy class of involutions of $\Int(\mL,\fA)$.
    Since the weights of $\fA$ are contained in the root lattice $\mL$, we have that $\Int(\mL,\fA)$ is isomorphic to the adjoint Chevalley group of type $\Phi$ and therefore simple.
    So the group generated by the Miyamoto maps must be isomorphic to it.
\end{proof}

\begin{remark}
    Note that we have never used any information about the algebra product $\ast$ on $\fA$.
    Indeed, the technique that we used here is applicable to any algebra on which the Lie algebra $\mL$ acts (non-trivially) by derivations, for example the Lie algebra itself.
    It will be possible to give the algebra the structure of a decomposition algebra with a $\Z/2\Z$-graded fusion law.
    If this grading is non-trivial then the corresponding Miyamoto group will be a Chevalley group of type $\Phi$ (but not necessarily adjoint).
\end{remark}

Let us now give an overview of some of the techniques that we used to explicitly obtain the local and global decompositions.

In \cref{sec:0dec}, we described the decompositions of the zero weight space of $\fA$.
We can use the results from~\cite{Bro95}*{Corollary 1} and~\cite{Ree98} to extend these decompositions of $\fJ$ to decompositions in $\fA$.
They introduce the terminology of \emph{small modules} which means that the double of a root is not a weight of the module.
More precisely, they prove that, if $V$ is a small module for a semisimple Lie algebra $\mL$, then its zero weight space is (almost always) irreducible as a representation for the Weyl group of $\mL$.
Note that $\fA$ is small as a module for $\mL$ or $\mL_\idxe$.
If $V$ is now an irreducible subrepresentation of $\fJ$ for $W$ (resp. $C_W(s_\alpha)$), then it follows from these results that the $\mL$\dash{}module (resp.\ $\mL_{\llangle e_{\alpha},e_{-\alpha} \rrangle}$\dash{}module) generated by $V$ is also irreducible.
Moreover, we can determine the highest weight of this module from the character of $V$.
This already helps to get a lot of components of the global and local decompositions of $\fA$.

The representation fusion laws for $\mL$ and $\mL_\idxe$ can be determined using the results from~\cite{FH91}*{\S~25.3}.

For each of the types $A_n$, $D_n$ and $E_n$, we will continue to use the notation introduced in the corresponding subsection of \cref{sec:0dec}.
In particular, we recall the index sets $X_g^0$ and $X_l^0$ for the local and global decomposition.
The global decomposition can then be given as follows.
For each $x \in X_g^0$ we let $A_x$ be the $\mL$\dash{}submodule generated by $J_x$.
From the discussion above, it follows that each $A_x$ is an isotypic component of the $\mL$\dash{}modules.
In fact, these are all the isotypic components of $\fA$ as $\mL$\dash{}module.
Therefore we can take $X_g^0 = X_g$.
We will give the following additional information about the decompositions of $\fA$.

\begin{enumerate}
    \item For each isotypic component $A_x$ for $x \in X_g^0$, we will give its highest weight and dimension.
        We say that $A_x$ has highest weight $n \cdot w$ if $A_x$ is the isotypic component corresponding to the dominant weight $w$ and the weight $w$ has multiplicity $n$ in $A_x$.
        The weight $w$ is given with respect to the basis of fundamental weights.
        We have ordered this basis with respect to the numbering of the nodes of the Dynkin diagram from \cref{fig:dynkindiagrams}.
    \item We give the global fusion law $(X_g,\fus)$.
    \item The full decomposition $\bigoplus_{(x,i) \in \mF} A_{x,i}^\idxe$ with respect to $\idxe \coloneqq \llangle e_\alpha , e_{-\alpha} \rrangle$ is given, where $\alpha$ is the highest root as in \cref{sec:0dec}.
        For $(x,i) \in X_g^0 \times X_l^0$ we let $A_{(x,i)}^\idxe$ be the $\mL_\idxe$\dash{}submodule generated by $J_{(x,i)}^\alpha$.
        We extend $X_l^0$ to $X_l$ and give $A_{(x,i)}^\idxe$ for each $x \in X_g$ and $i \in X_l \setminus X_l^0$ for which $A_{(x,i)}^\idxe \neq 0$.
    \item We give the highest weight and dimension of each of the components of the local decomposition $\bigoplus_{i \in X_l} A_i^\idxe$.
    \item Lastly, the local fusion law $(X_l,\fus)$ is given.
\end{enumerate}

\begin{figure}
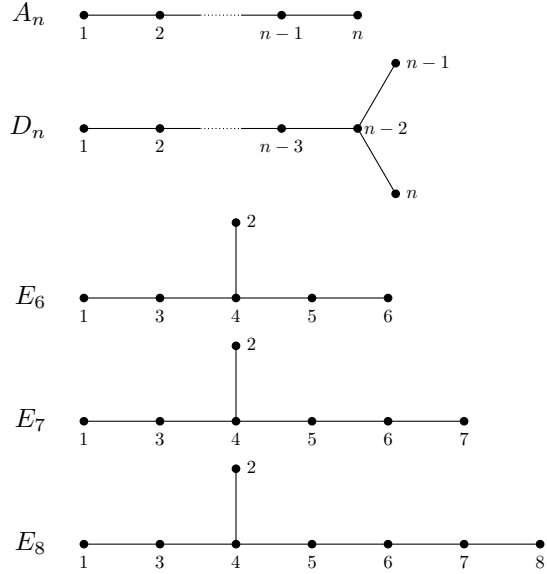

    \begin{tabular}{rl}
        $A_n$ & \dynkin[labels={1,2,n-1,n}]A{} \\
        $D_n$ & \begin{dynkinDiagram}[labels={1,2,n-3,,n-1,n}]D{} \node[anchor=west,scale=0.7] at (root 4) {$n-2$}; \end{dynkinDiagram} \\
        $E_6$ & \dynkin[label]E{6} \\
        $E_7$ & \dynkin[label]E{7} \\
        $E_8$ & \dynkin[label]E{8} \\
    \end{tabular}
    \caption{Dynkin diagrams of the irreducible simply laced root systems.}%
    \label{fig:dynkindiagrams}
\end{figure}

\subsection{Type \texorpdfstring{$A_n$}{An}}

We restrict to the case where $n>3$ for the global decomposition and fusion law and to $n>5$ for the local decomposition and fusion law.

\begin{enumerate}
    \item\begin{minipage}[t]{\linewidth}
        \captionsetup{type=table}
        \[\begin{array}{cccc} \toprule
            \text{Component} & \text{Highest weight} & \text{Dimension} \\ \midrule
            A_a & (0,0,\dots,0) & 1 \\
            A_b & (1,0,\dots,0,1) & n(n+2) \\
            A_c & (0,1,0,\dots,0,1,0) & \frac{(n+2){(n+1)}^2(n-2)}{4} \\ \bottomrule
        \end{array}\]
        \caption{Highest weights and dimensions for the global decomposition of \texorpdfstring{$\fA$}{A} for type \texorpdfstring{$A_n$ ($n>3$)}{An}.}%
        \label{tbl:decgAchar}
    \end{minipage}

    \item\begin{minipage}[t]{\linewidth}
        \captionsetup{type=table}
        \[\begin{array}{c|ccc} % chktex 44
            \fus &  a   &   b   &   c \\ \hline % chktex 44
            a   &   a   &   b   &   c \\
            b   &   b   &   a,b,c   & b,c   \\
            c   &   c   &   b,c &   a,b,c \\
        \end{array}\]
        \caption{The fusion law \texorpdfstring{$(X_g,\fus)$}{} for type \texorpdfstring{$A_n$ ($n>3$)}{An}.}%
        \label{tbl:decgAfus}
    \end{minipage}
    
    \item\begin{minipage}[t]{\linewidth}
        \begin{align*}
            A_{b,8}^\idxe &\coloneqq \langle {[b_1'+b_i']}_{b_1-b_i},{[b_{n+1}'+b_i']}_{b_{n+1}-b_i} \mid 1 \leq i \leq n-1 \rangle \\
            A_{b,9}^\idxe &\coloneqq \langle {[b_1'+b_i']}_{b_i-b_1},{[b_{n+1}'+b_i']}_{b_i-b_{n+1}} \mid 1 \leq i \leq n-1 \rangle \\
            A_{c,6}^\idxe &\coloneqq \langle x_{b_1+b_{n+1}-b_i-b_j} \mid 1 \leq i < j \leq n-1 \rangle \\
            A_{c,7}^\idxe &\coloneqq \langle x_{b_i+b_j-b_1+b_{n+1}} \mid 1 \leq i < j \leq n-1 \rangle \\
            A_{c,8}^\idxe &\coloneqq \langle {[b_1']}_{b_{n+1}-b_i},{[b_{n+1}']}_{b_1-b_i} \mid 1 \leq i \leq n-1\} \rangle \\
            A_{c,9}^\idxe &\coloneqq \langle {[b_1']}_{b_i - b_{n+1}},{[b_{n+1}']}_{b_i - b_1} \mid 1 \leq i \leq n-1\} \rangle\\
            A_{c,10}^\idxe &\coloneqq \begin{multlined}[t]
                \langle {[b_i']}_{b_1-b_j},{[b_i']}_{b_{n+1}-b_j},x_{b_1+b_i-b_j-b_k},x_{b_{n+1}+b_i-b_j-b_k} \mid \\ 1 \leq i,j,k \leq n-1 , \abs{\{i,j,k\}}=3 \rangle
                \end{multlined} \\
            A_{c,11}^\idxe &\coloneqq \begin{multlined}[t]
                \langle {[b_i']}_{b_j-b_1},{[b_i']}_{b_j-b_{n+1}},x_{b_j+b_k-b_1-b_i},x_{b_j+b_k-b_{n+1}-b_i} \mid \\ 1 \leq i,j,k \leq n-1 , \abs{\{i,j,k\}}=3 \rangle
                \end{multlined}
        \end{align*}
    \end{minipage}

    \item\begin{minipage}[t]{\linewidth}
        \captionsetup{type=table}
        \[\begin{array}{cccc} \toprule
            \text{Component} & \text{Highest weight} & \text{Dimension} \\ \midrule
            A_1^\idxe & 3 \cdot (0;0,\dots,0) & 3 \cdot 1\\
            A_2^\idxe & 2 \cdot (0;1,0,\dots,0,1) & 2 \cdot n(n-2) \\
            A_3^\idxe & (0;0,1,0,\dots,0,1,0) & \frac{n{(n-1)}^2(n-4)}{2} \\
            A_4^\idxe & (2;0,\dots,0) & 3 \\
            A_5^\idxe & (2;1,0,\dots,0,1) & 3n(n-2) \\
            A_6^\idxe & (0;0,\dots,0,1,0) & \frac{(n-1)(n-2)}{2}\\
            A_7^\idxe & (0;0,1,0,\dots,0) & \frac{(n-1)(n-2)}{2} \\
            A_8^\idxe & 2 \cdot (1;0,\dots,0,1) & 2 \cdot 2(n-1) \\
            A_9^\idxe & 2 \cdot (1;1,0,\dots,0) & 2 \cdot 2(n-1) \\
            A_{10}^\idxe & (1;1,0,\dots,0,1,0) & n(n-1)(n-3) \\
            A_{11}^\idxe & (1;0,1,0,\dots,0,1) & n(n-1)(n-3)\\ \bottomrule
        \end{array}\]
        \caption{Highest weights and dimensions for the local decomposition of \texorpdfstring{$\fA$}{A} for type \texorpdfstring{$A_n$ ($n>5$)}{An}.}%
        \label{tbl:declAchar}
    \end{minipage}

    \item\begin{minipage}[t]{\linewidth}
        \captionsetup{type=table}
        \[\resizebox{\linewidth}{!}{$\displaystyle\begin{array}{c|ccccccccccc} % chktex 44
            \fus & 1 & 2 & 3 & 4 & 5 & 6 & 7 & 8 & 9 & 10 & 11\\ \hline % chktex 44
            1 & 1 & 2 & 3 & 4 & 5 & 6 & 7 & 8 & 9 & 10 & 11 \\
            2 & 2 & 1,2,3 & 2,3 & 5 & 4,5 & 6 & 7 & 8,10 & 9,11 & 8,10 & 9,11 \\
            3 & 3 & 2,3 & 1,2,3 &  & 5 & 6 & 7 & 10 & 11 & 8,10 & 9,11 \\
            4 & 4 & 5 &  & 1,4 & 2,5 &  &  & 8 & 9 & 10 & 11 \\
            5 & 5 & 4,5 & 5 & 2,5 & 1,2,3,4,5 &  &  & 8,10 & 9,11 & 8,10 & 9,11 \\
            6 & 6 & 6 & 6 &  &  & 7^\sspec & 1,2,3 &  & 8,10 &  & 8,10 \\
            7 & 7 & 7 & 7 &  &  & 1,2,3 & 6^\sspec & 9,11 &  & 9,11 &  \\
            8 & 8 & 8,10 & 10 & 8 & 8,10 &  & 9,11 & 6 & 1,2,4,5 & 6 & 2,3,5 \\
            9 & 9 & 9,11 & 11 & 9 & 9,11 & 8,10 &  & 1,2,4,5 & 7 & 2,3,5 & 7 \\
            10 & 10 & 8,10 & 8,10 & 10 & 8,10 &  & 9,11 & 6 & 2,3,5 & 6 & 1,2,3,4,5 \\
            11 & 11 & 9,11 & 9,11 & 11 & 9,11 & 8,10 &  & 2,3,5 & 7 & 1,2,3,4,5 & 7 \\
        \end{array}$}\]
        \caption{The fusion law \texorpdfstring{$(X_l,\fus)$}{} for type \texorpdfstring{$A_n$ ($n>5$)}{An}. \texorpdfstring{Entries marked with $\sspec$ should be left out for $n \neq 7$.}{}}%
        \label{tbl:declAfus}
    \end{minipage}
\end{enumerate}

\subsection{Type \texorpdfstring{$D_n$}{Dn}}

We restrict to the case where $n>5$ for the global decomposition and fusion law and to $n>7$ for the local decomposition and fusion law.

\begin{enumerate}
    \item\begin{minipage}[t]{\linewidth}
        \captionsetup{type=table}
        \[\begin{array}{cccc} \toprule
            \text{Component} & \text{Highest weight} & \text{Dimension} \\ \midrule
            A_a & (0,0,\dots,0) & 1 \\ 
            A_b & (2,0,\dots,0,0) & (2n-1)(n+1) \\
            A_c & (0,0,0,1,0,\dots,0) &  \frac{(2n-3)(2n-1)(n-1)n}{6}\\ \bottomrule
        \end{array}\]
        \caption{Highest weights and dimensions for the global decomposition of \texorpdfstring{$\fA$}{A} for type \texorpdfstring{$D_n$ ($n>5$)}{Dn}.}%
        \label{tbl:decgDchar}
    \end{minipage}

    \item\begin{minipage}[t]{\linewidth}
        \captionsetup{type=table}
        \[\begin{array}{c|ccc} % chktex 44
            \fus &	a	&	b	&	c \\ \hline % chktex 44
            a	&	a	&	b	&	c \\
            b	&	b	&	a,b	&  c	\\
            c	&	c	&	c	&	a,b,c \\
        \end{array}\]
        \caption{The fusion law \texorpdfstring{$(X_g,\fus)$}{} for type \texorpdfstring{$D_n$ ($n>5$)}{Dn}.}%
        \label{tbl:decgDfus}
    \end{minipage}

    \item\begin{minipage}[t]{\linewidth}
        \begin{align*}
            A_{b,7}^\idxe &\coloneqq \langle {[b_1]}_{\pm b_1 \pm b_i}, {[b_2]}_{\pm b_2 \pm b_i} \mid 3 \leq i \leq n \rangle \\
            A_{c,7}^\idxe &\coloneqq \langle {[b_2]}_{\pm b_1 \pm b_i}, {[b_1]}_{\pm b_2 \pm b_i} \mid 3 \leq i \leq n \rangle \\
            A_{c,8}^\idxe &\coloneqq \begin{multlined}[t]
                \langle {[b_k]}_{\pm b_1 \pm b_i}, {[b_k]}_{\pm b_2 \pm b_i} , x_{\pm b_1 \pm b_i \pm b_k \pm b_l}, x_{\pm b_2 \pm b_i \pm b_k \pm b_l} \mid \\
                3 \leq i,j,k \leq n , \abs{\{i,j,k\}} = 3 \rangle
                \end{multlined}
        \end{align*}
    \end{minipage}
    
    \item\begin{minipage}[t]{\linewidth}
        \captionsetup{type=table}
        \[\begin{array}{cccc} \toprule
            \text{Component} & \text{Highest weight} & \text{Dimension} \\ \midrule
            A_1^\idxe & 3 \cdot (0;0;0,\dots,0) & 3 \cdot 1\\
            A_2^\idxe & (0;0;2,0,\dots,0) & (2n-5)(n-1) \\
            A_3^\idxe & (0;0;0,0,0,1,0,\dots,0) & \frac{(2n-7)(2n-5)(n-3)(n-2)}{6} \\
            A_4^\idxe & (2;0;0,1,0,\dots,0) & 3(2n-5)(n-2) \\
            A_5^\idxe & (0;2;0,1,0,\dots,0) & 3(2n-5)(n-2) \\
            A_6^\idxe & (2;2;0,\dots,0) & 9 \\
            A_7^\idxe & 2 \cdot (1;1;1,0,\dots,0) & 2 \cdot 8(n-2) \\
            A_8^\idxe & (1;1;0,0,1,0,\dots,0) & \frac{8(2n-5)(n-3)(n-2)}{3} \\ \bottomrule
        \end{array}\]
        \caption{Highest weights and dimensions for the local decomposition of \texorpdfstring{$\fA$}{A} for type \texorpdfstring{$D_n$ ($n>7$)}{Dn}.}%
        \label{tbl:declDchar}
    \end{minipage}

    \item\begin{minipage}[t]{\linewidth}
        \captionsetup{type=table}
        \[\begin{array}{c|cccccccc} % chktex 44
            \fus & 1 & 2 & 3 & 4 & 5 & 6 & 7 & 8 \\ \hline % chktex 44
            1 & 1 & 2 & 3 & 4 & 5 & 6 & 7 & 8 \\
            2 & 2 & 1,2 & 3 & 4 & 5 &  & 7 & 8 \\
            3 & 3 & 3 & 1,2,3 & 4 & 5 &  & 8 & 7,8 \\
            4 & 4 & 4 & 4 & 1,2,3,4 & 6 & 5 & 7,8 & 7,8 \\
            5 & 5 & 5 & 5 & 6 & 1,2,3,5 & 4 & 7,8 & 7,8 \\
            6 & 6 &  &  & 5 & 4 & 1,6 & 7 & 8 \\
            7 & 7 & 7 & 8 & 7,8 & 7,8 & 7 & 1,2,4,5,6 & 3,4,5 \\
            8 & 8 & 8 & 7,8 & 7,8 & 7,8 & 8 & 3,4,5 & 1,2,3,4,5,6 \\
        \end{array}\]
        \caption{The fusion law \texorpdfstring{$(X_l,\fus)$}{} for type \texorpdfstring{$D_n$ ($n>7$)}{Dn}.}%
        \label{tbl:declDfus}
    \end{minipage}
\end{enumerate}

\subsection{Type \texorpdfstring{$E_n$}{En}}%
\label{subsec:decEn}

\begin{enumerate}
    \item
        \begin{minipage}[t]{\linewidth}$n=6$
            \captionsetup{type=table}
            \[\begin{array}{ccc} \toprule
                \text{Component} & \text{Highest weight}     & \text{Dimension} \\ \midrule
                A_a & (0,0,0,0,0,0) & 1 \\
                A_b & (1,0,0,0,0,1) & 650 \\ \bottomrule
            \end{array}\]
            \caption{Highest weights and dimensions for the global decomposition of \texorpdfstring{$\fA$}{A} for type \texorpdfstring{$E_6$}{E6}.}%
            \label{tbl:decgE6char}
        \end{minipage}

        \noindent\begin{minipage}[t]{\linewidth}$n=7$
            \captionsetup{type=table}
            \[\begin{array}{ccc} \toprule
                \text{Component} & \text{Highest weight}     & \text{Dimension} \\ \midrule
                A_a & (0,0,0,0,0,0,0) & 1 \\
                A_b & (0,0,0,0,0,1,0) & 1539 \\ \bottomrule
            \end{array}\]
            \caption{Highest weights and dimensions for the global decomposition of \texorpdfstring{$\fA$}{A} for type \texorpdfstring{$E_7$}{E7}.}%
            \label{tbl:decgE7char}
        \end{minipage}

        \noindent\begin{minipage}[t]{\linewidth}$n=8$
            \captionsetup{type=table}
            \[\begin{array}{ccc} \toprule
                \text{Component} & \text{Highest weight}     & \text{Dimension} \\ \midrule
                A_a & (0,0,0,0,0,0,0,0) & 1 \\
                A_b & (1,0,0,0,0,0,0,0) & 3875 \\ \bottomrule
            \end{array}\]
            \caption{Highest weights and dimensions for the global decomposition of \texorpdfstring{$\fA$}{A} for type \texorpdfstring{$E_8$}{E8}.}%
            \label{tbl:decgE8char}
        \end{minipage}

    \item\begin{minipage}[t]{\linewidth}
        \captionsetup{type=table}
        \[\begin{array}{c|cc} % chktex 44
            \fus & a & b \\ \hline % chktex 44
            a & a & b \\
            b & b & a,b
        \end{array}\]
        \caption{The fusion law \texorpdfstring{$(X_g,\fus)$}{} for type \texorpdfstring{$E_n$}{En}.}%
        \label{tbl:decgEfus}
    \end{minipage}

    \item
        \begin{minipage}[t]{\linewidth}$n=6$

            Let $S = \{ \beta \in \Phi \mid \kappa(\alpha,\beta) = \pm 1 \}$.
            \begin{align*}
                A_{b,12}^\idxe &\coloneqq \langle {[b_7+b_8]}_\beta \mid \beta \in S \rangle \\
                A_{b,13}^\idxe &\coloneqq \begin{multlined}[t]
                    \langle {[b_i-b_j]}_\beta,x_{\beta + b_i - b_j}\mid 1 \leq i < j \leq 6, \beta \in S, \qquad \\
                    \kappa(b_i+b_j,\beta) = 1, \kappa(b_i-b_j,\beta)=0 \rangle
                    \end{multlined} \\
                A_{b,14}^\idxe &\coloneqq \begin{multlined}[t]
                    \langle {[b_i-b_j]}_\beta,x_{\beta + b_i - b_j}\mid 1 \leq i < j \leq 6, \beta \in S, \qquad \\
                    \kappa(b_i+b_j,\beta) = -1, \kappa(b_i-b_j,\beta)=0 \rangle
                    \end{multlined}
            \end{align*}
        \end{minipage}

        \noindent\begin{minipage}[t]{\linewidth}$n=7$

            Also here we let $S = \{ \beta \in \Phi \mid \kappa(\alpha,\beta) = \pm 1\}$.
            \begin{align*}
                A_{b,9}^\idxe &\coloneqq \langle {[b_7+b_8]}_{\beta} \mid \beta \in S \rangle \\
                A_{b,10}^\idxe &\coloneqq \langle {[\gamma]}_\beta , x_{\beta+\gamma} \mid \gamma = \pm b_i \pm b_j \text{ for } 1 \leq i < j \leq 6, \kappa(\gamma,\beta)=0 \rangle
            \end{align*}
        \end{minipage}

        \noindent\begin{minipage}[t]{\linewidth}$n=8$
            \begin{align*}
                A_{b,6}^\idxe &\coloneqq \langle {[\alpha]}_{\beta} \mid \beta \in \Phi, \kappa(\alpha,\beta)=\pm 1 \rangle \\
                A_{b,7}^\idxe &\coloneqq \langle {[\gamma]}_\beta , x_{\beta+\gamma} \mid \gamma,\beta \in \Phi, \kappa(\alpha,\beta)=\pm 1 , \kappa(\gamma,\alpha)=\kappa(\gamma,\beta)=0 \rangle
            \end{align*}
        \end{minipage}

    \item
        \begin{minipage}[t]{\linewidth}$n=6$
            \captionsetup{type=table}
            \[\begin{array}{ccc} \toprule
                \text{Component} & \text{Highest weight}     & \text{Dimension} \\ \midrule
                A_1^\idxe & 2 \cdot (0;0,0,0,0,0)  & 2 \cdot 1\\
                A_2^\idxe & (0;1,0,0,0,1) & 35 \\
                A_3^\idxe & (0;0,1,0,1,0) & 189 \\
                A_5^\idxe & (2;1,0,0,0,1) & 105 \\
                A_{12}^\idxe & (1;0,0,1,0,0) & 40 \\
                A_{13}^\idxe & (1;1,1,0,0,0) & 140 \\
                A_{14}^\idxe & (1;0,0,0,1,1) & 140 \\ \bottomrule
            \end{array}\]
            \caption{Highest weights and dimensions for the local decomposition of \texorpdfstring{$\fA$}{A} for type \texorpdfstring{$E_6$}{E6}.}%
            \label{tbl:declE6char}
        \end{minipage}

        \noindent\begin{minipage}[t]{\linewidth}$n=7$
            \captionsetup{type=table}
            \[\begin{array}{ccc} \toprule
                \text{Component} & \text{Highest weight}     & \text{Dimension} \\ \midrule
                A_1^\idxe & 2 \cdot (0;0,0,0,0,0,0)  & 2 \cdot 1\\
                A_2^\idxe & (0;2,0,0,0,0,0) & 77 \\
                A_3^\idxe & (0;0,0,0,1,0,0) & 495 \\
                A_5^\idxe & (2;0,1,0,0,0,0) & 198 \\
                A_{9}^\idxe & (1;0,0,0,0,1,0) & 64 \\
                A_{10}^\idxe & (1;1,0,0,0,0,1) & 704 \\ \bottomrule
            \end{array}\]
            \caption{Highest weights and dimensions for the local decomposition of \texorpdfstring{$\fA$}{A} for type \texorpdfstring{$E_7$}{E7}.}%
            \label{tbl:declE7char}
        \end{minipage}

        \noindent\begin{minipage}[t]{\linewidth}$n=8$
            \captionsetup{type=table}
            \[\begin{array}{ccc} \toprule
                \text{Component} & \text{Highest weight}     & \text{Dimension} \\ \midrule
                A_1^\idxe & 2 \cdot (0;0,0,0,0,0,0,0)  & 2 \cdot 1\\
                A_3^\idxe & (0;0,0,0,0,0,1,0) & 1539 \\
                A_5^\idxe & (2;1,0,0,0,0,0,0) & 399 \\
                A_{6}^\idxe & (1;0,0,0,0,0,0,1) & 112 \\
                A_{7}^\idxe & (1;0,1,0,0,0,0,0) & 1824 \\ \bottomrule
            \end{array}\]
            \caption{Highest weights and dimensions for the local decomposition of \texorpdfstring{$\fA$}{A} for type \texorpdfstring{$E_8$}{E8}.}%
            \label{tbl:declE8char}
        \end{minipage}

    \item
        \begin{minipage}[t]{\linewidth}$n=6$
            \captionsetup{type=table}
            \[\resizebox{\linewidth}{!}{$\displaystyle\begin{array}{c|ccccccc} % chktex 44
                \fus & 1 & 2 & 3 & 5 & 12 & 13 & 14 \\ \hline % chktex 44
                1 & 1 & 2 & 3 & 5 & 12 & 13 & 14 \\
                2 & 2 & 1,2,3 & 2,3 & 5 & 12,13,14 & 12,13 & 12,14 \\
                3 & 3 & 2,3 & 1,2,3 & 5 & 12,13,14 & 12,13,14 & 12,13,14 \\
                5 & 5 & 5 & 5 & 1,2,3,5 & 12,13,14 & 12,13 & 12,14 \\
                12 & 12 & 12,13,14 & 12,13,14 & 12,13,14 & 1,2,3,5 & 2,3,5 & 2,3,5 \\
                13 & 13 & 12,13 & 12,13,14 & 12,13 & 2,3,5 & 3 & 1,2,3,5 \\
                14 & 14 & 12,14 & 12,13,14 & 12,14 & 2,3,5 & 1,2,3,5 & 3 \\
            \end{array}$}\]
            \caption{The fusion law \texorpdfstring{$(X_l,\fus)$}{} for type \texorpdfstring{$E_6$}{E6}.}%
            \label{tbl:declE6fus}
        \end{minipage}

        \noindent\begin{minipage}[t]{\linewidth}$n=7$
            \captionsetup{type=table}
            \[\begin{array}{c|cccccc} % chktex 44
                \fus & 1 & 2 & 3 & 5 & 9 & 10 \\ \hline % chktex 44
                1 & 1 & 2 & 3 & 5 & 9 & 10 \\
                2 & 2 & 1,2 & 3 & 5 & 10 & 9,10 \\
                3 & 3 & 3 & 1,2,3 & 5 & 9,10 & 9,10 \\
                5 & 5 & 5 & 5 & 1,2,3,5 & 9,10 & 9,10 \\
                9 & 9 & 10 & 9,10 & 9,10 & 1,3,5 & 2,3,5 \\
                10 & 10 & 9,10 & 9,10 & 9,10 & 2,3,5 & 1,2,3,5 \\
            \end{array}\]
            \caption{The fusion law \texorpdfstring{$(X_l,\fus)$}{} for type \texorpdfstring{$E_7$}{E7}.}%
            \label{tbl:declE7fus}
        \end{minipage}

        \noindent\begin{minipage}[t]{\linewidth}$n=8$
            \captionsetup{type=table}
            \[\begin{array}{c|ccccc} % chktex 44
                \fus & 1 & 3 & 5 & 6 & 7 \\ \hline % chktex 44
                1 & 1 & 3 & 5 & 6 & 7 \\
                3 & 3 & 1,3 & 5 & 6,7 & 6,7 \\
                5 & 5 & 5 & 1,3,5 & 6,7 & 6,7 \\
                6 & 6 & 6,7 & 6,7 & 1,3,5 & 3,5 \\
                7 & 7 & 6,7 & 6,7 & 3,5 & 1,3,5 \\
            \end{array}\]
            \caption{The fusion law \texorpdfstring{$(X_l,\fus)$}{} for type \texorpdfstring{$E_8$}{E8}.}%
            \label{tbl:declE8fus}
        \end{minipage}
\end{enumerate}

\section{An algebra for \texorpdfstring{$E_8$}{E8}}%
\label{sec:E8}

In this section, we direct some more attention to the case where $\Phi$ is of type $E_8$ since this was the original algebra of interest.
We prove that $\fA$ belongs to a one-parameter family of algebras.
Each of these can be given the structure of an axial decomposition algebra.

\begin{definition}%
    \label{def:pprod}
    Let $\fA$, equipped with the $\mL$-equivariant bilinear product $\ast$ and bilinear form $\mB$, be as in \cref{sec:extending}.
    Let $\one$ be the unit for $(\fA,\ast)$ constructed in \cref{sec:unit} and $A'$ the orthogonal complement of $\langle \one \rangle$ with respect to $\mB$.
    Consider a parameter $p \in \C$.
    Define the following product and bilinear form on $\fA$ that depends on the parameter $p$:
    \begin{align*}
        (c_1 \one + a_1) \pprod (c_2 \one + a_2) &\coloneqq (c_1c_2 + p\mB(a_1,a_2) )\one + c_1a_2 + c_2a_1 + a_1 \ast a_2 \\
        \mB_p (c_1\one + a_1,c_2\one + a_2) & \coloneqq c_1c_2 \mB(\one,\one) + \mB(a_1,a_2) (1 + p\mB(\one,\one))
    \end{align*}
    for all $c_1,c_2 \in \C$ and $a,b \in A'$. Note that we retrieve the original product $\ast$ and bilinear form $\mB$ if we put $p = 0$.
\end{definition}

The important properties of $\ast$ and $\mB$ from \cref{prop:defast} still hold for this new product and bilinear form.

\begin{proposition}%
    \label{prop:pprod}
    The triple $(\fA,\pprod,\mB_p)$ is a Frobenius algebra for $\mL$ for any choice of the parameter $p \in \C$ with $1 + p\dim(\mL) \neq 0$.
\end{proposition}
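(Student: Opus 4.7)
The plan is to verify the Frobenius axioms for $(\fA,\pprod,\mB_p)$ by working in the $\mL$-equivariant decomposition $\fA = \langle\one\rangle \oplus A'$. Since $\one = C_\mL = \sum_i b_i b_i^\ast$ by \cref{def:unitS}, the formula for $\mB$ from \cref{def:prodS2} yields $\mB(\one,\one) = \sum_i \kappa(b_i,b_i^\ast) = \dim(\mL)$, so the hypothesis in the statement is equivalent to $\mB(\one,\one)\neq 0$ and $1+p\,\mB(\one,\one)\neq 0$. Moreover $\one$ is $\mL$-invariant by \cref{prop:unitA}~\ref{prop:unitA:trivial}, so $A'$ is an $\mL$-subrepresentation and the projection $\pi'\colon \fA \to A'$ given by $\pi'(u) = u - \frac{\mB(u,\one)}{\mB(\one,\one)}\one$ is $\mL$-equivariant. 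Introducing the $\mL$-equivariant symmetric bilinear form $q(u,v) \coloneqq \mB(\pi'(u),\pi'(v))$, a short check gives
\[
    u \pprod v = u \ast v + p\,q(u,v)\,\one \quad\text{and}\quad \mB_p(u,v) = \mB(u,v) + p\,\mB(\one,\one)\,q(u,v)
\]
for all $u,v \in \fA$.

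Non-degeneracy of $\mB_p$ follows at once: in the decomposition $\fA = \langle\one\rangle \oplus A'$, the form $\mB_p$ is block diagonal, with value $\mB(\one,\one)$ on $\langle\one\rangle$ and $(1+p\,\mB(\one,\one))\,\mB|_{A'\times A'}$ on $A'$, and $\mB|_{A'\times A'}$ is non-degenerate because $A'$ is the $\mB$-orthogonal complement of $\langle\one\rangle$ in the Frobenius algebra $(\fA,\ast,\mB)$ from \cref{prop:defast}. Symmetry of $\mB_p$, commutativity of $\pprod$, and $\mL$-equivariance of both then follow immediately from the corresponding properties of $\mB$, $q$, $\ast$, and $\pi'$.

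For the Frobenius identity, expanding the definitions and using $q(\one,\cdot) = 0$ together with $\mB(u\ast v, w) = \mB(u, v\ast w)$ yields
\begin{align*}
    &\mB_p(u\pprod v, w) - \mB_p(u, v\pprod w) \\
    &\qquad = p\bigl[q(u,v)\mB(\one,w) - q(v,w)\mB(u,\one)\bigr] + p\,\mB(\one,\one)\bigl[q(u\ast v,w) - q(u,v\ast w)\bigr].
\end{align*}
Substituting the explicit formula $q(x,y) = \mB(x,y) - \mB(x,\one)\mB(y,\one)/\mB(\one,\one)$ and using $\mB(u\ast v,\one) = \mB(u,v)$ (again from the Frobenius property of $\ast$), the first bracket simplifies to $\mB(u,v)\mB(\one,w) - \mB(v,w)\mB(u,\one)$, while the second, once multiplied by $\mB(\one,\one)$, becomes its negative; the sum therefore vanishes.

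The main obstacle is the bookkeeping in this final identity: tracking the correction terms coming from the projection onto $A'$ and checking that they cancel against those introduced by the $p$-perturbation. There are no conceptual difficulties, since every step reduces to the Frobenius property of $(\fA,\ast,\mB)$ and the fact that $\one$ lies in the radical of $q$.
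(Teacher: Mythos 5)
Your proof is correct and follows essentially the same route as the paper: both verify the Frobenius identity by direct computation in the decomposition $\fA = \langle\one\rangle \oplus A'$, reducing everything to the Frobenius property of $(\fA,\ast,\mB)$ and the identity $\mB(\one,\one)=\dim(\mL)$. Your reformulation $u\pprod v = u\ast v + p\,q(u,v)\one$ with $q(\one,\cdot)=0$ merely repackages the paper's explicit expansion in coordinates $c_i\one+a_i$, and the cancellation you exhibit is exactly the symmetry the paper reads off from its expanded expression.
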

\begin{proof}
    The $\mL$-equivariance of $\pprod$ follows from the definition of $\pprod$ and \cref{prop:defast}.
    Also from \cref{prop:defast} we have
    \begin{multline*}
        \mB((c_1 \one + a_1) \pprod (c_2 \one + a_2) , (c_3 \one + a_3)) \\
        \begin{aligned}
            &= c_1c_2c_3\mB(\one,\one)+  (1+p\mB(1,1))(c_1 \mB(a_2,a_3) \\
            &\qquad + c_2\mB(a_1,a_3)+c_3\mB(a_1,a_2) + \mB(a_1 \ast a_2, a_3)) \\
            &= \mB((c_{\pi(1)} \one + a_{\pi(1)}) \pprod (c_{\pi(2)} \one + a_{\pi(2)}) , (c_{\pi(3)} \one + a_{\pi(3)})).
        \end{aligned}
    \end{multline*}
    for all $c_1,c_2,c_3 \in \C$, $a_1,a_2,a_3 \in A'$ and any permutation $\pi$ of $\{1,2,3\}$.
    The non-degeneracy of $\mB_p$ follows from the non-degeneracy of $\mB$ if $1+p\mB(\one,\one) \neq 0$.
    From the construction of the unit in \cref{sec:unit} it follows that $\mB(\one,\one) = \tr(\id_{\mL}) = \dim(\mL)$.
\end{proof}

Next, we show that also the structure as a decomposition algebra from \cref{def:dec} transfers to this new algebra product.

\begin{proposition}
    Let $\idx$, $\Omega$ and $\mF$ be as in \cref{def:dec}.
    Then $(\fA,\idx,\Omega)$ is an $\mF$\dash{}decomposition algebra for the algebra product $\pprod$ on $\fA$.
\end{proposition}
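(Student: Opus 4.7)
My plan is to verify the decomposition property directly from the definition of $\pprod$, isolating the effect of the extra unit term. First I would identify the component containing the unit: by \cref{prop:unitA}, $\one$ is annihilated by all of $\mL$ (and hence by every $\mL_\idxe$), so it spans the trivial $\mL$- and $\mL_\idxe$-isotypic components. Since the trivial $\mL$-representation occurs in $\fA$ with multiplicity one (as can be read off from the global decomposition tables in \cref{sec:dec}), we have $A_{a,1}^\idxe = \langle \one \rangle$, where $a \in X_g$ and $1 \in X_l$ label the trivial representations. Both $a$ and $1$ are units of their respective fusion laws, so $(a, 1)$ is a unit of $\mF$. Moreover, the $\mL$-equivariance of $\mB$ together with Schur's lemma implies that distinct isotypic components are $\mB$-orthogonal, so $A_{x, y}^\idxe \subseteq A'$ for every $(x, y) \neq (a, 1)$.

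Next I would split the verification into cases. If either factor lies in $A_{a,1}^\idxe = \langle \one \rangle$, the required inclusion follows immediately because $\one$ remains a unit for $\pprod$ (as is verified directly from the formula) and $(a, 1)$ is a unit of $\mF$. Otherwise, take $u \in A_{x, y}^\idxe$ and $v \in A_{x', y'}^\idxe$ with $(x, y), (x', y') \neq (a, 1)$; then $u, v \in A'$, and the definition of $\pprod$ gives $u \pprod v = u \ast v + p\,\mB(u, v)\,\one$. The $\ast$-term lies in $\bigoplus_{(z,w) \in (x,y) \fus (x',y')} A_{z,w}^\idxe$ by the already-established decomposition property of \cref{def:dec}, so only the extra term requires attention.

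The main remaining subtlety is handling $p\,\mB(u, v)\,\one \in A_{a,1}^\idxe$. Using once more that $\mB$ is $\mL$- and $\mL_\idxe$-equivariant, together with Schur's lemma, I would argue that $\mB(u, v) \neq 0$ forces the isotypic component $A_{x'}$ to correspond to the dual of $W_x$ and $A_{y'}^\idxe$ to correspond to the dual of $W_y^\idxe$. When this happens, the trivial representation is contained in both $W_x \otimes W_{x'}$ (as an $\mL$-module) and $W_y^\idxe \otimes W_{y'}^\idxe$ (as an $\mL_\idxe$-module), so by the criterion of \cref{lem:dec} we have $a \in x \fus x'$ and $1 \in y \fus y'$. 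Hence $(a, 1) \in (x, y) \fus (x', y')$, and the extra term $p\,\mB(u, v)\,\one$ lies in the target subspace whenever it is non-zero. This duality argument is the one step that requires care, but it is standard given the dictionary between $\mB$-pairings of isotypic components and the occurrence of the trivial representation in tensor products.
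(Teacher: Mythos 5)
Your proof is correct and follows essentially the same route as the paper's: identify $\langle\one\rangle$ as the component indexed by the unit of $\mF$, reduce to the case of two factors in $A'$, and observe that the extra term $p\,\mB(u,v)\,\one$ can only be non-zero when the trivial representation occurs in the relevant tensor products, which by the definition of the representation fusion law places the unit index in the fusion set. The only difference is cosmetic: you phrase the last step via duality of isotypic components, while the paper phrases it directly as the existence of equivariant maps $A_{x_1}\otimes A_{x_2}\to\langle\one\rangle$.
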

\begin{proof}
    Let $X_g$, $X_l$, $\mL_\idxe$, $A_x$, $A_y^\idxe$ and $A_{x,y}^\idxe$ for $x \in X_g$, $y \in X_l$ and $\idxe \in \idx$ be as in \cref{def:dec}.
    Let $e_g \in X_g$ (resp.\ $e_l \in X_l$) be the element corresponding to the trivial $\mL$-module (resp.\ $\mL_\idxe$-module).
    Then $A_{e_g} = A^\idxe_{e_g,e_l} = \langle \one \rangle$ (as can be seen from \cref{subsec:decEn}).
    Note that $(e_g,e_l)$ is a unit for the fusion law $\mF$ and since $\one$ is still a unit for the algebra $(\fA,\pprod)$ we have indeed $A_{e_g,e_l}^\idxe \pprod A_{x,y} \subseteq A_{x,y}$ for all $(x,y) \in X_g \times X_l$.
    Also, since $\mB$ is $\mL$-equivariant, we have $\mB(A_{e_g,e_l}^\idxe , A_{x,y}^\idxe) = 0$ for all $(x,y) \in X_g \times X_l \setminus \{(e_g,e_l)\}$.
    Thus, for all $x_1,x_2 \in X_g$ and $y_1,y_2 \in X_l$ such that $(x_1,y_1) \neq (e_g,e_l) \neq (x_2,y_2)$:
    \begin{align*}
        A_{x_1,y_2}^\idxe \pprod A_{x_2,y_2}^\idxe & \subseteq A_{x_1,y_1}^\idxe \ast A_{x_2,y_2}^\idxe + \langle \one \rangle \\
        & \subseteq A_{(x_1,y_1) \fus (x_2,y_2)}^\idxe + \mB(A_{x_1,y_1}^\idxe,A_{x_2,y_2}^\idxe)\langle \one \rangle.
    \end{align*}
    Suppose that $\mB(A_{x_1,y_1}^\idxe,A_{x_2,y_2}^\idxe) \neq 0$, then there exists an $\mL$-equivariant map $A_{x_1} \otimes A_{x_2} \to \langle \one \rangle$ and an $\mL_\idxe$-equivariant map $A_{y_1}^\idxe \otimes A_{y_2}^\idxe \to \langle \one \rangle$.
    By definition of $\mF$ this means that $(e_g,e_l) \in (x_1,y_1)\fus (x_2,y_2)$.
    Therefore
    \[
        A_{(i_1,j_1) \fus (i_2,j_2)}^\idxe + \mB(A_{i_1,j_1}^\idxe,A_{i_2,j_2}^\idxe)\langle \one \rangle \subseteq A_{(i_1,j_1) \fus (i_2,j_2)}^\idxe. \qedhere
    \]
\end{proof}

In the remainder of this section we restrict to the case where $\Phi$ is of type $E_8$.
The decomposition $\bigoplus_{x,i} A^\idxe_{x,i}$ is given in \cref{subsec:decEn}.
Note that there are only six non-zero components in this decomposition, namely $A_{a,1}^\idxe$, $A_{b,1}^\idxe$, $A_{b,3}^\idxe$, $A_{b,5}^\idxe$, $A_{b,6}^\idxe$ and $A_{b,7}^\idxe$ of respective dimensions 1, 1, 1539, 399, 112 and 1824.
Each of these is irreducible as an $\mL_\idxe$\dash{}representation.
The corresponding sublaw of $\mF$ on these components is given in \cref{tbl:E8fus}.
(To preserve space we have denoted $(x,i)$ by $xi$.)

\begin{table}[ht]
    \[\begin{array}{c|cccccc} % chktex 44
        \fus & a1 & b1 & b3 & b5 & b6 & b7 \\ \hline % chktex 44
        a1 & a1 & b1 & b3 & b5 & b6 & b7 \\
        b1 & b1 & a1,b1 & b3 & b5 & b6 & b7 \\
        b3 & b3 & b3 & a1,b1,b3 & b5 & b6,b7 & b6,b7 \\
        b5 & b5 & b5 & b5 & a1,b1,b3,b5 & b6,b7 & b6,b7 \\
        b6 & b6 & b6 & b6,b7 & b6,b7 & a1,b1,b3,b5 & b3,b5 \\
        b7 & b7 & b7 & b6,b7 & b6,b7 & b3,b5 & a1,b1,b3,b5
    \end{array}\]
    \caption{The fusion law for type $E_8$.}\label{tbl:E8fus}
\end{table}

We want to look for an axis for this decomposition on which $\mL_\idxe$ acts trivially.
Such an axis must be contained in $A_1^\idxe = A_{a,1}^\idxe \oplus A_{b,1}^\idxe$.
Therefore, we need to know the action of $A_1^\idxe$ on $\fA$ by multiplication.
Consider a Chevalley basis of $\mL$ as in \cref{def:stage} and take $\idxe \coloneqq \langle\!\langle e_\alpha, e_{-\alpha} \rangle\!\rangle$ for some root $\alpha \in \Phi^+$ as before.
Let $a_\alpha$ be the projection with respect to $\mB$ of $j_\alpha$ onto $A_{b,1}$.
Then $A_1^\idxe = \langle \one , a_\alpha \rangle$ and $\mB(\one,a_\alpha) = 0$.
Since $\one$ is a unit for $(\fA,\pprod)$, it suffices to describe
\[
    \ad_{a_\alpha} : \fA \to \fA : a \mapsto a_\alpha \pprod a.
\]
Note that $\mL_\idxe$ fixes $a_\alpha$ and hence $\ad_{a_\alpha}$ is an isomorphism of $\mL_\idxe$\dash{}representations.
Since each $\mL_\alpha$-isotypic component $A_i^\idxe$ is irreducible if $i \neq 1$, the operator $\ad_{a_\alpha}$ must act as a scalar on each $A_i^\idxe$ for $i \neq 1$, by Schur's lemma.

\begin{proposition}%
    \label{prop:ada}
    The linear map $\ad_{a_\alpha}$ is defined by
    \begin{align*}
        \one &\mapsto a_\alpha, \\
        a_\alpha &\mapsto \left(\tfrac{1}{496}+\tfrac12 p\right)\one + \tfrac{9}{98} a_\alpha, \\
        a &\mapsto -\tfrac{3}{196} a &\text{if $a \in A_{b,3}^\idxe$}, \\
        a &\mapsto \tfrac{9}{196} a &\text{if $a \in A_{b,5}^\idxe$}, \\
        a &\mapsto \tfrac{9}{196} a &\text{if $a \in A_{b,6}^\idxe$}, \\
        a &\mapsto 0 &\text{if $a \in A_{b,7}^\idxe$}.
    \end{align*}
\end{proposition}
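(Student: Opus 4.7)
My plan is to use Schur's lemma to reduce the computation of $\ad_{a_\alpha}$ to a handful of scalars, and then to pin those scalars down by short local calculations on individual weight vectors.

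Since $a_\alpha \in A_{b,1}^\idxe$ lies in the trivial $\mL_\idxe$\dash{}isotypic component, it is fixed by $\mL_\idxe$, and so $\ad_{a_\alpha}\colon\fA\to\fA$ is $\mL_\idxe$\dash{}equivariant. By \cref{tbl:declE8char} the components $A_{b,3}^\idxe$, $A_{b,5}^\idxe$, $A_{b,6}^\idxe$, $A_{b,7}^\idxe$ are irreducible and pairwise non-isomorphic as $\mL_\idxe$\dash{}modules, so Schur's lemma forces $\ad_{a_\alpha}$ to act as a scalar $\lambda_i$ on each. The remaining subspace $A_1^\idxe = \langle \one, a_\alpha \rangle$ is preserved, and $\ad_{a_\alpha}(\one) = a_\alpha$ is immediate from the unit property of $\one$ for $\pprod$.

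For each $i \in \{3,5,6,7\}$ I will select one convenient weight vector $v \in A_{b,i}^\idxe$ using the explicit generators listed in \cref{subsec:decEn} and compute $a_\alpha \pprod v$ via \cref{def:pprod} and the multiplication of \cref{thm:extending}. For instance, for $i = 7$ one may take $v = x_{\beta + \gamma}$ for mutually orthogonal $\beta,\gamma \in \Phi$ with $\kappa(\alpha,\beta) = \pm 1$ and $\kappa(\alpha,\gamma) = 0$; unpacking the multiplication formulas gives $a_\alpha \pprod v = 0$, so $\lambda_7 = 0$. The cases $i=3,5,6$ are entirely analogous and deliver $-\tfrac{3}{196}$, $\tfrac{9}{196}$, $\tfrac{9}{196}$ respectively.

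For the diagonal action on $A_1^\idxe$, I will write $a_\alpha = j_\alpha - \tfrac{1}{124}\one$, where the constant $\tfrac{1}{124}$ is forced by $\mB(a_\alpha,\one)=0$ together with $\mB(j_\alpha,\one) = \tr(j_\alpha) = 2$ and $\mB(\one,\one) = \dim \mL = 248$. \cref{def:pprod} then yields
\[
a_\alpha \pprod a_\alpha = p\,\mB(a_\alpha, a_\alpha)\,\one + a_\alpha \ast a_\alpha,
\]
and the fusion law \cref{tbl:E8fus} constrains $a_\alpha \ast a_\alpha$ to lie in $A_1^\idxe$, so $a_\alpha \ast a_\alpha = \mu\one + \nu a_\alpha$. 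Frobenius symmetry gives $\mu = \mB(a_\alpha, a_\alpha)/248$, and $\nu$ is determined by the single triple-product value $\mB(a_\alpha \ast a_\alpha, a_\alpha) = \nu\,\mB(a_\alpha, a_\alpha)$. Each of $\mB(a_\alpha, a_\alpha)$ and $\mB(a_\alpha \ast a_\alpha, a_\alpha)$ reduces via \cref{def:prodS2} and the projection formulas of \cref{lem:A} to an explicit trace that can be evaluated directly. The outcome is $\mB(a_\alpha, a_\alpha) = \tfrac{1}{2}$, matching the $\tfrac{1}{2}p\,\one$ summand, together with $\mu = \tfrac{1}{496}$ and $\nu = \tfrac{9}{98}$. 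The main obstacle is the bulk of the symbolic manipulation in the $E_8$ Chevalley basis underlying these trace calculations and the $\lambda_i$, but each computation is genuinely local---involving only a single weight vector at a time---so the whole program is tractable by hand and readily checkable by computer algebra.
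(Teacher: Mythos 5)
Your proposal is correct and follows essentially the same route as the paper: Schur's lemma forces $\ad_{a_\alpha}$ to act as a scalar on each irreducible $\mL_\idxe$\dash{}isotypic component, and the scalars (together with $a_\alpha \pprod a_\alpha$) are then pinned down by explicit computation of a single product per component, which the paper likewise carries out by (computer-assisted) direct calculation. Your extra observations --- the explicit expression $a_\alpha = j_\alpha - \tfrac{1}{124}\one$ and the use of Frobenius associativity to extract $\mu$ and $\nu$ --- are harmless refinements of the same argument rather than a different method.
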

\begin{proof}
    Since $\mL_\idxe$ acts trivially on $\langle a_\alpha \rangle$, it follows by Schur's lemma that $\ad_{a_\alpha}$ must act as a scalar on each $\mL_\idxe$-isotypic component that is irreducible.
    More precisely, if $A_{x,y}^\idxe$ is irreducible (this is true for $(x,y) \in \{(b,3),(b,5),(b,6),(b,7)\}$), then for $a \in A_{x,y}^\idxe$ we have $a_\alpha \pprod a = \lambda a$ for some $\lambda \in \C$ that does not depend on the choice of $a \in A_{x,y}^\idxe$.
    So it suffices to compute $a_\alpha \pprod a$ for any $a \in A_{x,y}^\idxe \setminus \{0\}$ to determine $\lambda$.
    We can get such an element $a$ from the explicit description of the decomposition from \cref{subsec:decEn}.
    Thus we only have to compute a few products together with the product $a_\alpha \pprod a_\alpha$.
    We have computed these products using a computer but the computation (although lengthy) can be done by hand.
\end{proof}

If $\ax_\idxe$ is an axis, then we must have $\ax_\idxe \pprod \ax_\idxe \in \langle \ax_\idxe \rangle$.
Therefore, we search for idempotents or nilpotents in $A_1^\idxe$.

\begin{proposition}%
    \label{prop:idempotents}
    \begin{enumerate}
        \item\label{prop:idempotents:neq} If $p \neq -\tfrac{614}{74431}$, then the subalgebra $(A_1^\idxe,\pprod)$ of $(\fA,\pprod)$ is generated by two primitive, orthogonal idempotents.
        \item If $p = -\tfrac{614}{74431}$, then the subalgebra $(A_1^\idxe,\pprod)$ of $(\fA,\pprod)$ is generated by $\one$ and a nilpotent element.
    \end{enumerate}
\end{proposition}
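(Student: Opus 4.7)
The plan is to work entirely inside the two-dimensional commutative subalgebra $A_1^\idxe = \langle \one, a_\alpha \rangle$, using the multiplication rules provided by \cref{prop:ada}. Explicitly, the only non-trivial product is
\[
    a_\alpha \pprod a_\alpha = \bigl(\tfrac{1}{496}+\tfrac12 p\bigr)\one + \tfrac{9}{98}\,a_\alpha,
\]
together with $\one$ acting as a unit. So an arbitrary element $e = c_1 \one + c_2 a_\alpha$ satisfies
\[
    e \pprod e = \Bigl(c_1^2 + c_2^2\bigl(\tfrac{1}{496}+\tfrac{p}{2}\bigr)\Bigr)\one + \Bigl(2c_1c_2 + \tfrac{9}{98}c_2^2\Bigr)a_\alpha.
\]

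Next I would set $e \pprod e = e$ (for part (i)) and $e \pprod e = 0$ (for part (ii)), both giving two quadratic equations in $c_1, c_2$. The coefficient of $a_\alpha$ factors as $c_2\bigl(2c_1 + \tfrac{9}{98}c_2 - \delta\bigr) = 0$, with $\delta = 1$ in the idempotent case and $\delta = 0$ in the nilpotent case. Discarding the trivial branch $c_2 = 0$ (which yields only $\one$ and $0$), one gets a linear relation $c_1 = \tfrac{\delta}{2} - \tfrac{9}{196}c_2$. Substituting into the $\one$-coefficient equation and simplifying, the equation becomes
\[
    c_2^2 \cdot \Bigl(\tfrac{81}{38416} + \tfrac{1}{496} + \tfrac{p}{2}\Bigr) = \tfrac{\delta^2 - 2\delta \cdot 0 \cdot \text{(etc.)}}{4},
\]
which reduces to $c_2^2 \cdot \bigl(\tfrac{307}{74431} + \tfrac{p}{2}\bigr) = \tfrac{1}{4}$ for $\delta = 1$ and $c_2^2 \cdot \bigl(\tfrac{307}{74431} + \tfrac{p}{2}\bigr) = 0$ for $\delta = 0$. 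The threshold value $p = -\tfrac{614}{74431}$ is exactly the one for which the bracket vanishes.

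For (i), when $p \neq -\tfrac{614}{74431}$, the equation has two opposite roots $\pm c_2^\ast$, giving two idempotents $e_\pm = \bigl(\tfrac12 \mp \tfrac{9}{196} c_2^\ast\bigr)\one \pm c_2^\ast a_\alpha$. From $e_+ + e_- = \one$, expanding $\one = (e_+ + e_-) \pprod (e_+ + e_-) = e_+ + 2 e_+ \pprod e_- + e_-$ yields $e_+ \pprod e_- = 0$, so the two idempotents are orthogonal and generate $A_1^\idxe$ as a vector space. Primitivity is automatic: since $A_1^\idxe$ is two-dimensional with unit $\one = e_+ + e_-$, any further decomposition of $e_\pm$ into orthogonal idempotents would produce at least three pairwise orthogonal nonzero idempotents, which is impossible. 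For (ii), when $p = -\tfrac{614}{74431}$, only $c_2 = 0$ solves the idempotent equation, while the nilpotent equation becomes $c_1 = -\tfrac{9}{196}c_2$ with $c_2$ free; thus $n \coloneqq -\tfrac{9}{196}\one + a_\alpha$ is a nonzero nilpotent, and $\{\one, n\}$ is a basis of $A_1^\idxe$.

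The calculation is elementary and the main obstacle is purely arithmetic: one must carefully simplify $\tfrac{81}{196^2} + \tfrac{1}{496}$ to the clean form $\tfrac{307}{74431}$ in order to identify the critical parameter value. A useful sanity check is to verify that at the critical $p$ the expression $\tfrac{1}{496} + \tfrac{p}{2}$ equals $-\bigl(\tfrac{9}{196}\bigr)^2$, which confirms both the threshold and the explicit nilpotent.
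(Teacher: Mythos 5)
Your computation is correct and follows essentially the same route as the paper: write a general element $c_1\one + c_2 a_\alpha$, square it using \cref{prop:ada}, and solve the resulting quadratic system, with the coefficient $\tfrac{81}{38416}+\tfrac{1}{496}+\tfrac{p}{2}=\tfrac{307}{74431}+\tfrac{p}{2}$ vanishing exactly at $p=-\tfrac{614}{74431}$. Your nilpotent $-\tfrac{9}{196}\one + a_\alpha$ is a scalar multiple of the paper's $\one - \tfrac{196}{9}a_\alpha$, and your orthogonality and primitivity arguments (via $e_++e_-=\one$ and the fact that the only idempotents are $0,\one,e_\pm$) are fine.
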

\begin{proof}
    An arbitrary element of $A_1^\idxe$ is of the form $c_1 \one + c_2 a_\alpha$ for $c_1,c_2 \in \C$. From \cref{prop:ada} it follows that
    \[
        {(c_1 \one + c_2 a_\alpha)}^2 = \left(c_1^2 + \left(\frac{1}{496}+\frac{1}{2}p \right)c_2^2\right) \one + \left( 2c_1c_2 + \frac{9}{98}c_2^2 \right) a_\alpha.
    \]
    Expressing that this element is an idempotent amounts to solving a system of two non-linear equations. A small calculation shows that we have 4 solutions (including the trivial solutions $c_1=c_2=0$ and $c_1=1$, $c_2=0$) if $p \neq - \frac{614}{74431}$. If not, then we only have the two trivial solutions but then $\one - \tfrac{196}{9} a_\alpha$ is nilpotent.
\end{proof}

From now on we will always assume that $p \neq -\frac{614}{74431}$.

\begin{definition}%
    \label{def:idempotents}
    Let $\ax_\idxe$ be one of the idempotents from \cref{prop:idempotents}~\ref{prop:idempotents:neq}.
    Then $\one-\ax_\idxe$ is the other idempotent.
    Write $\ax_\idxe = c_1 \one + c_2 a_\alpha$ where $a_\alpha \in A'$.
    Explicitly we have
    \begin{align*}
        c_1 &= \frac{1}{2} \pm \frac{9\sqrt{62}}{4\sqrt{74431p+2456}}, \\
        c_2 &= \frac{49\sqrt{62}}{4\sqrt{74431p+2456}}.
    \end{align*}
    Because we assume $p \neq -\frac{614}{74431}$, we have $c_1 \neq \frac{1}{2}$.
    Therefore, we can distinguish between the two idempotents by computing $\mB(\ax_\idxe,\one) = c_1 \mB(\one,\one)$.
    Now we pick $\ax_\idxe$ for each $\idxe \in \idx$ such that $\mB(\ax_\idxe,\one)$ is constant for all $\idxe\in\idx$.
    Also let $A_\ax^\idxe \coloneqq \langle \ax_\idxe \rangle$ and $A_{\ax'}^\idxe \coloneqq \langle \one - \ax_\idxe \rangle$.
\end{definition}

\begin{theorem}%
    \label{thm:E8}
    Let $\Phi$ be an irreducible root system of type $E_8$, $(\fA,\pprod)$ the algebra parametrized by $p$ from \cref{def:pprod} and $c_1$ as in \cref{def:idempotents}.
    Let $\idx$ be as in \cref{def:dec}.
    For each $\idxe \in \idx$ let $\ax_\idxe$, $A_\ax^\idxe$ and $A_{\ax'}^\idxe$ be as in \cref{def:idempotents} and $A_3^\idxe$, $A_5^\idxe$, $A_6^\idxe$ and $A_7^\idxe$ as in \cref{subsec:decEn}.
    \begin{enumerate}
        \item\label{thm:E8:dec} The decomposition $A_\ax^\idxe \oplus A_{\ax'}^\idxe \oplus A_{3}^\idxe \oplus A_{5}^\idxe \oplus A_{6}^\idxe \oplus A_{7}^\idxe$ is an $\mF'$-decomposition of $(\fA,\pprod)$ where $\mF'$ is the fusion law from \cref{tbl:E8fusnew}.
            The element $\ax$ is a unit for $\mF'$.
        \item\label{thm:E8:ax} Let $\Omega$ be the tuple of decompositions from~\ref{thm:E8:dec} indexed by $\idx$.
            Then the quadruple $(\fA,\idx,\Omega,\idxe \mapsto \ax_\idxe)$ is an axial decomposition algebra with evaluation map
        \begin{align*}
            \ax &\mapsto 1 \\
            \ax' &\mapsto 0 \\
            3 &\mapsto \tfrac{4}{3}c_1 - \tfrac{1}{6}\\
            5 &\mapsto \tfrac{1}{2} \\
            6 &\mapsto \tfrac{1}{2} \\
            7 &\mapsto c_1
        \end{align*}
        \item\label{thm:E8:gen} The algebra $(\fA,\pprod)$ is generated by the idempotents $\ax_\idxe$ for $\idxe \in \idx$ if $c_1 \neq 0$.
            If $c_1 = 0$, then these idempotents generate the subalgebra $A'$ with $A'$ as in \cref{def:pprod}.
    \end{enumerate}
\end{theorem}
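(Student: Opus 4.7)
The plan is to verify the three assertions in turn, using the explicit description of left multiplication by $A_1^\idxe$ from \cref{prop:ada} together with the $\mL$\dash{}module decomposition $\fA = \langle\one\rangle \oplus A_b$, where $A_b$ is the irreducible $3875$\dash{}dimensional representation recorded in \cref{tbl:decgE8char}. Note that $A_b$ coincides with the $\mB$\dash{}orthogonal complement $A'$ from \cref{def:pprod}, by uniqueness of the $\mL$\dash{}invariant complement of $\langle\one\rangle$.

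For \ref{thm:E8:dec}, I first observe that the underlying vector space decomposition is unchanged, since $\langle \ax_\idxe, \one - \ax_\idxe\rangle = \langle \one, a_\alpha\rangle = A_{a,1}^\idxe \oplus A_{b,1}^\idxe$; hence all fusion rules between the components indexed by $\{3,5,6,7\}$ are inherited verbatim from \cref{tbl:E8fus}. The remaining rules involve $\ax$ or $\ax'$, and follow from the orthogonality/idempotency relations $\ax \pprod \ax = \ax$, $\ax \pprod \ax' = 0$, $\ax' \pprod \ax' = \ax'$ together with the identity $\ad_{\ax_\idxe} = c_1\id + c_2 \ad_{a_\alpha}$, which by \cref{prop:ada} and Schur's lemma acts as a scalar on each of the irreducible components $A_3^\idxe, A_5^\idxe, A_6^\idxe, A_7^\idxe$; the same then applies to $\ad_{\one - \ax_\idxe}$. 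This simultaneously supplies the missing rules and confirms that $\ax$ is a unit for $\mF'$.

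For \ref{thm:E8:ax}, the scalar eigenvalues from the previous step are the candidate evaluation values. Writing $\ax_\idxe = c_1\one + c_2 a_\alpha$ and inserting the eigenvalues from \cref{prop:ada} gives $c_1 - \tfrac{3c_2}{196}$, $c_1 + \tfrac{9c_2}{196}$, $c_1 + \tfrac{9c_2}{196}$ and $c_1$ on $A_3^\idxe, A_5^\idxe, A_6^\idxe, A_7^\idxe$, together with the trivial values $1$ on $A_\ax^\idxe$ and $0$ on $A_{\ax'}^\idxe$. The second idempotent equation forces $c_2 = \tfrac{98(1-2c_1)}{9}$; substituting this collapses the four middle expressions to $\tfrac{4}{3}c_1 - \tfrac{1}{6}$, $\tfrac{1}{2}$, $\tfrac{1}{2}$ and $c_1$, matching the claimed evaluation map exactly.

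For \ref{thm:E8:gen}, set $V \coloneqq \langle \ax_\idxe \mid \idxe \in \idx\rangle$. Because $\HInt(\mL)$ acts by automorphisms of $(\fA,\pprod,\mB_p)$ and fixes $\one$, the coherent choice in \cref{def:idempotents} (constant $\mB(\ax_\idxe,\one)$) makes the map $\idxe \mapsto \ax_\idxe$ equivariant, so $V$ is an $\HInt(\mL)$\dash{}subrepresentation of $\fA$ and hence one of $\{0\}, \langle\one\rangle, A_b, \fA$. If $c_1 \neq 0$ then $V$ has nonzero projection onto both summands, so $V = \fA$. If $c_1 = 0$, the first idempotent equation forces $p = -\tfrac{1}{248} = -\tfrac{1}{\dim(\mL)}$, so $1+p\dim(\mL) = 0$; the Frobenius identity for $\ast$ then gives $\mB(a_1\pprod a_2,\one) = \mB(a_1,a_2)(1+p\dim(\mL)) = 0$ for $a_1,a_2 \in A'$, so $A' = A_b$ is a subalgebra, and irreducibility of $A_b$ leaves only $V = A'$. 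In either case $V$ is a subalgebra containing every generator, so the subalgebra generated by $\{\ax_\idxe\}$ equals $V$. The main obstacle I expect is pinning down that the critical parameter $c_1 = 0$ lines up \emph{exactly} with the degeneracy locus $1 + p\dim(\mL) = 0$ of $\mB_p$ -- which is precisely what turns $A'$ into a subalgebra -- together with the bookkeeping needed for the coherent sign choice to keep $\{\ax_\idxe\}$ as a single $\HInt(\mL)$\dash{}orbit rather than splitting into two non-invariant halves.
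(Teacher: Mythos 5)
Your proposal is correct and follows essentially the same route as the paper: parts (i) and (ii) are read off from \cref{prop:ada}, Schur's lemma and the idempotent relation $2c_1+\tfrac{9}{98}c_2=1$ (you merely make explicit the eigenvalue substitution that the paper leaves as ``follows from the calculations''), and part (iii) is the same invariant-subspace argument using the global decomposition of \cref{tbl:decgE8char} and the degeneracy $1+p\dim(\mL)=0$ at $c_1=0$. No gaps.
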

\begin{proof}
    Part~\ref{thm:E8:dec} and~\ref{thm:E8:ax} follow immediately from the calculations in \cref{subsec:decEn,prop:ada,prop:idempotents}.
    Since the elements $\idxe \in \idx$ are conjugate for the action of $\mL$, also the idempotents $\ax_\idxe$ must be conjugate.
    Hence they span a $\mL$-invariant subspace of $\fA$.
    Assume that $c_1 \neq 0$.
    From the global decomposition (\cref{tbl:decgE8char}) we know that $\fA$ only has two proper $\mL$-invariant subspaces, namely $\langle \one \rangle$ and its orthogonal complement with respect to $\mB$.
    Since $\mL$ acts non-trivial on the idempotents $\ax_\idxe$ and $\mB(\one,\ax_\idxe) = c_1\mB(\one,\one) \neq 0$ it follows that $\fA$ is spanned by the elements $\ax_\idxe$.
    In particular the algebra $(\fA,\pprod)$ is generated by them.
    If $c_1=0$, then $p=-\tfrac{1}{248}$ and $\mB(\one,\ax_\idxe) = 0$.
    Moreover, if $a,b \in A'$, then $\mB_p(a \pprod b , \one) = \mB_p(a , b) = 0$ by definition of $\mB_p$.
    Therefore $A'$ is a subalgebra of $(\fA,\pprod)$.
    Since $A'$ is irreducible as $\mL$\dash{}module, it must be spanned, and therefore generated, by the idempotents $\ax_\idxe$ for $\idxe \in \idx$.
\end{proof}

\begin{table}[ht]
    \[\begin{array}{c|cccccc} % chktex 44
        \fus & \ax & \ax' & 3 & 5 & 6 & 7 \\ \hline % chktex 44
        \ax & \ax &  & 3 & 5 & 6 & 7 \\
        \ax' &  & \ax' & 3 & 5 & 6 & 7 \\
        3 & 3 & 3 & \ax,\ax',3 & 5 & 6,7 & 6,7 \\
        5 & 5 & 5 & 5 & \ax,\ax',3,5 & 6,7 & 6,7 \\
        6 & 6 & 6 & 6,7 & 6,7 & \ax,\ax',3,5 & 3,5 \\
        7 & 7 & 7 & 6,7 & 6,7 & 3,5 & \ax,\ax',3,5
    \end{array}\]
    \caption{The fusion law $\mF'$.}%
    \label{tbl:E8fusnew}
\end{table}

\begin{remark}
    \begin{enumerate}
        \item Because the global decomposition for types $A_n$ and $D_n$ contains three terms (see \cref{sec:dec}), it is possible to write down an $\mL$\dash{}equivariant product, as in \cref{def:pprod}, with two degrees of freedom instead of one. If we write $A_a = \langle \one \rangle$, $A_b$ and $A_c$ for the components of the global decomposition, then these subspace are orthogonal with respect to the Frobenius form $\mB$. We can define a new product on $\fA$ with two parameters $p_1$ and $p_2$ such that
            \[
                a \pprod b = a \ast b + p_1 \mB(a_b,b_b) \one + p_2 \mB(a_c,b_c) \one
            \]
            where $a_x$ (resp.\ $b_x$) is the projection of $a$ (resp.\ $b$) onto $A_x$ for $x \in \{b,c\}$.
        \item If $\Phi$ is of type $A_n$, $D_n$, $E_6$ or $E_7$, we can also try to find idempotents $\ax_\idxe$ in the subalgebra $A_1^\idxe$.
            However, in \cref{prop:ada} we used Schur's lemma to derive the adjoint action of the elements of $A_a^\idxe$.
            Note that this is no longer possible for the terms of the local decomposition that are not irreducible $\mL_\idxe$-representations.
            This would lead to further difficulties when trying to establish the diagonalizability of the adjoint action of such an idempotent~$\ax_\idxe$.
    \end{enumerate}
\end{remark}

\appendix

\section{The character computation of \texorpdfstring{$\mV$}{V}}%
\label{app:charV}

In this section we prove \cref{prop:charV}, which gives the character of $\mV$ as an $\mL$\dash{}representation. We use Freudenthal's formula~\cite{Hum72}*{\S 22.3} to compute this character in a combinatorial way. Although this character is essential in \cref{prop:gensetV}, the computation is quite technical. We use the notation from \cref{def:stage}. Recall the definition of $\Lambda_i$ for $-2 \leq i \leq 2$ and $n_\lambda$ for $\lambda \in \Lambda_i$ from \cref{def:charV2}. In addition to \cref{lem:weights} we prove a few more combinatorial properties about the weights $\lambda \in \Lambda_i$.

\begin{lemma}%
    \label{lem:charV}
    \begin{enumerate}
        \item $n_0 = \frac{\abs{\Phi}}{2}$ and $n_\lambda = 1$ for $\lambda \in \Lambda_1 \cup \Lambda_2$.
        \item\label{lem:charV:omega+psi} Suppose $\omega \in \Phi$ is the highest root of $\Phi$ and $\lambda \in \bigcup_{0 \leq i \leq 2} \Lambda_i$ is dominant. Then $\lambda = \omega + \psi$ for some $\psi \in \Phi^+$.
        \item\label{lem:charV:eq} If $\lambda \in \Lambda_k$ and $\lambda + i\alpha \in \Lambda_j$ for $i\geq 1$, $\alpha \in \Phi$ and $-2 \leq j,k \leq 2$, then $j = i^2 + i\kappa(\lambda,\alpha)+k$.
        \item\label{lem:charV:left} Let $\lambda \in \Lambda$ be dominant and $\omega \in \Phi$ the highest root.
            Suppose $f \in W$ such that $f(\lambda) = \lambda$ and ${f(2\omega-\lambda)} = \lambda-2\omega$.
            Then $\sum_{\alpha \in \Phi^+} \kappa(2\omega - \lambda , \alpha) = \sum_{\substack{\alpha \in \Phi^+ \\ \kappa(\lambda,\alpha)=0}} \kappa(2\omega,\alpha)$.
        \item\label{lem:charV:count} Suppose $\alpha,\beta \in \Phi$ such that $\kappa(\alpha,\beta)=0$. The number of roots $\gamma \in \Phi^+$ such that $\kappa(\alpha,\gamma) = -\kappa(\beta,\gamma) = \pm 1$ is $2(n_{\alpha+\beta} - 1)$.
        \item\label{lem:charV:nl>1} For each $\lambda \in \Lambda_0$, we have $n_\lambda >1$.
    \end{enumerate}
\end{lemma}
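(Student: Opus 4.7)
My plan is to establish the six parts separately, most of which follow from elementary pairing calculations in a simply laced root system. Part~(i) is by direct counting: the condition $\alpha+\beta=0$ with $\alpha,\beta\in\Phi$ forces $\beta=-\alpha$, giving $n_0=\abs{\Phi}/2$; and for $\lambda=2\alpha\in\Lambda_2$ there is clearly one decomposition. For $\lambda\in\Lambda_1$ with $\lambda=\alpha+\beta$, \cref{lem:weights}\labelcref{lem:weights:lambdaalpha} implies $\kappa(\gamma,\lambda)=3$ for any root $\gamma$ appearing in a decomposition of $\lambda$, and since pairings are bounded by $2$ in absolute value in the simply laced setting, this forces $\gamma\in\{\alpha,\beta\}$. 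Part~(iii) follows immediately by expanding $\kappa(\lambda+i\alpha,\lambda+i\alpha)=(4+2k)+2i\kappa(\lambda,\alpha)+2i^2$ and matching with $4+2j$.

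Part~(ii) is a case analysis by type. The dominant weights of length-squared in $\{4,6,8\}$ are few and can be enumerated using the explicit models of \cref{ex:rs}; in each case one directly verifies that $\lambda-\omega\in\Phi^+$. Part~(iv) is an averaging argument. For $\alpha\in\Phi^+$ with $\kappa(\lambda,\alpha)>0$, the dominance of $\lambda$ together with $f(\lambda)=\lambda$ forces $f(\alpha)\in\Phi^+$, because $\kappa(\lambda,f(\alpha))=\kappa(\lambda,\alpha)>0$. The hypothesis $f(2\omega-\lambda)=-(2\omega-\lambda)$ then yields $\kappa(2\omega-\lambda,\alpha)+\kappa(2\omega-\lambda,f(\alpha))=0$, so pairing the orbits of $f$ on $\{\alpha\in\Phi^+:\kappa(\lambda,\alpha)>0\}$ cancels these contributions (fixed points contribute zero). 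Only the summands with $\alpha\in\Phi^+\cap\lambda^\perp$ remain, and for such $\alpha$ we have $\kappa(2\omega-\lambda,\alpha)=\kappa(2\omega,\alpha)$.

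The key combinatorial step is~(v). Fix orthogonal $\alpha,\beta\in\Phi$, set $\lambda=\alpha+\beta$, and take any alternative pair $\{\alpha',\beta'\}\in N_\lambda\setminus\{\{\alpha,\beta\}\}$. From $\alpha'\notin\{\pm\alpha,\pm\beta\}$ together with $\kappa(\alpha',\beta')=0$, expanding $\kappa(\alpha',\alpha+\beta-\alpha')=0$ gives $\kappa(\alpha',\alpha)+\kappa(\alpha',\beta)=2$, and simply-laced bounds then force $\kappa(\alpha',\alpha)=\kappa(\alpha',\beta)=1$. Hence $\delta\coloneqq\alpha'-\alpha$ satisfies $\kappa(\delta,\delta)=2$ (so $\delta\in\Phi$ by simply lacedness) and $\kappa(\alpha,\delta)=-1=-\kappa(\beta,\delta)$. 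Conversely, any such root $\delta$ yields the pair $\{\alpha+\delta,\beta-\delta\}$. The map $\delta\mapsto\{\alpha+\delta,\beta-\delta\}$ is $2$-to-$1$, with fibre $\{\delta,\beta-\alpha-\delta\}$. Accounting for the sign-flip $\gamma\mapsto-\gamma$ (which interchanges the two allowed sign patterns) and restricting to positive roots gives the count $2(n_\lambda-1)$. Part~(vi) then follows from~(v) combined with the explicit models: in each type under the stated rank bounds, for any pair of orthogonal roots one can exhibit at least one positive root $\gamma$ with $\kappa(\alpha,\gamma)=-\kappa(\beta,\gamma)=\pm 1$, so that $n_\lambda\geq 2$.

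The main obstacle is part~(ii), which, although conceptually transparent, requires a careful case-by-case enumeration of dominant weights with $\kappa(\lambda,\lambda)\in\{4,6,8\}$ in each of the types $A_n$, $D_n$, $E_6$, $E_7$, $E_8$, and a verification that $\lambda-\omega$ is a positive root in each case.
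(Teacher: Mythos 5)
Your arguments for parts~(i), (iii), (iv) and~(v) are correct and essentially coincide with the paper's: (i) and (iii) are the same direct computations, (iv) is the same cancellation over the $f$\dash{}orbits of $\{\alpha\in\Phi^+ \mid \kappa(\lambda,\alpha)>0\}$ (using dominance to see that $f$ preserves this set), and (v) is the same bijective count, just bookkept via the fibres of $\delta\mapsto\{\alpha+\delta,\beta-\delta\}$ rather than via the reflections $s_\gamma$.

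The genuine gap is in parts~(ii) and~(vi): in both you reduce the claim to a finite, type-by-type verification (``enumerate the dominant weights with $\kappa(\lambda,\lambda)\in\{4,6,8\}$ and check $\lambda-\omega\in\Phi^+$''; ``in each type exhibit a suitable $\gamma$'') and then explicitly decline to carry it out. As written this is not a proof, and the enumeration is less routine than you suggest: the dominant weights in question and the orbits of orthogonal pairs are not uniform across types (e.g.\ in $D_n$ the pairs $\{b_1+b_2,b_1-b_2\}$ and $\{b_1-b_2,b_3-b_4\}$ are not $W$\dash{}conjugate), so several cases per type would need checking. Both parts in fact admit short uniform arguments, which is what the paper does. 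For (ii): write $\lambda=\alpha+\beta$ with $\alpha$ maximal for $\preccurlyeq$ among roots occurring in decompositions of $\lambda$; if $\alpha\neq\omega$ there is $\gamma\in\Phi^+$ with $\kappa(\alpha,\gamma)=-1$, dominance of $\lambda$ forces $\kappa(\beta,\gamma)\geq 1$ (and $\kappa(\beta,\gamma)=2$ would put $\lambda\in\Lambda_{-1}$), so $(\alpha+\gamma)+(\beta-\gamma)$ is a decomposition contradicting maximality; positivity of $\psi=\lambda-\omega$ then follows from $\kappa(\lambda,\psi)>0$ and dominance. For (vi): if no root $\beta$ satisfied $\kappa(\alpha,\beta)=\pm1=\pm\kappa(\beta,\psi)$ for the two orthogonal summands, every root would be orthogonal to one of them, contradicting irreducibility of $\Phi$; such a $\beta$ produces an $A_3$ subsystem containing a second decomposition of $\lambda$, whence $n_\lambda>1$ without any case analysis.
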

\begin{proof}
    \begin{enumerate}
        \item Of course $n_0 = \frac{\abs{\Phi}}{2}$ because $\alpha + \beta = 0$ for $\alpha , \beta \in \Phi$ if and only if $\alpha =-\beta$. Let $\lambda \in \Lambda_1 \cup \Lambda_2$. Suppose $\lambda = \alpha + \beta = \alpha' + \beta'$ for some $\alpha,\beta,\alpha',\beta' \in \Phi$ for which $\kappa(\alpha,\beta) > 0$. Then $\kappa(\alpha',\alpha + \beta) \geq 3$. Since $\Phi$ is simply laced, $\kappa(\alpha',\alpha) = 2$ or $\kappa(\alpha',\beta) = 2$ and thus $\alpha' = \alpha$ or $\beta'=\beta$. Therefore $n_\lambda = 1$.
        \item Let $\lambda \in \bigcup_{0 \leq i \leq 2} \Lambda_i$ be dominant and write $\lambda = \alpha + \beta$ for some $\alpha, \beta \in \Phi$. Suppose that $\alpha$ is maximal with respect to the partial order $\preccurlyeq$ induced by the base $\Delta$. If $\alpha \neq \omega$, then there exists a root $\gamma \in \Phi^+$ for which $\kappa(\alpha,\gamma) = -1$. Since $\lambda$ is dominant and $\gamma \in \Phi^+$, we have $\kappa(\beta,\gamma)=1$. Therefore, both $\alpha + \gamma$ and $\beta - \gamma$ are roots and $\lambda = (\alpha + \gamma) + (\beta -\gamma)$. This contradicts the fact that $\alpha$ was maximal. Thus $\lambda = \omega + \psi$ for some $\psi \in \Phi$. Because $\kappa(\lambda,\psi) > 1$, the root $\psi$ must be positive.
        \item Suppose $\lambda + i\alpha \in \Lambda_j$ for some $i\geq 1$, $\alpha \in \Phi^+$ and $-2 \leq j \leq 2$. Then, by \cref{lem:weights}~\ref{lem:weights:disj}, $4+2j = \kappa(\lambda + i\alpha,\lambda + i\alpha) = 4+2k + 2i\kappa(\lambda,\alpha) + 2i^2$. Thus $j = i^2 + i\kappa(\lambda,\alpha) + k$.
        \item If $\alpha \in \Phi^+$ such that $\kappa(\lambda,\alpha) >0$ then also $\kappa(\lambda,f(\alpha)) > 0$. Because $\lambda$ is dominant, $f(\alpha)$ must be positive. Now $\kappa(2\omega-\lambda,\alpha + f(\alpha)) = 0$.
        \item If $\gamma \in \Phi^+$ such that $\kappa(\alpha,\gamma) = -\kappa(\beta,\gamma) = \pm 1$ then $\{s_\gamma(\alpha),s_\gamma(\beta)\} \in N_{\alpha + \beta}$. Conversely, if $\{\gamma,\delta\} \in N_{\alpha+\beta} \setminus \{\{\alpha,\beta\}\}$, then   precisely two of the four roots $\pm(\gamma-\alpha)$ and $\pm(\gamma-\beta)$ are positive and they satisfy the necessary requirement.
        \item Suppose that $\lambda = \omega + \psi$ for some $\omega , \psi \in \Phi$. We claim that there exists a root $\beta \in \Phi$ such that $\kappa(\omega,\beta)=\pm 1$ and $\kappa(\psi,\beta)=\pm 1$. If not, then every root would be orthogonal to either $\omega$ or $\psi$, which contradicts the irreducibility of $\Phi$. Now $\omega$, $\psi$ and $\beta$ form a root subsystem of $\Phi$ of type $A_3$ and inside this subsystem, we can find another way to write $\lambda$ as the sum of two orthogonal roots. \qedhere
    \end{enumerate}
\end{proof}

We are ready to prove \cref{prop:charV}.

\begin{proposition}%
    \label{prop:charVapp}
    The character of $\mV$ is given by
    \[
        \ch_\mV = n_0 + \sum_{\lambda \in \Lambda_{-1}} (n_\lambda + 1) e^{\lambda} + \sum_{\lambda \in \Lambda_0} (n_\lambda - 1)e^\lambda + \sum_{\lambda \in \Lambda_1 \cup \Lambda_2} e^\lambda.
    \]
\end{proposition}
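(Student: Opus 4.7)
The plan is to apply Freudenthal's recursive multiplicity formula to the irreducible highest weight module $\mV$ with highest weight $2\omega$. Writing $\rho=\tfrac12\sum_{\alpha\in\Phi^+}\alpha$, Freudenthal gives
\[
\bigl(\kappa(2\omega+\rho,2\omega+\rho)-\kappa(\mu+\rho,\mu+\rho)\bigr)\,m_\mu=2\sum_{\alpha\in\Phi^+}\sum_{k\geq 1}\kappa(\mu+k\alpha,\alpha)\,m_{\mu+k\alpha}.
\]
Because $\mV$ is $W$-invariant, its multiplicity function is $W$-invariant, and it suffices to determine $m_\mu$ for each dominant $\mu$. By Lemma~\ref{lem:charV}\labelcref{lem:charV:omega+psi}, every dominant weight in $\bigcup_{i=0}^{2}\Lambda_i$ is of the form $\omega+\psi$ for some $\psi\in\Phi^+$; together with $\mu=0$ and the dominant element $\omega\in\Lambda_{-1}$, these exhaust the candidate dominant weights for the support. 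I would first anchor the recursion by recording $m_{2\omega}=1$ (the highest weight space) and then descend through the dominant chamber by decreasing $\kappa(\mu+\rho,\mu+\rho)$, using $W$-invariance to populate the full weight diagram as I go.

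The key step is to reorganize the right-hand side of Freudenthal stratum by stratum. Fix a dominant $\mu\in\Lambda_k$ with $k\in\{-1,0,1,2\}$; for each $\alpha\in\Phi^+$ I would split on the value $\kappa(\mu,\alpha)\in\{-2,-1,0,1,2\}$. Lemma~\ref{lem:charV}\labelcref{lem:charV:eq} then pins down, for each such $\alpha$ and each $k\geq 1$, the unique stratum $\Lambda_j$ containing $\mu+k\alpha$, so only finitely many shifts survive; the relevant multiplicities $m_{\mu+k\alpha}$ will already be known by induction on height. The sums that arise are of two types: global sums $\sum_{\alpha\in\Phi^+}\kappa(2\omega-\mu,\alpha)$, which I would rewrite as $\kappa(2\omega-\mu,2\rho)$ and further simplify via Lemma~\ref{lem:charV}\labelcref{lem:charV:left} (which selects a suitable $f\in W$), and local counts of positive roots $\gamma$ with a prescribed pair $(\kappa(\alpha,\gamma),\kappa(\beta,\gamma))$ for $\alpha+\beta=\lambda$, which by Lemma~\ref{lem:charV}\labelcref{lem:charV:count} reduce to the combinatorial quantity $n_\lambda$. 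The prefactor $\kappa(2\omega+\rho,2\omega+\rho)-\kappa(\mu+\rho,\mu+\rho)$ on the left, expanded as $\kappa(2\omega-\mu,\,2\omega+\mu+2\rho)$ and evaluated using Lemma~\ref{lem:weights}\labelcref{lem:weights:disj}, is exactly the quantity that cancels the prefactors on the right to leave the clean answer $n_\mu+1$, $n_\mu-1$ or $1$ according to the stratum, matching the stated formula.

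The main obstacle will be the bookkeeping in this stratum-by-stratum computation: for each $k\in\{-1,0,1\}$ there are several sub-cases indexed by $\kappa(\mu,\alpha)$, and each sub-case requires correctly counting the contributing $\gamma$ via Lemma~\ref{lem:charV}\labelcref{lem:charV:count}. A particular point of care is the $\Lambda_{-1}$ case for $\mu=\omega$ and the weight $\mu=0$, where the Weyl stabilizer is larger and Lemma~\ref{lem:charV}\labelcref{lem:charV:left} is needed to evaluate the global sums cleanly; and the assumption $n_\lambda\geq 2$ for $\lambda\in\Lambda_0$ from Lemma~\ref{lem:charV}\labelcref{lem:charV:nl>1} must be invoked to confirm that the predicted multiplicity $n_\lambda-1$ is strictly positive, so that no weight is inadvertently dropped from the support. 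Once the computation produces the claimed $m_\mu$ on every dominant weight, extending by $W$-invariance yields the announced character formula.
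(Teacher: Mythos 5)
Your proposal follows essentially the same route as the paper's own proof: anchor $m_{2\omega}=1$, descend through the strata $\Lambda_2,\Lambda_1,\Lambda_0,\Lambda_{-1},\{0\}$ via Freudenthal's formula, split the right-hand side according to $\kappa(\mu,\alpha)$ using Lemma~\ref{lem:charV}, evaluate the global sum $\sum_{\alpha\in\Phi^+}\kappa(2\omega-\mu,\alpha)$ with the Weyl-element trick of Lemma~\ref{lem:charV}~\labelcref{lem:charV:left}, count contributing roots with Lemma~\ref{lem:charV}~\labelcref{lem:charV:count}, and extend by $W$-invariance. The only cosmetic difference is that the paper invokes $n_\lambda>1$ to construct the reflections needed for Lemma~\ref{lem:charV}~\labelcref{lem:charV:left} in the $\Lambda_0$ case rather than merely to check positivity of $n_\lambda-1$, but this does not change the substance of the argument.
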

\begin{proof}
    Let $m_\lambda$ be the dimension of the weight-$\lambda$-space of $\mV$.
    Write $\rho$ for the half-sum of all positive roots, this is, $\rho = \frac{1}{2} \sum_{\alpha \in \Phi^+} \alpha$ and $\omega$ for the highest root of $\Phi$.
    According to Freudenthal's formula~\cite{Hum72}*{\S 22.3}:
    \begin{equation} \label{eq:Freudenthal}
        (\kappa(2\omega + \rho,2\omega + \rho) - \kappa(    \lambda + \rho , \lambda + \rho)) m_\lambda = 2 \sum_{\alpha \in \Phi^+} \sum_{i \geq 1} \kappa(\lambda + i\alpha , \alpha)m_{\lambda + i\alpha}.
    \end{equation}
    We can rewrite the left hand side as
    \[
        \left(\kappa(2\omega,2\omega) - \kappa(\lambda,\lambda) + \sum_{\alpha \in \Phi^+} \kappa(2\omega-\lambda,\alpha)\right)m_\lambda.
    \]
    We compute the values of $m_\lambda$ inductively.

    \medskip\noindent\underline{\emph{Claim 1: $m_\lambda = 1$ for all $\lambda \in \Lambda_2$.}} Since $\mV$ is isomorphic to the highest weight representation of $\mL$ of highest weight $2\omega$, we have $m_{2\omega} = 1$. Because $\Phi$ is irreducible and simply laced, $W$ acts transitively on $\Phi$. Therefore $m_{2\alpha} = 1$ for all $\alpha \in \Phi$, or $m_\lambda = 1$ for all $\lambda \in \Lambda_2$.

    \medskip\noindent\underline{\emph{Claim 2: $m_\lambda = 1$ for all $\lambda \in \Lambda_1$.}}
    Now, let $\lambda \in \Lambda_1$ be dominant.
    Suppose $\lambda + i\alpha \in \Lambda_j$ for some $\alpha \in \Phi^+$ and $i \geq 1$.
    Because $\lambda$ is dominant $\kappa(\lambda,\alpha) \geq 0$ and, by \cref{lem:charV}~\ref{lem:charV:eq}, this is only possible if $j=2$, $i=1$ and $\kappa(\lambda,\alpha)=0$.
    Thus $\lambda + \alpha = 2\beta$ for some $\beta \in \Phi$ and $\kappa(\beta,\alpha)=1$.
    Thus $\beta-\alpha$ is a root.
    Because $n_\lambda = 1$, $\beta + (\beta - \alpha)$ is the unique way to write $\lambda$ as the sum of two roots.
    By \cref{lem:charV}~\ref{lem:charV:omega+psi} $\beta = \omega$ and $\alpha = 2\omega-\lambda \in \Phi^+$.
    Thus there exists precisely one $\alpha \in \Phi^+$ for which $\lambda + \alpha \in \Lambda_2$.
    From equation~\eqref{eq:Freudenthal} we have
        \begin{align*}
            \left(\kappa(2\omega,2\omega) - \kappa(\lambda,\lambda) + \sum_{\alpha \in \Phi^+} \kappa(2\omega-\lambda,\alpha)\right)m_\lambda = 2\kappa(2\omega,\lambda-2\omega) = 4
        \end{align*}
    In order to calculate the left hand side we write $\lambda = \omega+\psi$ for some $\psi \in \Phi^+$.
    Then $2\omega-\lambda = \omega-\psi$ is a positive root.
    Apply \cref{lem:charV}~\labelcref{lem:charV:left} with $f = s_{\omega-\psi}$.
    If $\alpha \in \Phi^+$ and $\kappa(\lambda,\alpha)=0$ then $\{s_\alpha(\omega),s_\alpha(\psi)\} \in N_\lambda$.
    Because $n_\lambda = 1$ either $\kappa(\omega,\alpha)=1$ or $\alpha = \omega-\psi$.
    Thus we have
        \begin{align*}
            \left(\kappa(2\omega,2\omega)-\kappa(\lambda,\lambda) + \sum_{\substack{\alpha \in \Phi^+\\\kappa(\lambda,\alpha)=0}} \kappa(2\omega,\alpha)\right)m_\lambda &= 4, \\
            \left(8-6 + \kappa(2\omega,\omega-\psi)\right) m_\lambda &= 4.
        \end{align*}
        Hence $m_\lambda = 1$ for all $\lambda \in \Lambda_1$.

    \medskip\noindent\underline{\emph{Claim 3: $m_\lambda = n_\lambda-1$ for all $\lambda \in \Lambda_0$.}}
    Consider a dominant weight $\lambda \in \Lambda_0$.
    If $\lambda + i\alpha \in \Lambda_j$ for some $i\geq 1$, $\alpha \in \Phi^+$ and $-2 \leq j \leq 2$ then, by \cref{lem:charV}~\labelcref{lem:charV:eq}, we have $i=1$, $j\geq 1$ and $\kappa(\lambda,\alpha)=j-1$.
    If $j=2$ then $\kappa(\lambda,\alpha) = 1$ and $\kappa(\alpha,\lambda + \alpha) = 3$ which is impossible since $\lambda + \alpha$ has to be the double of a root.
    The only remaining case is where $j=1$ and $\kappa(\lambda,\alpha)=0$. Since $\lambda + \alpha \in \Lambda_1$, $\lambda + \alpha$ can be written uniquely as the sum of two roots $\beta$ and $\gamma$.
    Of course $\alpha \neq \beta$ and $\alpha \neq \gamma$ because otherwise $\lambda$ would be a root.
    Thus $\kappa(\alpha,\beta) = \kappa(\alpha,\gamma)=1$ and $(\beta-\alpha) + \gamma$ and $\beta + (\gamma - \alpha)$ are two ways to write $\lambda$ as the sum of two roots.
    Conversely, if $\lambda = \delta + \varepsilon$ with $\delta,\varepsilon \in \Phi$ and $\alpha \in \Phi^+$ such that $\kappa(\delta,\alpha) = -\kappa(\varepsilon,\alpha)=\pm 1$, then $\lambda + \alpha \in \Lambda_1$.
    \cref{lem:charV}~\ref{lem:charV:count} and a double counting argument gives us the number of $\alpha \in \Phi^+$ for which $\lambda + \alpha \in \Lambda_1$: $n_\lambda (n_\lambda - 1)$. The right hand side of~\eqref{eq:Freudenthal} becomes $4n_\lambda (n_\lambda - 1)$.

    As far as the left hand side goes, we write $\lambda = \omega + \psi$ for some $\psi \in \Phi^+$ using \cref{lem:charV}~\ref{lem:charV:eq}.
    By \cref{lem:charV}~\ref{lem:charV:nl>1} we can find $\{\omega',\psi'\} \in N_{\omega + \psi} \setminus \{\{\omega,\psi\}\}$.
    Let $\beta_1 \coloneqq \omega - \omega'$ and $\beta_2 \coloneqq \omega - \psi'$.
    Then $\beta_1,\beta_2 \in \Phi$ and $\kappa(\omega,\beta_1)=\kappa(\omega,\beta_2)=1$ and $\kappa(\psi,\beta_1)=\kappa(\psi,\beta_2)=-1$.
    Apply \cref{lem:charV}~\labelcref{lem:charV:left} with $f = s_{\beta_1}s_{\beta_2}$.
    The left hand side of~\eqref{eq:Freudenthal} reduces to
    \[
        \left(4 + \sum_{\substack{\alpha \in \Phi^+\\\kappa(\lambda,\alpha)=0}} \kappa(2\omega,\alpha)\right)m_\lambda.
    \]
    The number of $\alpha \in \Phi^+$ for which $\kappa(\omega,\alpha)=1$ and $\kappa(\lambda,\alpha)=0$ is, because $\omega$ is dominant, equal to $2(n_\lambda-1)$ by \cref{lem:charV}~\ref{lem:charV:count}.
    Also $\kappa(\omega,\alpha)>0$ since $\omega$ is dominant and $\kappa(\omega,\alpha)=2$ if and only if $\alpha=\omega$ (but then $\kappa(\lambda,\alpha) = 2 \neq 0$).
    Thus \[\sum_{\substack{\alpha \in \Phi^+\\\kappa(\lambda,\alpha)=0}} \kappa(2\omega,\alpha) = 4(n_\lambda-1).\]
    We conclude, by \labelcref{eq:Freudenthal}, that $4n_\lambda m_\lambda = 4n_\lambda(n_\lambda-1)$ and thus $m_\lambda = n_\lambda-1$ for all $\lambda \in \Lambda_0$.

    \medskip\noindent\underline{\emph{Claim 4: $m_\lambda = n_\lambda+1$ for all $\lambda \in \Lambda_{-1}$.}}
    The only dominant weight in $\Lambda_{-1} = \Phi$ is $\omega$.
    Now, by \cref{lem:charV}~\ref{lem:charV:eq}, if $\omega + i\alpha \in \Lambda_j$ for $i\geq 1$ and $\alpha \in \Phi^+$ then $i=1$ and $\kappa(\omega,\alpha) = j$.
    Obviously, the converse is also true.
    Hence, for the right hand side of~\eqref{eq:Freudenthal}:
    \[
        2 \sum_{\alpha \in \Phi^+} \kappa(\omega+\alpha,\alpha)m_{\omega+\alpha} = 2 \sum_{\substack{\alpha \in \Phi^+ \\ \kappa(\omega,\alpha)=0}} 2\cdot (n_{\omega+\alpha}-1) + 2 \sum_{\substack{\alpha \in \Phi^+ \\ \kappa(\omega,\alpha)=1}} 3\cdot 1 + 2 \sum_{\substack{\alpha \in \Phi^+ \\ \kappa(\omega,\alpha)=2}} 4 \cdot 1
    \]

    Let $\alpha \in \Phi^+$ such that $\kappa(\omega,\alpha)=0$.
    Let $\beta \in \Phi^+$ such that $\kappa(\beta,\omega)=1$ and $\kappa(\beta,\alpha)=-1$.
    Since $\omega$ is dominant, there are precisely $2(n_{\alpha+\omega}-1)$ choices for $\beta$ by \cref{lem:charV}~\ref{lem:charV:count}.
    Then $\{\beta,\omega-\beta\}$ and $\{\alpha + \beta, \omega - \alpha + \beta\}$ are two different elements of $N_\omega$.
    Conversely, let $\{\gamma,\delta\}$ and $\{\varepsilon,\zeta\}$ be two different elements of $N_\omega$.
    Then $\kappa(\gamma,\varepsilon)=1$ or $\kappa(\gamma,\zeta)=1$.
    Without loss of generality, assume that $\kappa(\gamma,\varepsilon) = 1$.
    Also assume that $\gamma-\varepsilon$ is positive (otherwise take $\varepsilon-\gamma$).
    Since $\kappa(\omega,\varepsilon)=1$ and $\omega$ is dominant, the root $\varepsilon$ must be positive.
    Thus $\alpha \coloneqq \gamma-\varepsilon \in \Phi^+$ and $\beta \coloneqq \varepsilon \in \Phi^+$ are positive roots for which $\kappa(\omega,\alpha)=0$, $\kappa(\omega,\beta) = 1$ and $\kappa(\alpha,\beta)=-1$.
    The same reasoning applies when $\gamma$ is replaced by $\delta$ but leads to the same $\alpha$ and $\beta$.
    This double counting argument gives us
    \[
        \sum_{\substack{\alpha \in \Phi^+ \\ \kappa(\omega,\alpha)=0}} 2\cdot (n_{\omega+\alpha}-1) = n_\omega(n_\omega-1).
    \]

    Now consider $\alpha \in \Phi^+$ such that $\kappa(\omega,\alpha)=1$.
    Then $\{\alpha,\omega-\alpha\} \in N_\omega$.
    Conversely, if $\{\beta,\gamma\} \in N_\omega$ then $\kappa(\omega,\beta)=\kappa(\omega,\gamma)=1$.
    Thus
    \begin{equation}\label{eq:2nomega}
        \sum_{\substack{\alpha \in \Phi^+ \\ \kappa(\omega,\alpha)=1}} 1 = n_\omega \cdot 2.
    \end{equation}
    The only root $\alpha \in \Phi^+$ such that $\kappa(\omega,\alpha)=2$, is $\omega$ itself.

    So the right hand side of~\eqref{eq:Freudenthal} equals
    \[
        2 n_\omega(n_\omega-1) + 12n_\omega + 8 = 2n_\omega^2 + 10n_\omega + 8 = 2(n_\omega+1)(n_\omega+4).
    \]

    As far as the left hand side goes, we have
    \[
        \left(\kappa(2\omega,2\omega) - \kappa(\omega,\omega) + \sum_{\alpha \in \Phi^+} \kappa(\omega,\alpha)\right)m_\omega = \left(8-2+\sum_{\substack{\alpha \in \Phi^+\\ \kappa(\omega,\alpha)=1}} 1 + 2\right)m_\omega.
    \]
    Using~\eqref{eq:2nomega}, we conclude
    \[
        (2n_\omega+8)m_\omega = 2(n_\omega+1)(n_\omega+4).
    \]
    Hence $m_\lambda=n_\lambda + 1$ for all $\lambda \in \Lambda_{-1}$.

    \medskip\noindent\underline{\emph{Claim 5: $m_0 = n_0$.}}
    Finally, we compute $m_0$, once again using~\eqref{eq:Freudenthal}.
    By~\eqref{eq:2nomega}, the left hand side equals $4(n_\omega+3)m_0$.
    Obviously $0+i\alpha \in \Lambda_j$ for $i \geq 0$ and $\alpha \in \Phi^+$ if and only if $i=1$ and $j=-1$ or $i=2$ and $j=2$.
    Because $W$ acts transitively on $\Phi$, we have $n_\alpha = n_\omega$ for all $\alpha \in \Phi$.
    The right hand side becomes
    \begin{align*}
        2\sum_{\alpha \in \Phi^+} 2\cdot (n_\alpha + 1) + 2 \sum_{\alpha \in \Phi^+} 4\cdot 1 &= 4 \sum_{\alpha \in \Phi^+} n_\alpha + 12 \sum_{\alpha \in \Phi^+} 1, \\
            &= \frac{\abs{\Phi}}{2} \cdot (4n_\omega + 12)
    \end{align*}
    Hence $m_0 = \frac{\abs{\Phi}}{2} = n_0$.
\end{proof}

\bibliography{bibliography}

\end{document}